\documentclass[11pt,a4paper,reqno]{amsart}

\usepackage[british]{babel}
\usepackage{csquotes}
\usepackage{epsfig}
\usepackage{amsmath}
\usepackage{bbm}
\numberwithin{equation}{section}
\usepackage[hidelinks]{hyperref}

\usepackage[left=2.5cm,right=2.5cm,top=2.5cm,bottom=2.5cm,footskip=1.5cm,headsep=1cm]{geometry}
\usepackage{appendix}

\newcommand{\interior}[1]{%
  {\kern0pt#1}^{\mathrm{o}}%
}
\DeclareMathOperator*{\esssup}{ess\,sup}

\usepackage[T1]{fontenc} % To switch to the T1 encoding
\usepackage{lmodern} % To switch to Latin Modern
\rmfamily % To load Latin Modern Roman and enable the following NFSS declarations.
\DeclareFontShape{T1}{lmr}{b}{sc}{<->ssub*cmr/bx/sc}{}
\DeclareFontShape{T1}{lmr}{bx}{sc}{<->ssub*cmr/bx/sc}{}
\usepackage{lipsum}
\usepackage{mathtools}
\usepackage{amsmath}
\usepackage{amssymb}
\usepackage{tikz}
\usetikzlibrary{matrix,positioning,decorations.pathreplacing,calc}
\usetikzlibrary{
decorations.text,%
decorations.markings,%
shadows}
\usepackage{adjustbox}
\usepackage{graphicx} 

\usepackage{caption}
\usepackage{subcaption}

\usepackage{dsfont} % for 1 bbmath
\usepackage[format=hang]{caption}

\usepackage[normalem]{ulem}

\usepackage{bm}

\usepackage[
sorting=nty,
maxnames=99,
maxalphanames=5,
natbib=true,
sortcites]{biblatex}

\DeclareNameAlias{default}{family-given}

\graphicspath{{figures/}}

\AtEveryBibitem{% Clean up the bibtex rather than editing it
 \clearfield{url}
 \clearfield{issn}
 \clearfield{isbn}
 \clearfield{urldate}
 
 \ifentrytype{book}{
  \clearfield{pages}}{% 
 }
}

\usepackage{xargs}                      % Use more than one optional parameter in a new commands
\usepackage[prependcaption,textsize=tiny,textwidth=3cm]{todonotes}
\newcommandx{\unsure}[2][1=]{\todo[linecolor=red,backgroundcolor=red!25,bordercolor=red,#1]{#2}}
\newcommandx{\change}[2][1=]{\todo[linecolor=blue,backgroundcolor=blue!25,bordercolor=blue,#1]{#2}}
\newcommandx{\info}[2][1=]{\todo[linecolor=OliveGreen,backgroundcolor=OliveGreen!25,bordercolor=OliveGreen,#1]{#2}}
\newcommandx{\improvement}[2][1=]{\todo[linecolor=black,backgroundcolor=black!25,bordercolor=black,#1]{#2}}
\newcommandx{\thiswillnotshow}[2][1=]{\todo[disable,#1]{#2}}
\usepackage{amsthm}
\usepackage[noabbrev,capitalize]{cleveref}
\crefname{proposition}{Proposition}{Propositions}
\crefname{equation}{}{}

\newtheorem{theorem}{Theorem}[section]
\newtheorem{lemma}[theorem]{Lemma}
\newtheorem{proposition}[theorem]{Proposition}
\newtheorem{corollary}[theorem]{Corollary}

\theoremstyle{definition}
\newtheorem{definition}[theorem]{Definition}
\newtheorem{example}[theorem]{Example}

\newtheorem{remark}[theorem]{Remark}

\crefname{assumption}{Assumption}{Assumptions}
\crefname{definition}{Definition}{Definitions}
\crefname{corollary}{Corollary}{Corollaries}
\crefname{enumi}{item}{items}
\usepackage{fancyhdr}

\fancypagestyle{plain}{%
\fancyhead[C]{}
\fancyfoot[C]{}

}
\fancypagestyle{nosection}{%
\fancyhead[CE]{}
\fancyhead[CO]{}
\fancyfoot[CE,CO]{\thepage}
}
\newcommand{\nm}{\noalign{\smallskip}}
\newcommand{\ds}{\displaystyle}

\renewcommand{\tilde}{\widetilde}

\renewcommand{\qedsymbol}{$\blacksquare$}

\usepackage{fancyhdr}

\fancypagestyle{plain}{%
\fancyhead[C]{}
\fancyfoot[C]{}

}
\fancypagestyle{nosection}{%
\fancyhead[CE]{}
\fancyhead[CO]{}
\fancyfoot[CE,CO]{\thepage}
}
\renewcommand{\epsilon}{\varepsilon}

\let\emptyset\varnothing

\renewcommand{\tilde}{\widetilde}

\usepackage{tcolorbox}
\usetikzlibrary{tikzmark,calc}

\usepackage{enumerate}
\usepackage{upgreek}

\begin{document}

\title[]{Regularized Brascamp-Lieb inequalities via Optimal Transport and Study of Equality Cases}
\author[B. Ammari]{Bader Ammari}
\address[]{Department of Mathematics, ETH Z\"{u}rich, R\"{a}mistrasse 101, CH-8092 Z\"{u}rich, Switzerland}
\email{bader.ammari@math.ethz.ch}

\large
\maketitle

\begin{abstract}
    We consider regularized Brascamp-Lieb inequalities using the theory of optimal transportation, more precisely an anisotropic version of Caffarelli's contraction theorem. Furthermore, we provide a full picture concerning the issues of finiteness of the Brascamp-Lieb constant and of the existence of Gaussian extremizers. We also find all optimizers for these regularized Brascamp-Lieb inequalities by employing heat flow methods that were already used to settle this question for the non-regularized Brascamp-Lieb inequality and introducing new ideas to deal with several difficulties, which do not appear for the non-regularized Brascamp-Lieb datum. Finally, we give some interesting applications.
\end{abstract}

\hfill \\

\vspace{0.2cm}
{\small
\noindent{\textbf{2020 Mathematics Subject Classification:} 26D15, 52A40, 49Q20, 35K05.} }

\vspace{0.2cm}
{\small
\noindent{\textbf{Keywords:} Brascamp-Lieb inequality, Log-convexity/concavity, Gaussian extremizer, Caffarelli's contraction, Heat flow.}}
\vspace{0.5cm}

\section{Introduction}

Forward Brascamp-Lieb inequalities provide a natural generalization to many important inequalities as the inequalities of Hölder, Young convolution or Loomis-Whitney and are nowadays very well-understood. In addition, applications given by Brascamp-Lieb (BL) inequalities may be found in harmonic analysis (see \cite{BCT} in the context of Fourier restriction theory), analytic number theory (see \cite{Guo_Zhang}), convex geometry (see \cite{KBall,KBall2} where the author studies estimates for the volume of sections of the unit ball of $(\mathbb{R}^m,\|\cdot\|_p)$ as well as the volume ratio), probability, and much more. 
\hfill \\ \\ The first crucial result that made the study of Brascamp-Lieb inequalities flourish is the following observation of Gaussian saturation by Lieb in \cite{Lieb}: 

\begin{theorem}[Lieb \cite{Lieb}]
    Let $\mathbb{R}^{n \times n} \ni \mathcal{Q} \geq 0$, $C_i: \mathbb{R}^n \rightarrow \mathbb{R}^{n_i}$ surjective linear transformations for $i=1,\cdots,m$ and $p_1,\cdots,p_m>0.$ Then, the largest constant $C$ such that $$\int_{\mathbb{R}^n}e^{-\pi \langle \mathcal{Q}x,x\rangle}\prod_{i=1}^mf_i(C_ix)^{p_i}\, dx \leq C\prod_{i=1}^m \Bigl( \int_{\mathbb{R}^{n_i}}f_i \Bigl)^{p_i}$$ holds for all non-negative functions $f_1,\cdots,f_m$ is also the largest constant such that the inequality holds for centered Gaussian functions and is usually called the Brascamp-Lieb constant.
\end{theorem}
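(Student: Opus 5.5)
Since the paper's main tool is an anisotropic Caffarelli contraction theorem, I would prove Lieb's theorem by optimal transport, reducing every admissible tuple of inputs to a Gaussian comparison. The alternative is the classical route: Lieb's tensorisation-plus-rotation argument, or the Bennett--Carbery--Christ--Tao heat-flow monotonicity.

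\textbf{Reduction and change of variables.} By homogeneity and density it suffices to treat $f_i$ smooth, positive and of integral one. Fix centered Gaussians $g_i(y)=\det(A_i)^{1/2}e^{-\pi\langle A_iy,y\rangle}$ on $\mathbb{R}^{n_i}$, and let $T_i=\nabla\varphi_i$ be the Brenier map pushing $g_i$ to $f_i$. The Monge--Amp\`ere relation $f_i(T_i y)\det DT_i(y)=g_i(y)$ gives, after substituting $y_i=C_ix$, the pointwise identity
\[
\prod_i f_i(C_i x)^{p_i}=\prod_i g_i(y_i)^{p_i}\det DT_i(y_i)^{-p_i}.
\]
This identity is only useful together with a map on $\mathbb{R}^n$ that realises the change of variables on the left-hand side. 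The Brenier maps $T_i$ live on the quotients $\mathbb{R}^{n_i}$, however, and no single map on $\mathbb{R}^n$ intertwines all of them. So I would not try to produce one directly.

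\textbf{Comparison step.} I would reverse the roles: transport the full weighted integrand. Choose the Gaussians $g_i$ to be extremizers of the Gaussian problem; the constant is then attained there, provided the Gaussian supremum is finite and attained, and the general case follows by approximation after adding a small $\epsilon|x|^2$ to $\mathcal{Q}$. Set
\[
F(x)=e^{-\pi\langle\mathcal{Q}x,x\rangle}\prod_i f_i(C_ix)^{p_i}, \qquad G(x)=e^{-\pi\langle\mathcal{Q}x,x\rangle}\prod_i g_i(C_ix)^{p_i}.
\]
The inequality $\int F\le C\prod_i(\int f_i)^{p_i}$ is the statement to be proved, and $G$ is a Gaussian that saturates $C$. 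The cleanest version of the plan is the dual entropy formulation, namely the Carlen--Cordero-Erausquin duality. The BL inequality is equivalent to
\[
\sum_i p_i\,\mathrm{Ent}(C_{i\#}\mu)\le \mathrm{Ent}_{\mathcal{Q}}(\mu)+\log C
\]
for all probability measures $\mu$ on $\mathbb{R}^n$, with the appropriate Gaussian-weighted relative entropy on the right. Let $\mu$ have density $h$ and let $S=\nabla\psi$ be the Brenier map from the extremal Gaussian $\gamma$ to $\mu$. Then $\mu=S_\#\gamma$ and $C_{i\#}\mu=(C_i\circ S)_\#\gamma$. The entropy of $\mu$ equals that of $\gamma$ minus $\int\log\det D^2\psi\,d\gamma$. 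For the marginals I would use the fact that $C_{i\#}\mu$ is the pushforward of $C_{i\#}\gamma$ under the conditional-expectation map $E_i(y)=\mathbb{E}_\gamma[C_iS\mid C_ix=y]$ composed with a contraction. By convexity of entropy under averaging, together with $\log\det$ concavity,
\[
\mathrm{Ent}(C_{i\#}\mu)\le \mathrm{Ent}(C_{i\#}\gamma)-\int\log\det\bigl(C_iD^2\psi\,C_i^{*}(C_iC_i^{*})^{-1}\bigr)\,d\gamma .
\]
The Gaussian optimality of $\gamma$, expressed through the Euler--Lagrange (geometric) condition $\sum_i p_i C_i^{*}(C_iC_i^{*})^{-1}C_i=\mathrm{Id}+\mathcal{Q}$ after normalisation, then yields the pointwise matrix inequality
\[
\sum_i p_i\log\det(C_iMC_i^{*})\ge \log\det M
\]
for $M=D^2\psi>0$. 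This is exactly the Gaussian BL inequality in its determinant form. Integrating against $\gamma$ closes the argument.

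\textbf{Main obstacle.} The hard step is the marginal entropy bound: controlling $\mathrm{Ent}(C_{i\#}\mu)$ by a quantity depending only on $C_iD^2\psi\,C_i^*$. I would first try to prove it via the displacement-convexity/Jensen argument sketched above. If that fails, I would fall back on the heat-flow proof. In that proof one evolves $f_i$ by the heat equation with covariances $A_i^{-1}$ and shows that $\int F_t$ is monotone using the same geometric condition. As $t\to\infty$ the flow converges to the Gaussian case. Degenerate data, where the Gaussian supremum is not attained, would be handled by the $\epsilon$-regularisation of $\mathcal{Q}$ and monotone convergence.
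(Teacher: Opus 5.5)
Your argument has genuine gaps. The most serious is that it needs an extremal Gaussian, which need not exist.

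\textbf{The extremal Gaussian and the $\mathcal Q$-term.} Your comparison only closes with the sharp constant when $\gamma$ is the extremal Gaussian of a datum normalised to geometric form (for $\mathcal Q=0$: $C_iC_i^*=\mathrm{id}_{H_i}$ and $\sum_ip_iC_i^*C_i=\mathrm{id}_H$). Only then is $\sum_ip_i\log\det(C_iMC_i^*)\ge\log\det M$ the sharp Gaussian inequality. Your remedy for non-extremizable data does not work. Adding $\epsilon|x|^2$ to $\mathcal Q$ gives no compactness, because nothing stops the Gaussian parameters from escaping to infinity. Already for $n=n_1=m=1$, $C_1=1$, $p_1=1$, $\mathcal Q=\epsilon$, the Gaussian ratio is $(b/(\epsilon+b))^{1/2}$, whose supremum $1$ is approached only as $b\to\infty$. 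Attainment needs both $\mathcal Q>0$ and an upper bound $B_i\le Q_i$. This is exactly what the paper's log-convexity constraint (or the type-$G$ inputs of \cite{BCCT}) supplies, before letting $Q_i=\Lambda\,\mathrm{id}\to\infty$. Alternatively, you could induct on dimension along critical subspaces, using that simple data are Gaussian-extremizable. Your heat-flow fallback needs the same geometric normalisation, so it has the same problem.

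The $\mathcal Q$-term is also not handled. The correct normalisation is $C_iC_i^*=\mathrm{id}_{H_i}$ with $\sum_ip_iC_i^*C_i=\mathrm{id}_H-\mathcal Q$ (not $+\mathcal Q$). Hence $\sum_ip_in_i\ne n$ as soon as $\mathcal Q\ne 0$, and the scaling $M\mapsto tM$ shows the pure determinant inequality you invoke is then false. The weight $\pi\int\langle\mathcal QS,S\rangle\,d\gamma$ is not a function of $D^2\psi$ and needs a separate estimate. The paper instead removes $\mathcal Q$ by adjoining the input $g_{\mathcal Q}$ with $C_{m+1}=\mathrm{id}_H$, $p_{m+1}=1$ and $B_{m+1}\le\mathcal Q$.

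\textbf{The marginal entropy bound.} Your justification here is incorrect. In general $C_iS(X)$ is not a function of $C_iX$, so $C_{i\#}\mu$ is not the image of $C_{i\#}\gamma$ under $E_i$ followed by a contraction. What is true is that the law of $C_iS(X)$ is a martingale dilation of the law of $E_i(C_iX)$. Entropy is not monotone along such dilations: splitting mass into narrow bumps lowers differential entropy arbitrarily. So Jensen-type averaging cannot bound the entropy of the marginal.

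The bound itself does hold in the geometric normalisation with $\gamma$ standard, for a different reason. Write $X=C_i^*X'+X''$ with $X'=C_iX$ and $X''=(\mathrm{id}_H-C_i^*C_i)X$; these are independent. For fixed $X''$, the map $x'\mapsto C_i\nabla\psi(C_i^*x'+X'')$ is the gradient of the convex function $x'\mapsto\psi(C_i^*x'+X'')$, with Jacobian $C_iD^2\psi\,C_i^*$. Writing $h$ for differential entropy, this gives $h(C_iS(X)\mid X'')=h(X')+\mathbb E\log\det(C_iD^2\psi\,C_i^*)$, and conditioning reduces entropy. With this fix your route proves the geometric case with $\mathcal Q=0$.

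\textbf{Comparison with the paper.} The paper never reproves Lieb's theorem. It recovers it from the Barthe-type argument in the proof of \cref{Theorem_REG_BL} by letting $Q_i=\Lambda\,\mathrm{id}\to\infty$, and that argument avoids all of the above. You dismissed building a map on $\mathbb R^n$ because none intertwines the $T_i$, but $\Theta(x)=\sum_ip_iC_i^*\nabla\phi_i(C_ix)$ need not intertwine anything. It is only a change of variables into the reverse Brascamp--Lieb functional. The pointwise bound $\prod_i\det(\nabla^2\phi_i(C_ix))^{p_i}\le D_g^{-1}\det(\sum_ip_iC_i^*\nabla^2\phi_i(C_ix)C_i)$ is just the definition of the Gaussian constant as an infimum. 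Barthe's duality $E_gF_g=1$ then closes the argument, and no extremal Gaussian is ever required.
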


From this work, two main approaches have emerged: \hfill \\

The first is based on heat flow methods and was first introduced in \cite{Carlen}. Using a monotonicity formula for positive solutions to heat equations in multilinear settings, which was inspired by \cite{Carlen}, \cite{BCCT} established the necessary conditions for the Brascamp-Lieb constant to be finite or for its value to be attained. In addition, Valdimarsson in \cite{Vald2} settled the question of the form of functions that give equality when it is known that the Brascamp-Lieb constant is attained.

\hfill \\

The second method has its roots in optimal transport. The pioneering papers of Brenier and McCann (see \cite{Brenier,MCann}) led to the well-known Brenier's theorem which enabled identifying the optimal transport map between absolutely continuous measures in $\mathbb{R}^n$ as the gradient of a convex function. These works also hinted at the connection between optimal transportation theory and functional/geometric inequalities. In Euclidean spaces, many classical inequalities can then be derived or redemonstrated using Brenier maps combined with the change of variables formula for convex gradients. The forward Brascamp-Lieb inequality was one of the first inequalities to which this strategy was applied in \cite{Barthe} and, surprisingly, one even gets a dual inequality which is now known as the reverse Brascamp-Lieb inequality. 

\hfill \\ Our goal in this work is to give a complete study of a regularized version of the forward Brascamp-Lieb inequalities in the sense that a log-convexity/log-concavity condition will be imposed on the inputs. Regularizing functional inequalities by restricting the inputs to a smaller subclass of functions is a natural procedure whose main advantage consists in obtaining an improved form of the inequality. This viewpoint is often adopted in areas such as convex geometry, differential geometry, or information theory. Among them, let us mention three examples (for more references, see \cite{Nakamura}). In \cite{Fathi}, the constant of the log-Sobolev inequality is improved thanks to a regularization in terms of the Poincaré constant. In \cite{Nakamura_Tsuji}, deficit estimates for Nelson's hypercontractivity, the logarithmic Sobolev inequality and Talagrand's cost inequality are given under the assumptions of log-subharmonicity, log-convexity and log-concavity. Notice that the hypercontractivity is proven via Fokker-Planck flows which behave well with respect to log-subharmonicity while Talagrand's cost inequality is proven by an optimal transport argument inspired by \cite{Cordero}. 
Finally in the works of \cite{Shohei1,Shohei2} (see also \cite{Cordero_Fradelizi,Milman,Shohei3}), the authors manage to first prove an inverse Brascamp-Lieb inequality under centering, log-concavity and log-convexity assumptions. At the limit, it then retrieves the inverse Brascamp-Lieb inequality of \cite{Barthe_Wolff} without non-degeneracy conditions, for arbitrary centered log-concave inputs, which allows proving the Gaussian correlation inequality for centered convex sets. In addition, we refer the reader to \cite{Strong_Gaussian_Correlation} for the beautiful resolution of the strong Gaussian correlation conjecture by the Forward-Reverse Brascamp-Lieb inequality for centered log-concave functions. 

\hfill \\

Let us explain the main results in this work. We first manage to prove regularized Brascamp-Lieb inequalities and their dual (see \cref{Theorem_REG_BL}) using an anisotropic extension of Caffarelli's contraction theorem and a strategy similar to the proof in \cite{Barthe} of the classical forward Brascamp-Lieb inequality. 
This generalizes the works of \cite{Vald1} and \cite{BCCT} where type $G$ functions, \textit{i.e.} convolutions with $N(0,G^{-1})$, are considered. 
Then, we find necessary and sufficient conditions for the finiteness of a regularized Brascamp-Lieb constant (see \cref{Necessary_suff_cond_finite_BL}) and show that extremizable Brascamp-Lieb data are also Gaussian extremizable (see \cref{Ext_implies_Gaussian_ext}). Finally, using the heat flow approach employed in \cite{BCCT}, we define a notion of generalized geometric Brascamp-Lieb data to which any extremizable Brascamp-Lieb datum is equivalent (see \cref{Prop_Equiv_BL_constant}) and prove that extremizers of such data have a very specific form (see \cref{Main_Thm_Eq_Geometric_BL,Main_Thm_Eq_BL}).

\hfill \\
To the best of our knowledge, this is one of the first works dealing with the structure and equality cases of a functional inequality where the inputs are supposed to satisfy a log-convexity/log-concavity condition. In particular, new ideas are needed, especially for the equality cases of generalized geometric Brascamp-Lieb datum where several difficulties, which do not appear for the usual geometric Brascamp-Lieb datum, arise.

\hfill \\ \\ This work is organized as follows. In the first section, we prove regularized versions of Brascamp-Lieb inequalities along with their dual reverse inequalities using arguments from the optimal transportation theory. In the second section, we focus on establishing finiteness conditions on the Brascamp-Lieb constant and a satisfactory description of the Brascamp-Lieb datum for which extremizers or Gaussian extremizers exist, thus generalizing the main results of \cite{BCCT}. The third section is devoted to the study of equality cases for extremizable Brascamp-Lieb data and generalizes the work of \cite{Vald2}. Finally, some applications are provided in the last section and a general Caffarelli contraction is proven in the appendix, where an extension of the results of \cite{Guido} is also included, as it is an important tool for the proof of the regularized Brascamp-Lieb inequalities.

\bigskip

\section{Proof of Regularized Brascamp-Lieb Inequalities}

We will consider regularized Brascamp-Lieb inequalities on Euclidean spaces, \textit{i.e.} a finite-dimensional real Hilbert space $H$ endowed with the usual Lebesgue measure $dx.$ 
All functions below will be assumed to be non-negative.

\hfill \\

Given a positive definite transformation $A: H \rightarrow H$, we denote $$g_A(x)=\exp(-\pi \langle Ax,x \rangle)$$ for $x \in H.$ We say $0<A \leq B$ for $A,B: H \rightarrow H$ two positive definite transformations iff $B-A$ is positive semi-definite. 

Moreover, let $C_i: H \rightarrow H_i$ surjective linear transformations for $i=1,\cdots,m$ and $p_1,\cdots,p_m>0.$

\begin{definition}
A function $f: H \rightarrow [0;+\infty)$ is log-concave (resp. log-convex) if $\log(f): H \rightarrow \mathbb{R} \cup \{-\infty\}$ is concave (resp. convex). Given a positive semi-definite $Q: H \rightarrow H$, we say that $f$ is more log-concave (resp. more log-convex) than $g_Q$ if $f=g_Qh$ for some log-concave (resp. log-convex) function $h.$ 
\end{definition}

\begin{remark}
For a smooth function $f$, $f$ is more log-concave (resp. more log-convex) than $g_Q$ iff $$\mathrm{Hess}(-\log f) \geq 2\pi Q \; (\text{resp. } \leq).$$
\end{remark}

\hfill \\

\begin{theorem} \label{Reformulation_REG_BL} 
Let $\mathcal{Q}: H \rightarrow H$ be a positive semi-definite transformation, $I \subset \{1,\cdots,m\}, Q_i: H_i \rightarrow H_i$ be positive definite transformations for any $1 \leq i \leq m.$ If $f_i \in L^1(H_i)$ is more log-convex than $g_{Q_i}$ for any $i \in I$ and $f_j \in L^1(H_j)$ is more log-concave than $g_{Q_j}$ for any $j \in I^c$, then $$\begin{array}{lll} \ds \frac{\ds \int_H e^{-\pi \langle \mathcal{Q}x,x\rangle}\prod_{i=1}^m f_i(C_ix)^{p_i}\, dx}{\ds \prod_{i=1}^m \Bigl(\int_{H_i}f_i \Bigl)^{p_i}} \\ \leq \ds \sup_{\substack{0<B_i \leq Q_i \text{ for all } i \in I \\ B_j \geq Q_j \text{ for all } j \in I^c}}\frac{\ds \int_H e^{-\pi \langle \mathcal{Q}x,x\rangle}\prod_{i=1}^m g_{B_i}(C_ix)^{p_i}\, dx}{\ds \prod_{i=1}^m \Bigl(\int_{H_i}g_{B_i} \Bigl)^{p_i}}=\sup_{\substack{0<B_i \leq Q_i \text{ for all } i \in I \\ B_j \geq Q_j \text{ for all } j \in I^c}}\Bigl \{ \frac{\prod_{i=1}^m (\det B_i)^{p_i}}{\det(\mathcal{Q}+\sum_{i=1}^m p_iC_i^* B_iC_i)}\Bigl \}^{1/2}.
    \end{array}$$
By duality, if $g_i$ is more log-concave than $g_{Q_i^{-1}}$ for any $i \in I$ and $g_j$ is more log-convex than $g_{Q_j^{-1}}$ for any $j \in I^c$, then  $$\begin{array}{lll}
    \ds \frac{\ds \int_{H}g_{\mathcal{Q}^{-1}} * \sup \Bigl \{ \prod_{j=1}^m g_j(y_j)^{p_j}: \sum_{j=1}^m p_jC_j^*y_j=x, y_j \in H_j\Bigl \}\, dx}{\ds \prod_{i=1}^m \Bigl( \int_{H_i}g_i\Bigl)^{p_i}} \\ \nm 
    \geq \ds \inf_{\substack{B_i \geq Q_i^{-1} \text{ for all } i \in I \\ 0<B_j \leq Q_j^{-1} \text{ for all } j \in I^c}}\frac{\ds \int_{H}g_{\mathcal{Q}^{-1}} *\sup \Bigl \{ \prod_{j=1}^m g_{B_j}(y_j)^{p_j}: \sum_{j=1}^m p_jC_j^*y_j=x, y_j \in H_j\Bigl \}\, dx}{\ds \prod_{i=1}^m \Bigl( \int_{H_i}g_{B_i}\Bigl)^{p_i}} \\ \nm
    \qquad \ds =\inf_{\substack{0< B_i \leq Q_i \text{ for all } i \in I \\ B_j \geq Q_j \text{ for all } j \in I^c}} \Bigl \{ \frac{\det(\mathcal{Q}+\sum_{i=1}^m p_i C_i^*B_iC_i)}{\prod_{i=1}^m (\det B_i)^{p_i}}  \Bigl \}^{1/2} \end{array}$$ where $$f * g(x)=\sup_{y \in H}f(y)g(x-y)$$ and $g_{\mathcal Q^{-1}}$ is understood for $\mathcal{Q}=0$ to be equal to $$\left\{ \begin{array}{l}
    \ds 1 \text{ if } y=0,\\
    \nm
    0 \text{ if } y\neq 0.
    \end{array}
    \right. $$
    Even more generally, for $\tilde G_i,G_i:H_i \rightarrow H_i$ with $0 \le \tilde G_i \le G_i \le +\infty$ for any $1 \le i \le m$, if $f_i \in L^1(H_i)$ is more log-concave than $g_{\tilde G_i}$ and more log-convex than $g_{G_i}$, then $$\begin{array}{lll} \ds \frac{\ds \int_H e^{-\pi \langle \mathcal{Q}x,x\rangle}\prod_{i=1}^m f_i(C_ix)^{p_i}\, dx}{\ds \prod_{i=1}^m \Bigl(\int_{H_i}f_i \Bigl)^{p_i}} \\ \leq \ds \sup_{\tilde{G_i} \le B_i \le G_i}\frac{\ds \int_H e^{-\pi \langle \mathcal{Q}x,x\rangle}\prod_{i=1}^m g_{B_i}(C_ix)^{p_i}\, dx}{\ds \prod_{i=1}^m \Bigl(\int_{H_i}g_{B_i} \Bigl)^{p_i}}=\sup_{\tilde{G_i} \le B_i \le G_i}\Bigl \{ \frac{\prod_{i=1}^m (\det B_i)^{p_i}}{\det(\mathcal{Q}+\sum_{i=1}^m p_iC_i^* B_iC_i)}\Bigl \}^{1/2}\end{array}$$ and similarly, for the dual inequality. 
\end{theorem}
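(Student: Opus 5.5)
The approach follows Barthe's transport proof of the Brascamp--Lieb inequality, with Caffarelli's contraction theorem replaced by its anisotropic version from the appendix. By homogeneity and approximation (truncating, mollifying with Gaussians, which preserves the log-concavity/log-convexity comparisons, and using monotone convergence), I may assume each $f_i$ is smooth, positive and normalised, $\int_{H_i} f_i = 1$. The Gaussian value of the ratio is a direct computation: the numerator equals $\det(\mathcal{Q}+\sum_i p_iC_i^*B_iC_i)^{-1/2}$ and $\int g_{B_i}=(\det B_i)^{-1/2}$. This gives the closed form on the right-hand side. So the content of the theorem is the inequality between the ratio for the $f_i$ and the Gaussian supremum. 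Moreover, the first statement is the special case $\tilde G_i=0,\ G_i=Q_i$ for $i\in I$ and $\tilde G_j=Q_j,\ G_j=+\infty$ for $j\in I^c$ of the general statement, so I will prove the general one.

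\textbf{Key steps.}

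First, fix admissible $B_i$ with $\tilde G_i\le B_i\le G_i$. Let $T_i=\nabla\varphi_i$ be the Brenier map pushing the normalised Gaussian $\gamma_{B_i}=(\det B_i)^{1/2} g_{B_i}$ onto $f_i\,dy$.

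Second, invoke the anisotropic Caffarelli contraction from the appendix. It should give two-sided Hessian bounds: since $f_i$ is more log-concave than $g_{\tilde G_i}$, we get $\mathrm{Hess}\,\varphi_i \le \tilde G_i^{-1/2}(\tilde G_i^{1/2}B_i\tilde G_i^{1/2})^{1/2}\tilde G_i^{-1/2}$ in the appropriate sense. Since $f_i$ is more log-convex than $g_{G_i}$, we get the reverse bound with $G_i$. The natural choice is $B_i$ tied to the $f_i$ so that $\mathrm{Hess}\,\varphi_i$ lies between $\mathrm{Id}$-type comparisons. More robustly, I only need that the \emph{pullback matrices} $B_i(x)$ satisfy $\tilde G_i\le B_i(x)\le G_i$ pointwise, where
\[
B_i(x):=\mathrm{Hess}\,\varphi_i(x)\, B_i\, \mathrm{Hess}\,\varphi_i(x)
\]
or a similar quantity, which is what the anisotropic contraction is designed to deliver.

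Third, write $f_i(T_i y)\det \mathrm{Hess}\,\varphi_i(y)=\gamma_{B_i}(y)$ (Monge--Ampère) and change variables in the left-hand side via the map
\[
\Theta(x)=\Bigl(\mathcal{Q}+\sum_i p_iC_i^*B_iC_i\Bigr)^{-1}\Bigl(\mathcal{Q}x+\sum_i p_iC_i^*B_iT_i(C_ix)\Bigr).
\]
Its differential is $M^{-1}(\mathcal{Q}+\sum_i p_iC_i^*B_i\,\mathrm{Hess}\,\varphi_i\,C_i)$, which is symmetric-positive after conjugation, and $\Theta$ is injective by strict monotonicity.

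Fourth, apply Ball's determinant inequality (the Brascamp--Lieb--Barthe determinant lemma) together with the arithmetic--geometric convexity argument, as in Barthe. Because the Hessians are only constrained to the admissible range, the determinant bound is controlled by the supremum over Gaussians with $\tilde G_i\le B_i\le G_i$. This yields the inequality.

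Fifth, for the dual inequality, run the same transport with $\Theta$ as the reverse map. One bounds the sup-convolution from below at the point $\Theta(x)$ by choosing $y_j=T_j(C_jx)$, exactly as in Barthe's reverse BL. The log-concave/log-convex roles swap under Legendre duality, which explains $Q_i^{-1}$ and the identification of the infimum with the reciprocal Gaussian expression.

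\textbf{Main obstacle.} I expect the main difficulty to be the second step: ensuring that the determinant lemma in the fourth step can be compared to Gaussians \emph{within the constrained class}. In the unconstrained case, Barthe only needs the determinant inequality with the Hessian matrices absorbed freely. Here the effective Gaussian matrices $\mathrm{Hess}\,\varphi_i\,B_i\,\mathrm{Hess}\,\varphi_i$ must stay in $[\tilde G_i,G_i]$, and that requires the anisotropic Caffarelli bound in exactly matching form, including the non-commuting case. I would first treat the commuting case where $B_i$, $\tilde G_i$ and $G_i$ are simultaneously diagonalisable, then reduce the general case by conjugating with $B_i^{1/2}$ and applying the appendix result to the rescaled measures.
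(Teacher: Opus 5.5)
There is a genuine gap, and it sits where you flag the main obstacle. Steps 2--4 never produce a pointwise inequality that can be compared with the Gaussian supremum.

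\textbf{The transport direction.} You push $\gamma_{B_i}$, with $B_i$ in the \emph{forward} class $[\tilde G_i,G_i]$, onto $f_i$. The appendix theorem then correctly gives
$G_i^{-1/2}(G_i^{1/2}B_iG_i^{1/2})^{1/2}G_i^{-1/2}\le\nabla^2\varphi_i\le\tilde G_i^{-1/2}(\tilde G_i^{1/2}B_i\tilde G_i^{1/2})^{1/2}\tilde G_i^{-1/2}$.
These geometric-mean bounds do not place $\nabla^2\varphi_i$, or your pullback $\nabla^2\varphi_iB_i\nabla^2\varphi_i$, in $[\tilde G_i,G_i]$.
\begin{itemize}
\item For scalar Gaussians $f_i=g_{b'}$ one gets $\varphi_i''B_i\varphi_i''=b^2/b'$. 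With $\tilde G_i=1$, $G_i=2$, $b=2$, $b'=1$ this equals $4\notin[1,2]$.
\item In the commuting case it is $(\nabla^2\varphi_i)^{-1}B_i(\nabla^2\varphi_i)^{-1}$ that lands in $[\tilde G_i,G_i]$. In the non-commuting case even that does not follow, since $P\le X$ does not imply $P\tilde G_iP\le X\tilde G_iX$.
\end{itemize}

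\textbf{The map $\Theta$.} We have $C_i\Theta(x)\ne T_i(C_ix)$, so the Monge--Amp\`ere equation cannot be inserted into $f_i(C_i\Theta(x))$. Also $B_i\nabla^2\varphi_i$ is not symmetric, so neither positivity of the Jacobian nor injectivity by monotonicity is justified. More fundamentally, with the Gaussian as source, Barthe's change of variables bounds a Gaussian integral by the sup-convolution functional of the $f_i$. That is a reverse-type estimate for $f$, not an upper bound on the forward ratio of $f$.

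\textbf{Step 4.} It asserts that the determinant bound is controlled by the constrained Gaussian supremum, which is the whole content of the theorem. Ball's lemma and AM--GM do not supply this for constrained classes.

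\textbf{The missing idea} is to make the Hessians of the transport potentials themselves admissible matrices.
\begin{enumerate}
\item Reduce to $\mathcal Q=0$ by adjoining $f_{m+1}=g_{\mathcal Q}$, $C_{m+1}=\mathrm{id}_H$, $Q_{m+1}=\mathcal Q$. For fixed $B_1,\dots,B_m$ the extra factor is maximised at $B_{m+1}=\mathcal Q$; then approximate a general $\mathcal Q\ge0$.
\item Take $f_i=e^{-V_i}$ as the \emph{source}. As target take any normalised $g_i=e^{-W_i}$ from the \emph{dual} class: $\nabla^2W_i\ge2\pi Q_i^{-1}$ for $i\in I$ and $\nabla^2W_j\le2\pi Q_j^{-1}$ for $j\in I^c$. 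In the two-sided version use $2\pi G_i^{-1}\le\nabla^2W_i\le2\pi\tilde G_i^{-1}$.
\item Apply the anisotropic Caffarelli theorem to $\nabla\phi_i$, with $A=2\pi Q_i$ and $B=2\pi Q_i^{-1}$ so that $A^{1/2}BA^{1/2}=4\pi^2\,\mathrm{id}$, and to its inverse. This gives $0\le\nabla^2\phi_i\le Q_i$ for $i\in I$ and $\nabla^2\phi_j\ge Q_j$ for $j\in I^c$; in the two-sided case, $\tilde G_i\le\nabla^2\phi_i\le G_i$.
\item Let $D_g$ be the infimum over admissible $B$ of $\det(\sum_ip_iC_i^*B_iC_i)/\prod_i(\det B_i)^{p_i}$. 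Then $\prod_i\det(\nabla^2\phi_i(C_ix))^{p_i}\le D_g^{-1}\det\bigl(\sum_ip_iC_i^*\nabla^2\phi_i(C_ix)C_i\bigr)$ holds pointwise by definition of $D_g$.
\item Substituting $y=\sum_ip_iC_i^*\nabla\phi_i(C_ix)$ bounds the forward ratio of $f$ by $D_g^{-1}$ times the sup-convolution ratio of $g$, for every such pair.
\item For Gaussian targets $g_{B_i^{-1}}$ with admissible $B$, Barthe's duality of quadratic forms ($R=Q^*$) shows the infimal dual ratio is $\sqrt{D_g}$. Hence the forward ratio is at most $D_g^{-1/2}$.
\item Conversely, testing with near-optimal admissible Gaussian $f$ gives the dual inequality.
\end{enumerate}
This route needs no commuting versus non-commuting case distinction.
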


\hfill \\

Considering $I=\emptyset$ and $I=\{1,\cdots,m\}$ gives the following proposition.

\begin{proposition} \label{Theorem_REG_BL}
    Let $\mathcal{Q}: H \rightarrow H$ be a positive semi-definite transformation and $Q_i: H_i \rightarrow H_i$ be positive definite transformations for $i=1,\cdots,m$. If $f_i \in L^1(H_i)$ is more log-convex than $g_{Q_i}$ for any $1 \leq i \leq m$, then \begin{equation} \label{Reg_BL_Log_Convexity} \begin{array}{lll} \ds \frac{\ds \int_H e^{-\pi \langle \mathcal{Q}x,x\rangle}\prod_{i=1}^m f_i(C_ix)^{p_i}\, dx}{\ds \prod_{i=1}^m \Bigl(\int_{H_i}f_i \Bigl)^{p_i}} \\ \leq \ds \sup_{0<B_i \leq Q_i}\frac{\ds \int_H e^{-\pi \langle \mathcal{Q}x,x\rangle}\prod_{i=1}^m g_{B_i}(C_ix)^{p_i}\, dx}{\ds \prod_{i=1}^m \Bigl(\int_{H_i}g_{B_i} \Bigl)^{p_i}}=\sup_{0<B_i \leq Q_i}\Bigl \{ \frac{\prod_{i=1}^m (\det B_i)^{p_i}}{\det(\mathcal{Q}+\sum_{i=1}^m p_iC_i^* B_iC_i)}\Bigl \}^{1/2}.
    \end{array}\end{equation} By duality, if $g_j$ is more log-concave than $g_{Q_j^{-1}}$ for any $1 \leq j \leq m$, then \begin{equation} \label{Reg_Dual_BL_Log_Concavity} \begin{array}{l}
    \ds \frac{\ds \int_{H}g_{\mathcal{Q}^{-1}} * \sup \Bigl \{ \prod_{j=1}^m g_j(y_j)^{p_j}: \sum_{j=1}^m p_jC_j^*y_j=x, y_j \in H_j\Bigl \}\, dx}{\ds \prod_{i=1}^m \Bigl( \int_{H_i}g_i\Bigl)^{p_i}} \\ 
    \nm 
    \geq \ds \inf_{B_i \geq Q_i^{-1}}\frac{\ds \int_{H}g_{\mathcal{Q}^{-1}} * \sup \Bigl \{ \prod_{j=1}^m g_{B_j}(y_j)^{p_j}: \sum_{j=1}^m p_jC_j^*y_j=x, y_j \in H_j\Bigl \}\, dx}{\ds \prod_{i=1}^m \Bigl( \int_{H_i}g_{B_j}\Bigl)^{p_i}}  \\
    \nm 
    \qquad = \ds \inf_{0<B_i \leq Q_i} \Bigl \{ \frac{\det(\sum_{i=1}^m p_i C_i^*B_iC_i+\mathcal{Q})}{\prod_{i=1}^m (\det B_i)^{p_i}}  \Bigl \}^{1/2}.\end{array} \end{equation}

    If $f_i \in L^1(H_i)$ is more log-concave than $g_{Q_i}$ for any $1 \leq i \leq m$, then
    \begin{equation} \label{Reg_BL_Log_Concavity} \begin{array}{lll} 
     \frac{\ds \int_H e^{-\pi \langle \mathcal{Q}x,x\rangle} \prod_{i=1}^m f_i(C_ix)^{p_i}\, dx }{\ds \prod_{i=1}^m \Bigl(\int_{H_i}f_i\Bigl)^{p_i}} 
    \\ \nm
    \leq  \ds \sup_{B_i \geq Q_i}\frac{\ds \int e^{-\pi \langle \mathcal{Q}x,x\rangle}\prod_{i=1}^m g_{B_i}(C_ix)^{p_i}\, dx}{\ds \prod_{i=1}^m \Bigl(\int_{H_i}g_{B_i} \Bigl)^{p_i}}=\sup_{B_i \geq Q_i}\Bigl \{ \frac{\prod_{i=1}^m (\det B_i)^{p_i}}{\det(\mathcal{Q}+\sum_{i=1}^m p_iC_i^* B_iC_i)}\Bigl \}^{1/2}. \end{array} \end{equation} By duality, if $g_j$ is more log-convex than $g_{Q_j^{-1}}$ for any $1 \leq j \leq m$, then \begin{equation} \label{Reg_Dual_BL_Log_Convexity} \begin{array}{lll}
    \ds \frac{\ds \int_{H}g_{\mathcal{Q}^{-1}} * \sup \Bigl \{ \prod_{j=1}^m g_j(y_j)^{p_j}: \sum_{j=1}^m p_jC_j^*y_j=x, y_j \in H_j\Bigl \}\, dx}{\ds \prod_{i=1}^m \Bigl( \int_{H_i}g_i\Bigl)^{p_i}} \\ \nm 
    \geq \ds \inf_{0<B_i \leq Q_i^{-1}}\frac{\ds \int_{H}g_{\mathcal{Q}^{-1}} *\sup \Bigl \{ \prod_{j=1}^m g_{B_j}(y_j)^{p_j}: \sum_{j=1}^m p_jC_j^*y_j=x, y_j \in H_j\Bigl \}\, dx}{\ds \prod_{i=1}^m \Bigl( \int_{H_i}g_{B_i}\Bigl)^{p_i}} \\ \nm
    \qquad \ds =\inf_{B_i \geq Q_i} \Bigl \{ \frac{\det(\mathcal{Q}+\sum_{i=1}^m p_i C_i^*B_iC_i)}{\prod_{i=1}^m (\det B_i)^{p_i}}  \Bigl \}^{1/2}. \end{array} \end{equation} 
\end{proposition}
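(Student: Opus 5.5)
This proposition is the special case of \cref{Reformulation_REG_BL} with $I=\{1,\cdots,m\}$ (first pair of inequalities) and $I=\emptyset$ (second pair), so the plan is simply to specialise that theorem and check that the hypotheses and the constraint sets match.

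\emph{First pair.} Take $I=\{1,\cdots,m\}$. Then $I^c=\emptyset$, so the hypothesis of \cref{Reformulation_REG_BL} reduces to each $f_i$ being more log-convex than $g_{Q_i}$. The constraint set in both suprema becomes $0<B_i\le Q_i$ for all $i$, which gives \eqref{Reg_BL_Log_Convexity} directly.

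For the dual statement with the same $I$, the hypothesis in \cref{Reformulation_REG_BL} reads: $g_i$ is more log-concave than $g_{Q_i^{-1}}$ for every $i$. This is exactly the hypothesis of \eqref{Reg_Dual_BL_Log_Concavity}. The middle infimum in the theorem runs over $B_i\ge Q_i^{-1}$, and the determinant infimum runs over $0<B_i\le Q_i$, matching \eqref{Reg_Dual_BL_Log_Concavity}.

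\emph{Second pair.} Take $I=\emptyset$. Now every index lies in $I^c$, so the hypothesis becomes more log-concave than $g_{Q_j}$ for each $j$, and the constraint becomes $B_j\ge Q_j$. This yields \eqref{Reg_BL_Log_Concavity}. The corresponding dual statement in the theorem assumes $g_j$ more log-convex than $g_{Q_j^{-1}}$, with middle constraint $0<B_j\le Q_j^{-1}$ and determinant constraint $B_j\ge Q_j$. This is exactly \eqref{Reg_Dual_BL_Log_Convexity}.

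\emph{Only real check.} Nothing beyond bookkeeping is required, since the Gaussian computation
\[
\int_H e^{-\pi\langle \mathcal Q x,x\rangle}\prod_i g_{B_i}(C_ix)^{p_i}\,dx=\det\Bigl(\mathcal Q+\sum_i p_iC_i^*B_iC_i\Bigr)^{-1/2},\qquad \int_{H_i}g_{B_i}=(\det B_i)^{-1/2},
\]
and the duality are already part of \cref{Reformulation_REG_BL}. The one point I would verify carefully is the matching of the constraint sets in the dual inequalities. There the middle expression is parametrised by the dual matrices ($B\ge Q^{-1}$ or $B\le Q^{-1}$), while the determinant formula is parametrised through the substitution $B\mapsto B^{-1}$. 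This substitution reverses the order, so for positive definite matrices $B\ge Q^{-1}\iff 0<B^{-1}\le Q$. The labels in the proposition therefore agree with those of the theorem. In \eqref{Reg_Dual_BL_Log_Concavity} the denominator is printed as $g_{B_j}$ rather than $g_{B_i}$; this is clearly a typographical slip for $g_{B_i}$ and does not affect the statement.
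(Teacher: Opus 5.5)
Your specialisation is correct as bookkeeping. With $I=\{1,\dots,m\}$ and $I=\emptyset$, the hypotheses and constraint sets of \cref{Reformulation_REG_BL} reduce exactly to those of the four inequalities. The order reversal $B\ge Q^{-1}\iff 0<B^{-1}\le Q$ is the right way to match the two parametrisations on the dual side, and the paper itself notes that the proposition arises this way.

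As a proof, however, it is circular. The paper never proves \cref{Reformulation_REG_BL} independently. The argument written under this proposition, together with the preceding remark reducing to $\mathcal Q=0$, \emph{is} the proof of the general theorem; it is carried out for an arbitrary index set $I$ and its complement. You have therefore only reduced the statement to one whose sole justification is the argument you were asked to supply, and none of that argument appears in your proposal. Even the two items you file under ``already part of the theorem'' need proof: the Gaussian evaluation of the dual side, and the equality of the two infima. Nothing in your text ever uses the log-convexity or log-concavity hypotheses.

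\textbf{What is missing.}

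First, reduce to $\mathcal Q=0$. Adjoin $f_{m+1}=g_{\mathcal Q}$, $C_{m+1}=\mathrm{id}_H$, $Q_{m+1}=\mathcal Q$, check that for fixed $B_1,\dots,B_m$ the extra factor is optimised at $B_{m+1}=\mathcal Q$, and then approximate semidefinite $\mathcal Q$. Let $D_g$ be the infimum of $\det(\sum_ip_iC_i^*B_iC_i)/\prod_i(\det B_i)^{p_i}$ over admissible $B$, i.e.\ $0<B_i\le Q_i$ in the log-convex case and $B_i\ge Q_i$ in the log-concave case. Two facts are then needed.

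(i) The Gaussian duality. The exponent $R(x)=\inf\{\sum_jp_j\langle B_j^{-1}x_j,x_j\rangle : x=\sum_jp_jC_j^*x_j\}$ equals $\langle M^{-1}x,x\rangle$ with $M=\sum_ip_iC_i^*B_iC_i$ (\cite[Lemma 2]{Barthe}). Hence the Gaussian dual infimum is $\sqrt{D_g}$, while the Gaussian forward supremum is $1/\sqrt{D_g}$.

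(ii) The key inequality, for unit-mass inputs:
\[
\int_H\sup\Bigl\{\prod_{j}g_j(y_j)^{p_j}:\sum_{j}p_jC_j^*y_j=x\Bigr\}\,dx\;\ge\;D_g\int_H\prod_{i}f_i(C_ix)^{p_i}\,dx .
\]
To prove (ii), let $\nabla\phi_i$ be the Brenier map from $f_i$ to $g_i$.
\begin{itemize}
\item The anisotropic Caffarelli theorem \cref{General_Caff_Contraction}, with $A=2\pi Q_i$ and $B=2\pi Q_i^{-1}$, gives $\nabla^2\phi_i\le Q_i$ in the log-convex case. Applied to the inverse map, it gives $\nabla^2\phi_j\ge Q_j$ in the log-concave case.
\item Hence $\nabla^2\phi_i(C_ix)$ is an admissible competitor for $D_g$, so $\prod_i\det(\nabla^2\phi_i(C_ix))^{p_i}\le D_g^{-1}\det(\sum_ip_iC_i^*\nabla^2\phi_i(C_ix)C_i)$.
\item Insert this into the Monge--Amp\`ere equation $f_i=g_i(\nabla\phi_i)\det\nabla^2\phi_i$.
\item Bound $\prod_ig_i(\nabla\phi_i(C_ix))^{p_i}$ by the supremum at the point $\Theta(x)$, where $\Theta=\sum_ip_iC_i^*\nabla\phi_i\circ C_i$.
\item Change variables $y=\Theta(x)$.
\end{itemize}

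\textbf{Conclusion.} Write $F$ and $E$ for the best forward and dual constants over the regularised classes, and $F_g$, $E_g$ for their Gaussian restrictions. The trivial bounds $F\ge F_g$ and $E\le E_g$, combined with (i) and (ii), give $\sqrt{D_g}=E_g\ge E\ge D_gF\ge D_gF_g=\sqrt{D_g}$. This yields the forward and dual inequalities at once.
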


\hfill \\

We should mention that \cref{Reg_BL_Log_Convexity}, in the special case $$H=\mathbb{R}^N=\bigoplus_{i=1}^m \mathbb{R}^{n_i}, H_i=\mathbb{R}^{n_i} \text{ and } C_i=P_{H_i}$$ for any $1 \le i \le m$, has already appeared in the work of \cite{Milman} and \cref{Reg_Dual_BL_Log_Concavity}, in the case $\mathcal Q=0$, was proven in \cite{Vald1}.

%For the sake of clarity, we will only study, starting from Section 3, the structure of \cref{Reg_BL_Log_Convexity}. The corresponding theorems treating \cref{Reg_BL_Log_Concavity} have flipped signs compared to the theorems established for \cref{Reg_BL_Log_Convexity}. Regarding \cref{Reformulation_REG_BL}, when conditions of log-convexity and log-concavity are possibly mixed, the technique of proofs and the statements, apart from the finiteness of the Brascamp-Lieb constant, remain the same.    

\hfill \\

When $\mathcal{Q}=0$, let us note that if the non-degeneracy assumption $$\bigcap_{i=1}^m \ker(C_i)=\{0\}$$ is not satisfied, the expressions in \cref{Reg_BL_Log_Concavity,Reg_BL_Log_Convexity} are equal to $+\infty$ and those in \cref{Reg_Dual_BL_Log_Concavity,Reg_Dual_BL_Log_Convexity} are equal to $0.$ Thus, from now on, if $\mathcal{Q}$ is assumed to be equal to $0$, the non-degeneracy assumption is implicitly supposed to be true.

\begin{remark}
Recall that $$\int e^{-\pi \langle Ax,x\rangle}\, dx=\frac{1}{\sqrt{\det A}}$$ for a positive definite $n \times n$ matrix $A.$
This shows that the equality for the suprema both in \cref{Reg_BL_Log_Convexity,Reg_BL_Log_Concavity} holds. We will see in the proof why the equality for infima in both \cref{Reg_Dual_BL_Log_Concavity,Reg_Dual_BL_Log_Convexity} holds. 
Moreover, taking $Q_i=\Lambda \mathrm{id}_{H_i}$ with $\Lambda \uparrow +\infty$ in \cref{Reg_BL_Log_Convexity}, one recovers the usual Brascamp-Lieb inequality (see \cite[Corollary 8.16]{BCCT}), and \cref{Reg_Dual_BL_Log_Convexity} gives back the reverse Brascamp-Lieb formula of \cite{Barthe} by taking $Q_j=\Lambda \mathrm{id}_{H_j}$ and letting $\Lambda \rightarrow +\infty$. 
\end{remark}

\begin{remark}
To prove \cref{Reformulation_REG_BL}, one may reduce to the case $\mathcal{Q}=0$. 
Indeed, first suppose that $\mathcal{Q}>0.$ We take $f_{m+1}=g_{\mathcal Q}, p_{m+1}=1$ and $H_{m+1}=H, C_{m+1}=\mathrm{id}_{H}, Q_{m+1}=\mathcal Q$ (which corresponds to a localised BL data in the terms of \cite{BCCT}). Then, assuming that the forward regularized Brascamp-Lieb inequality with $\mathcal Q=0$ holds true and setting $\ds M=\sum_{i=1}^m p_iC_i^*B_iC_i, I \subset \{1,\cdots,m\}$, one has $$\begin{array}{lll} \ds \frac{\ds \int_H e^{-\pi \langle \mathcal Qx,x\rangle}\prod_{i=1}^m f_i(C_ix)^{p_i}\, dx}{\ds \prod_{i=1}^m \Bigl(\int_{H_i}f_i(x_i) \, dx_i \Bigl)^{p_i}}
\leq \ds \Bigl(\int_{H} f_{m+1}(x) \, dx \Bigl) \sup_{\substack{0<B_i \leq Q_i \text{ for all } i \in I \cup \{m+1\} \\  B_{j} \geq Q_j \text{ for all } j \in I^c}}\frac{\ds \int \prod_{i=1}^{m+1} g_{B_i}(C_ix)^{p_i}\, dx}{\ds \prod_{i=1}^{m+1} \Bigl(\int_{\mathbb{R}^{n_i}}g_{B_i}(x_i) \, dx_i \Bigl)^{p_i}} \\
\nm
\leq \ds \frac{1}{\sqrt{\det\mathcal Q}} \sup_{\substack{0<B_i \leq Q_i \text{ for all } i \in I \cup \{m+1\} \\  B_{j} \geq Q_j \text{ for all } j \in I^c}} \det(B_{m+1}+\sum_{i=1}^m p_iC_i^*B_iC_i)^{-1/2} \ds \prod_{i=1}^{m+1}\det(B_i)^{p_i/2} \\
\nm 
\leq \ds  \frac{1}{\sqrt{\det(\mathcal Q)}}  \sup_{\substack{0<B_i \leq Q_i \text{ for all } i \in I \cup \{m+1\} \\  B_{j} \geq Q_j \text{ for all } j \in I^c}}  \prod_{i=1}^{m} \det(B_i)^{p_i/2} \det(\mathrm{id}_H+M^{1/2}B_{m+1}^{-1}M^{1/2})^{-1/2}.
\end{array}$$
Note that for fixed $B_1,\cdots,B_m$, the quantity $\det(\mathrm{id}_H+M^{1/2}B_{m+1}^{-1}M^{1/2})^{-1/2}$ with $0<B_{m+1} \leq \mathcal Q$ is maximized when $B_{m+1}=\mathcal Q.$ This implies that $$\begin{array}{lll}\ds \frac{\ds \int e^{-\pi \langle \mathcal Qx,x\rangle}\prod_{i=1}^m f_i(C_ix)^{p_i}\, dx}{\ds \prod_{i=1}^m \Bigl(\int_{H_i}f_i(x_i) \, dx_i \Bigl)^{p_i}} &\leq& \ds  \sup_{\substack{0<B_i \leq Q_i \text{ for all } i \in I \\  B_{j} \geq Q_j \text{ for all } j \in I^c}} \det(\mathcal Q+M)^{-1/2}   \prod_{i=1}^m \det(B_i)^{p_i/2} \\ \\
&\leq& \ds  \sup_{\substack{0<B_i \leq Q_i \text{ for all } i \in I \\  B_{j} \geq Q_j \text{ for all } j \in I^c}}\frac{\ds \int e^{-\pi \langle \mathcal Qx,x\rangle}\prod_{i=1}^m g_{B_i}(C_ix)^{p_i}\, dx}{\ds \prod_{i=1}^m \Bigl(\int_{H_i}g_{B_i}(x_i) \, dx_i \Bigl)^{p_i}}.
\end{array}$$ 
Similarly, assuming that the dual regularized Brascamp-Lieb inequality with $\mathcal Q=0$ holds true, by taking $g_{m+1}=g_{\mathcal Q^{-1}}$, one gets $$\begin{array}{lll}
    \ds \frac{\ds \int_{H}g_{\mathcal Q^{-1}} * \sup \Bigl \{ \prod_{j=1}^m g_j(y_j)^{p_j}: \sum_{j=1}^m p_jC_j^*y_j=x, y_j \in H_j\Bigl \}\, dx}{\ds \prod_{i=1}^m \Bigl( \int_{H_i}g_i\Bigl)^{p_i}} \\
    \nm 
    \ds = \frac{\ds \int_{H}\sup \Bigl \{ \prod_{j=1}^{m+1} g_j(y_j)^{p_j}: \sum_{j=1}^{m+1} p_jC_j^*y_j=x, y_j \in H_j\Bigl \}\, dx}{\ds \prod_{i=1}^m \Bigl( \int_{H_i}g_i\Bigl)^{p_i}} \\ \\
    \geq \ds \Bigl( \int_H g_{m+1}(x)\, dx\Bigl) \inf_{\substack{0< B_i \leq Q_i \text{ for all } i \in I \cup \{m+1\} \\ B_j \geq Q_j \text{ for all } j \in I^c}}  \frac{\det(\sum_{i=1}^{m+1} p_i C_i^*B_iC_i)^{1/2}}{\prod_{i=1}^{m+1} (\det B_i)^{p_i/2}}  \\
    \nm 
    = \ds \sqrt{\det(\mathcal Q)} \inf_{\substack{0< B_i \leq Q_i \text{ for all } i \in I \cup \{m+1\} \\ B_j \geq Q_j \text{ for all } j \in I^c}}  \det(\mathrm{id}_H+M^{1/2}B_{m+1}^{-1}M^{1/2})^{1/2}\prod_{i=1}^m (\det B_i)^{-p_i/2}.
\end{array}$$ As for fixed $B_1,\cdots,B_m$, the quantity $\det(\mathrm{id}_H+M^{1/2}B_{m+1}^{-1}M^{1/2})^{1/2}$ with $0<B_{m+1} \leq \mathcal Q$ is minimized when $B_{m+1}=\mathcal Q$, one concludes in a similar fashion: $$\begin{array}{lll}
    \ds \frac{\ds \int_{H}g_{\mathcal Q^{-1}} * \sup \Bigl \{ \prod_{j=1}^m g_j(y_j)^{p_j}: \sum_{j=1}^m p_jC_j^*y_j=x, y_j \in H_j\Bigl \}\, dx}{\ds \prod_{i=1}^m \Bigl( \int_{H_i}g_i\Bigl)^{p_i}} \\
    \geq \ds \inf_{\substack{0< B_i \leq Q_i \text{ for all } i \in I \\ B_j \geq Q_j \text{ for all } j \in I^c}}  \det(\mathcal Q+M)^{1/2}\prod_{i=1}^m (\det B_i)^{-p_i/2} \\
    \nm 
    \geq \ds \inf_{\substack{0< B_i \leq Q_i \text{ for all } i \in I \\ B_j \geq Q_j \text{ for all } j \in I^c}} \Bigl\{\frac{\det(\mathcal Q+\sum_{i=1}^m p_iC_i^*B_iC_i)}{\prod_{i=1}^m \det(B_i)^{p_i}} \Bigl \}^{1/2} \\
    \nm \geq \ds \inf_{\substack{B_i \geq Q_i^{-1} \text{ for all } i \in I \\ 0<B_j \leq Q_j \text{ for all } j \in I^c}}\frac{\ds \int_H g_{\mathcal Q^{-1}} * \sup \Bigl \{ \prod_{j=1}^m g_{B_j}(y_j)^{p_j}: \sum_{j=1}^m p_jC_j^*y_j=x, y_j \in H_j\Bigl \}\, dx}{\ds \prod_{i=1}^m \Bigl( \int_{H_i}g_{B_i}\Bigl)^{p_i}}.
    \end{array}$$

By approximation for matrices, the previous statements are proven for any $\mathcal Q \geq 0.$ 
\end{remark}

\bigskip

We follow the strategy of Valdimarsson's proof (see \cite{Vald1}).

\begin{proof}[Proof of \cref{Theorem_REG_BL}]
We can suppose that $H=\mathbb{R}^n, H_i=\mathbb{R}^{n_i}$ for $1 \leq i \leq m$ since any Euclidean space $E$ is isomorphic to $\mathbb{R}^{\mathrm{dim}(E)}.$ \hfill \\
Let us first show \cref{Reg_BL_Log_Convexity,Reg_Dual_BL_Log_Concavity}. 
Set $$J((f_i)_{i=1}^m)=\frac{\int_{\mathbb{R}^n}\prod_{i=1}^m f_i(C_i x)^{p_i}\, dx}{\prod_{i=1}^m (\int_{\mathbb{R}^{n_i}} f_i)^{p_i}}$$ and $$I((g_j)_{j=1}^m)=\frac{\int_{\mathbb{R}^n}\sup \Bigl \{ \prod_{j=1}^m g_j(y_j)^{p_j}: \sum_{j=1}^m p_jC_j^*y_j=x, y_j \in \mathbb{R}^{n_j}\Bigl \}\, dx}{\prod_{j=1}^m (\int_{\mathbb{R}^{n_j}}g_j)^{p_j}}.$$ Define $$F=\sup \Biggl \{ J((f_i)_{i=1}^m) : \substack{\ds f_i \text{ more log-convex than } g_{Q_i} \text{ for any } i \in I, \\ \ds f_i \text{ more log-concave than } g_{Q_j} \text{ for any } j \in I^c} \Biggl \},$$ \hfill \\  $$F_g=\sup_{\substack{0<B_i \leq Q_i \text{ for all } i \in I \\  B_j \geq Q_j \text{ for all } j \in I^c}}J((g_{B_i})_{i=1}^m),$$ and $$E=\inf \Biggl \{ I((g_j)_{j=1}^m): \substack{\ds g_i \text{ more log-concave than } g_{Q_i^{-1}} \text{ for any } i \in I,\\ \ds g_j \text{ more log-convex than } g_{Q_j^{-1}} \text{ for any } j \in I^c  \\ } \Biggl\},$$ \hfill \\ $$E_g=\inf_{\substack{B_i \geq Q_i^{-1} \text{ for all } i \in I \\ 0<B_j \leq  Q_j^{-1} \text{ for all } j \in I^c}}  I((g_{B_j})_{j=1}^m).$$

The goal is to show that $F=F_g, E=E_g$ and $E_g$ is equal to $F_g^{-1}.$ 

\hfill \\

As the infimum for $E_g$ is taken over a smaller class of functions than the infimum for $E$, $E_g \geq E$ and similarly, $F \geq F_g.$ From the previous remark, $$F_g=\frac{1}{\sqrt{D_g}}$$ where $$D_g=\inf_{\substack{0<B_i \leq Q_i \text{ for all } i \in I \\ B_j \ge Q_j \text{ for all } j \in I^c}} \Bigl \{ \frac{\det(\sum_{i=1}^m p_i C_i^*B_iC_i)}{\prod_{i=1}^m (\det B_i)^{p_i}}  \Bigl \}.$$ If $D_g=0$, there is nothing to show. Hence, we may assume that $D_g>0$ and denote $$Q(y)=\Bigl \langle \sum_{i=1}^m p_iC_i^*B_iC_iy,y \Bigl \rangle$$ for all $y \in H.$
Let us show that $$E_gF_g=1.$$ Take $0<B_i \leq Q_i$ for all $i \in I, B_j \ge Q_j$ for all $j \in I^c.$ 
Notice that $$I((g_{B_j^{-1}})_{j=1}^m)=\int e^{-\pi R(x)}\, dx$$ where $$R(x)=\inf \Bigl \{ \sum_{j=1}^m p_j \langle B_j^{-1}x_j,x_j\rangle : x=\sum_{j=1}^m p_jC_j^*x_j \text{ where } x_j \in \mathbb{R}^{n_j} \Bigl \}.$$ Let us recall that the dual of a quadratic form $\tilde Q$ is defined by $$\tilde Q^*(x)=\sup \{ |\langle x,y \rangle|^2 : \tilde Q(y) \leq 1\}.$$ It is proven in \cite[Lemma 2]{Barthe} that $R$ is a quadratic form and $R=Q^*.$ This implies that $$\frac{I((g_{B_j^{-1}})_{j=1}^m)}{\prod_{j=1}^m (\int_{\mathbb{R}^{n_j}}g_{B_j^{-1}}(x_j) \, dx_j)^{p_j}}= \Bigl (\frac{\prod_{j=1}^m (\det B_j^{-1})^{p_j} }{\det(R)} \Bigl)^{1/2}$$ and since $\det R=\det Q^*=(\det Q)^{-1}$, $$E_g=\sqrt{D_g}$$ as wished. 

If one manages to prove that $E \geq D_gF$, the following chain of inequalities allows to conclude: $$\sqrt{D_g}=E_g \geq E \geq D_gF \geq D_gF_g=\sqrt{D_g}.$$

Hence, it remains to show that $$I(g_1,\cdots,g_m) \geq D_g J(f_1,\cdots,f_m)$$ for $f_i$ more log-convex than $g_{Q_i}$ for any $i \in I$, $f_j$ more log-concave than $g_{Q_j}$ for any $j \in I^c$, $g_i$ more log-concave than $g_{Q_i^{-1}}$ for any $i \in I$ and $g_j$ more log-convex than $g_{Q_j^{-1}}$ for any $j \in I^c$ and $\int f_i=\int g_j=1.$ 

\hfill \\ We write $f_i=\exp(-V_i), g_i=\exp(-W_i)$ and by assumptions, $$\nabla^2 V_i \leq 2\pi Q_i, \nabla^2 W_i \geq 2\pi Q_i^{-1} \quad \text{ for all } i \in I$$ and $$\nabla^2 V_j \ge 2\pi Q_j, \nabla^2 W_j \le 2\pi Q_j^{-1} \quad \text{ for all } j \in I^c.$$

\hfill \\

Consider the optimal transport map $T_i=\nabla \phi_i$ between the probability measures $f_i$ and $g_i.$ 
Then, by \cref{General_Caff_Contraction}, it follows that $$0 \leq \nabla^2 \phi_i \leq Q_i \text{ for any } i \in I,$$ and $$\nabla^2 \phi_j \ge Q_j \text{ for any } j \in I^c.$$

In addition, the Monge-Ampère equation is satisfied with $$g_i(\nabla \phi_i(x))\det(\nabla^2\phi_i(x))=f_i(x).$$ Define $$\Theta(y)=\sum_{i=1}^m p_iC_i^*\nabla\phi_i(C_iy)$$ and notice that the Jacobian of $\Theta$ at $x$ is $$\det \Bigl(\sum_{i=1}^m p_iC_i^*\nabla^2 \phi_i(C_ix)C_i \Bigl)>0$$ by the assumption that $D_G>0.$ Using the Monge-Ampère equation, $$\begin{array}{lll}
J((f_i)_{i=1}^m) &=& \ds \int_{\mathbb{R}^n}\prod_{i=1}^m f_i^{p_i}(C_ix)\, dx \\
\nm 
&=& \ds \int_{\mathbb{R}^n}\prod_{i=1}^m g_i^{p_i}(\nabla \phi_i(C_ix))\prod_{i=1}^m (\det \nabla^2 \phi_i(C_ix))^{p_i}\, dx \\
\nm 
&\leq& \ds \frac{1}{D_g}\int_{\mathbb{R}^n}\prod_{i=1}^m g_i^{p_i}(\nabla \phi_i(C_ix)) \det \Bigl( \sum_{i=1}^m p_iC_i^* \nabla^2\phi_i(C_ix)C_i \Bigl ) \, dx \\
\nm 
&\leq& \ds \frac{1}{D_g}\int_{\mathbb{R}^n}\sup_{\Theta(x)=\sum p_iC_i^*x_i} \Bigl( \prod_{i=1}^m g_i^{p_i}(x_i) \Bigl) \det(d\Theta(x)) \, dx.
\end{array}$$ By the change of variables $y=\Theta(x)$, we get the desired claim. 
\end{proof}

\hfill \\

Notice that this strategy of proof may be applied to the work of Barthe and Wolff (see \cite{Barthe_Wolff}) to derive an inverse regularized Brascamp-Lieb inequality. More precisely, in this context, assuming that $\mathcal{Q}:H \rightarrow H$ is self-adjoint, $C_j:H \rightarrow H_j$ is surjective and $p$ is arranged such that $$p_1,\cdots,p_{m_+}>0>p_{m_++1},\cdots,p_m \text{ for some } 0\le m_+ \le m,$$  we say that $(\textbf{C},p,\mathcal Q,\tilde{\textbf{G}},\textbf{G})$ is non-degenerate if $$\begin{array}{lll}
\ds \mathcal Q|_{\ker(C_+)}>0, \quad \dim(H) \geq s^+(\mathcal Q)+\sum_{j=1}^{m_+}\dim(H_j) \\
\ds \mathcal Q+\sum_{j=1}^{m_+} p_jC_j^*G_jC_j>\sum_{j=m_++1}^m |p_j|C_j^*\tilde{G_j}C_j
\end{array}$$ where $$C_+:H \rightarrow \bigoplus_{j=1}^{m_+}H_j, \; C_+x=(C_1x,\cdots,C_{m^+}x).$$ 

Recall that one may decompose $\mathcal Q$ as $$\mathcal Q=p_0C_0^*\mathcal Q_+C_0+p_{m+1}C_{m+1}^*\mathcal Q_-C_{m+1}$$ where $p_0=-p_{m+1}=1$, $C_0: H \rightarrow H_0$ and $C_{m+1}: H \rightarrow H_{m+1}$ are surjective linear maps onto Euclidean spaces $H_0$ and $H_{m+1}$ such that $\mathcal Q_+, \mathcal Q_-$ are positive definite quadratic forms on $H_0$ and $H_{m+1}$ respectively.

\begin{theorem} 
    Let $(\textbf{C},p,\mathcal Q,\textbf{G})$ a non-degenerate generalized Brascamp-Lieb datum. If $f_i \in L^1(H_i)$ is more log-concave than $g_{\tilde G_i}$ and more log-convex than $g_{G_i}$ for any $1 \le i \le m$, then
    \begin{equation} \label{Reg_Inverse_BL} 
    \frac{\ds \int_H e^{-\pi\langle Qx,x\rangle}\prod_{i=1}^m f_i(C_ix)^{p_i}\, dx}{\ds \prod_{i=1}^m \Bigl ( \int f_i \Bigl)^{p_i}}\ge \inf_{\tilde G_i \le B_i \le G_i}\Bigl\{ \frac{\prod_{i=1}^m (\det B_i)^{p_i}}{\det(\mathcal Q+\sum_{i=1}^m p_iC_i^*B_iC_i)} \Bigl\}^{1/2}.\end{equation}
    By duality, if $g_j \in L^1(H_j)$ is more log-concave than $g_{G_j^{-1}}$ and more log-convex than $g_{\tilde{G_j}^{-1}}$ for any $1 \le j \le m$, then $$\begin{array}{lll}
    \ds \frac{\ds \int_H \inf \left\{e^{-\pi \langle \mathcal Q_+^{-1}y_0,y_0\rangle}e^{\pi \langle \mathcal{Q}_-^{-1} y_{m+1},y_{m+1} \rangle} \prod_{j=1}^m g_j(y_j)^{p_j} : \sum_{j=0}^{m+1} p_jC_j^*y_j=y \right\}\, dy}{\ds \prod_{j=1}^m \Bigl( \int_{H_j}g_j\Bigl)^{p_j}} \\ 
    \ds \le \sup_{G_i^{-1} \le B_i \le \tilde{G_i}^{-1}} \frac{\ds \int_H \inf \left\{e^{-\pi \langle \mathcal Q_+^{-1}y_0,y_0\rangle}e^{\pi \langle \mathcal{Q}_-^{-1} y_{m+1},y_{m+1} \rangle} \prod_{j=1}^m g_{B_j}(y_j)^{p_j} : \sum_{j=0}^{m+1} p_jC_j^*y_j=y \right\}\, dy}{\ds \prod_{j=1}^m \Bigl( \int_{H_j}g_{B_j}\Bigl)^{p_j}} \\
    \ds \le \sup_{\tilde G_i \le B_i \le G_i} \Bigl\{ \frac{\det(\sum_{i=1}^m p_iC_i^*B_iC_i+\mathcal Q)}{\prod_{i=1}^m (\det B_i)^{p_i}} \Bigl\}^{1/2}.
    \end{array}$$ 
\end{theorem}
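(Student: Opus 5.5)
The plan is to follow the transport strategy from the proof of \cref{Theorem_REG_BL} and adapt it to the inverse Brascamp-Lieb setting of Barthe and Wolff. The first step is to absorb $\mathcal Q$ into the datum. Using the decomposition $\mathcal Q=p_0C_0^*\mathcal Q_+C_0+p_{m+1}C_{m+1}^*\mathcal Q_-C_{m+1}$, we set
\[
f_0=g_{\mathcal Q_+},\qquad f_{m+1}=g_{\mathcal Q_-},
\]
with exponents $p_0=1$ and $p_{m+1}=-1$. These two Gaussians are frozen: we take $\tilde G_0=G_0=\mathcal Q_+$ and $\tilde G_{m+1}=G_{m+1}=\mathcal Q_-$. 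After this we are in the case $\mathcal Q=0$ with positive exponents $p_0,p_1,\dots,p_{m_+}$ and negative exponents $p_{m_++1},\dots,p_{m+1}$. The normalising factors $\int f_0$ and $\int f_{m+1}$ are explicit determinants, and they recombine into $\det(\mathcal Q+\sum_i p_iC_i^*B_iC_i)$ exactly as in the remark preceding the proof of \cref{Theorem_REG_BL}.

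For the forward inequality \cref{Reg_Inverse_BL}, normalise $\int f_i=1$. Fix any admissible Gaussian reference $g_i$ for each $i$: one more log-concave than $g_{G_i^{-1}}$ and more log-convex than $g_{\tilde G_i^{-1}}$. Let $T_i=\nabla\phi_i$ be the Brenier map pushing $f_i$ forward to $g_i$. The anisotropic Caffarelli contraction (\cref{General_Caff_Contraction}), applied with both a lower and an upper bound, gives
\[
\tilde G_i\le\nabla^2\phi_i\le G_i .
\]
This two-sided control is exactly what is needed. Writing $f_i=g_i(\nabla\phi_i)\det\nabla^2\phi_i$ for every $i$, including the frozen indices, we get
\[
\int\prod_i f_i(C_ix)^{p_i}\,dx=\int\prod_i g_i(\nabla\phi_i(C_ix))^{p_i}\prod_i\det(\nabla^2\phi_i(C_ix))^{p_i}\,dx .
\]
Since each $B_i(x)=\nabla^2\phi_i(C_ix)$ lies in $[\tilde G_i,G_i]$, pointwise we have
\[
\prod_i\det(B_i)^{p_i}\ge D\,\det\Bigl(\sum_i p_iC_i^*B_iC_i\Bigr),
\qquad
D=\inf_{\tilde G_i\le B_i\le G_i}\frac{\prod_i\det B_i^{p_i}}{\det(\sum_ip_iC_i^*B_iC_i)} .
\]
The non-degeneracy condition $\mathcal Q+\sum_{j\le m_+}p_jC_j^*G_jC_j>\sum_{j>m_+}|p_j|C_j^*\tilde G_jC_j$ guarantees that $\sum_ip_iC_i^*B_iC_i>0$ on the whole admissible box. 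Hence $\Theta(x)=\sum_ip_iC_i^*\nabla\phi_i(C_ix)$ has a positive definite Jacobian everywhere. The symmetric Jacobian and the connectedness of $H$ then show that $\Theta$ is the gradient of a strictly convex function, so $\Theta$ is injective.

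Changing variables $y=\Theta(x)$, we bound the integrand below by
\[
\inf\Bigl\{\prod_i g_i(y_i)^{p_i}:\sum_ip_iC_i^*y_i=y\Bigr\},
\]
since the particular choice $y_i=\nabla\phi_i(C_ix)$ is one competitor. The injectivity of $\Theta$ lets us replace the integral over its image by an inequality in the right direction, because the integrand is nonnegative. This yields $J(f)\ge D\cdot I(g)$, which is the dual-type coupling. Choosing the $g_i$ Gaussian, $g_i=g_{B_i^{-1}}$, the inf-convolution becomes an explicit Gaussian integral. By the duality of quadratic forms from \cite[Lemma 2]{Barthe}, extended to indefinite signatures in the way Barthe and Wolff do, we get $I(g_{B^{-1}})=D_{B}^{-1/2}$ at the matched Gaussian. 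Combining this with the trivial bounds ($J$ over Gaussians is at least $J$ over the class, and similarly for $I$) closes the chain $\sqrt{D}\le J_g\le\dots$ exactly as in the proof of \cref{Theorem_REG_BL}, giving both \cref{Reg_Inverse_BL} and its dual.

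The main obstacle is the change of variables and finiteness step. With negative exponents, the inf-convolution over the $y_{m+1}$ variable, which carries the weight $e^{+\pi\langle\mathcal Q_-^{-1}y_{m+1},y_{m+1}\rangle}$, may be degenerate. Moreover, $\Theta$ need not be surjective, so we must make sure that only the inequality direction we actually need is used. We will first try to handle this by invoking the Barthe--Wolff non-degeneracy, in particular $\dim H\ge s^+(\mathcal Q)+\sum_{j\le m_+}\dim H_j$ and $\mathcal Q|_{\ker C_+}>0$. These conditions should ensure that the Gaussian inf-convolution is finite and equals the determinant formula. A secondary technical point is applying \cref{General_Caff_Contraction} with both bounds at once. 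We will justify this by smooth approximation of the $f_i$ inside the class: convolve with small Gaussians and truncate, then pass to the limit using the monotone convergence of both sides.
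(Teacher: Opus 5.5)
The paper gives no separate argument for this theorem. It only remarks that the transport proof of \cref{Theorem_REG_BL} can be run in the Barthe--Wolff framework. Your outline follows exactly that route: absorbing $\mathcal Q$ via $g_{\mathcal Q_\pm}$ with exponents $\pm1$, the two-sided bound $\tilde G_i\le\nabla^2\phi_i\le G_i$ from \cref{General_Caff_Contraction} applied to $\nabla\phi_i$ and to its inverse, and the signature condition $\dim H\ge s^+(\mathcal Q)+\sum_{j\le m_+}\dim H_j$ to evaluate the Gaussian inf-convolution. The step that fails is precisely the one where the inverse inequality differs from the forward one. Write $F(y)=\inf\{\prod_i g_i(y_i)^{p_i}:\sum_ip_iC_i^*y_i=y\}\ge0$. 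Your computation gives $J(f)\ge D\int_H F(\Theta(x))\det d\Theta(x)\,dx=D\int_{\Theta(H)}F(y)\,dy$. Nonnegativity of $F$ only yields $\int_{\Theta(H)}F\le\int_HF$, which is the wrong direction: you cannot conclude $J(f)\ge D\int_HF=D\,I(g)$. In the forward proof one has an upper bound, and there injectivity of $\Theta$ is what is needed. For a lower bound you need $\Theta(H)$ to have full measure, i.e.\ surjectivity of $\Theta$, and your proposal gives no argument for it.

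Surjectivity would follow from a uniform bound $\mathcal Q+\sum_ip_iC_i^*\nabla^2\phi_i(C_ix)C_i\ge c\,\mathrm{id}_H$ with $c>0$: then $\Psi=\sum_ip_i\,\phi_i\circ C_i$ is uniformly convex and $\Theta=\nabla\Psi$ is a bijection of $H$. But your claim that non-degeneracy makes $\mathcal Q+\sum_ip_iC_i^*B_iC_i$ positive definite on the whole box $\tilde G_i\le B_i\le G_i$ is false. This matrix is increasing in $B_j$ for $j\le m_+$ and decreasing in $B_j$ for $j>m_+$. The hypothesis $\mathcal Q+\sum_{j\le m_+}p_jC_j^*G_jC_j>\sum_{j>m_+}|p_j|C_j^*\tilde G_jC_j$ is therefore positivity only at the most favourable corner of the box. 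For instance, take $H=H_1=H_2=\mathbb R$, $C_1=C_2=1$, $p_1=1$, $p_2=-\tfrac12$, $\mathcal Q=0$, $\tilde G_1=\tilde G_2=0$, $G_1=G_2=1$. This datum is non-degenerate, yet $B_1-\tfrac12B_2<0$ for $B_1<\tfrac12$, $B_2=1$. Correspondingly, $\Theta'=\phi_1''-\tfrac12\phi_2''$ can be negative. Already for Gaussian $f_1,f_2$ and Gaussian targets with $f_1$ much flatter than $f_2$, $\Theta$ is a decreasing linear map, $\int F(\Theta(x))\Theta'(x)\,dx<0$, and the chain says nothing. For those Gaussian inputs the left-hand side happens to be infinite, but nothing in your argument excludes non-Gaussian inputs with $J(f)<\infty$ for which $\Theta'$ changes sign. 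So as written your argument proves \eqref{Reg_Inverse_BL} only under the stronger hypothesis $\mathcal Q+\sum_{j\le m_+}p_jC_j^*\tilde G_jC_j>\sum_{j>m_+}|p_j|C_j^*G_jC_j$, i.e.\ positivity at the unfavourable corner, with $G_j<\infty$ for $j>m_+$. Under the stated non-degeneracy an additional idea is needed for inputs whose transport maps do not make $\Theta$ a monotone bijection, and neither your proposal nor the paper's remark supplies it.

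A smaller issue of the same kind concerns the approximation step. When some $p_i<0$, approximating $f_i$ by smooth functions does not give a monotone limit of the left-hand side, and Fatou's lemma goes the wrong way for a lower bound. You therefore need genuine convergence of $\int e^{-\pi\langle\mathcal Qx,x\rangle}\prod_if_i(C_ix)^{p_i}\,dx$.
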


\hfill \\

Equation \cref{Reg_Inverse_BL} generalizes Theorem 2.1 of \cite{Nakamura} where the previous statement is established with $\tilde G_i=0$ and for type $G_i$ functions or in other words, convolutions with $N(0,G_i^{-1})$, \textit{i.e.} $$(\det G_i)^{1/2}\int_{\mathbb{R}^n}e^{-\pi \langle x-y,G_i(x-y)\rangle}\, d\mu(y)$$ for any $\mu$ positive finite Borel measure with non-zero mass.

\hfill \\

\section{Finiteness and Structure of Regularized Brascamp-Lieb Inequalities}

\hfill \\

Now that the desired inequalities have been proved, it is natural to ask when the regularized Brascamp-Lieb constants are finite or even whether they are attained by a family of transformations $B_i: H_i \rightarrow H_i$ for $i=1,\cdots,m.$ \hfill \\

By convention, we will denote $n=\dim(H), n_i=\dim(H_i)$ for any $1 \leq i \leq m.$

\begin{definition}
    We define $$\mathrm{BL}(\textbf{C},p,\textbf{Q})=\sup_{0<B_i \leq Q_i} \Bigl \{ \frac{\prod_{i=1}^m (\det B_i)^{p_i}}{\det(\sum_{i=1}^m p_iC_i^*B_iC_i)} \Bigl \}^{1/2}$$ where $$\textbf{C}=\Bigl (H, (H_i)_{1 \leq i \leq m},(C_i)_{1 \leq i \leq m} \Bigl)$$ with $H,H_1,\cdots,H_m$ Euclidean spaces and for each $i$, $C_i: H \rightarrow H_i$ linear transformations (with $C$ non-degenerate \textit{i.e.} $C_i$ surjective and $\bigcap \ker(C_i)=\{0\}$), $p=(p_i)_{1 \leq i \leq m} \subset \mathbb{R}_+^m$ and $\textbf{Q}=(Q_i)_{1 \leq i \leq m}$ with $Q_i: H_i \rightarrow H_i, Q_i>0.$ Similarly, one may define \begin{equation} \label{Full_LogConvex_BL_Constant} \mathrm{BL}(\textbf{C},p,\textbf{Q},\mathcal{Q})=\sup_{0<B_i \leq Q_i}\Bigl \{ \frac{\prod_{i=1}^m (\det B_i)^{p_i}}{\det(\mathcal{Q}+\sum_{i=1}^m p_iC_i^*B_iC_i)} \Bigl \}^{1/2} \end{equation} and \begin{equation} \label{Mixed_BL_Constant} \mathrm{BL}(\textbf{C},p,\textbf{Q},I,\mathcal Q)=\sup_{\substack{0<B_i \le Q_i \; \forall i \in I \\ B_j \ge Q_j \; \forall j \in I^c}}\left\{ \frac{\prod_{i=1}^m (\det B_i)^{p_i}}{\det(\mathcal Q+\sum_{i=1}^m p_iC_i^*B_iC_i)} \right\}^{1/2}.\end{equation}
\end{definition}

In the latter, $(\textbf{C},p,\textbf{Q},I,\mathcal Q)$ (if $\mathcal Q=0$, we will write $(\textbf{C},p,\textbf{Q},I)$ and further, if $I=\{1,\cdots,m\}$, we will write $(\textbf{C},p,\textbf{Q})$) will be said to be a generalized Brascamp-Lieb datum and $\mathrm{BL}(\textbf{C},p,\textbf{Q},I,\mathcal Q)$ a generalized Brascamp-Lieb constant. 

\hfill \\

The conditions of finiteness, which will only be considered in the case $I=\{1,\cdots,m\}$, can immediately be treated for the case $\mathcal Q>0$. The supremum may be extended continuously to semi-definite $B_i$ and as a supremum over a compact interval, it is finite and attained. More precisely, $$\sup_{0<B_i \leq Q_i}\Bigl \{ \frac{\prod_{i=1}^m (\det B_i)^{p_i}}{\det(\mathcal Q+\sum_{i=1}^m p_iC_i^* B_iC_i)}\Bigl \}^{1/2}=\max_{0 \leq B_i \leq Q_i}\Bigl \{ \frac{\prod_{i=1}^m (\det B_i)^{p_i}}{\det(\mathcal Q+\sum_{i=1}^m p_iC_i^* B_iC_i)}\Bigl \}^{1/2}<+\infty.$$ Thus, an extremizer exists in the range $0 \leq B_i \leq Q_i$ but the expression vanishes when $\det(B_i)=0$ for some $i$ and hence at the extremum, one has $B_i>0$ for all $1 \leq i \leq m.$

\hfill \\

If $\ker(\mathcal Q) \neq \{0\}$, the sufficient and necessary conditions for the supremum in \cref{Reg_BL_Log_Convexity} to be finite can be formulated as follows.

\begin{theorem} \label{Necessary_suff_cond_finite_BL}
    The regularized BL constant $\mathrm{BL}(\textbf{C},p,\textbf{Q})$ is finite if and only if for any subspace $V \subset H$, \begin{equation} \label{Condition_BL_finite}\dim(V) \leq \sum_{j=1}^m p_j \dim(C_jV).\end{equation}
    More generally, if $\mathcal Q \geq 0$ and is not positive definite, $$\mathrm{BL}(\textbf{C},p,\textbf{Q},\mathcal Q)<+\infty$$ if and only if for any subspace $V \subset \ker(\mathcal Q),$ $$\dim(V) \leq \sum_{j=1}^m p_j\dim(C_jV).$$
\end{theorem}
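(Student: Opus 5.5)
The plan is to prove the two directions separately. Necessity comes from an explicit degenerating family of $B_i$. Sufficiency comes from a lower bound on $\det(\sum_i p_iC_i^*B_iC_i)$ that is uniform over $0<B_i\le Q_i$. Throughout, write $M(B)=\sum_{i=1}^m p_iC_i^*B_iC_i$. Since every admissible $B_i$ satisfies $B_i\le Q_i\le \|Q_i\|\,\mathrm{id}$, the ratio can only blow up when some eigenvalues of the $B_i$ tend to $0$. Unlike the classical case of \cite{BCCT}, there is no scaling condition $\dim H=\sum p_i\dim H_i$: the upper constraint $B_i\le Q_i$ removes the blow-up in the direction $B_i\to\infty$, and that blow-up is exactly what the condition $V=H$ with equality controls classically.

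\emph{Necessity.} Suppose some $V\subset H$ (respectively $V\subset\ker\mathcal Q$) has $\dim V>\sum_j p_j\dim(C_jV)$. Fix $\delta>0$ below the smallest eigenvalue of every $Q_i$. For $0<\varepsilon<\delta$ set $B_i=\varepsilon P_{C_iV}+\delta P_{(C_iV)^\perp}$, which is admissible. Then $\det B_i\ge c\,\varepsilon^{\dim C_iV}$. For $x\in V$ we have $C_ix\in C_iV$, so $\langle M(B)x,x\rangle\le C\varepsilon|x|^2$; when $V\subset\ker\mathcal Q$ the same holds for $\mathcal Q+M(B)$. By Fischer's inequality applied to the decomposition $H=V\oplus V^\perp$, the determinant $\det(\mathcal Q+M(B))$ is at most $C\varepsilon^{\dim V}$ times a bounded factor. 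Hence the ratio is at least $c\,\varepsilon^{\sum_j p_j\dim C_jV-\dim V}$, which tends to $+\infty$ as $\varepsilon\to0$.

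\emph{Sufficiency for $\mathcal Q=0$.} Diagonalize $B_i=\sum_k\lambda_{i,k}u_{i,k}\otimes u_{i,k}$ with $0<\lambda_{i,k}\le\|Q_i\|$. By Cauchy--Binet,
$$\det M(B)=\sum_{S}\prod_{(i,k)\in S}p_i\lambda_{i,k}\,\det\bigl(\langle C_i^*u_{i,k},\cdot\rangle\bigr)_{(i,k)\in S}^2,$$
where $S$ runs over $n$-element sets of indices $(i,k)$. It then suffices to find one $S$ for which both of the following hold:
\begin{enumerate}[(a)]
\item $\prod_{S}\lambda_{i,k}\ge c\prod_i\prod_k\lambda_{i,k}^{p_i}$;
\item the corresponding Gram determinant is bounded below uniformly.
\end{enumerate}
For (a), write $\log\lambda_{i,k}=\log\|Q_i\|-\int_0^\infty \mathbf 1_{\{\log\lambda_{i,k}<\log\|Q_i\|-t\}}\,dt$ (layer-cake) and choose $S$ greedily, adding rows in decreasing order of $\lambda$. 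For the thresholds $t$, the number of selected rows with small eigenvalue is controlled by $\dim V_t$ for the subspace $V_t=\bigcap_i C_i^{-1}(\text{span of small eigenvectors of }B_i)$. Applying \eqref{Condition_BL_finite} to each $V_t$, together with $\dim(C_iV_t)\le\#\{\text{small eigenvalues of }B_i\}$, gives the exponent inequality. Since all $\lambda\le\|Q_i\|$, the constant terms are bounded, with no need for a scaling identity.

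\emph{Main obstacle.} I expect (b), the uniformity over the rotations $u_{i,k}$, to be the hard step: the greedy basis can become nearly degenerate. To get around it, I would instead argue by induction on $\dim H$ in the style of \cite{BCCT}. If some proper $V\neq\{0\}$ is critical (equality in \eqref{Condition_BL_finite}), factor the constant through the data restricted to $V$ and induced on $H/V$. Both factors inherit the condition, and finiteness follows from the factorization $\det M\ge c\det(M|_V)\det(\text{Schur complement on }V^\perp)$ combined with the induction hypothesis. If no proper subspace is critical, the strict inequalities leave room: a compactness argument over the Grassmannians, using the fact that $B_i$ stays bounded above, should yield a uniform lower bound on $\det M(B)/\prod_i(\det B_i)^{p_i}$.

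\emph{General $\mathcal Q$.} Let $K=\ker\mathcal Q$, so that $\mathcal Q\ge c\,P_{K^\perp}$. I would append the datum $C_{m+1}=P_{K^\perp}$, $p_{m+1}=1$, $B_{m+1}=c\,\mathrm{id}_{K^\perp}$, as in the remark reducing \cref{Reformulation_REG_BL} to $\mathcal Q=0$. The resulting constant dominates $\mathrm{BL}(\textbf{C},p,\textbf{Q},\mathcal Q)$ up to a constant factor. For any $V$, the quantity $\dim C_{m+1}V=\dim V-\dim(V\cap K)$ reduces the condition for the augmented datum on $H$ to the condition on subspaces of $K$ alone. This last reduction uses $\dim(C_jV)\ge\dim(C_j(V\cap K))$.
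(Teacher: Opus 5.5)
Your necessity argument (an explicit degenerating family plus Fischer's inequality) is the paper's argument. Your reduction of general $\mathcal Q$ to $\mathcal Q=0$ by appending $C_{m+1}=P_{K^\perp}$ is correct, and tidier than the paper's remark. Your step (a) is also correct.

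The gap is at the core of sufficiency. As you suspect, (b) genuinely fails for the greedy $S$. Take $H=\mathbb R^2$, $C_1=C_2=\mathrm{id}$, $p_1=p_2=\tfrac12$, and let $B_1,B_2$ have top eigenvectors at a tiny angle $\theta$ and second eigenvalue $\varepsilon\gg\sin^2\theta$. The greedy term is then of size $\sin^2\theta$, and the bound $\det M\gtrsim\varepsilon$ comes from a different $S$.

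Your fallback does not remove this. The induction over proper critical subspaces is a valid reduction: the Schur complement of $M$ on $V^\perp$ dominates the quotient form built from the Schur complements $B_i/B_i^{V}$ of $B_i$ relative to $C_iV$, and $\det B_i=\det B_i^V\det(B_i/B_i^V)$. But it only moves the whole difficulty to simple data. There, ``a compactness argument over the Grassmannians should yield a uniform lower bound'' is exactly the assertion to be proved. The set $\{0<B_i\le Q_i\}$ is not compact, and the ratio is of type $0/0$ at its boundary. Strictness of the dimension inequalities is only combinatorial slack. Turning it (or the non-strict inequality) into an analytic bound needs uniform control of the degenerating eigendirections, which is (b) again. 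The paper's finiteness proof does not use simplicity at all.

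The missing idea, in the paper (following \cite{BCCT}), is to work in an orthonormal eigenbasis $e_1,\dots,e_n$ of $M$, with $\lambda_1\ge\dots\ge\lambda_n$, instead of in eigenbases of the $B_j$.
\begin{itemize}
\item Let $I_j$ be the set of $i$ with $C_je_i\notin\mathrm{span}\{C_je_{i'}:i'>i\}$. Then $|I_j|=n_j$ and $\dim C_j\,\mathrm{span}\{e_{k+1},\dots,e_n\}=|I_j\cap\{k+1,\dots,n\}|$, so the hypothesis gives $\sum_jp_j|I_j\cap\{k+1,\dots,n\}|\ge n-k$.
\item Compactness of the set of orthonormal frames, together with finiteness of the possible $I_j$, gives $\|\bigwedge_{i\in I_j}C_je_i\|\ge c$ uniformly.
\item Hadamard's inequality in the non-orthogonal basis $(C_je_i)_{i\in I_j}$ of $H_j$ gives $\det B_j\le c^{-2}\prod_{i\in I_j}\langle B_jC_je_i,C_je_i\rangle\le c^{-2}\prod_{i\in I_j}\lambda_i/p_j$.
\item Telescoping, with $\lambda_1\le\lambda_{\max}(\sum_jp_jC_j^*Q_jC_j)$ and $\sum_jp_jn_j\ge n$, finishes the proof.
\end{itemize}

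Alternatively, your Cauchy--Binet route can be repaired without any induction. Argue by contradiction along a sequence. Pass to a subsequence on which the eigenvectors $u_{i,k}$ converge to $u^\infty_{i,k}$ and the relative order of all the $\lambda_{i,k}$ is constant. Then run the greedy selection on the limit rows $C_i^*u^\infty_{i,k}$ in that frozen order. Now $S$ is fixed and its Gram determinant tends to a nonzero limit. Your layer-cake count goes through verbatim with $V_t$ built from the limit eigenvectors, since $\dim C_iV_t^\infty$ is still at most the number of small eigenvalues of $B_i$.
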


\begin{remark}
    We will only write the proof for the case $\mathcal Q=0.$ 
    Indeed, if $\{0\} \subsetneq \ker(\mathcal Q) \subsetneq H$, the supremum may be extended to $0 \leq B_i \leq Q_i$ with $B_i|_{H_i/(C_i\ker(\mathcal Q))}>0$ and then, one can apply the same arguments that will be presented on $\ker(\mathcal Q)$ where obviously $\mathcal Q=0.$
\end{remark}

\begin{proof}
    We can prove easily the direction $\Longrightarrow.$ Indeed, take $V$ to be any subspace of $H, 0<\epsilon<1.$ Let us define $A^{(\epsilon)}=(A^{(\epsilon)}_j)_{1 \leq j \leq m}$ with $$A_j^{(\epsilon)}=\frac{\epsilon}{\lambda}\mathrm{id}_{C_jV} \, \oplus \, \frac{1}{\lambda}\mathrm{id}_{(C_jV)^{\perp}} \quad \text{ with } \lambda>\frac{1}{\min_{1 \leq j \leq m}\lambda_{\text{min}}(Q_j)}.$$ By the choice of $\lambda$ and as $0<\epsilon<1$, $0<A_j^{(\epsilon)} \leq Q_j$ for all $1 \leq j \leq m.$ It is clear that $\det(A_j^{(\epsilon)})$ decays like $\lambda^{-n_j}\epsilon^{\dim(C_jV)}$ as $\epsilon \rightarrow 0$ and $$\det \Bigl(\sum_{j=1}^m C_j^* A_j^{(\epsilon)}C_j \Bigl) \text{ decays at least as fast as } \lambda^{-n}\epsilon^{\dim(V)}$$ since $\sum_{j=1}^m C_j^*A_j^{(\epsilon)}C_j$ is bounded uniformly in $\epsilon$ and when restricted to $V$, decays linearly in $\epsilon$. Thus, as $\mathrm{BL}(\textbf{C},p,\textbf{Q})<+\infty$, we conclude that $$\lambda^{(n-\sum_{j=1}^m p_jn_j)/2}\epsilon^{(\sum_{j=1}^m p_j \dim(C_jV)-\dim(V))/2}<+\infty \text{ as } \epsilon \rightarrow 0$$ which means that $$\sum_{j=1}^m p_j\dim(C_jV) \geq \dim(V).$$
\end{proof}

To prove the opposite direction (which is an immediate consequence of \cref{Prop_Simple_GE}), some additional work needs to be done. We recall a very important definition to be able to study finiteness and extremizers.
 
\begin{definition}
We say that a subspace $V$ of $H$ is critical for a generalized Brascamp-Lieb datum $(\textbf{C},p,\textbf{Q})$  if $$\dim(V)=\sum_{j=1}^m p_j \dim(C_jV).$$
\end{definition}

For the following lemma, \cite[Definition 4.2]{BCCT} is employed; more precisely, $$\mathbf{C}|_V=(V,(C_iV)_{1 \le i \le m},(C_i|_V)_{1 \le i \le m}).$$ In addition, when writing $\textbf{Q}|_V$, we mean that $\textbf{Q}|_V=(Q_i|_{C_iV})_{1 \leq i \leq m}.$ 

\begin{lemma}
    Let $(\textbf{C},p,\textbf{Q})$ be a Brascamp-Lieb datum and let $V_c$ be a critical subspace. Then, $(\textbf{C},p,\textbf{Q})$ satisfies \cref{Condition_BL_finite} if and only if both $(\textbf{C}_{V_c},p,\textbf{Q}|_{V_c})$ and $(\textbf{C}_{H/V_c},p,\textbf{Q}|_{H/V_c})$ obey \cref{Condition_BL_finite}. 
\end{lemma}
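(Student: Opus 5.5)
This is the purely linear-algebraic analogue of \cite[Lemma 4.6]{BCCT}. Condition \cref{Condition_BL_finite} involves only $\mathbf{C}$ and $p$, not $\textbf{Q}$, so the matrices $\textbf{Q}|_{V_c}$, $\textbf{Q}|_{H/V_c}$ play no role and the lemma is a statement about dimensions. Throughout I use the conventions of \cite[Definition 4.2]{BCCT}: $\mathbf{C}_{V_c}$ consists of the restrictions $C_j|_{V_c}:V_c\to C_jV_c$, and $\mathbf{C}_{H/V_c}$ consists of the induced maps $\bar C_j: H/V_c \to H_j/C_jV_c$. For a subspace $W \subset H$ containing $V_c$ I write $\bar W = W/V_c$. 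Two facts will be used repeatedly:
\[
\bar C_j \bar W = (C_jW)/(C_jV_c), \qquad \dim(\bar C_j\bar W) = \dim(C_jW)-\dim(C_jV_c).
\]

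For the direction $(\Rightarrow)$, the datum on $V_c$ is handled at once: a subspace $V\subset V_c$ is also a subspace of $H$, and $C_j|_{V_c}V = C_jV$, so the inequality is inherited. For the quotient, every subspace of $H/V_c$ has the form $\bar W$ with $V_c \subset W \subset H$. Apply \cref{Condition_BL_finite} to $W$ and subtract the criticality identity $\dim V_c = \sum_j p_j \dim C_jV_c$. Using the two facts above, this gives
\[
\dim \bar W \le \sum_j p_j \dim(\bar C_j \bar W).
\]

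For the direction $(\Leftarrow)$, let $V\subset H$ be arbitrary. I split it along $V_c$: set $V_1 = V\cap V_c \subset V_c$, and let $W = V+V_c$, so that $\bar W=(V+V_c)/V_c \cong V/V_1$. Then $\dim V = \dim V_1 + \dim \bar W$. The hypothesis on $V_c$ gives $\dim V_1\le \sum_j p_j\dim C_jV_1$. The hypothesis on the quotient gives $\dim\bar W\le \sum_j p_j(\dim C_j(V+V_c) - \dim C_jV_c)$. It therefore suffices to prove, for each $j$,
\[
\dim C_j(V\cap V_c) + \dim C_j(V+V_c) - \dim C_jV_c \le \dim C_jV,
\]
and then multiply by $p_j\ge 0$ and sum.

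This last inequality is the crux, and I would prove it via submodularity. First, $C_j(V+V_c)=C_jV+C_jV_c$, so the Grassmann formula gives
\[
\dim C_j(V+V_c) - \dim C_jV_c = \dim C_jV - \dim(C_jV\cap C_jV_c).
\]
Second, $C_j(V\cap V_c)\subset C_jV\cap C_jV_c$. Together these give the desired bound immediately. The only point needing care is getting the inclusion in the right direction; the inequality is not an equality in general, but it points the right way.
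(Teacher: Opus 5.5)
Your proof is correct and follows the paper's argument essentially line by line. For $(\Rightarrow)$ you subtract the criticality identity from the condition applied to $V+V_c$, and for $(\Leftarrow)$ you split $V$ into $V\cap V_c$ and $(V+V_c)/V_c$, then close with $\dim C_j(V\cap V_c)\le \dim(C_jV\cap C_jV_c)$ and the Grassmann formula, exactly as the paper does.
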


\begin{proof}
    On the one hand, suppose that $(\textbf{C},p,\textbf{Q})$ satisfies \cref{Condition_BL_finite}. Then, it is clear that $(\textbf{C}_{V_c},p,\textbf{Q}|_{V_c})$ obeys \cref{Condition_BL_finite} by restricting the spaces $V \subseteq H$ to subspaces of $V_c.$ Let now $V$ be a subspace of $H/V_c.$ We notice that $$\begin{array}{lll}
    \dim(V) &=& \dim(V+V_c)-\dim(V_c) \\
    \nm 
    &\leq& \ds \sum_{j=1}^m p_j\dim(C_j(V+V_c))-\dim(V_c) \\
    \nm 
    &=& \ds \sum_{j=1}^m p_j(\dim(C_jV+C_jV_c)-\dim(C_jV_c)) \\
    \nm 
    &=& \ds \sum_{j=1}^m p_j \dim(C_{j,H/V_c}V)
    \end{array}$$ as desired. On the other hand, assume that $(\textbf{C}_{V_c},p,\textbf{Q}|_{V_c})$ and $(\textbf{C}_{H/V_c},p,\textbf{Q}|_{H/V_c})$ obey \cref{Condition_BL_finite}. For any $V \subseteq H$, writing $U=V \cap V_c, W=(V+V_c)/V_c$, one has $$\begin{array}{lll}
    \dim(V) &=& \dim(V \cap V_c)+\dim(V+V_c)-\dim(V_c) \\
    \nm 
    &=& \dim(U)+\dim(W) \\
    \nm
    &\leq& \ds \sum_{j=1}^m p_j \dim(C_{j,V}U)+p_j \dim(C_{j,H/V}W) \\
    \nm 
    &=& \ds \sum_{j=1}^m p_j \dim(C_jU)+p_j(\dim(C_jW+C_jV_c)-\dim(C_jV_c)) \\
    \nm 
    &=& \ds \sum_{j=1}^m p_j \Bigl[ \dim(C_jU)+\dim(C_jV+C_jV_c)-\dim(C_jV_c) \Bigl]
    \end{array}$$ since $W+V_c=V+V_c.$ As $$\dim(C_jU)=\dim(C_j(V \cap V_c)) \leq \dim(C_jV \cap C_jV_c)$$ and $$\dim(C_jV \cap C_jV_c)+\dim(C_jV+C_jV_c)-\dim(C_jV_c)=\dim(C_jV),$$ we see that $(\textbf{C},p,\textbf{Q})$ satisfies \cref{Condition_BL_finite}.  
\end{proof}

\begin{lemma}
    If $(\textbf{C},p,\textbf{Q})$ is a generalized Brascamp-Lieb datum, then $$\mathrm{BL}(\textbf{C},p,\textbf{Q}) \leq \mathrm{BL}(\textbf{C}_V,p,\textbf{Q}|_V)\mathrm{BL}(\textbf{C}_{H /V},p,\textbf{Q}|_{H/V}).$$
\end{lemma}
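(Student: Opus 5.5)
The plan is to work directly with the determinant formula for $\mathrm{BL}(\textbf{C},p,\textbf{Q})$ and use a block decomposition adapted to $V$. We fix admissible $0<B_i\le Q_i$ and bound the ratio $\prod_i(\det B_i)^{p_i}/\det(\sum_i p_iC_i^*B_iC_i)$ by the product of the corresponding ratios for the data $\textbf{C}_V$ and $\textbf{C}_{H/V}$, evaluated at suitable admissible operators; taking suprema then gives the lemma. We identify $H/V$ with $V^\perp$ and $H_i/C_iV$ with $(C_iV)^\perp$, so that $C_{i,H/V}=P_{(C_iV)^\perp}C_i|_{V^\perp}$. In the orthogonal decompositions $H=V\oplus V^\perp$ and $H_i=C_iV\oplus(C_iV)^\perp$, the map $C_i$ is block upper triangular:
\[
C_i=\begin{pmatrix} C_i|_V & * \\ 0 & C_{i,H/V}\end{pmatrix},
\]
because $C_iV\subset C_iV$.

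The first step uses Schur complements on the $H_i$ side. We write $B_i=\begin{pmatrix}\beta_i&\gamma_i\\ \gamma_i^*&\delta_i\end{pmatrix}$ on $C_iV\oplus(C_iV)^\perp$ and set $\beta_i=B_i|_{C_iV}$ (the compression) and $\sigma_i=\delta_i-\gamma_i^*\beta_i^{-1}\gamma_i$. Then $\det B_i=\det\beta_i\,\det\sigma_i$. The compression satisfies $0<\beta_i\le Q_i|_{C_iV}$, so it is admissible for $\textbf{Q}|_V$. The Schur complement satisfies $0<\sigma_i\le\delta_i\le$ the compression of $Q_i$ to $(C_iV)^\perp$, which is $\textbf{Q}|_{H/V}$ under our identification. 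This is the one place where the regularization matters: the upper constraint $B_i\le Q_i$ must pass to both pieces, and it does because compressions and Schur complements are both dominated by compressions of $Q_i$.

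The second step is a lower bound for the denominator. Let $M=\sum_ip_iC_i^*B_iC_i$ on $V\oplus V^\perp$. Its $V$-block is $\sum_ip_i(C_i|_V)^*\beta_iC_i|_V=:M_V$. Hence $\det M=\det M_V\cdot\det(\text{Schur complement of }M)$. We want to show that this Schur complement dominates $M_{V^\perp}:=\sum_ip_iC_{i,H/V}^*\sigma_iC_{i,H/V}$. We use the variational characterization: for $y\in V^\perp$,
\[
\langle S y,y\rangle=\inf_{v\in V}\langle M(v+y),v+y\rangle=\inf_v\sum_ip_i\langle B_iC_i(v+y),C_i(v+y)\rangle .
\]
Each summand is bounded below by its own infimum over $v$. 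That infimum is in turn at least $\inf_{w\in C_iV}\langle B_i(w+C_iy),w+C_iy\rangle=\langle\sigma_iP_{(C_iV)^\perp}C_iy,\,P_{(C_iV)^\perp}C_iy\rangle$, since $C_iv$ ranges inside $C_iV$. The infimum of a sum is at least the sum of the infima, so $S\ge M_{V^\perp}$ and $\det M\ge\det M_V\det M_{V^\perp}$.

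Combining the two steps gives
\[
\frac{\prod_i(\det B_i)^{p_i}}{\det M}\le\frac{\prod_i(\det\beta_i)^{p_i}}{\det M_V}\cdot\frac{\prod_i(\det\sigma_i)^{p_i}}{\det M_{V^\perp}}\le \mathrm{BL}(\textbf{C}_V,p,\textbf{Q}|_V)^2\,\mathrm{BL}(\textbf{C}_{H/V},p,\textbf{Q}|_{H/V})^2 .
\]
We then take the supremum over the $B_i$ and square roots. Two details need checking. The main one is the admissibility of $\sigma_i$, which was handled in the first step. The second is degenerate data: if $M_V$ or $M_{V^\perp}$ is singular, the corresponding constant is $+\infty$ under the paper's conventions, so the inequality is trivial. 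Finally, we must confirm that $\textbf{Q}|_{H/V}$ is meant as the compression of $Q_i$ to $(C_iV)^\perp$, which is consistent with the identification of quotients with orthogonal complements.
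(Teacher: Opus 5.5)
Your proof is correct, and it takes a genuinely different route from the paper's.

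\textbf{The paper's route.} The paper argues with functions. For admissible $f_j$ it applies Fubini on $V\oplus V^\perp$. It then checks, by an explicit convexity computation and H\"older's inequality, that the slices $f_{j,w}$ on $C_jV$ and the marginals $f_{j,H/V}$ on $H_j/(C_jV)$ stay more log-convex than the compressed Gaussians. Finally it applies the regularized inequality of \cref{Theorem_REG_BL} on $V$ and on $H/V$.

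\textbf{Your route.} You stay entirely on the Gaussian side. The compression $\beta_i$ and the Schur complement $\sigma_i$ of $B_i$ factor $\det B_i$, and both remain admissible. The variational formula for the Schur complement of $M$ then gives $\det M\ge\det M_V\det M_{V^\perp}$.

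\textbf{What your route buys.} It is elementary and self-contained. Since $\mathrm{BL}$ is defined as a Gaussian supremum, you never need the identification of functional and Gaussian constants in \cref{Theorem_REG_BL}. So you also avoid the anisotropic Caffarelli theorem (\cref{General_Caff_Contraction}) and the slice/marginal log-convexity claim. Your argument even gives a slightly sharper bound. By monotonicity of Schur complements, $\sigma_i$ is dominated by the Schur complement of $Q_i$ onto $(C_iV)^\perp$. That Schur complement is itself below the compression used in $\textbf{Q}|_{H/V}$, so the quotient constant could be taken with this stronger constraint.

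\textbf{What the paper's route buys.} The Fubini argument factorizes the functional inequality directly, in the spirit of BCCT. It would go through verbatim for functional constants without any appeal to Gaussian saturation.

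Your treatment of a possibly degenerate quotient datum is also correct: there $\bigcap_i\ker C_{i,H/V}\neq\{0\}$ can occur, and the right-hand side is then $+\infty$.
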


\begin{proof}
    We may assume that $\mathrm{BL}(\textbf{C}_V,p,\textbf{Q}|_V)$ and $\mathrm{BL}(\textbf{C}_{H/V},p,\textbf{Q}|_{H \setminus V})$ are finite. Take $f=(f_i)_{1 \leq i \leq m}$ with $f_i$ more log-convex than $g_{Q_i}$ for any $1 \leq i \leq m.$ We may normalize $\int_{H_j}f_j=1.$ The goal is to show that $$\int_H \prod_{j=1}^m (f_j \circ C_j)^{p_j}\, dx \leq \mathrm{BL}(\textbf{C}_V,p,\textbf{Q}|_V)\mathrm{BL}(\textbf{C}_{H/V},p,\textbf{Q}|_{H/V}).$$ By Fubini-Tonelli, the left-hand side can be rewritten as $$\int_{V^\perp}\Bigl (\int_V \prod_{j=1}^m (f_j \circ C_j)^{p_j}(v+w)\, dv \Bigl)\, dw.$$ However, one can write $f_j \circ C_j(v+w)=f_{j,w}\circ C_{j,V}(v)$ where $f_{j,w}: C_jV \rightarrow [0;+\infty)$ is the function $$f_{j,w}(v_j)=f_j(v_j+C_jw) \quad \forall v_j \in C_jV.$$ We claim that $f_{j,w}$ (respectively $f_{j,H/V}$, see the definition below) is more log-convex than $g_{Q_j|_{C_jV}}$ (resp. $g_{Q_j|_{C_j(H/V)}}.$) Suppose the claim is proven.
    
    Then, since $f_{j,w}$ is more-log convex than $g_{Q_j|_V}$, we conclude that $$\int_V \prod_{j=1}^m (f_j \circ C_j)^{p_j}(v+w)\, dv \leq \mathrm{BL}(\textbf{C}_V,p,\textbf{Q}|_V)\prod_{j=1}^m \Bigl (\int_{C_jV}f_{j,w}\Bigl)^{p_j}.$$ Therefore, it suffices to show that \begin{equation} \label{Split_eq}\int_{V^\perp}\prod_{j=1}^m \Bigl( \int_{C_jV}f_{j,w}\Bigl)^{p_j}\, dw \leq \mathrm{BL}(\textbf{C}_{H/V},p,\textbf{Q}|_{H/V}).\end{equation} Identifying $V^\perp$ with $H/V$, we observe that $\ds \int_{C_jV}f_{j,w}=f_{j,H/V}\circ C_{j,H/V}(w)$ where $f_{j,H/V}:H_j/(C_jV)\ \longrightarrow [0;+\infty)$ is the function $$f_{j,H/V}(w_j)=\int_{w_j+C_jV}f_j.$$ 
    Since $f_{j,{H/V}}$ is more log-convex than $g_{Q_j|_{H/V}}$ for every $1 \leq j \leq n$, by Fubini-Tonelli and the normalization $\int_{H_j}f_j=1$, \cref{Split_eq} holds true. \\ \hfill \\ Let us now prove the claim. In fact, setting $\tilde w=(1-\alpha)w_j+\alpha \tilde w_j, \tilde v=(1-\alpha)v_j+\alpha \tilde v_j$, one has using that $\log(f_j)+\pi \langle Q_j \cdot,\cdot \rangle$ is convex, the following for $\alpha \in (0;1)$:  \begin{equation} \label{Log_Convexity_Superposition} \begin{array}{lll}
    \\ \log(f_j(\tilde v+\tilde w)) = \log(f_j((1-\alpha)(v_j+w_j)+\alpha(\tilde v_j+\tilde w_j)) \\ \\
    \nm 
    = \log[f_j((1-\alpha)(v_j+w_j)+\alpha(\tilde v_j+\tilde w_j))] +\pi \langle Q_j (\tilde v+\tilde w),\tilde v+\tilde w \rangle \\ \\
    \nm 
    -\pi(1-\alpha)^2 \langle Q_j(v_j+w_j),v_j+w_j \rangle-2\pi \alpha(1-\alpha)\langle Q_j(v_j+w_j),\tilde v_j+\tilde w_j \rangle   \\ \\
    \nm 
    -\pi \alpha^2 \langle Q_j(\tilde v_j+\tilde w_j),\tilde v_j+\tilde w_j \rangle \leq (1-\alpha)\Bigl [ \log f_j(v_j+w_j)+\pi \langle Q_j(v_j+w_j),v_j+w_j \rangle \Bigl] \\ \\
    \nm 
    +\alpha \Bigl [ \log f_j(\tilde v_j+\tilde w_j)+\pi \langle Q_j(\tilde v_j+\tilde w_j),\tilde v_j+\tilde w_j\rangle \Bigl]-\pi(1-\alpha)^2 \langle Q_j(v_j+w_j),v_j+w_j \rangle \\ \\
    \nm 
    -2\pi \alpha(1-\alpha)\langle Q_j(v_j+w_j),\tilde v_j+\tilde w_j \rangle   -\pi \alpha^2 \langle Q_j(\tilde v_j+\tilde w_j),\tilde v_j+\tilde w_j  \rangle = (1-\alpha) \log f_j(v_j+w_j) \\ \\
    \nm 
    +\alpha \log f_j(\tilde v_j+\tilde w_j)+\pi \alpha(1-\alpha) \Bigl[ \langle Q_j(v_j+w_j),v_j+w_j\rangle-2\langle Q_j(v_j+w_j), \tilde v_j+\tilde w_j \rangle \\ \\
    \nm 
    +\langle Q_j(\tilde v_j+\tilde w_j),\tilde v_j+\tilde w_j \rangle \Bigl]=(1-\alpha) \log f_j(v_j+w_j)+\alpha \log f_j(\tilde v_j+\tilde w_j) \\ \\
    \nm 
    +\pi \alpha(1-\alpha) \langle Q_j(v_j+w_j-\tilde v_j-\tilde w_j),v_j+w_j-\tilde v_j-\tilde w_j \rangle.
    \end{array} \end{equation}
    In particular, taking $v_j=\tilde v_j$ in \cref{Log_Convexity_Superposition}, one obtains $$\begin{array}{lll}\log f_j(v_j+(1-\alpha)w_j+\alpha \tilde w_j) &\leq& (1-\alpha)\log f_j(v_j+w_j)+\alpha \log f_j(v_j+\tilde w_j) \\
    \nm 
    && +\pi \alpha(1-\alpha) \langle Q_j(w_j-\tilde w_j),w_j-\tilde w_j\rangle \end{array}$$ and taking the exponential, $$f_j(v_j+(1-\alpha)w_j+\alpha \tilde w_j) \leq f_j(v_j+w_j)^{1-\alpha}f_j(v_j+\tilde w_j)^{\alpha} e^{\pi \alpha (1-\alpha) \langle Q_j(w_j-\tilde w_j),w_j-\tilde w_j\rangle}.$$ By Hölder's inequality, $$\begin{array}{lll}
    \ds \int_{C_jV}f_j(v_j+(1-\alpha)w_j+\alpha \tilde w_j) \, dv_j &\leq& \ds e^{\pi \alpha(1-\alpha) \langle Q_j(w_j-\tilde w_j),w_j-\tilde w_j\rangle} \Bigl( \int_{C_jV}f_j(v_j+w_j)\, dv_j \Bigl)^{1-\alpha} \\
    \nm 
    && \ds \Bigl ( \int_{C_jV}f_j(v_j+\tilde w_j)\, dv_j\Bigl)^{\alpha}.
    \end{array}$$ This implies that $$\begin{array}{lll}
    \log f_{j,H/V}((1-\alpha)w_j+\alpha \tilde w_j)+\pi \langle Q_j((1-\alpha)w_j+\alpha \tilde w_j),(1-\alpha)w_j+\alpha \tilde w_j \rangle \\ \\
    \nm 
    \leq (1-\alpha)\log f_{j,H/V}(w_j)+\alpha \log f_{j,H/V}(\tilde w_j)+\pi \alpha(1-\alpha) \langle Q_j(w_j-\tilde w_j),w_j-\tilde w_j \rangle \\ \\
    \nm 
    +\pi(1-\alpha)^2 \langle Q_jw_j,w_j \rangle+2\pi \alpha(1-\alpha) \langle Q_jw_j,\tilde w_j\rangle+\pi \alpha^2 \langle Q_j \tilde w_j, \tilde w_j \rangle \\ \\
    \nm 
    = (1-\alpha) \Bigl[ \log f_{j,H/V}(w_j)+\pi \langle Q_jw_j,w_j \rangle \Bigl]+ \alpha \Bigl[\log f_{j,H/V}(\tilde w_j)+\pi \langle Q_j\tilde w_j,\tilde w_j \rangle \Bigl].
    \end{array}$$ Setting $\tilde w=w_j=\tilde w_j=C_jw$ in \cref{Log_Convexity_Superposition}, one gets $$\begin{array}{lll}
    \log f_{j,w}((1-\alpha)v_j+\alpha \tilde v_j) &=& \log f_j((1-\alpha)v_j+\alpha \tilde v_j+C_jw) \\
    \nm 
    &\leq& (1-\alpha)\log f_j(v_j+C_jw)+\alpha \log f_j(\tilde v_j+C_jw) \\
    \nm 
    && +\pi \alpha (1-\alpha)\langle Q_j(v_j-\tilde v_j),v_j-\tilde v_j\rangle 
    \end{array}$$ which implies that $$\begin{array}{lll}
    \log f_{j,w}((1-\alpha)v_j+\alpha \tilde v_j)+\pi \langle Q_j[(1-\alpha )v_j+\alpha \tilde v_j],(1-\alpha)v_j+\alpha \tilde v_j\rangle \leq \\
    \nm 
    \leq (1-\alpha)\Bigl[\log f_{j,w}(v_j)+\pi \langle Q_jv_j,v_j \rangle \Bigl] +\alpha \Bigl[ \log f_{j,w}(\tilde v_j)+\pi \langle Q_j\tilde v_j,\tilde v_j\rangle \Bigl]
    \end{array}$$ and concludes the proof of the claim. 
\end{proof} 

\begin{lemma}
    In fact, if $V$ is critical and $Q$ splits with respect to $V$, \textit{i.e.} $$\quad Q_j(C_j V) \subset C_jV,\quad Q_j(H_j/(C_j V)) \subset H_j/(C_jV) \quad \forall 1 \leq j \leq m,$$ then $$\mathrm{BL}(\textbf{C},p,\textbf{Q})=\mathrm{BL}(\textbf{C}_V,p,\textbf{Q}|_V)\mathrm{BL}(\textbf{C}_{H/V},p,\textbf{Q}|_{H/V}).$$
\end{lemma}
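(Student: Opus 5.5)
The inequality $\mathrm{BL}(\textbf{C},p,\textbf{Q}) \leq \mathrm{BL}(\textbf{C}_V,p,\textbf{Q}|_V)\mathrm{BL}(\textbf{C}_{H/V},p,\textbf{Q}|_{H/V})$ is the previous lemma, so only the reverse inequality $\geq$ needs proof. I will prove it on the Gaussian side. By the definition of the constants, I pick admissible families $0<A_j\leq Q_j|_{C_jV}$ on $C_jV$ and $0<B_j \leq Q_j|_{H_j/(C_jV)}$ on $H_j/(C_jV)\cong (C_jV)^\perp$ whose ratios come within $\epsilon$ of the two suprema. From them I build block Gaussians on $H_j$ and a dilation parameter to glue them.

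Concretely, for $\lambda>0$ I set
$$B_j^{(\lambda)}=\lambda^{-1}A_j \oplus \lambda B_j \quad\text{on } C_jV \oplus (C_jV)^\perp .$$
At first sight this is not admissible, since $\lambda B_j$ may exceed $Q_j$. The fix is to use instead the pair $A_j\oplus B_j$ and rescale the ambient variable: under $x=v+w\mapsto \lambda^{-1/2}v+\lambda^{1/2}w$, i.e. $D_\lambda=\lambda^{-1/2}P_V+\lambda^{1/2}P_{V^\perp}$, the quantity $\det(\sum_j p_jC_j^*(A_j\oplus B_j)C_j)$ changes as follows. Because $Q_j$ splits, $A_j\oplus B_j\leq Q_j$ is admissible; this is exactly where the splitting hypothesis enters.

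Computing $\det(\sum_j p_j C_j^*(A_j\oplus B_j)C_j)$ in block form with respect to $H=V\oplus V^\perp$ is the delicate point. Write $M=\sum_jp_jC_j^*(A_j\oplus B_j)C_j$. Since $C_j$ maps $V$ into $C_jV$ but maps $V^\perp$ to vectors with components in both $C_jV$ and $(C_jV)^\perp$, $M$ is block upper/lower triangular only up to an off-diagonal term. The $VV$ block is $\sum_j p_j C_{j,V}^*A_jC_{j,V}$. The $V^\perp V^\perp$ block contains $\sum_j p_jC_{j,H/V}^*B_jC_{j,H/V}$ plus a contribution from $A_j$ on the $C_jV$-component of $C_jw$.

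To handle this, I would use the standard BCCT trick. Replace $A_j$ by $tA_j$ and $B_j$ fixed, and conjugate by $D_t=\mathrm{id}_V\oplus t^{1/2}\ldots$; more simply, note that admissibility only requires $\leq Q_j$, so scaling $A_j\mapsto tA_j$ with $t\leq 1$ remains admissible. Then
$$\det M_t = t^{\dim V}\det\Bigl(\sum_j p_jC_{j,V}^*A_jC_{j,V}\Bigr)\det\Bigl(\sum_j p_jC_{j,H/V}^*B_jC_{j,H/V}\Bigr)(1+o(1)) \quad \text{as } t\to 0 .$$
This follows from a Schur complement: the $V^\perp$ block tends to the quotient form, and the coupling is $O(t)$, giving an $O(t^2)/t$ correction. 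Meanwhile $\prod_j\det(tA_j\oplus B_j)^{p_j}=t^{\sum_jp_j\dim C_jV}\prod_j(\det A_j)^{p_j}(\det B_j)^{p_j}$.

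Criticality, $\dim V=\sum_j p_j\dim(C_jV)$, makes the powers of $t$ cancel. The ratio therefore tends to the product of the two near-optimal ratios, and letting $t\to0$ and then $\epsilon\to0$ gives $\geq$.

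The main obstacle is the Schur-complement asymptotics, namely checking that the off-diagonal coupling contributes only $o(1)$ after the $t^{\dim V}$ factor is extracted. I would verify it by writing $M_t=\begin{pmatrix} tX & tY\\ tY^* & Z+tW\end{pmatrix}$ and noting that $\det M_t=t^{\dim V}\det X\det(Z+tW-tY^*X^{-1}Y)$. I would also confirm carefully that the identification $H_j/(C_jV)\cong (C_jV)^\perp$ matches the splitting of $Q_j$, so that $tA_j\oplus B_j\leq Q_j$ holds.
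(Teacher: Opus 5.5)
Your proof is correct, but it departs from the paper's at exactly the step that needs care.

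The paper also takes near-optimal inputs $B_i$ on $C_iV$ and $B_i'$ on $(C_iV)^\perp$. It forms the single block-diagonal input $\tilde B_i=P_i^*B_iP_i+(I-P_i)^*B_i'(I-P_i)$, which is admissible by the splitting of $Q_i$, just as you argue. It then asserts that the Gaussian ratio at $\tilde B$ equals the product of the two sub-ratios, with no limiting procedure and without using criticality anywhere.

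Your own block computation (your $M_t$ at $t=1$) shows this cannot hold in general:
\[
\det M_1=\det X\,\det\bigl(Z+W-Y^*X^{-1}Y\bigr)\ \ge\ \det X\,\det Z .
\]
The inequality holds because $W-Y^*X^{-1}Y$ is the Schur complement of the positive semi-definite matrix $\begin{pmatrix}X&Y\\Y^*&W\end{pmatrix}$. So a fixed block-diagonal input only gives a ratio at most the product.

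Concretely, take $H=\mathbb{R}^2$, $V=\mathrm{span}\{e_1\}$, $C_1=(1,1)$, $C_2=(1,-1)$, $C_3=(0,1)$, $p=(\tfrac12,\tfrac12,1)$ and scalar $Q_j$. Then $V$ is critical and the splitting is automatic. One finds $\det M=b_1b_2+\tfrac12(b_1+b_2)b_3$, whereas $\det X\det Z=\tfrac12(b_1+b_2)b_3$. Moreover the full constant $1=1\cdot 1$ is never attained here, so some limit is unavoidable.

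Your scaling $A_j\mapsto tA_j$ with $t\downarrow 0$ is exactly the right repair:
\begin{itemize}
\item it preserves admissibility, since you shrink the $C_jV$-block rather than enlarging the complementary block;
\item the coupling enters only as $t(W-Y^*X^{-1}Y)\to 0$;
\item criticality is precisely what makes $t^{\dim V}$ cancel against $t^{\sum_j p_j\dim C_jV}$.
\end{itemize}
In other words, you have adapted the BCCT mechanism to the constraint $B_j\le Q_j$, and your route actually closes an argument that the paper's one-shot construction does not.

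Two small clean-ups. First, choose $A_j,B_j$ with ratios exceeding arbitrary numbers below the respective suprema, as the paper does with $C_V,C_{H/V}$, so that infinite constants are covered. If $Z$ is singular, the quotient constant is $+\infty$, and your formula indeed gives ratio $\to\infty$. Second, delete the abandoned $\lambda^{\pm1}$ and $D_\lambda$ detour, which plays no role.
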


\begin{proof}
    By the previous lemma, we only need to show that $\geq$ holds.
    If \cref{Condition_BL_finite} does not hold for $(\textbf{C},p)$, there is nothing to show as $\mathrm{BL}(\textbf{C},p,\textbf{Q})=+\infty.$ \\ \hfill \\ Let $0<C_V<\mathrm{BL}(\textbf{C}_V,p,\textbf{Q}|_V), 0<C_{H/V}<\mathrm{BL}(\textbf{C}_{H/V},p,\textbf{Q}|_{H/V})$ be arbitrary constants. By \cref{Theorem_REG_BL}, there exist $0<B_i \leq Q_i|_{C_iV}$ and $0<B_i' \leq Q_i|_{H_i/(C_iV)}$ for any $1 \leq i \leq m$ such that $$\frac{\prod_{i=1}^m (\det B_i)^{p_i}}{\det(\sum_{i=1}^m p_iC_i^*B_iC_i)} \geq C_V^2, \quad \frac{\prod_{i=1}^m (\det B_i')^{p_i}}{\det(\sum_{i=1}^m p_iC_i^*B_i'C_i)} \geq C_{H/V}^2.$$ \hfill \\ Identifying $H_i/(C_iV)$ with $(C_iV)^\perp$ as a subset of $H_i$, we define $P_i: H_i \rightarrow C_iV$ orthogonal projection and $I-P_i: H_i \rightarrow H_i/(C_iV)$, which leads to considering $$\tilde B_i=P_i^* B_iP_i+(I-P_i)^*B_i'(I-P_i) \quad \forall 1 \leq i \leq m.$$ Then, $0<\tilde B_i \leq Q_i$ as $\textbf{Q}$ splits with respect to $V$ and thus $\langle Q_iP_ix,(I-P_i)x\rangle=0$ for any $x \in H_i$ which implies that $$\begin{array}{lll} \langle \tilde B_ix,x\rangle &=& \langle B_iP_ix,P_ix\rangle+\langle B_i'(I-P_i)x,(I-P_i)x\rangle \\
    \nm 
    &\leq& \langle Q_iP_ix,P_ix\rangle+\langle Q_i(I-P_i)x,(I-P_i)x\rangle \\
    \nm 
    &=& \langle Q_ix,x\rangle \quad \forall x \in H_i.
    \end{array}$$ 
    Furthermore, $$\begin{array}{lll} 
    \mathrm{BL}(\textbf{C},p,\textbf{Q})^2 &\geq& \ds \frac{\prod_{i=1}^m (\det \tilde B_i)^{p_i}}{\det(\sum_{i=1}^m p_iC_i^*\tilde B_iC_i)}=\frac{\prod_{i=1}^m (\det B_i)^{p_i}}{\det(\sum_{i=1}^m p_iC_i^*B_iC_i)} \times \frac{\prod_{i=1}^m (\det B_i')^{p_i}}{\det(\sum_{i=1}^m p_iC_i^*B_i'C_i)} \\ \\
    \nm 
    &\geq& C_V^2C_{H/V}^2. \end{array}$$
\end{proof}

\begin{remark}
    We do not expect this to hold if there exists $1 \leq j \leq m$ such that $Q_j$ does not split with respect to $V.$ As one is imposing a quite strong condition on the structure of $Q_j$ for any $1 \leq j \leq m$, \textit{i.e.} $$Q_j=\left(\begin{array}{ c | c } 
    C_jV & 0 \\
    \hline 
    0 & (C_jV)^\perp
    \end{array} \right),$$ the property of multiplicativity of Brascamp-Lieb constants is not completely preserved by regularizing.
\end{remark}

\begin{lemma}
    If $$\dim(V) \leq \sum_{j=1}^m p_j \dim(C_jV) \text{ for all subspaces } V \text{ of } H,$$ then there exists a positive real number $c>0$ such that for every orthonormal basis $e_1,\cdots,e_n$ of $H$, there exists a set $I_j \subseteq \{1,\cdots,n\}$ for each $1 \leq j \leq m$ with $|I_j|=\dim(H_j)$ so that \begin{equation} \label{EQ_cardinality}\sum_{j=1}^m p_j|I_j \, \cap \{k+1,\cdots,n\}| \geq n-k \quad \forall 0 \leq k \leq n \end{equation} and \begin{equation} \label{EQ_wedge} \Bigl \| \bigwedge_{i \in I_j}C_je_i \Bigl \|_{H} \geq c \quad \forall 1 \leq j \leq m.\end{equation} If there are no critical subspaces, we can enforce strict inequality in \cref{EQ_cardinality} for all $0 \leq k<n.$ 
\end{lemma}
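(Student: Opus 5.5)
We follow the strategy of \cite[Lemma 5.5]{BCCT}: this statement involves only $\textbf{C}$ and $p$, not $\textbf{Q}$. The plan has three steps: a compactness reduction, a combinatorial step at a fixed basis via Hall's theorem, and an upgrade from strict to robust wedge bounds.

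\emph{Compactness reduction.} It suffices to prove that for every orthonormal basis $(e_i)$ there are index sets $I_j$ with $|I_j|=n_j$ satisfying \cref{EQ_cardinality} and $\bigwedge_{i\in I_j}C_je_i\neq 0$ for all $j$. Indeed, the orthogonal group of $H$ is compact and there are finitely many choices of $(I_j)_j$. For a fixed choice, the function $(e_i)\mapsto \min_j \|\bigwedge_{i\in I_j}C_je_i\|$ is continuous. So $(e_i)\mapsto \max_{(I_j)\text{ admissible}}\min_j\|\bigwedge_{i\in I_j}C_je_i\|$ is a maximum of finitely many continuous functions, hence continuous. It is positive everywhere by the pointwise statement, so it has a positive minimum $c$. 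Admissibility via \cref{EQ_cardinality} depends only on the index sets, not on the basis, so the family of admissible choices is fixed.

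\emph{Pointwise statement for a fixed basis.} Set $V_k=\mathrm{span}(e_{k+1},\dots,e_n)$, a decreasing flag. For each $j$ we choose $I_j$ greedily from the top: scan $i=n,n-1,\dots,1$ and put $i$ into $I_j$ when $C_je_i\notin \mathrm{span}\{C_je_l: l\in I_j,\ l>i\}$. Since $C_j$ is surjective, $\{C_je_i\}_{i\in I_j}$ is then a basis of $H_j$, so $|I_j|=n_j$ and the wedge is nonzero. Moreover $\{C_je_i: i\in I_j,\ i>k\}$ spans $C_jV_k$ for each $k$, which gives
\[|I_j\cap\{k+1,\dots,n\}|=\dim(C_jV_k).\]
Then \cref{EQ_cardinality} follows directly from the hypothesis applied to $V=V_k$:
\[\sum_j p_j|I_j\cap\{k+1,\dots,n\}|=\sum_j p_j\dim(C_jV_k)\ge \dim V_k=n-k.\]
If there are no critical subspaces, the inequality is strict for $V_k\neq\{0\}$, i.e.\ for $k<n$. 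The set of admissible choices for the strict version is again finite and basis-independent, so the compactness argument applies unchanged.

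\emph{Main obstacle.} The only subtle point is that the greedy $I_j$ depends on the basis and can jump discontinuously. This is why the compactness argument takes a maximum over all admissible index choices rather than following the greedy choice. The greedy choice only supplies pointwise positivity, and the maximum over the fixed finite family is continuous. If the robust bound is instead needed in the form where each selected wedge is at least $c$ uniformly, the argument above already gives that, since $c$ bounds the minimum over $j$.
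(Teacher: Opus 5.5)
Your proposal is correct and follows essentially the same route as the paper: the same greedy choice of $I_j$ (the indices $i$ with $C_je_i\notin\mathrm{span}\{C_je_{i'}: i'>i\}$), the identity $|I_j\cap\{k+1,\dots,n\}|=\dim(C_jV_k)$ on the flag $V_k=\mathrm{span}(e_{k+1},\dots,e_n)$ combined with the hypothesis, and compactness of the set of orthonormal bases together with finiteness of the possible index families. Your function $\max_{(I_j)\text{ admissible}}\min_j\|\bigwedge_{i\in I_j}C_je_i\|$ simply spells out the paper's one-line ``by continuity and compactness'' step, and it correctly avoids the discontinuity of the greedy choice; note that the announced use of Hall's theorem never actually enters your argument.
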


\begin{proof}
    Recall that the space of all orthonormal bases is compact and the number of possible $I_j$ is finite. By continuity and compactness, it suffices to show that $$\bigwedge_{i \in I_j}C_je_i \neq 0 \text{ for all } 1 \leq j \leq m$$ to deduce \cref{EQ_wedge}. In other words, we want the vectors $(C_je_i)_{i \in I_j}$ to be linearly independent in $H_j$ for each $j.$ \\ In fact, take $I_j$ to be the set of indices $i$ for which $C_je_i$ is not in the linear span of $\{C_je_{i'}: i<i' \leq n\}.$ As $C_j$ is surjective, $|I_j|=\dim(H_j)$ for every $1 \leq j \leq m.$ Applying the assumption with $V$ equal to the span of $\{e_{k+1},\cdots,e_n\}$, one has $$\sum_{j=1}^m p_j \dim(C_jV) \geq n-k.$$ But by construction of $I_j$, $\dim(C_jV)=|I_j \cap \{k+1,\cdots,n\}|.$ If there are no critical subspaces, one can use that $$\sum_{j=1}^m p_j\dim(C_jV)>\dim(V) \text{ for any non-zero subspace } V \text{ of } H.$$ 
\end{proof}

\begin{proposition} \label{Prop_Simple_GE}
    Let $(\textbf{C},p,\textbf{Q})$ be a Brascamp-Lieb datum such that \cref{Condition_BL_finite} holds. Then, $\mathrm{BL}(\textbf{C},p,\textbf{Q})$ is finite. 
    Furthermore, if $(\textbf{C},p,\textbf{Q})$ is simple (in other words, there is no critical subspace), then $(\textbf{C},p,\textbf{Q})$ is Gaussian-extremizable. 
\end{proposition}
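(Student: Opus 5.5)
The plan is to adapt the proof of \cite[Proposition 5.2]{BCCT}, combining the previous lemma (orthonormal bases and the index sets $I_j$) with the upper bound $B_j \le Q_j$. For the first claim I need a uniform bound on
$$\frac{\prod_{j=1}^m (\det B_j)^{p_j}}{\det M}, \qquad M=\sum_{j=1}^m p_jC_j^*B_jC_j,$$
over all $0<B_j\le Q_j$. Fix such $B_j$. Non-degeneracy gives $M>0$. Let $e_1,\dots,e_n$ be an orthonormal eigenbasis of $M$ with eigenvalues $\lambda_1\ge \lambda_2\ge\cdots\ge\lambda_n>0$. Apply the previous lemma to this basis to obtain sets $I_j$ satisfying \cref{EQ_cardinality,EQ_wedge} with a constant $c>0$ independent of the basis.

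Hadamard's inequality in the metric $\langle B_j\cdot,\cdot\rangle$ gives
$$\Bigl\|\bigwedge_{i\in I_j}C_je_i\Bigr\|^2\det B_j \le \prod_{i\in I_j}\langle B_jC_je_i,C_je_i\rangle .$$
Since $M\ge p_jC_j^*B_jC_j$, we have $\langle B_jC_je_i,C_je_i\rangle\le \lambda_i/p_j$. Combined with \cref{EQ_wedge}, this yields $\det B_j\le c^{-2}\prod_{i\in I_j}\lambda_i/p_j$. Setting $a_i=\sum_j p_j\mathbbm{1}_{i\in I_j}$, it follows that
$$\frac{\prod_j(\det B_j)^{p_j}}{\det M}\le K\prod_{i=1}^n\lambda_i^{a_i-1},$$
where $K$ depends only on $c$ and $p$.

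Now put $t_k=\sum_{i>k}(a_i-1)$. By \cref{EQ_cardinality}, $t_k\ge 0$ for all $k$, and $t_0=\sum_jp_jn_j-n$. Abel summation gives
$$\sum_i(a_i-1)\log\lambda_i=t_0\log\lambda_1+\sum_{k=1}^{n-1}t_k\log(\lambda_{k+1}/\lambda_k).$$
Every term of the sum is $\le 0$. This is where the regularization enters: $\lambda_1\le\Lambda:=\sum_jp_j\|C_j\|^2\|Q_j\|$. Hence the ratio is at most $K\Lambda^{t_0}$, which proves finiteness. Unlike the classical case, we do not need $t_0=0$.

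For the simple case, the previous lemma gives strict inequality in \cref{EQ_cardinality} for $0\le k<n$. Since there are only finitely many possible index sets, there is a uniform $\delta>0$ with $t_k\ge\delta$ for $1\le k<n$, and also $t_0\ge\delta$ because $H$ itself is not critical. The ratio is then bounded by
$$K\lambda_1^{t_0}\prod_{k=1}^{n-1}(\lambda_{k+1}/\lambda_k)^{\delta}.$$
Take a maximizing sequence $(B_j^{(\ell)})$. The ratio stays above $\tfrac12\mathrm{BL}(\textbf{C},p,\textbf{Q})^2>0$, which forces $\lambda_1^{(\ell)}$ to be bounded below and all the ratios $\lambda_{k+1}^{(\ell)}/\lambda_k^{(\ell)}$ to be bounded below. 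So $\lambda_n^{(\ell)}\ge\eta>0$ and $\det M^{(\ell)}$ is bounded below. Each $\det B_j^{(\ell)}$ is bounded above by $\det Q_j$, while their weighted product is bounded below by a multiple of $\det M^{(\ell)}$; hence each $\det B_j^{(\ell)}$ is bounded below. Together with $B_j^{(\ell)}\le Q_j$, this keeps all eigenvalues of $B_j^{(\ell)}$ in a compact subset of $(0,\infty)$. A subsequence therefore converges to some $0<B_j\le Q_j$, and by continuity the supremum is attained. This is Gaussian extremizability.

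The step I expect to be the main obstacle is this compactness argument, specifically ruling out degeneration at the top of the spectrum. It relies on $t_0>0$, i.e.\ on $H$ not being critical. If one's convention for ``simple'' only excludes proper nonzero critical subspaces, then $t_0=0$ is possible. In that case the ratio is invariant under $B_j\mapsto sB_j$. I would first rescale each element of the maximizing sequence by the largest $s\ge1$ keeping $sB_j\le Q_j$, so that some $B_j$ touches $Q_j$. This prevents $\lambda_1\to0$, and the argument above then goes through unchanged.
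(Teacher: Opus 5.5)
Your proof is correct, and the finiteness half is the same as the paper's. You take the eigenbasis of $M$ and the index sets $I_j$ from the preceding lemma, and your Hadamard inequality in the $B_j$-metric cleanly justifies the paper's phrase ``a basis with a lower bound on the degeneracy'', i.e.\ $\det B_j\lesssim\prod_{i\in I_j}\lambda_i$. You then telescope, and the regularization enters only through $\lambda_1\le\lambda_{\max}(\sum_j p_jC_j^*Q_jC_j)$, which absorbs the leftover factor $\lambda_1^{\sum_j p_jn_j-n}$.

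For Gaussian extremizability the two routes diverge after this common estimate. The paper keeps only the consequence that the ratio is at most $C(\lambda_n/\lambda_1)^c$ and argues by contradiction. It asserts that a non-extremizable datum has an extremizing sequence with $\det M^{(\epsilon)}\to0$. It then deduces that all eigenvalues of $M^{(\epsilon)}$ collapse at a common rate $f(\epsilon)$, so that $A_j^{(\epsilon)}\to0$ too, and writes the ratio as $f(\epsilon)^{\sum_j p_jn_j-n}$ times a bounded quantity.

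You instead keep the factor $\lambda_1^{t_0}$ in the bound. This rules out degeneration of the top of the spectrum and of the spectral gaps at the same time, and you conclude by direct compactness of a maximizing sequence. Your version is shorter. It also carries out a step the paper leaves implicit: a lower bound on $\det M$ along a maximizing sequence already gives compactness of the $B_j$'s, and that is the only reason non-extremizability would force $\det M^{(\epsilon)}\to0$.

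Your argument is also slightly more general. The paper's contradiction rests entirely on $\sum_j p_jn_j-n>0$, so it gives nothing under the convention of \cite{BCCT}, where $H$ itself may be critical. Your rescaling by the largest admissible $s\ge1$, using scale invariance when $t_0=0$, covers that case as well.
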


\begin{proof}
    Fix $A=(A_j)_{1 \leq j \leq m}$ with $0<A_j \leq Q_j.$ Set $M=\sum_{j=1}^m p_jC_j^*A_jC_j>0$ as a positive definite self-adjoint transformation. By choosing an appropriate orthonormal basis $\{e_1,\cdots,e_n\}$ of $H$, we can assume that $$M=\mathrm{diag}(\lambda_1,\cdots,\lambda_n) \text{ with } \lambda_1 \geq \cdots \geq \lambda_n>0.$$ Moreover, $M \leq \tilde M=\sum_{j=1}^m p_jC_j^*Q_jC_j$ and in particular, $$\lambda_1=\langle Me_1,e_1\rangle \leq \langle \tilde Me_1,e_1\rangle \leq \sup_{\|e\|=1}\langle \tilde Me,e\rangle=\lambda_{\max}(\tilde M).$$ By the previous lemma, we find $I_j \subset \{1,\cdots,n\}$ for each $1 \leq j \leq m$ of cardinality $\dim(H_j)$ obeying \cref{EQ_cardinality} and \cref{EQ_wedge}. Thus, for $i \in I_j$, $$\langle A_jC_je_i,C_je_i \rangle=\langle e_i, C_j^*A_jC_je_i \rangle \leq \frac{1}{p_j}\langle e_i,Me_i \rangle=\frac{\lambda_i}{p_j}.$$ As $(C_je_i)_{i \in I_j}$ is a basis of $H_j$ with a lower bound on the degeneracy, $$\det(A_j) \leq C\prod_{i \in I_j}\lambda_i$$ for some constant $C>0$ depending on $(\textbf{C},p).$ This implies that $$\prod_{j=1}^m \det(A_j)^{p_j} \leq C\prod_{i=1}^n \lambda_i^{\sum_{j=1}^n p_j|I_j \cap \{i\}|}.$$ Telescoping the right hand side, one gets $$\prod_{j=1}^m (\det A_j)^{p_j} \leq C\lambda_1^{\sum_{j=1}^m p_jn_j} \; \prod_{k=0}^{n-1}(\lambda_{k+1}/\lambda_k)^{\sum_{j=1}^m p_j|I_j \cap \{k+1,\cdots,n\}|}$$ with the convention $\lambda_0=\lambda_1.$ As $\lambda_{k+1} \leq \lambda_k$ for any $0 \leq k \leq n-1$ and $$\sum_{j=1}^m p_j|I_j \cap \{k+1,\cdots,n\}| \geq n-k,$$ one concludes that $$\begin{array}{lll}
    \ds \prod_{j=1}^m (\det A_j)^{p_j} &\leq& \ds C\lambda_1^{\sum_{j=1}^m p_jn_j}\prod_{k=0}^{n-1}(\lambda_{k+1}/\lambda_k)^{n-k} \\
    \nm 
    &\leq& \ds C\lambda_1^{\sum_{j=1}^m p_jn_j-n}\prod_{k=1}^{n}\lambda_k. \\
    \nm 
    &\leq& \ds C\lambda_{\max}(\tilde M)^{\sum_{j=1}^m p_jn_j-n}\det(M)=\tilde{C}\det(M)
    \end{array}$$ for some constant $\tilde C>0$ depending on $(\textbf{C},p,\textbf{Q})$, using that $\sum_{j=1}^m p_jn_j \geq n.$

    Let us assume that $(\textbf{C},p,\textbf{Q})$ is simple. We then have a strict inequality in \cref{EQ_cardinality} which gives that $$\prod_{j=1}^m (\det A_j)^{p_j} \leq C \det(M) \prod_{k=1}^{n-1}(\lambda_{k+1}/\lambda_k)^c=C \det(M) (\lambda_n /\lambda_1)^c$$ for some $c>0$ depending on the Brascamp-Lieb data. Thus, $\prod_{j=1}^m (\det A_j)^{p_j}/\det(M)$ converges to $0$ whenever $\lambda_n/\lambda_1$ goes to $0.$ This proves that it suffices to evaluate the supremum in the region $\lambda_1(M) \leq C\lambda_n(M).$ \\ Assume by contradiction that $(\textbf{C},p,\textbf{Q})$ is not Gaussian-extremizable. This means there exists a sequence $(A_j^{(\epsilon)})_{1 \leq j \leq m}$ such that $0<A_j^{(\epsilon)} \leq Q_j \quad \forall 1 \leq j \leq m$, so that $$\frac{\prod_{j=1}^m \det(A_j^{(\epsilon)})^{p_j}}{\det(\sum_{j=1}^m p_jC_j^*A_j^{(\epsilon)}C_j)} \rightarrow_{\epsilon \downarrow 0} \mathrm{BL}(\textbf{C},p,\textbf{Q})>\frac{\prod_{j=1}^m \det(B_j)^{p_j}}{\det(\sum_{j=1}^m p_jC_j^*B_jC_j)}$$ for any $0<B_j \leq Q_j$ and the determinant of $M^{(\epsilon)}=\sum_{j=1}^m p_jC_j^* A_j^{(\epsilon)}C_j$  converges to $0.$ Since $M^{(\epsilon)}$ is symmetric, one can write $$M^{(\epsilon)}=U^{(\epsilon)}\mathrm{diag}(\lambda_1^{(\epsilon)},\cdots,\lambda_n^{(\epsilon)})(U^{(\epsilon)})^T \text{ for } U^{(\epsilon)} \in O(n), \text{ with } \lambda_1^{(\epsilon)} \geq \cdots \geq \lambda_n^{(\epsilon)}>0.$$ Then, $\lambda_n^{(\epsilon)} \rightarrow_{\epsilon \rightarrow 0} 0$ and by what was previously shown, $C\lambda_n^{(\epsilon)} \geq \lambda_1^{(\epsilon)}$. This means that $\lambda_1^{(\epsilon)} \rightarrow_{\epsilon \rightarrow 0} 0$ with the same rate of convergence than $\lambda_n^{(\epsilon)}$, which will be denoted by $f(\epsilon).$ Consequently, $\lambda_k^{(\epsilon)} \rightarrow_{\epsilon \rightarrow 0} 0$ for any $1 \leq k \leq n$ with rate of convergence $f(\epsilon).$   
    Moreover, by compactness of $O(n)$, up to a subsequence, $U^{(\epsilon)} \rightarrow U \in O(n)$ which proves that $M^{(\epsilon)} \rightarrow_{\epsilon \rightarrow 0} 0.$ Hence, $$\sum_{j=1}^m p_j\langle A_j^{(\epsilon)}C_jx,C_jy\rangle=\langle M^{(\epsilon)}x,y\rangle \rightarrow 0 \quad \forall x,y \in H$$ and by surjectivity of $C_j$ and $p_j>0$ for $j=1,\cdots,m$, $$\langle A_j^{(\epsilon)}v_j,w_j \rangle \rightarrow_{\epsilon \rightarrow 0} 0 \quad \forall v_j, w_j \in H_j \Longrightarrow A_j^{(\epsilon)} \rightarrow 0 \quad \forall 1 \leq j \leq m.$$ 
Now, write $$\ds \frac{\prod_{j=1}^m (\det A_j^{(\epsilon)})^{p_j}}{\det(M^{(\epsilon)})}=f(\epsilon)^{\sum_{j=1}^m p_jn_j-n}\frac{\prod_{j=1}^m \det \Bigl(\frac{A_j^{(\epsilon)}}{f(\epsilon)} \Bigl)^{p_j}}{\det \Bigl( \frac{M^{(\epsilon)}}{f(\epsilon)}\Bigl)}$$ and notice that $M^{(\epsilon)}/f(\epsilon) \rightarrow_{\epsilon \rightarrow 0}U\mathrm{diag}(\tilde \lambda_1,\cdots,\tilde \lambda_n)U^T=\tilde M>0$ up to a subsequence. Furthermore, for $v_j^{(k,\epsilon)}$ a unit norm eigenvector of $A_j^{(\epsilon)}$ associated to its $k$-th eigenvalue $\lambda_k(A_j^{(\epsilon)})$, $$\frac{\lambda_k(A_j^{(\epsilon)})}{f(\epsilon)}=\Bigl\langle \frac{A_j^{(\epsilon)}v_j^{(k,\epsilon)}}{f(\epsilon)},v_j^{(k,\epsilon)}\Bigl\rangle \leq \frac{1}{p_j} \Bigl\langle \frac{M^{(\epsilon)}}{f(\epsilon)}v^{(\epsilon)},v^{(\epsilon)} \Bigl\rangle$$ where $v^{(\epsilon)}=D_jv_j^{(k,\epsilon)}$ for $D_j$ a right-inverse of $C_j$, which exists since $C_j$ is surjective (\textit{i.e.} $C_jD_j=\mathrm{id}_{H_j}$ so that $C_jv^{(\epsilon)}=v_j^{(k,\epsilon)}$). As $v^{(\epsilon)}$ has a bounded norm and consequently has a convergent subsequence by compactness, $\frac{\lambda_k(A_j^{(\epsilon)})}{f(\epsilon)}$ is bounded in $\epsilon$ up to a subsequence. Thus, we can conclude with a contradiction since $\sum_{j=1}^m p_jn_j-n>0$, $f(\epsilon) \rightarrow_{\epsilon \rightarrow 0} 0$ and $$\frac{\prod_{j=1}^m \det\Bigl( \frac{A_j^{(\epsilon)}}{f(\epsilon)}\Bigl)^{p_j}}{\det \Bigl( \frac{M^{(\epsilon)}}{f(\epsilon)}\Bigl)}$$ is uniformly bounded in $\epsilon$, which would mean that $\mathrm{BL}(\textbf{C},p,\textbf{Q})=0$ (which is, of course, impossible).  
\end{proof}

\bigskip

\begin{remark}
 Recall the following classical fact. When $f$ is more log-convex (resp. more log-concave) than $g_A$, g more log-convex (resp. more log-concave) than $g_B$ for some $A,B>0$, $f \star g$ is more log-convex (resp. more log-concave) than $g_C$ for $C=A-A(A+B)^{-1}A$. Indeed, since $f=g_Ah$ and $g=g_B\tilde h$ for log-convex (resp. log-concave) functions $h, \tilde h$, one has $$\begin{array}{lll}
\ds e^{\pi \langle Cx,x \rangle}(f \star g)(x) &=& \ds \int_{\mathbb{R}^n} h(x-y)\tilde h(y) e^{\pi \langle Cx,x\rangle}e^{-\pi \langle A(x-y), x-y\rangle}e^{-\pi \langle By,y\rangle}\, dy \\
\nm 
&=& \ds \int_{\mathbb{R}^n}h(x-y)\tilde h(y) e^{-\pi \langle M(x,y),(x,y)\rangle}\, dy
\end{array}$$ where $$M=\begin{pmatrix}
    A(A+B)^{-1}A &  -A \\ 
   -A & A+B
\end{pmatrix}.$$ The log-convexity (resp. log-concavity) of $h$ and $\tilde h$ implies the log-convexity (resp. log-concavity) of $(x,y) \mapsto h(x-y)\tilde h(y)$, which gives, by \cite[Theorem 4.3]{Brascamp_Lieb}, that $f \star g$ is more log-convex (resp. log-concavity) than $g_C.$ 
\\ 
Define $$\mathrm{Conv}(f,g)=2^{n/2}(f \star g)(\sqrt{2}\;\cdot)$$ for $f,g \in L^1(\mathbb{R}^n,\mathbb{R}_+).$ From what was proven, if $f$ and $g$ are more log-convex (resp. log-concave) than $g_A$, $\mathrm{Conv}(f,g)$ is also more log-convex (resp. log-convex) than $g_A.$ 
\end{remark}

\hfill \\

Using convolutions and the central limit theorem allows to prove the following: 

\begin{theorem} \label{Ext_implies_Gaussian_ext}
If the Brascamp-Lieb constant $\mathrm{BL}(\textbf{C},p,\textbf{Q},I)$ is extremizable (\textit{i.e.} there is equality for some functions $f_i$ that are more log-convex than $g_{Q_i}$, $i \in  I$ and more log-concave than $g_{Q_i}$, $i \in I^c$), it is Gaussian extremizable (\textit{i.e.} the supremum in the definition of $\mathrm{BL}(\textbf{C},p,\textbf{Q},I)$ is attained). 
\end{theorem}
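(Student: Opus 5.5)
The plan is to adapt the self-convolution and central limit argument of \cite[Proposition 5.2]{BCCT} (in the spirit of Ball and Barthe) to the regularized classes. Let $(f_i)_{i=1}^m$ be an extremizer, normalized so that $\int_{H_i} f_i=1$. For $i\in I$ each $f_i$ is more log-convex than $g_{Q_i}$, and for $i\in I^c$ it is more log-concave than $g_{Q_i}$. Write $J$ for the Brascamp-Lieb functional and $F=\mathrm{BL}(\textbf{C},p,\textbf{Q},I)$ for its supremum over the admissible class; by \cref{Reformulation_REG_BL}, $F$ is also the Gaussian supremum.

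The first step is to show that the extremizer class is closed under the operation $\mathrm{Conv}$ from the preceding remark. Concretely, I will show that $(\mathrm{Conv}(f_i,f_i))_i$ is again an extremizer. Admissibility follows from the remark: $\mathrm{Conv}(f,f)$ stays more log-convex (resp. log-concave) than $g_{Q_i}$, since the remark gives this for $C=A-A(2A)^{-1}A=A/2$, and the rescaling by $\sqrt2$ restores $A$. For extremality, apply the inequality on $H\oplus H$ with data $C_i\oplus C_i$ and inputs $F_i(y,z)=f_i(y)f_i(z)$. After the rotation $(y,z)\mapsto ((y+z)/\sqrt2,(y-z)/\sqrt2)$, which commutes with $C_i\oplus C_i$, Fubini gives
\[
J(f)^2=\int_H\!\int_H \prod_i f_i\bigl(C_i\tfrac{x+w}{\sqrt2}\bigr)^{p_i}f_i\bigl(C_i\tfrac{x-w}{\sqrt2}\bigr)^{p_i}\,dx\,dw .
\]
For fixed $x$, the inner functions $u\mapsto f_i(\tfrac{C_ix}{\sqrt2}+\tfrac u{\sqrt2})f_i(\tfrac{C_ix}{\sqrt2}-\tfrac u{\sqrt2})$ are admissible, because a product of two translates keeps the Hessian bounds after the $\sqrt2$ rescaling. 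Applying the inequality in $w$ with constant $F$ produces a product of integrals that equals $\prod_i \mathrm{Conv}(f_i,f_i)(C_ix)^{p_i}$ up to normalization. Applying the inequality again in $x$ gives $J(f)^2\le F\cdot J(\mathrm{Conv}(f_i,f_i))\le F^2$. Since $J(f)=F$, both inequalities are equalities, so the convolved family is again extremizing.

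The second step iterates this. We may first translate each $f_i$ so that it is centered, after checking that translations compatible with the datum preserve $J$; alternatively, we symmetrize by using $f_i$ and its reflection $f_i(-\cdot)$ in place of two copies, which kills the mean. We also need finite second moments. For $i\in I^c$ these are automatic from log-concavity. For $i\in I$ a log-convex perturbation of $g_{Q_i}$ that is integrable still has Gaussian-type tails, since $f_i=g_{Q_i}h_i$ with $h_i$ log-convex and integrability forces $h_i$ to grow at most like $e^{\pi\langle (Q_i-\delta) x,x\rangle}$. I expect this step to need care. By the central limit theorem, the $k$-fold iterates $f_i^{(k)}$ converge to the centered Gaussian $g_{B_i}$, suitably normalized, whose covariance matches that of $f_i$.

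The third step, which I expect to be the main obstacle, is passing to the limit: I must show $J(f^{(k)})\to J((g_{B_i})_i)$ and that $B_i$ satisfies the constraints. The constraints are closed conditions preserved under the CLT limit: if log-convexity/concavity relative to $g_{Q_i}$ is preserved along the sequence and the convergence is local uniform, the limit Gaussian satisfies $B_i\le Q_i$ or $B_i\ge Q_i$, and Brascamp-Lieb variance bounds show $B_i$ is positive definite. For the convergence of $J$, I would use the uniform Hessian control. For $i\in I$, the bound $f_i^{(k)}\le c\,g_{Q_i}$-type envelopes after centering give uniform domination; together with local uniform convergence of densities (a local CLT, valid because the densities are uniformly log-semiconvex or semiconcave), dominated convergence then yields $J(f^{(k)})\to J(g_B)$. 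For $i\in I^c$, uniform log-concavity gives a uniform Gaussian domination directly. If the domination fails for the log-convex factors, I would instead use Fatou, which gives $J(g_B)\ge\limsup J(f^{(k)})=F$ provided the product is controlled. In either case $J(g_B)=F$, so the supremum defining $\mathrm{BL}(\textbf{C},p,\textbf{Q},I)$ is attained at $(B_i)$.
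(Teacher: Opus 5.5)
Your outline is the paper's proof: the rotation identity giving $J(f)J(g)\le \mathrm{BL}(\textbf{C},p,\textbf{Q},I)\,J(\mathrm{Conv}(f,g))$, then iteration, the central limit theorem, and a passage to the limit. Using $\mathrm{Conv}(f,f(-\cdot))$ to kill the means is actually an improvement on the paper. The paper iterates $\mathrm{Conv}(f,f)$, which multiplies the mean by $\sqrt2$ at each step, so the iterates cannot converge unless each $f_i$ is centred. Nor can you centre the $f_i$ one by one, since only translations $f_i(\cdot-C_ix_0)$ with a common $x_0$ preserve $J$.

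However, your justification of finite second moments for $i\in I$ is false. Writing $f_i=g_{Q_i}h_i$ with $h_i$ log-convex gives only \emph{lower} bounds $f_i(x)\ge c\,g_{Q_i}(x)e^{\langle a,x\rangle}$. Integrability does not force $h_i$ below $e^{\pi\langle (Q_i-\delta)x,x\rangle}$. For example, on $\mathbb{R}$ the Cauchy profile $f(x)=(1+x^2/s^2)^{-1}$ is more log-convex than $g_q$ once $s^2\ge 1/(\pi q)$, because $(\pi qx^2-\log(1+x^2/s^2))''\ge 2\pi q-2/s^2$. Yet it has no first moment. Take the datum $H=H_1=H_2=\mathbb{R}$, $C_1=C_2=\mathrm{id}$, $p_1+p_2=1$, $Q_1=Q_2=q$, $I=\{1,2\}$, whose constant is $1$ by weighted AM--GM. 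Then $(f,f)$ is an extremizer and is already symmetric. Its iterates are Cauchy profiles of scale $2^{k/2}s$, which flatten out to $0$ rather than converging to a Gaussian. So the CLT step needs an extra argument that produces an extremizer with finite, nondegenerate covariance, or some other route to a Gaussian. The paper also applies the CLT without checking this hypothesis, so this is a hole you must fill yourself.

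Your third step also fails as written. The envelope $f_i^{(k)}\le c\,g_{Q_i}$ for $i\in I$ does not exist, for the reason just given; upper Gaussian envelopes come from the log-concave side $I^c$. Fatou's lemma points the wrong way: from $f^{(k)}\to g_B$ a.e. it only yields $J(g_B)\le\liminf_k J(f^{(k)})=\mathrm{BL}(\textbf{C},p,\textbf{Q},I)$, which is the trivial inequality. What you need is upper semicontinuity of $J$ along the sequence, that is, tightness of $\prod_i (f_i^{(k)}\circ C_i)^{p_i}$ on $H$, and pointwise convergence cannot give this. The paper handles this step with \cref{Closure_prop_extremizers}(iv): an admissible $L^1$-limit of extremizers is again an extremizer. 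It justifies this by continuity of $f\mapsto\int_H\prod_j (f_j\circ C_j)^{p_j}$ under $L^1$ convergence, derived from the finiteness of the constant rather than from a pointwise bound. Your Step 3 should rest on that ingredient. Accordingly, you need the CLT in $L^1$ (total variation), not merely local uniform convergence of densities.
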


\begin{proof}
The computation will first start by considering the more general $\mathrm{BL}(\textbf{C},p,\textbf{Q},I,\mathcal{Q})$, that is defined in \cref{Mixed_BL_Constant}. One can assume as usual that $H=\mathbb{R}^n, H_i=\mathbb{R}^{n_i}$ for some $n_i \leq n, \; 1 \leq i \leq m.$ Let $f_i, g_i$ be more log-convex than $g_{Q_i}$ for any $i \in I$, more log-concave than $g_{Q_i}$ for any $i \in I^c$ and $\mathcal{Q} \geq 0.$ Assume without loss of generality that $\int_{\mathbb{R}^{n_i}}f_i=\int_{\mathbb{R}^{n_i}}g_i=1$ and set $$F(x)=e^{-\pi \langle \mathcal{Q}x,x\rangle}\prod_{i=1}^m f_i(C_ix)^{p_i}, G(y)=e^{-\pi \langle \mathcal Qy,y \rangle}\prod_{i=1}^m g_i(C_iy)^{p_i}.$$ Then, \begin{equation} \label{Ext_implies_Gaussianext_eq}\begin{array}{lll}
\ds \int_{\mathbb{R}^n}F \int_{\mathbb{R}^n}G &=& \ds \int_{\mathbb{R}^n}F \star G(x)\, dx \\
\nm 
&=& \ds 2^{n/2} \int_{\mathbb{R}^n}\int_{\mathbb{R}^n} e^{-\pi \langle \mathcal{Q}y,y\rangle} e^{-\pi \langle \mathcal Q(\sqrt{2}x-y),\sqrt{2}x-y\rangle} \prod_{i=1}^m \Bigl(f_i(\sqrt{2}C_ix-C_iy)g_i(C_iy)\Bigl)^{p_i} \, dx dy \\
\nm 
&=& \ds \int_{\mathbb{R}^n} \int_{\mathbb{R}^n} e^{-\pi \langle \mathcal Q \frac{x+y}{\sqrt{2}},\frac{x+y}{\sqrt{2}}\rangle}e^{-\pi \langle \mathcal Q \frac{x-y}{\sqrt{2}},\frac{x-y}{\sqrt{2}} \rangle} \prod_{i=1}^m \Bigl(f_i \bigl(\frac{C_ix-C_iy}{\sqrt{2}} \bigl)g_i \bigl(\frac{C_ix+C_iy}{\sqrt{2}} \bigl)\Bigl)^{p_i} \, dx dy \\
\nm 
&=& \ds \int_{\mathbb{R}^n} e^{-\pi \langle \mathcal Qx,x \rangle} \Bigl( \int_{\mathbb{R}^n} e^{-\pi \langle \mathcal Qy,y \rangle} \prod_{i=1}^m F_i^{(C_ix)}(C_iy)^{p_i} \, dy \Bigl) \, dx
\end{array}\end{equation} where $F_i^{(C_ix)}(y_i)=f_i(\frac{C_ix-y_i}{\sqrt{2}})g_i(\frac{C_ix+y_i}{\sqrt{2}}).$
In the third line, the change of variables is $y \mapsto \frac{x+y}{\sqrt 2}$ and in the last line, it was used that $$\Bigl \langle \mathcal Q\frac{x+y}{\sqrt{2}},\frac{x+y}{\sqrt 2}\Bigl \rangle+\Bigl \langle \mathcal Q\frac{x-y}{\sqrt{2}},\frac{x-y}{\sqrt 2}\Bigl \rangle=\langle \mathcal Qx,x\rangle+\langle \mathcal Qy,y\rangle \quad \forall x,y \in \mathbb{R}^n.$$

As $f_i$ and $g_i$ are more log-convex than $g_{Q_i}$ for any $i \in I$ (resp. more log-concave than $g_{Q_i}$ for any $i \in I^c$), there exist log-convex functions (resp. log-concave) $h_i, \tilde h_i$ such that $f_i(x_i)=e^{-\pi \langle Q_ix_i,x_i\rangle}h_i(x_i), g_i(x_i)=e^{-\pi \langle Q_ix_i,x_i \rangle}\tilde h_i(x_i).$ Hence, we deduce that $F_i^{(C_ix)}$ is also more log-convex (resp. more log-concave) than $g_{Q_i}$ for any $i \in I$ (resp. $i \in I^c$) as $$F_i^{(C_ix)}(y_i)=e^{-\pi \langle Q_iC_ix,C_ix\rangle}e^{-\pi \langle Q_iy_i,y_i \rangle}h_i \bigl(\frac{C_ix-y_i}{\sqrt{2}} \bigl)\tilde h_i \bigl(\frac{C_ix+y_i}{\sqrt{2}} \bigl)$$ and notice that $$\mathrm{Conv}(f_i,g_i)(C_ix)=2^{n_i/2}\int_{\mathbb{R}^{n_i}} f_i(\sqrt{2}C_ix_i-y_i)g_i(y_i)\, dy_i= \int_{\mathbb{R}^{n_i}}F_i^{(C_ix)}(y_i)\, dy_i$$ by the change of variables from $y_i$ to $\frac{C_ix_i+y_i}{\sqrt{2}}.$  

Thus, denoting $\mathrm{Conv}(f,g)=(\mathrm{Conv}(f_i,g_i))_{1 \leq i \leq m}$, \begin{equation} \label{J_Q_increases_under_CONV} \begin{array}{lll}
\ds J_{\mathcal Q}(f)J_{\mathcal Q}(g)=\int_{\mathbb{R}^n}F \int_{\mathbb{R}^n}G &\leq& \ds \mathrm{BL}(\textbf{C},p,\textbf{Q},I,\mathcal Q)\int_{\mathbb{R}^n}e^{-\pi \langle \mathcal Qx,x \rangle} \prod_{i=1}^m \Bigl ( \int_{\mathbb{R}^{n_i}}F_i^{(C_ix)}(y_i)\, dy_i\Bigl)^{p_i} \, dx \\
\nm 
&\leq& \mathrm{BL}(\textbf{C},p,\textbf{Q},I,\mathcal Q) J_{\mathcal Q}(\mathrm{Conv}(f,g)).
\end{array} \end{equation} Here, we set $$J_{\mathcal Q}((h_i)_{i=1}^m)=\frac{\ds \int e^{-\pi \langle \mathcal Qx,x\rangle}\prod_{i=1}^m h_i(C_ix)^{p_i}\, dx}{\ds \prod_{i=1}^m \Bigl( \int h_i \Bigl)^{p_i}}.$$ \hfill \\ %For the rest of the argument, we can take $\mathcal Q=0$ (and we will write $J=J_0$) as we already know that $\mathrm{BL}(\textbf{C},p,\textbf{Q},\mathcal Q)$ is Gaussian extremizable when $\mathcal Q>0$. 
Now, from the previous inequality, if $f$ is an extremizer for $J_{\mathcal Q}$ with $\int f_i=1$ for any $1 \leq i \leq m$, $\mathrm{Conv}(f):=\mathrm{Conv}(f,f)$ is also an extremizer as $$\begin{array}{lll}J_{\mathcal Q}(\mathrm{Conv}(f)) &\geq& \ds \frac{J_{\mathcal Q}(f)^2}{\mathrm{BL}(\textbf{C},p,\textbf{Q},I,\mathcal Q)}=\mathrm{BL}(\textbf{C},p,\textbf{Q},I,\mathcal Q)\\
\nm 
&=& \ds \mathrm{BL}(\textbf{C},p,\textbf{Q},I,\mathcal Q)\prod_{i=1}^m \Bigl( \int \mathrm{Conv}(f_i) \Bigl)^{p_i}\end{array}$$ with $\mathrm{Conv}(f_i)$ being more log-convex than $g_{Q_i}$ for any $i \in I$ and more log-concave than $g_{Q_i}$ for any $i \in I^c$, according to the previous remark. Iterating this process, we deduce that $\mathrm{Conv}^k(f)$ is also an extremizer for all $k \geq 1.$ By the central limit theorem (for example, see \cite[Thm 1.1]{Bobkov}), each $\mathrm{Conv}^k(f_i)$ converges in $L^1(\mathbb{R}^{n_i})$ and almost everywhere as $k \rightarrow +\infty$ to a centered Gaussian $\frac{g_{B_i}}{Z_i}$, which is still more log-convex than $g_{Q_i}$ for any $i \in I$ and more log-concave than $g_{Q_i}$ for any $i \in I^c.$
By Fatou's lemma and \cref{Closure_prop_extremizers} (ii),(iv) with $g=(g_{B_1},\cdots,g_{B_m})$ and $0<B_i \leq Q_i$ for all $i \in I$, $B_i \ge Q_i$ for all $i \in I^c$, $$J_{\mathcal Q}(g) \geq \mathrm{BL}(\textbf{C},p,\textbf{Q},I,\mathcal Q) \Bigl(\int_{\mathbb{R}^{n_i}}g_{B_i} \Bigl)^{p_i}$$ and this proves that the BL constant is Gaussian-extremizable, as desired. 
\end{proof}

\hfill \\

Thanks to the previous lemma, we may easily obtain the following closure properties of extremizers of $J_Q$: 

\begin{lemma} \label{Closure_prop_extremizers}
    Let $(\textbf{C},p,\textbf{Q},I,\mathcal Q)$ with $\mathcal Q \geq 0$ be a generalized Brascamp-Lieb datum with $\mathrm{BL}(\textbf{C},p,\textbf{Q},I,\mathcal Q)$ finite.
    \begin{itemize}
        \item [(i)] If $f=(f_j)_{1 \leq j \leq m}$ is an extremizer, then so is $(f_j(-\cdot))_{1 \leq j \leq m}.$
        \item[(ii)] If $f=(f_j)_{1 \leq j \leq m}$ is an extremizer, then so is $(c_jf_j)_{1 \leq j \leq m}$ for any positive real numbers $c_1,\cdots,c_m.$
        \item[(iii)] If $f=(f_j)_{1 \leq j \leq m}$ is an extremizer, then so is $(f_j(\cdot-C_jx_0))_{1 \leq j \leq m}$ for any $x_0 \in H.$ 
        \item[(iv)] If $f^{(k)}=(f_j^{(k)})_{1 \leq j \leq m}$ is a sequence of extremizers converging in $L^1$ to $f$, \textit{i.e.} $\lim_{k \rightarrow +\infty}\|f_j^{(k)}-f_j\|_{L^1(H_j)}=0$ for all $1 \leq j \leq m$ and such that $f_j$ is more log-convex than $g_{Q_j}$ for any $j \in I$, more log-concave than $g_{Q_j}$ for any $j \in I^c$, then $f=(f_j)_{1 \le j \le m}$ is also an extremizer.
        \item[(v)] If $f$ and $g$ are extremizers, then $\mathrm{Conv}(f,g)$ is also extremizer.
        \item[(vi)] If $f$ and $g$ are extremizers, then $$\Bigl(f_j\Bigl(\frac{\cdot-C_jx_0}{\sqrt{2}}\Bigl)g_j\Bigl(\frac{\cdot+C_jx_0}{\sqrt{2}}\Bigl)\Bigl)_{1 \leq j \leq m}$$ is also an extremizer for almost every $x_0 \in H.$ In addition, if $f$ and $g$ are bounded, then the latter holds for every $x_0 \in H.$ 
    \end{itemize}
\end{lemma}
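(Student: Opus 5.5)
The six items split into three groups. Items (i)–(iii) are invariances of the functional $J_{\mathcal Q}$ together with the class of admissible inputs. Item (iv) is a limiting argument. Items (v)–(vi) come from the convolution identity \cref{Ext_implies_Gaussianext_eq} and the inequality \cref{J_Q_increases_under_CONV} obtained in the proof of \cref{Ext_implies_Gaussian_ext}. Throughout I write $\mathrm{BL}=\mathrm{BL}(\textbf{C},p,\textbf{Q},I,\mathcal Q)<+\infty$.

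For (i), the map $x\mapsto -x$ preserves Lebesgue measure and $\langle\mathcal Q x,x\rangle$, so $J_{\mathcal Q}$ is unchanged. Log-convexity or log-concavity of $h_j$ passes to $h_j(-\cdot)$, and $g_{Q_j}$ is even, so the admissible class is preserved. For (ii), $J_{\mathcal Q}$ is homogeneous of degree $0$ in each $f_j$, and the classes are cones.

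For (iii), I will not use translation invariance, because the factor $e^{-\pi\langle\mathcal Qx,x\rangle}$ breaks it when $\mathcal Q\neq0$. Admissibility is still preserved under translation: $g_{Q_j}(y-a)=g_{Q_j}(y)\,e^{2\pi\langle Q_jy,a\rangle-\pi\langle Q_ja,a\rangle}$, and an exponential-affine factor is both log-convex and log-concave. When $\mathcal Q=0$, the change of variables $x\mapsto x-x_0$ finishes the argument. When $\mathcal Q\neq0$, I plan to take (iii) as a consequence of (vi) with $g=f(-\cdot)$, which is an extremizer by (i), plus the $L^1$-closure (iv). I expect this to be the delicate point, and may need to restrict (iii) to $x_0\in\ker\mathcal Q$ or rely on the structure used later.

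For (iv), Fatou's lemma gives $\int e^{-\pi\langle\mathcal Qx,x\rangle}\prod f_j(C_jx)^{p_j}\ge\limsup$ of the same expression along an a.e.-convergent subsequence. The catch is that Fatou gives $\liminf$, not $\limsup$, so I need more. Each $f_j$ is admissible by hypothesis, so the regularized inequality (\cref{Reformulation_REG_BL}) gives the upper bound $J_{\mathcal Q}(f)\le\mathrm{BL}$. For the lower bound I pass to an a.e.-convergent subsequence and apply Fatou to $\prod f_j^{(k)}(C_jx)^{p_j}$, which converges a.e. in $x$ because each $C_j$ is surjective, so null sets pull back to null sets. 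This yields $\int F\le\liminf\int F^{(k)}=\mathrm{BL}\prod(\int f_j)^{p_j}$, again only an upper bound. To get the matching lower bound I will use extra structure. Admissible functions are more log-convex or log-concave than a Gaussian, and $L^1$-convergence of such functions upgrades to locally uniform convergence with uniform Gaussian-type domination. On the log-concave side this holds since $f_j\le C e^{-c|x|}$ uniformly; on the log-convex side I need $f_j^{(k)}\le g_{Q_j}\cdot$(uniformly bounded log-convex factor) on the relevant region. With that domination, dominated convergence gives equality. Securing this uniform domination is the main obstacle of the lemma.

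For (v), the chain \cref{J_Q_increases_under_CONV} gives $\mathrm{BL}^2=J_{\mathcal Q}(f)J_{\mathcal Q}(g)\le\mathrm{BL}\,J_{\mathcal Q}(\mathrm{Conv}(f,g))\le\mathrm{BL}^2$, after normalizing $\int f_j=\int g_j=1$. The remark before \cref{Ext_implies_Gaussian_ext} ensures $\mathrm{Conv}(f_j,g_j)$ is admissible. For (vi), equality throughout that chain forces equality in the first inequality, which was an integral over $x$ of the inner BL inequality applied to $F_i^{(C_ix)}$. Since the inner inequality holds pointwise in $x$ and the weight $e^{-\pi\langle\mathcal Qx,x\rangle}\prod(\int F_i^{(C_ix)})^{p_i}$ is positive wherever the integrand is, equality must hold for a.e. $x_0$. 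Thus $(F_j^{(C_jx_0)})_j$, which is $(f_j(\frac{\cdot-C_jx_0}{\sqrt2})g_j(\frac{\cdot+C_jx_0}{\sqrt2}))_j$ after the reflection $y\mapsto -y$ and (i), is an extremizer. When $f,g$ are bounded, $x_0\mapsto F^{(C_jx_0)}_j$ is $L^1$-continuous by continuity of translation and dominated convergence. Applying (iv) along a sequence $x_0^{(k)}\to x_0$ from the full-measure set, together with the admissibility already checked in the proof of \cref{Ext_implies_Gaussian_ext}, gives every $x_0$.
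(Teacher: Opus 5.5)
There are two real problems, in (iv) and in (iii).

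\textbf{Item (iv).} Once the direction of Fatou is fixed, your argument only reproduces the upper bound $J_{\mathcal Q}(f)\le\mathrm{BL}$, which admissibility already gives. The inequality you actually need, $\int F\ge\lim_k\int F^{(k)}$, is deferred to a uniform domination that you leave unproved. As you formulate it (a Gaussian-type, or even merely integrable, majorant), it is false on the log-convex side. In dimension one with $Q=1$, take $f^{(k)}(x)=e^{-\pi x^2}+k^{-1}e^{-\pi(x-10k)^2}$. These are more log-convex than $e^{-\pi x^2}$, since the ratio $1+k^{-1}e^{20\pi kx-100\pi k^2}$ is log-convex, and they tend to $e^{-\pi x^2}$ in $L^1$. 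Yet $\sup_kf^{(k)}\ge k^{-1}e^{-\pi/4}$ on the disjoint intervals $[10k-\frac12,10k+\frac12]$, so $\sup_kf^{(k)}\notin L^1$.

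What log-convexity does give is a uniform sup bound. If $f=g_Qh$ with $h$ log-convex, an affine minorant of $\log h$ touching at $x_0$ yields $f(x)\ge f(x_0)e^{-\pi\langle Q(x-x_0),x-x_0\rangle}$ on a half-space through $x_0$, whence $\|f\|_\infty\le 2(\det Q)^{1/2}\int f$. Combined with your log-concave bounds, this dominates $F^{(k)}$ by $K\,e^{-\pi\langle\mathcal Qx,x\rangle}\prod_{j\notin I}e^{-cp_j|C_jx|}$. That settles (iv) whenever $\ker\mathcal Q\cap\bigcap_{j\notin I}\ker C_j=\{0\}$, for instance when $\mathcal Q>0$.

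In the remaining case, notably $\mathcal Q=0$ with $I=\{1,\dots,m\}$, you still need tightness of $F^{(k)}$ on $H$. That is, you must show the small $L^1$ tails of the $f_j^{(k)}$ contribute little to $\int F^{(k)}$, which requires an estimate for non-admissible inputs that the regularized constant does not supply. The paper disposes of (iv) with the BCCT continuity argument, and that does not transfer verbatim either. It applies the inequality to $|f^{(k)}_j-f_j|$, which is not admissible, and it uses $|a^{p}-b^{p}|\le|a-b|^{p}$, which needs $p_j\le 1$; without a scaling condition this is not implied here. So your caution is warranted, but the proposal does not prove (iv). The every-$x_0$ part of (vi), which you reduce to (iv), inherits the gap.

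\textbf{Item (iii).} For $\mathcal Q\neq0$, the restriction to $x_0\in\ker\mathcal Q$ that you contemplate is forced, and the route through (vi) cannot work, because the statement is false in general for $x_0\notin\ker\mathcal Q$. Take $H=H_1=\mathbb R$, $m=1$, $C_1=\mathrm{id}$, $p_1=1$, $I=\{1\}$, $\mathcal Q=q>0$. Then $\mathrm{BL}=(Q_1/(q+Q_1))^{1/2}$ is attained by $g_{Q_1}$, while for $x_0\neq0$
\[
J_{\mathcal Q}\bigl(g_{Q_1}(\cdot-x_0)\bigr)=\Bigl(\frac{Q_1}{q+Q_1}\Bigr)^{1/2}e^{-\pi qQ_1x_0^2/(q+Q_1)}<\mathrm{BL}.
\]
Consistently with this, (vi) with $g=f(-\cdot)$ and $f_j=g_{B_j}$ just returns $e^{-\pi\langle B_jC_jx_0,C_jx_0\rangle}g_{B_j}$, a multiple of $f_j$ rather than a translate. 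The correct statement is (iii) for $x_0\in\ker\mathcal Q$. It follows from the substitution $x\mapsto x-x_0$, since then $\langle\mathcal Q(y+x_0),y+x_0\rangle=\langle\mathcal Qy,y\rangle$. This is the only sense in which the paper's ``trivial'' is right.

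\textbf{Remaining items.} Your (i), (ii), (v) and the a.e.\ part of (vi) follow the paper. One bookkeeping point: to obtain exactly the tuple in (vi), run the chain for the pair $(f(-\cdot),g)$, as the paper does. Reflecting $y$ in $F_j^{(C_jx_0)}$ produces $f_j(\frac{\cdot+C_jx_0}{\sqrt2})\,g_j(\frac{C_jx_0-\cdot}{\sqrt2})$ instead.
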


\begin{proof}
The first three statements are trivial.
Statement (iv) follows from standard arguments since one can use the definition of $\mathrm{BL}(\textbf{C},p,\textbf{Q},\mathcal{Q})$ and the assumption that the Brascamp-Lieb constant is finite to show that $$\int_H e^{-\pi \langle \mathcal Qx,x\rangle}\prod_{j=1}^m (f_j^{(k)}\circ C_j)^{p_j} \rightarrow_{k \rightarrow +\infty }\int_H e^{-\pi \langle \mathcal Q x,x\rangle}\prod_{j=1}^m(f_j \circ C_j)^{p_j}.$$
Statement (v) is a consequence of \cref{J_Q_increases_under_CONV} and $$\int_{H_i}\mathrm{Conv}(f_i,g_i)=\int_{H_i}f_i \int_{H_i}g_i.$$ Finally for statement (vi), we use \cref{Ext_implies_Gaussianext_eq} and the pairs $(\tilde f_j)_{1 \leq j \leq m}, (g_j)_{1 \leq j \leq m}$ where \hfill \\ $\tilde f_j(x)=f_j(-x)$ for any $1 \leq j \leq m.$ As usual, we can assume that $\int f_i=\int g_i=1$ for any $1 \leq i \leq m$ by (ii). Thus, $$\begin{array}{lll}
\ds \mathrm{BL}(\textbf{C},p,\textbf{Q},I,\mathcal Q)^2=\int_H e^{-\pi \langle \mathcal Qx,x\rangle}\Bigl( \int_H e^{-\pi \langle \mathcal Qy,y\rangle} \prod_{i=1}^m (F_i^{(C_ix)}( C_iy))^{p_i} \, dy \Bigl)\, dx \\
\nm 
\leq \ds \mathrm{BL}(\textbf{C},p,\textbf{Q},I,\mathcal Q)\int_H e^{-\pi\langle \mathcal Qx,x\rangle} \prod_{i=1}^m \Bigl(\int_{H_i} F_i^{(C_ix)} \Bigl)^{p_i} \, dx \leq \mathrm{BL}(\textbf{C},p,\textbf{Q},I,\mathcal Q)^2 \end{array}$$ which means that there must be equality $$\int_H e^{-\pi \langle \mathcal Qx,x\rangle}\prod_{i=1}^m (F_i^{(C_ix)}\circ C_i)^{p_i}\, dx=\mathrm{BL}(\textbf{C},p,\textbf{Q},I,\mathcal Q)\prod_{i=1}^m \Bigl( \int_{H_i}F_i^{(C_ix)}\Bigl)^{p_i}$$ for almost every $x.$ To remove the almost in case $f$ and $g$ are bounded, the argument is the same as in \cite{BCCT}. 
\end{proof}

Specializing the above lemma to centered Gaussian extremizers, we conclude: 

\begin{corollary}
    Let $(\textbf{C},p,\textbf{Q},I,\mathcal Q)$ with $\mathcal Q \geq 0$ be a generalized Brascamp-Lieb datum with $\mathrm{BL}(\textbf{C},p,\textbf{Q},I,\mathcal Q)$ finite. \begin{itemize}
        \item [(i)] If $A^{(n)}$ is a sequence of extremizing Gaussian inputs which converge to a Gaussian input $A=(A_i)_{1 \leq i \leq m}$ with $0<A_i \leq Q_i$ for any $i \in I$, $A_i \ge Q_i$ for any $i \in I^c$, then $A$ is also an extremizer. 
        \item[(ii)] If $A=(A_j)_{1 \leq j \leq m}, A'=(A_j')_{1 \leq j \leq m}$ are Gaussian extremizers, then so is $(2(A_j^{-1}+(A_j')^{-1})^{-1})_{1 \leq j \leq m}.$ 
        \item[(iii)] If $A$ and $A'$ are Gaussian extremizers, then so is $\frac{A+A'}{2}.$ 
    \end{itemize} 
\end{corollary}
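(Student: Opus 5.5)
Each part will be obtained by specializing \cref{Closure_prop_extremizers} to centered Gaussian inputs $g_{A_j}$ and computing what the operations there produce on such inputs. Throughout, a Gaussian input $A=(A_j)$ is called an extremizer when $(g_{A_j})_{1\le j\le m}$ attains $\mathrm{BL}(\textbf{C},p,\textbf{Q},I,\mathcal Q)$ in $J_{\mathcal Q}$. The admissibility constraints $0<A_j\le Q_j$ for $j\in I$ and $A_j\ge Q_j$ for $j\in I^c$ are exactly the statement that $g_{A_j}$ is more log-convex, respectively more log-concave, than $g_{Q_j}$.

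For (i), I will first check that $g_{A^{(n)}_j}\to g_{A_j}$ in $L^1(H_j)$. Since $A^{(n)}_j\to A_j>0$, the Gaussians converge pointwise. They are eventually dominated by $g_{A_j/2}$, because $A^{(n)}_j\ge A_j/2$ for $n$ large. Dominated convergence then gives the $L^1$ convergence, and the limit $A$ satisfies the admissibility constraints by hypothesis. Part (iv) of \cref{Closure_prop_extremizers} then yields that $A$ is an extremizer.

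For (ii), I will apply part (v) of \cref{Closure_prop_extremizers} to $f=(g_{A_j})$ and $g=(g_{A'_j})$. Convolution of centered Gaussians adds covariances: $g_{A_j}\star g_{A'_j}$ is a constant multiple of $g_{(A_j^{-1}+A_j'^{-1})^{-1}}$. The rescaling $x\mapsto\sqrt2 x$ in $\mathrm{Conv}$ then turns this into a constant multiple of $g_{2(A_j^{-1}+A_j'^{-1})^{-1}}$. By part (ii) of \cref{Closure_prop_extremizers}, the constants can be dropped, so this Gaussian input is an extremizer. Its admissibility comes from the Remark preceding \cref{Ext_implies_Gaussian_ext}: $\mathrm{Conv}$ preserves the relevant log-convexity and log-concavity classes. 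One can also check it directly, since $X\mapsto 2(X^{-1}+Y^{-1})^{-1}$ is monotone and $2(Q_j^{-1}+Q_j^{-1})^{-1}=Q_j$.

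For (iii), I will use part (vi) of \cref{Closure_prop_extremizers} with $x_0=0$; this is allowed since Gaussians are bounded. The new input is
\[
g_{A_j}(\cdot/\sqrt2)\,g_{A'_j}(\cdot/\sqrt2)=g_{(A_j+A'_j)/2},
\]
which is admissible because the constraint sets $\{0<B\le Q_j\}$ and $\{B\ge Q_j\}$ are convex.

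The only point requiring care is in (vi) of \cref{Closure_prop_extremizers}: in its proof, the admissibility of $F_i^{(C_ix)}$ was checked only for each fixed $x$. Here this is immediate from the explicit Gaussian form, so I expect no genuine obstacle.
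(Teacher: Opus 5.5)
Your proposal is correct and takes essentially the same route as the paper. The paper also derives (i), (ii) and (iii) from items (iv), (v) and (vi) of \cref{Closure_prop_extremizers} respectively, together with the Gaussian convolution computation and the same admissibility check $A_j^{-1}+(A_j')^{-1}\ge 2Q_j^{-1}$ (reversed for $j\in I^c$). The details you add are correct and only make explicit what the paper leaves implicit: dominated convergence to obtain $L^1$ convergence in (i), and in (iii) the choice $x_0=0$ in (vi) (legitimate since Gaussians are bounded) together with convexity of the constraint sets.
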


\begin{proof}
    (i) is clear by \cref{Closure_prop_extremizers} (iv); (ii) follows from \cref{Closure_prop_extremizers} (v), the computation of the convolution of two Gaussians and the fact that $$0<A_i,A_i' \leq Q_i \Longrightarrow A_i^{-1}+(A_i')^{-1} \geq 2Q_i^{-1} \Longrightarrow 0<2(A_i^{-1}+(A_i')^{-1})^{-1} \leq Q_i \quad \forall i \in I,$$ and similarly, $$A_i,A_i' \ge Q_i \Longrightarrow 0< A_i^{-1}+(A_i')^{-1} \le 2Q_i^{-1} \Longrightarrow (A_i^{-1}+(A_i')^{-1})^{-1} \ge Q_i \quad \forall i \in I^c.$$ Finally, (iii) follows from \cref{Closure_prop_extremizers} (vi).   
\end{proof}

\hfill \\

Now that we have shown that extremizable is equivalent to Gaussian extremizable, one would like to understand what are the necessary and sufficient conditions for Gaussian extremizers/extremizers to exist. 
We start by defining a notion of equivalence between generalized Brascamp-Lieb data. 

\begin{theorem}
    Two Brascamp-Lieb data $(\textbf{C},p,\textbf{Q},I)$ and $(\textbf{C'},p',\textbf{Q'},I')$ are equivalent if $I=I'$ and there exist invertible linear transformations $D: H' \rightarrow H$ and $D_j: H_j' \rightarrow H_j$ (called intertwining transformations) such that $C'_j=D_j^{-1}C_jD$ and $Q_j'=D_j^{*}Q_jD_j$ for all $j$ and $p=p'.$ 
\end{theorem}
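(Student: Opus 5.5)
The statement is really a definition: it declares two generalized Brascamp-Lieb data equivalent when they are related by intertwining transformations. What must be checked is that this notion is well posed and does what it is meant to do. So I would prove three things. First, it is an equivalence relation. Second, it transports the constraint sets and the regularity classes of inputs onto each other. Third, it changes $\mathrm{BL}(\textbf{C},p,\textbf{Q},I)$ only by an explicit multiplicative factor, so that finiteness, Gaussian extremizability and extremizability are all invariant. The approach is a direct change of variables, at the level of matrices and at the level of functions.

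\emph{Equivalence relation.} Reflexivity holds with identity maps. For symmetry I would take $D^{-1},D_j^{-1}$ and check $C_j=D_jC_j'D^{-1}$ and $Q_j=(D_j^{-1})^*Q_j'D_j^{-1}$. For transitivity I would compose the maps, using $(D_jE_j)^*Q(D_jE_j)=E_j^*(D_j^*QD_j)E_j$.

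\emph{Gaussian level.} Given $B_j'>0$ on $H_j'$, I set $B_j=(D_j^{-1})^*B_j'D_j^{-1}$. Congruence preserves the Loewner order, so $B_j'\le Q_j'$ if and only if $B_j\le Q_j$, and likewise for $\ge$. Hence the admissible sets for $I=I'$ correspond bijectively. Next,
\[
\sum_j p_j C_j'^*B_j'C_j' = D^*\Bigl(\sum_j p_jC_j^*B_jC_j\Bigr)D \quad\text{and}\quad \det B_j'=(\det D_j)^2\det B_j .
\]
Taking the supremum then gives
\[
\mathrm{BL}(\textbf{C}',p,\textbf{Q}',I)=\frac{\prod_j|\det D_j|^{p_j}}{|\det D|}\,\mathrm{BL}(\textbf{C},p,\textbf{Q},I).
\]
Since the map $B'\mapsto B$ is a bijection of admissible sets that preserves the ratio up to this fixed factor, the supremum is attained for one datum exactly when it is attained for the other. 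The same computation carries over verbatim when a term $\mathcal Q$ is present, with $\mathcal Q'=D^*\mathcal QD$.

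\emph{Functional level.} For $f_j$ on $H_j$ I set $f_j'=f_j\circ D_j$. Because $g_{Q_j}(D_jx)=g_{Q_j'}(x)$ and composition with a linear map preserves log-convexity and log-concavity, $f_j$ is more log-convex (resp. log-concave) than $g_{Q_j}$ if and only if $f_j'$ is more log-convex (resp. log-concave) than $g_{Q_j'}$. The substitution $x=Dx'$ together with $\int f_j'=|\det D_j|^{-1}\int f_j$ shows that the functional ratio $J$ transforms by the same factor. Consequently extremizers correspond, namely $f\mapsto(f_j\circ D_j)_j$.

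There is no genuine obstacle. The only delicate step is bookkeeping: keeping the adjoints in the right place, so that the order relation is preserved in the correct direction, and keeping the dimension-dependent Jacobian factors consistent between the Gaussian formula and the integral formula.
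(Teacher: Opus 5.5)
Your proposal is correct. The statement is really a definition, and the substantive content you verify coincides with the paper's \cref{Equiv_BL_Constants}. That lemma is proved by exactly your congruence substitution $B_j=D_j^{-*}B_j'D_j^{-1}$ together with $\sum_j p_jC_j'^*B_j'C_j'=D^*\bigl(\sum_j p_jC_j^*B_jC_j\bigr)D$; your absolute values on the determinants are the right reading of the paper's factor $\prod_j\det(D_j)^{p_j}/\det(D)$. Your functional-level transfer $f_j\mapsto f_j\circ D_j$ is also what the paper uses, without proof and with $D_j=B_j^{-1/2}$, when it passes from a generalized geometric datum back to a general extremizable one.
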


The Brascamp-Lieb constants of equivalent data are closely related.

\begin{lemma} \label{Equiv_BL_Constants}
    Suppose that $(\textbf{C},p,\textbf{Q},I), (\textbf{C'},p',\textbf{Q'},I)$ are two equivalent BL data with intertwining maps $D: H' \rightarrow H, D_j: H_j' \rightarrow H_j.$ Then, $$\mathrm{BL}(\textbf{C'},p',\textbf{Q'},I)=\frac{\prod_{j=1}^m \det(D_j)^{p_j}}{\det(D)}\mathrm{BL}(\textbf{C},p,\textbf{Q},I).$$
\end{lemma}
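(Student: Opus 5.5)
The plan is to work directly with the Gaussian formula for the constant, namely the definition \cref{Mixed_BL_Constant} with $\mathcal Q=0$, rather than with general inputs. I will show that the intertwining maps induce a bijection between the admissible families of Gaussian inputs for the two data, and that the ratio inside the supremum transforms by a fixed multiplicative factor. Throughout, $\det(D)$ and $\det(D_j)$ denote absolute values of determinants computed in orthonormal bases of the respective Euclidean spaces; the sign convention is irrelevant since only squares enter.

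\emph{Step 1: the bijection.} Given a family $\mathbf B=(B_j)_{1\le j\le m}$ of positive definite $B_j:H_j\to H_j$, set $B_j'=D_j^*B_jD_j$. Since $D_j$ is invertible, this map $\mathbf B\mapsto \mathbf B'$ is a bijection onto positive definite families on the $H_j'$, with inverse $B_j=(D_j^{-1})^*B_j'D_j^{-1}$. Congruence by an invertible map preserves the Loewner order. Hence $0<B_j\le Q_j$ holds iff $0<B_j'\le D_j^*Q_jD_j=Q_j'$, and $B_j\ge Q_j$ holds iff $B_j'\ge Q_j'$. Because $I=I'$, the admissible set for $(\textbf{C},p,\textbf{Q},I)$ is mapped exactly onto the admissible set for $(\textbf{C'},p',\textbf{Q'},I)$.

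\emph{Step 2: the transformation of the ratio.} Using $C_j'=D_j^{-1}C_jD$, I compute
$$\sum_{j=1}^m p_jC_j'^*B_j'C_j'=\sum_{j=1}^m p_jD^*C_j^*(D_j^{-1})^*D_j^*B_jD_jD_j^{-1}C_jD=D^*\Bigl(\sum_{j=1}^m p_jC_j^*B_jC_j\Bigr)D.$$
Taking determinants gives
$$\det\Bigl(\sum_{j=1}^m p_jC_j'^*B_j'C_j'\Bigr)=\det(D)^2\det\Bigl(\sum_{j=1}^m p_jC_j^*B_jC_j\Bigr),\qquad \det B_j'=\det(D_j)^2\det B_j.$$
Consequently the quantity $\bigl\{\prod_j(\det B_j')^{p_j}/\det(\sum_j p_jC_j'^*B_j'C_j')\bigr\}^{1/2}$ equals $\frac{\prod_j\det(D_j)^{p_j}}{\det D}$ times the corresponding quantity for $\mathbf B$.

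\emph{Step 3: conclusion.} Taking the supremum over the admissible sets, which correspond bijectively by Step 1, yields the identity, including the case where both sides are $+\infty$. The non-degeneracy assumptions also transfer: $C_j'$ is surjective, and $\bigcap_j\ker C_j'=D^{-1}\bigcap_j\ker C_j=\{0\}$.

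As a consistency check, one could instead argue at the level of functions. The substitution $f_j'=f_j\circ D_j$ preserves "more log-convex/log-concave than $g_{Q_j}$", turning it into the same property with respect to $g_{Q_j'}$. The change of variables $x=Dx'$ in $H$ and $y_j=D_jy_j'$ in $H_j$ then produces the same Jacobian factor. This route is not needed given \cref{Reformulation_REG_BL}. There is no real obstacle here; the only point requiring care is the order-preservation of congruence in Step 1, which is what makes the constraint sets match, together with the bookkeeping of determinant normalizations between different Euclidean spaces.
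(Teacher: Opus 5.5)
Your proof is correct and takes the same approach as the paper. Both use the substitution $B_j'=D_j^*B_jD_j$ (the paper writes it as $A_j\mapsto D_j^{-*}A_jD_j^{-1}$), which matches the two admissible sets via $Q_j'=D_j^*Q_jD_j$, together with the identity $\sum_j p_jC_j'^*B_j'C_j'=D^*\bigl(\sum_j p_jC_j^*B_jC_j\bigr)D$ that produces the factor $\prod_j\det(D_j)^{p_j}/\det(D)$; your explicit points on order preservation under congruence and on the transfer of non-degeneracy are details the paper leaves implicit.
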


\begin{proof}
    This lemma is a consequence of the following simple computation: $$\begin{array}{lll}
    \mathrm{BL}(\textbf{C'},p',\textbf{Q'}) &=& \ds \sup_{\substack{0<A_j \leq Q_j' \; \forall j \in I \\ A_j \ge Q_j' \; \forall j \in I^c}}\frac{\prod_{j=1}^m \det(A_j)^{p_j/2}}{\det \Bigl(\ds \sum_{j=1}^m p_j(C_j')^*A_jC_j'\Bigl)^{1/2}} \\
    \nm 
    &=& \ds \sup_{\substack{0<A_j \leq Q_j' \; \forall j \in I \\ A_j \ge Q_j' \; \forall j \in I^c}}\frac{\prod_{j=1}^m \det(A_j)^{p_j/2}}{\det \Bigl(D^* \Bigl[\ds \sum_{j=1}^m p_jC_j^*\bigl(D_j^{-*}A_jD_j^{-1} \bigl)C_j \Bigl] D\Bigl)^{1/2}} \\
    \nm 
    &=& \ds \frac{\prod_{j=1}^m \det(D_j)^{p_j}}{\det(D)} \sup_{\substack{0<A_j \leq Q_j' \; \forall j \in I \\ A_j \ge Q_j' \; \forall j \in I^c}} \frac{\prod_{j=1}^m \det(D_j^{-*}A_jD_j^{-1})^{p_j/2}}{\det \Bigl(\ds \sum_{j=1}^m p_jC_j^*\bigl(D_j^{-*}A_jD_j^{-1} \bigl)C_j\Bigl)^{1/2}}
    \\
    \nm 
    &=& \ds \frac{\prod_{j=1}^m \det(D_j)^{p_j}}{\det(D)} \sup_{\substack{0<A_j \leq (D_j)^{-*}Q_j'D_j^{-1} \; \forall j \in I \\ A_j \ge (D_j)^{-*}Q_j'D_j^{-1} \; \forall j \in I^c}} \frac{\prod_{j=1}^m \det(A_j)^{p_j/2}}{\det \Bigl(\ds \sum_{j=1}^m p_jC_j^*A_jC_j\Bigl)^{1/2}} \\
    \nm 
    &=& \ds \frac{\prod_{j=1}^m \det(D_j)^{p_j}}{\det(D)} \mathrm{BL}(\textbf{C},p,\textbf{Q},I).
    \end{array}$$
\end{proof}

\begin{definition}
    A Brascamp-Lieb datum $(\textbf{C},p,\textbf{Q},I)$ is a generalized geometric BL datum if $$C_jC_j^* \leq \mathrm{id}_{H_j}, Q_j \geq \mathrm{id}_{H_j}, (\mathrm{id}_{H_j}-C_jC_j^*)(Q_j-\mathrm{id}_{H_j})=0$$ for any $j \in I$, $$C_jC_j^* \geq \mathrm{id}_{H_j}, Q_j \leq \mathrm{id}_{H_j}, (\mathrm{id}_{H_j}-C_jC_j^*)(Q_j-\mathrm{id}_{H_j})=0$$ for any $j \in I^c$ and \begin{equation} \label{GGeometric_BL_datum} \sum_{j=1}^m p_jC_j^*C_j=\mathrm{id}_H.\end{equation}
\end{definition}

\begin{remark}
    Let $G>0$ be positive definite and let $u \in C^2$ be a positive function that is more log-convex than $g_G$ and $A \geq 0.$ Then, $$\mathrm{div}(A\nabla \log u)=\mathrm{Tr}(A^{1/2}D^2(\log u)A^{1/2}) \geq -2\pi \mathrm{Tr}(A^{1/2}GA^{1/2})=-2\pi \mathrm{Tr}(AG).$$ Similarly, if $u \in C^2$ is a positive function that is more log-concave than $g_G$ and $A \ge 0$, $$-\mathrm{div}(A\nabla \log u) \ge 2\pi \mathrm{Tr}(AG).$$
\end{remark}

\begin{theorem} \label{BL_Geometric_equals_1}
A generalized geometric Brascamp-Lieb datum has BL constant equal to $1.$
\end{theorem}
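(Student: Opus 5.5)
The plan is to prove the two inequalities $\mathrm{BL}(\textbf{C},p,\textbf{Q},I)\ge 1$ and $\mathrm{BL}(\textbf{C},p,\textbf{Q},I)\le 1$ directly from the Gaussian formula \cref{Mixed_BL_Constant} with $\mathcal Q=0$. The analytic statement then follows from \cref{Reformulation_REG_BL}.

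\emph{Lower bound.} Take $B_j=\mathrm{id}_{H_j}$ for every $j$. This choice is admissible. For $j\in I$ we have $0<\mathrm{id}\le Q_j$, and for $j\in I^c$ we have $\mathrm{id}\ge Q_j$, by the definition of a generalized geometric datum. By \cref{GGeometric_BL_datum} the ratio equals $1/\det(\mathrm{id}_H)=1$. Hence $\mathrm{BL}\ge 1$.

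\emph{Upper bound.} We need to show
$$\prod_j(\det B_j)^{p_j}\le \det\Bigl(\sum_j p_jC_j^*B_jC_j\Bigr)$$
for all admissible $B_j$.

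Define the linear map $\Phi(X_1,\dots,X_m)=\sum_j p_jC_j^*X_jC_j$ from block operators on $\bigoplus_j H_j$ to operators on $H$. The map $\Phi$ is positive, and by \cref{GGeometric_BL_datum} it is unital. Since $\log$ is operator concave, the Choi--Davis--Jensen operator inequality gives
$$\Phi(\log B)\le \log \Phi(B).$$
Taking traces yields
$$\log\det\Bigl(\sum_j p_jC_j^*B_jC_j\Bigr)\ \ge\ \sum_j p_j\,\mathrm{Tr}\bigl(C_jC_j^*\log B_j\bigr).$$
It therefore suffices to check, for each $j$, that
$$\mathrm{Tr}\bigl((C_jC_j^*-\mathrm{id})\log B_j\bigr)\ge 0 .$$

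This is the step where the structural condition $(\mathrm{id}-C_jC_j^*)(Q_j-\mathrm{id})=0$ is used, and it is the main point to get right. Both factors are self-adjoint and their product vanishes, so taking adjoints shows that they commute. Set $K_j=\mathrm{id}-C_jC_j^*$. Then $Q_j$ acts as the identity on $\mathrm{ran}(K_j)$, and $K_j$ commutes with $\log Q_j$. It follows that $K_j\log Q_j=0$.

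\emph{Case $j\in I$.} Here $K_j\ge 0$ and $B_j\le Q_j$. Since $\log$ is operator monotone, $\log B_j\le\log Q_j$. Because $K_j\ge 0$, this gives
$$\mathrm{Tr}(K_j\log B_j)\le \mathrm{Tr}(K_j\log Q_j)=0.$$

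\emph{Case $j\in I^c$.} Here $-K_j\ge 0$ and $B_j\ge Q_j$. Operator monotonicity gives $\log B_j\ge \log Q_j$, and therefore
$$\mathrm{Tr}(-K_j\log B_j)\ge \mathrm{Tr}(-K_j\log Q_j)=0.$$

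In both cases the required trace inequality holds. Summing over $j$ gives $\prod_j(\det B_j)^{p_j}\le\det\bigl(\sum_jp_jC_j^*B_jC_j\bigr)$, so $\mathrm{BL}\le 1$.

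The only external inputs are the operator Jensen inequality for unital positive maps and the operator monotonicity of $\log$. If one prefers to avoid the operator Jensen inequality, a fallback is a heat-flow monotonicity argument. That argument would use the preceding remark on $\mathrm{div}(A\nabla\log u)$ with $A=C_j^*C_j$, and would follow the approach of \cite{BCCT}.
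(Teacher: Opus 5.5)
Your proof is correct, and it takes a genuinely different route from the paper's.

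\textbf{The paper's route.} The paper never works with the determinant ratio directly. It proves the functional inequality $\int_H\prod_j f_j(C_jx)^{p_j}\,dx\le\prod_j\bigl(\int f_j\bigr)^{p_j}$ for Schwartz inputs by running the heat flow $\tilde f_j(\cdot,t)$. It shows, via \cite[Theorem 4.3]{Brascamp_Lieb}, that $\tilde f_j(\cdot,t)$ remains more log-convex (resp.\ more log-concave) than $g_{Q_j/t}$, and then applies the multilinear monotonicity lemma \cite[Lemma 2.6]{BCCT}. The Gaussian upper bound is the special case $f_j=g_{B_j}$. The structural condition you exploit appears there as $\mathrm{Tr}\bigl((\mathrm{id}_{H_j}-C_jC_j^*)Q_j\bigr)=\mathrm{Tr}(\mathrm{id}_{H_j}-C_jC_j^*)$. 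Your isometry $x\mapsto(\sqrt{p_j}\,C_jx)_j$ is the paper's $T$, with $T^*T=\mathrm{id}_H$.

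\textbf{What your route buys.} It is purely finite-dimensional and handles the mixed case $I\neq\{1,\dots,m\}$ uniformly. It needs no growth estimates or density argument. Since you only use the Choi--Davis inequality after taking traces, that step could even be replaced by scalar concavity of $\log$: evaluate on an orthonormal eigenbasis $(e_i)$ of $M$, where $\log\langle \tilde B\,Ve_i,Ve_i\rangle\ge\langle(\log\tilde B)Ve_i,Ve_i\rangle$.

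\textbf{What it does not give.} First, it does not give the functional inequality on its own. For that you depend on \cref{Reformulation_REG_BL}, and hence on the anisotropic Caffarelli contraction. Second, it does not give the pointwise equality conditions of \cref{Important_rk_equality_cases}. These conditions are the main reason the paper reproves the result by heat flow: they are the starting point for characterizing all, including non-Gaussian, extremizers in the next section. Your argument only identifies the Gaussian equality cases, namely equality in the Jensen step and in the two trace inequalities.
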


From the proof of \cref{Theorem_REG_BL}, one may see that $$1 \leq \mathrm{BL}(\textbf{C},p,\textbf{Q},[m])=\mathrm{BL}_g(\textbf{C},p,\textbf{Q},[m]) \leq \mathrm{BL}_{\text{type Q fcts}}(\textbf{C},p,\textbf{Q},[m])=1$$ where the last equality is shown in \cite[Prop 8.9]{BCCT} and the first inequality holds by testing $A_j=\mathrm{id}_{H_j}$ for any $1 \leq j \leq m.$ However, we repeat the proof to be able to treat the mixed log-concavity/log-convexity regularization (\text{i.e.} if $I \neq \{1,\cdots,m\}$) and as it will be a crucial starting point to study the equality cases. 

\hfill \\

\begin{proof}
Let $f_j$ be a Schwarz function that is more log-convex than $g_{Q_j}$ for all $j \in I$ and more log-concave than $g_{Q_j}$ for all $j \in I^c$. This implies that $\nabla \log \tilde f_j(x,t)$ grows at most polynomially in space, locally uniformly on $(1;+\infty)$ by \cite{Vald2} and the assumptions of \cite[Lemma 2.6]{BCCT} are then satisfied.
Let us define the following function:
$$\tilde Q(x,t)=t^{(\sum_{j=1}^m p_j\dim(H_j)-\dim(H))/2}\prod_{j=1}^m \tilde f_j(C_jx,t)^{p_j}$$ with $\tilde f_j: H_j \times [1;+\infty) \rightarrow \mathbb{R}_+$ solution to the heat equation  $$\tilde f_j(x_j,1)=f_j(x_j) \text{ and } \partial_t \tilde f_j(x_j,t)=\frac{1}{4\pi}\Delta \tilde f_j(x_j,t) \text{ for } t>1.$$
Then $\tilde Q(x,1)= \prod_{j=1}^m f_j(C_jx)^{p_j}$ and since $$\tilde f_j(x,t)=\int_{H_j}\frac{1}{(t-1)^{n_j/2}}\exp \Bigl(-\frac{\pi|x-y|^2}{t-1} \Bigl) f_j(y)\, dy=\int_{H_j}\exp(-\pi|z|^2)f_j(x+\sqrt{t-1}z)\, dz,$$ we have that $$\begin{array}{lll}
\ds \int_H \tilde Q(x,t)\, dx &=& \ds t^{\sum_{j=1}^m p_j\dim(H_j)/2}\int_H \prod_{j=1}^m \tilde f_j^{p_j}(\sqrt{t}C_jx,t)\, dx \\
\nm 
&=& \ds \frac{t^{\sum_{j=1}^m p_jn_j/2}}{(t-1)^{\sum_{j=1}^m p_jn_j/2}}\int_H \prod_{j=1}^m \Bigl( \int_{H_j} e^{-\pi \frac{|\sqrt{t} C_jx-y|^2}{t-1}} f_j(y)\, dy \Bigl)^{p_j}\, dx \\
\nm 
&=& \ds \frac{t^{\sum_{j=1}^m p_jn_j/2}}{(t-1)^{\sum_{j=1}^m p_jn_j/2}} \int_H \prod_{j=1}^m \Bigl( e^{-\pi \frac{t}{t-1}p_j|C_jx|^2} \Bigl(\int_{H_j}  e^{\frac{2\sqrt{t}}{t-1}\langle C_jx,y\rangle-\frac{|y|^2}{t-1}}f_j(y)\, dy\Bigl)^{p_j} \Bigl)\, dx\\
\nm 
&& \ds \rightarrow_{t \rightarrow +\infty} \int_H e^{-\pi \langle \sum_{j=1}^m p_jC_j^*C_jx,x\rangle}\prod_{j=1}^m \Bigl(\int_{H_j}f_j\Bigl)^{p_j}\, dx = \prod_{j=1}^m \Bigl( \int f_j \Bigl)^{p_j}.
\end{array}$$ In the last line, dominated convergence was used as well as the fact that $\sum_{j=1}^m p_jC_j^*C_j=\mathrm{id}_H$ since the BL datum is generalized geometric.
The goal is to show that $$\int_H \tilde Q(x,t)\, dx$$ is non-decreasing in $t$ (which suffices by density of Schwarz functions) and the proof employs \cite[Lemma 2.6]{BCCT} (multilinear monotonicity for transport equations).
First, let us show that $\tilde f_j(\cdot,t)$ is more log-convex than $g_{Q_j/t}$ for any $j \in I$ and more log-concave than $g_{Q_j/t}$ for any $j \in I^c.$ Indeed, setting $h_j(x)=e^{\pi \langle Q_jx,x\rangle}f_j(x)$, one has $$\begin{array}{lll}
\tilde f_j(x,t) &=& \ds \int_{H_j}e^{-\pi |z|^2}f_j(x+\sqrt{t-1}z) \, dz \\
\nm 
&=& \ds \int_{H_j}e^{-\pi |z|^2}e^{-\pi \langle Q_j(x+\sqrt{t-1}z),x+\sqrt{t-1}z\rangle}h_j(x+\sqrt{t-1}z)\, dz \\
\nm 
&=& \ds \int_{H_j}e^{-\pi \langle Q_jx,x\rangle}e^{-2\pi \sqrt{t-1}\langle Q_jx,z\rangle}e^{-\pi(t-1)\langle Q_jz,z\rangle}e^{-\pi |z|^2} h_j(x+\sqrt{t-1}z)\, dz.
\end{array}$$ Using the notation of  \cite[Theorem 4.3]{Brascamp_Lieb}, $$\begin{pmatrix}
    D_j^{(1)} \quad D_j^{(2)} \\
    (D_j^{(2)})^* \quad D_j^{(3)}
\end{pmatrix}=\begin{pmatrix}
    \pi Q_j \quad \pi \sqrt{t-1}Q_j \\
    \pi \sqrt{t-1}Q_j \quad \pi(t-1)Q_j+\pi \mathrm{id}_{H_j}
\end{pmatrix},$$ one has $$\begin{array}{lll} D_j &=& D_j^{(1)}-(D_j^{(2)})^*(D_j^{(3)})^{-1}D_j^{(2)} \\ 
&=& \pi Q_j-\pi^2(t-1)Q_j^2 \bigl[\pi(t-1)Q_j+\pi \mathrm{id}_{H_j} \bigl]^{-1} \\
\nm 
&=& \pi Q_j\Bigl( \mathrm{id}_{H_j}-(t-1)Q_j[(t-1)Q_j+\mathrm{id}_{H_j}]^{-1} \Bigl)=\pi Q_j[(t-1)Q_j+\mathrm{id}_{H_j}]^{-1} \\
\nm 
&=& \pi [(t-1)\mathrm{id}_{H_j}+Q_j^{-1}]^{-1} \leq \pi Q_j/t \quad \forall j \in I,
\end{array}$$ since $\mathrm{id}_{H_j} \geq Q_j^{-1} \Longrightarrow (t-1)\mathrm{id}_{H_j}+Q_j^{-1} \geq tQ_j^{-1}$ for any $j \in I.$ 
As $Q_j^{-1} \ge \mathrm{id}_{H_j} \Longrightarrow (t-1)\mathrm{id}_{H_j}+Q_j^{-1} \le tQ_j^{-1}$ for any $j \in I^c$, $$D_j \ge \pi Q_j/t \quad \forall j \in I^c.$$ 
Applying \cite[Theorem 4.3]{Brascamp_Lieb}, it follows that $\tilde f_j(\cdot,t)$ is more log-convex (resp. more log-concave) than $g_{D_j/\pi}$, in particular more log-convex (resp. more log-concave) than $g_{Q_j/t}$ for any $j \in I$ (resp. $j \in I^c$).

Consider the functions $$u_j=\tilde f_j \circ C_j, \overrightarrow{v_j}=-\frac{1}{4\pi}D_j(\nabla \log \tilde f_j) \circ C_j$$ where $D_j$ is a right-inverse of $C_j$, $$\overrightarrow{v}=\sum_{j=1}^m p_jC_j^*C_jv_j, \alpha=\frac{1}{2}\Bigl(\dim(H)-\sum_{j=1}^m p_j \dim(H_j) \Bigl).$$ As $\partial_t \tilde f_j=\frac{1}{4\pi}\Delta \tilde f_j$, $$\partial_t u_j=\frac{1}{4\pi}(\Delta \tilde f_j)\circ C_j$$ and $$\begin{array}{lll}
    \mathrm{div}(\overrightarrow{v_j}u_j) &=& \ds -\frac{1}{4\pi}\mathrm{div}([D_j(\nabla \log \tilde f_j)\tilde f_j]\circ C_j)  \\ \\
    \nm
    &=& \ds -\frac{1}{4\pi}\mathrm{div}(C_jD_j\nabla \tilde f_j) \circ C_j \\ \\
    \nm 
    &=& \ds -\frac{1}{4\pi}(\Delta \tilde f_j)\circ C_j. \\
    \nm 
\end{array}$$ Thus, $\mathrm{div}(\overrightarrow{v_j}u_j)+\partial_t u_j=0$ for any $1 \leq j \leq m.$ Moreover, since $C_j(C_j^*C_j-\mathrm{id}_{H_j})D_j=C_jC_j^*-\mathrm{id}_{H_j}$, $$\begin{array}{lll}
\ds \mathrm{div}\Bigl(\overrightarrow{v}-\sum_{j=1}^m p_j\overrightarrow{v_j}\Bigl) &=& \ds \sum_{j=1}^m p_j \mathrm{div}((C_j^*C_j-\mathrm{id}_{H_j})\overrightarrow{v_j}) \\
\nm 
&=& \ds  -\frac{1}{4\pi}\sum_{j=1}^m p_j\mathrm{div}((C_j^*C_j-\mathrm{id}_{H_j})D_j(\nabla \log \tilde f_j)\circ C_j) \\
\nm 
&=& \ds -\frac{1}{4\pi}\sum_{j=1}^m p_j \mathrm{div}(C_j(C_j^*C_j-\mathrm{id}_{H_j})D_j(\nabla \log \tilde f_j))\circ C_j \\
\nm 
&=& \ds \frac{1}{4\pi}\sum_{j=1}^m p_j\mathrm{div}((\mathrm{id}_{H_j}-C_jC_j^*)\nabla \log \tilde f_j) \circ C_j.
\end{array}$$  By the previous remark, $$
\ds \mathrm{div}((\mathrm{id}_{H_j}-C_jC_j^*)\nabla \log \tilde f_j) \geq \ds -\frac{2\pi}{t}\mathrm{Tr}((\mathrm{id}_{H_j}-C_jC_j^*)Q_j)=-\frac{2\pi}{t}\mathrm{Tr}(\mathrm{id}_{H_j}-C_jC_j^*) 
$$ which implies by summing over $j=1,\cdots,m$, that $$\begin{array}{lll}
\ds \frac{1}{4\pi}\sum_{j=1}^m p_j\mathrm{div}((\mathrm{id}_{H_j}-C_jC_j^*)\nabla \log \tilde f_j) &\geq& \ds \frac{1}{2t}\sum_{j=1}^m p_j\mathrm{Tr}(C_j^*C_j)-\frac{1}{2t}\sum_{j=1}^m p_j\dim(H_j) \\
\nm
&=& \ds \frac{\alpha}{t}.
\end{array}$$ Finally, $$\nabla \log u_j=C_j^*(\nabla \log \tilde f_j)\circ C_j \Longrightarrow -C_j^*C_j\overrightarrow{v_j}=\frac{1}{4\pi}C_j^*C_jD_j(\nabla \log \tilde f_j)\circ C_j=\frac{1}{4\pi}\nabla \log u_j$$ which gives that $$\sum_{j=1}^m p_j\langle \overrightarrow{v}-\overrightarrow{v_j},\nabla \log u_j\rangle_H =4\pi \sum_{j=1}^m p_j \langle C_j^*C_j(\overrightarrow{v}-\overrightarrow{v_j}),-\overrightarrow{v_j}\rangle_H.$$  As $\sum_{j=1}^m p_jC_j^*C_j(\overrightarrow{v}-\overrightarrow{v_j})=\overrightarrow{v}-\sum_{j=1}^m p_jC_j^*C_j\overrightarrow{v_j}=0$, one has $$\begin{array}{lll}
\ds \sum_{j=1}^m p_j \langle \overrightarrow{v}-\overrightarrow{v_j},\nabla \log u_j\rangle_H &=& \ds 4\pi \sum_{j=1}^m p_j \langle C_j^*C_j(\overrightarrow{v}-\overrightarrow{v_j}),\overrightarrow{v}-\overrightarrow{v_j}) \geq 0 \\
\nm 
&=& \ds 4\pi \Bigl[ \sum_{j=1}^m p_j \langle C_j\overrightarrow{v_j},C_j\overrightarrow{v_j}\rangle-\langle \overrightarrow{v},\overrightarrow{v} \rangle \Bigl] \\
\nm 
&=& 4\pi \langle (I-P)A(x),A(x) \rangle \geq 0
\end{array}$$ where $$A(x)=\begin{pmatrix}
    p_1^{1/2}C_1\overrightarrow{v_1} \\
    \vdots \\
    p_m^{1/2}C_m\overrightarrow{v_m}
\end{pmatrix}, \; T=\begin{pmatrix}
    \cdots p_1^{1/2}C_1 \cdots \\
    \vdots \\
    \cdots p_m^{1/2}C_m \cdots
\end{pmatrix}$$ and $P=TT^*=T(T^*T)^{-1}T^*$ is  a projection (recall by assumption, $T^*T=\sum_{j=1}^m p_jC_j^*C_j=\mathrm{id}_{H}$). The conditions to apply Lemma $2.6$ are all fulfilled and hence we get the desired claim. 
\end{proof}

\begin{remark} \label{Important_rk_equality_cases}
There would be equality $$\int_H \prod_{j=1}^m (f_j \circ C_j)^{p_j} \, = \mathrm{BL}(\textbf{C},p,\textbf{Q})\prod_{j=1}^m \Bigl( \int_{H_j}f_j\Bigl)^{p_j}$$ if and only if $$\mathrm{Tr}((\mathrm{id}_{H_j}-C_jC_j^*)(D^2\log \tilde f_j+\frac{2\pi}{t}Q_j))=0 \quad \forall 1 \le j \le m, $$ as the trace of a product of two positive semi-definite (or two negative semi-definite) matrices and $$\langle (I-P)A(x),A(x)\rangle=0.$$ 
\end{remark}

\hfill \\

One can give a satisfactory algebraic characterization of Gaussian extremizable Brascamp-Lieb data using the previously introduced definitions. 

\begin{proposition} \label{Prop_Equiv_BL_constant}
    Let $(\textbf{C},p,\textbf{Q},I)$ be a Brascamp-Lieb datum with $p_j>0$ for all $j$, $B=(B_j)_{1 \leq j \leq m}$ such that $0<B_j \leq Q_j$ for any $j \in I$, $B_j \ge Q_j$ for any $j \in I^c.$ Denote by $M=\sum_{j=1}^m p_jC_j^* B_jC_j.$ Then the following assertions are equivalent: 
    \begin{enumerate}
        \item [(a)] $B$ is a global extremizer, \textit{i.e.} $$\mathrm{BL}(\textbf{C},p,\textbf{Q},I)=\frac{\prod_{j=1}^m \det(B_j)^{p_j/2}}{\det(M)^{1/2}},$$
        \item[(b)] $B$ is a local extremizer,
        \item[(c)] $M$ is invertible with $$B_j^{-1} \geq C_jM^{-1}C_j^* \quad \forall j \in I,\, B_j^{-1} \le C_jM^{-1}C_j^* \quad \forall j \in I^c$$ and $$ (B_j^{-1}-C_jM^{-1}C_j^*)(Q_j-B_j)=0 \quad \forall 1 \leq j \leq m,$$
        \item[(d)] $(\textbf{C},p,\textbf{Q},I)$ is equivalent to a generalized geometric BL datum $(\textbf{C'},p',\textbf{Q'},I)$ with intertwining operators $D=M^{-1/2}, D_j=B_j^{-1/2}.$
    \end{enumerate}
\end{proposition}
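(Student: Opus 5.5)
I will prove the cycle (a)$\Rightarrow$(b)$\Rightarrow$(c)$\Rightarrow$(d)$\Rightarrow$(a). The implication (a)$\Rightarrow$(b) is trivial.

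\textbf{(d)$\Rightarrow$(a).} Since the intertwining maps are $D=M^{-1/2}$ and $D_j=B_j^{-1/2}$, \cref{Equiv_BL_Constants} gives
\[
\mathrm{BL}(\textbf{C},p,\textbf{Q},I)=\frac{\det(D)}{\prod_j\det(D_j)^{p_j}}\,\mathrm{BL}(\textbf{C'},p,\textbf{Q'},I).
\]
By \cref{BL_Geometric_equals_1} the second factor equals $1$, so the right-hand side is exactly $\prod_j\det(B_j)^{p_j/2}/\det(M)^{1/2}$. This is (a).

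\textbf{(c)$\Rightarrow$(d).} Set $C_j'=B_j^{1/2}C_jM^{-1/2}$ and $Q_j'=B_j^{-1/2}Q_jB_j^{-1/2}$; these match $C_j'=D_j^{-1}C_jD$ and $Q_j'=D_j^*Q_jD_j$. I then check the geometric conditions one by one.
\begin{itemize}
\item $\sum_j p_jC_j'^*C_j'=M^{-1/2}MM^{-1/2}=\mathrm{id}_H$.
\item $C_j'C_j'^*=B_j^{1/2}C_jM^{-1}C_j^*B_j^{1/2}$, so $C_j'C_j'^*\le\mathrm{id}$ is equivalent to $C_jM^{-1}C_j^*\le B_j^{-1}$ for $j\in I$, and the reverse inequality holds for $j\in I^c$.
\item $Q_j'\ge\mathrm{id}$ is equivalent to $Q_j\ge B_j$ for $j\in I$, and the reverse holds for $j\in I^c$.
\item $(\mathrm{id}-C_j'C_j'^*)(Q_j'-\mathrm{id})=B_j^{1/2}(B_j^{-1}-C_jM^{-1}C_j^*)(Q_j-B_j)B_j^{-1/2}$, which vanishes by (c).
\end{itemize}

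\textbf{(b)$\Rightarrow$(c).} This is the main step. I will write first-order optimality conditions for $\Phi(A)=\sum_jp_j\log\det A_j-\log\det(\sum_jp_jC_j^*A_jC_j)$ at $B$ over the constraint set $\{0<A_j\le Q_j\}$ for $j\in I$ and $\{A_j\ge Q_j\}$ for $j\in I^c$.
\begin{itemize}
\item $M$ is invertible, otherwise the value would be infinite or undefined at a local extremizer of a finite quantity.
\item The directional derivative in a symmetric direction $S_j$ is $p_j\mathrm{Tr}\bigl((B_j^{-1}-C_jM^{-1}C_j^*)S_j\bigr)$. Write $K_j=B_j^{-1}-C_jM^{-1}C_j^*$.
\item For $j\in I$, admissible directions are $S_j=-P$ for any $P\ge0$ (decreasing $B_j$ stays admissible). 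Local maximality gives $\mathrm{Tr}(K_jP)\ge0$ for all $P\ge0$, hence $K_j\ge0$.
\item The direction $S_j=t(Q_j-B_j)$ is admissible for small $t$ of either sign, since $B_j+t(Q_j-B_j)$ lies between $B_j$ and $Q_j$ and stays positive for small $|t|$ when $t<0$. This gives $\mathrm{Tr}(K_j(Q_j-B_j))=0$.
\item Since $K_j\ge0$ and $Q_j-B_j\ge0$, a vanishing trace of their product forces $K_j(Q_j-B_j)=0$, using $\mathrm{Tr}(XY)=\|X^{1/2}Y^{1/2}\|^2$.
\item For $j\in I^c$ the argument is symmetric: directions $+P$ are admissible, so $K_j\le0$, and $Q_j-B_j\le0$ again yields $K_j(Q_j-B_j)=0$.
\end{itemize}

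The main obstacle I expect is the interpretation of "local extremizer" when $B_j$ sits on the boundary in a non-full-rank way, that is, when $Q_j-B_j$ is singular. In that case only one-sided variations exist in some directions. My argument above uses only the admissible one-sided and segment directions, so it should still go through. I also need to make sure the cone $-\{P\ge0\}$ suffices to conclude $K_j\ge0$, which it does by duality of the positive semidefinite cone.
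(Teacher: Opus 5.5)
Your proof is correct and follows the same cycle (a)$\Rightarrow$(b)$\Rightarrow$(c)$\Rightarrow$(d)$\Rightarrow$(a) as the paper. Your (c)$\Rightarrow$(d) and (d)$\Rightarrow$(a) are exactly the paper's computations; you also spell out the $j\in I^c$ inequalities, which the paper leaves implicit.

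The real difference is in (b)$\Rightarrow$(c). The paper sets $V_j=\ker(Q_j-B_j)$ with inclusion $i_j$ and perturbs $B_j$ by arbitrary self-adjoint $G_j$ with $i_j^*G_ji_j\le 0$. From this it gets $K_j\ge 0$, and $\mathrm{Tr}(K_jG_j)=0$ whenever $i_j^*G_ji_j=0$. It then concludes that $K_j=i_jN_ji_j^*$ has range in $V_j$, hence $K_j(Q_j-B_j)=0$. You use only the cone $\{-P: P\ge 0\}$ and the single two-sided direction $\pm(Q_j-B_j)$, and finish with $\mathrm{Tr}(XY)=\|X^{1/2}Y^{1/2}\|^2$.

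Your route is more economical, and every direction you use is genuinely admissible. The paper claims that $0<B_j+\epsilon G_j\le Q_j$ for small $\epsilon>0$ whenever $i_j^*G_ji_j\le 0$. This is false when that compression is singular and $G_j$ has off-diagonal blocks. For example, take $Q_j-B_j=\mathrm{diag}(0,1)$ and $G_j$ with zero diagonal and off-diagonal entries $1$; then $Q_j-B_j\mp\epsilon G_j$ has determinant $-\epsilon^2$. To be rigorous, the paper's argument needs an extra approximation: replace $G_j$ by $G_j-\delta\, i_ji_j^*$ and let $\delta\downarrow 0$. The structural fact the paper records, $\mathrm{Ran}(K_j)\subset\ker(Q_j-B_j)$, is equivalent to your product identity, since both operators are self-adjoint. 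So nothing is lost by your shortcut.

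One small remark: invertibility of $M$ needs no argument from the value of the ratio. Since $B_j>0$ and $\bigcap_j\ker C_j=\{0\}$, you have $\langle Mx,x\rangle=\sum_j p_j\langle B_jC_jx,C_jx\rangle>0$ for $x\neq 0$, so $M>0$ automatically.
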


\begin{proof}
    The implication from $(a)$ to $(b)$ is trivial. Let us verify that $(b)$ implies $(c)$. 
    By assumption, $B$ is a local extremizer for the function $$f(A_1,\cdots,A_m)=\sum_{j=1}^m p_j \log \det A_j -\log \det \Bigl(\sum_{j=1}^m p_jC_j^*A_jC_j \Bigl)$$ under the constraint $0<A_j \leq Q_j$ for any $j \in I$, $A_j \ge Q_j$ for any $j \in I^c.$ Fix some $j \in I$ and denote by $V_j$ the kernel of the positive semi-definite operator $Q_j-B_j.$ We also consider $\textit{i}_j: V_j \rightarrow H_j$ the inclusion map and $G_j$ a self-adjoint operator that is negative semi-definite on $V_j$ (in other words,  $\textit{i}_j^*G_j\textit{i}_j \leq 0$). Hence, $0<B_j+\epsilon G_j \leq Q_j$ for all $\epsilon>0$ sufficiently small and as $B$ is a local extremizer, $$\limsup_{\epsilon \downarrow 0}\frac{f(B_1,\cdots,B_{j-1},B_j+\epsilon G_j,B_{j+1},\cdots,B_m)-f(B_1,\cdots,B_m)}{\epsilon} \leq 0$$ which gives that $$\mathrm{Tr}((B_j^{-1}-C_jM^{-1}C_j^*)G_j) \leq 0$$ whenever $G_j$ is negative semi-definite on $V_j.$ \hfill \\ In particular, as the previous inequality holds whenever $G_j$ is negative semi-definite on $H_j$, one obtains that $$B_j^{-1} \geq C_jM^{-1}C_j^*.$$ Furthermore, $$\mathrm{Tr}((B_j^{-1}-C_jM^{-1}C_j^*)G_j)=0 \text{ if } \textit{i}_j^*G_j\textit{i}_j=0, G_j^*=G_j.$$ \hfill \\ By positive semi-definiteness of $B_j^{-1}-C_jM^{-1}C_j^*$, one deduces that it must be equal to $\textit{i}_jN_j\textit{i}_j^*$ for a self-adjoint transformation $N_j: V_j \rightarrow V_j.$  But $(Q_j-B_j)\textit{i}_j=0$ by definition of $V_j$ which implies that $\textit{i}_j^*(Q_j-B_j)=0$ and so $$(B_j^{-1}-C_jM^{-1}C_j^*)(Q_j-B_j)=0.$$ 
    Similarly, we fix $j \in I^c$ and denote again by $V_j$ the kernel of the positive semi-definite operator $B_j-Q_j.$ Considering $G_j$ this time to be a self-adjoint operator that is positive semi-definite on $V_j$, we may observe that $B_j+\epsilon G_j \ge Q_j$ for all $\epsilon>0$ and as $B$ is a local extremizer, $$\limsup_{\epsilon \downarrow 0}\frac{f(B_1,\cdots,B_{j-1},B_j+\epsilon G_j,B_{j+1},\cdots,B_m)-f(B_1,\cdots,B_m)}{\epsilon} \leq 0$$ which gives that $$\mathrm{Tr}((B_j^{-1}-C_jM^{-1}C_j^*)G_j) \leq 0$$ whenever $G_j$ is positive semi-definite on $V_j.$ This implies in particular that $$B_j^{-1} \le C_jM^{-1}C_j^*$$ and by the same argument as previously, one also derives that $$(B_j^{-1}-C_jM^{-1}C_j^*)(B_j-Q_j)=0.$$
    
    \hfill \\ We prove that $(c)$ implies $(d)$. Let us define $D=M^{-1/2}, D_j=B_j^{-1/2}, Q_j'=B_j^{-1/2}Q_jB_j^{-1/2}$ and $C_j'=B_j^{1/2}C_jM^{-1/2}$ for any $1 \leq j \leq m.$ We claim that $(\textbf{C'},p,\textbf{Q'})$ is a generalized geometric BL datum. Indeed, as $B_j \leq Q_j,$ one has $Q_j' \geq \mathrm{id}_{H_j}$ and $$C_j'(C_j')^*=B_j^{1/2}C_jM^{-1}C_j^*B_j^{1/2} \leq B_j^{1/2}B_j^{-1}B_j^{1/2}=\mathrm{id}_{H_j}.$$ Moreover, $$\sum_{j=1}^m p_j(C_j')^*C_j'=M^{-1/2}\Bigl(\sum_{j=1}^m p_jC_j^*B_jC_j\Bigl)M^{-1/2}=\mathrm{id}_H$$ by definition of $M$ and $$(\mathrm{id}_{H_j}-C_j'(C_j')^*)(Q_j'-\mathrm{id}_{H_j})=B_j^{1/2}(B_j^{-1}-C_jM^{-1}C_j^*)B_j^{1/2}B_j^{-1/2}(Q_j-B_j)B_j^{-1/2}=0$$ by the second assumption of $(c).$ \\ Finally, $(d)$ implies $(a)$ as a consequence of \cref{Equiv_BL_Constants,BL_Geometric_equals_1}. 
\end{proof}

\hfill \\

The dual regularized Brascamp-Lieb inequality \cref{Reg_Dual_BL_Log_Convexity}, together with \cref{Prop_Equiv_BL_constant} for $I=\emptyset$, allows us to extend a result of \cite[Section 6.2]{Nakamura}:

\begin{corollary}
Let $p_1,p_2 \in (0;1)$ be such that $p_1+p_2<1.$ Then, for $\mathbb{R}^n \ni Q_1,Q_2>0$, if $f_j$ is more log-convex than $g_{Q_j^{-1}}$ for $j=1,2$, $$\Bigl(\int_{\mathbb{R}^n}f_1\Bigl)^{p_1}\Bigl( \int_{\mathbb{R}^n}f_2\Bigl)^{p_2} \leq  \Bigl( \frac{\det(Q_1)^{p_1}\det(Q_2)^{p_2}}{\det(p_1Q_1+p_2Q_2)}\Bigl)^{1/2}\int_{\mathbb{R}^n}\esssup_{x=p_1x_1+p_2x_2}f_1(x_1)^{p_1}f_2(x_2)^{p_2}\, dx$$ if and only if $$Q_j \geq p_1Q_1+p_2Q_2 \text{ for } j=1,2.$$ 
\end{corollary}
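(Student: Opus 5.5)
The plan is to read the corollary as the special case of the dual inequality \cref{Reg_Dual_BL_Log_Convexity} with $\mathcal Q=0$, $m=2$, $H=H_1=H_2=\mathbb{R}^n$ and $C_1=C_2=\mathrm{id}_{\mathbb{R}^n}$. In that case $\sum_j p_jC_j^*y_j=x$ becomes $x=p_1x_1+p_2x_2$, and the hypothesis ``$f_j$ more log-convex than $g_{Q_j^{-1}}$'' is exactly the hypothesis of \cref{Reg_Dual_BL_Log_Convexity}. That inequality then reads
\[
\frac{\int_{\mathbb{R}^n}\sup_{x=p_1x_1+p_2x_2}f_1(x_1)^{p_1}f_2(x_2)^{p_2}\,dx}{(\int f_1)^{p_1}(\int f_2)^{p_2}}\;\geq\;\inf_{B_j\geq Q_j}\Bigl\{\frac{\det(p_1B_1+p_2B_2)}{(\det B_1)^{p_1}(\det B_2)^{p_2}}\Bigr\}^{1/2}=\mathrm{BL}(\mathbf{C},p,\mathbf{Q},\emptyset)^{-1}.
\]
The datum is non-degenerate and the constant is finite and positive. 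Indeed, replacing $B$ by $\lambda B$ multiplies the ratio $\prod_j(\det B_j)^{p_j}/\det M$ by $\lambda^{n(p_1+p_2-1)}$, which tends to $0$ as $\lambda\to\infty$ because $p_1+p_2<1$. I will also check that replacing $\sup$ by $\esssup$ is harmless: the envelope is log-convex up to a Gaussian factor, so it coincides with its essential supremum version almost everywhere. The claimed constant
\[
\Bigl(\frac{\det(Q_1)^{p_1}\det(Q_2)^{p_2}}{\det(p_1Q_1+p_2Q_2)}\Bigr)^{1/2}
\]
is precisely the reciprocal of the generalized BL expression evaluated at $B=Q$. So the corollary reduces to the following claim: the stated inequality holds for all admissible $f_j$ if and only if $B=(Q_1,Q_2)$ is a global extremizer of $\mathrm{BL}(\mathbf{C},p,\mathbf{Q},\emptyset)$.

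For sufficiency I will use \cref{Prop_Equiv_BL_constant} with $I=\emptyset$ and $B_j=Q_j$, so that $M=p_1Q_1+p_2Q_2$. Condition (c) requires $B_j^{-1}\leq C_jM^{-1}C_j^*$, that is $Q_j^{-1}\leq M^{-1}$, which is equivalent to $Q_j\geq p_1Q_1+p_2Q_2$. The complementary condition $(B_j^{-1}-M^{-1})(Q_j-B_j)=0$ is automatic, since $Q_j-B_j=0$. By (c)$\Rightarrow$(a), the infimum above is attained at $B=Q$ and equals the reciprocal of the claimed constant. Multiplying through then yields the inequality.

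For necessity I will test the inequality on Gaussian inputs $f_j=g_{B_j^{-1}}$ with $B_j\geq Q_j$. These are admissible, because $B_j^{-1}\leq Q_j^{-1}$ makes $g_{B_j^{-1}}$ more log-convex than $g_{Q_j^{-1}}$. By the computation in the proof of \cref{Theorem_REG_BL}, which uses Barthe's lemma ($R=Q^*$ and $\det R=(\det Q)^{-1}$), the left-hand ratio for these inputs equals $\{\det(p_1B_1+p_2B_2)/\prod_j(\det B_j)^{p_j}\}^{1/2}$. The assumed inequality therefore says that this quantity is at least its value at $B=Q$ for every admissible $B$. In other words, $Q$ is a global, hence local, extremizer, and (b)$\Rightarrow$(c) of \cref{Prop_Equiv_BL_constant} gives $Q_j^{-1}\leq M^{-1}$, i.e. $Q_j\geq p_1Q_1+p_2Q_2$.

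I expect the main obstacle to be bookkeeping rather than ideas: keeping the direction of the constraints straight. The hypothesis ``more log-convex than $g_{Q_j^{-1}}$'' corresponds to $B_j\geq Q_j$ on the primal side, i.e. $I=\emptyset$. The relevant inequality in (c) is $B_j^{-1}\leq C_jM^{-1}C_j^*$ and not its reverse. One must also confirm that the explicit Gaussian evaluation of the left side really matches the determinant expression, including the normalisations $\int g_{B_j^{-1}}=(\det B_j)^{1/2}$.
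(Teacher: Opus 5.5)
Your proposal is correct and follows the paper's route. The paper obtains the corollary by specializing the dual inequality \cref{Reg_Dual_BL_Log_Convexity} to $\mathcal Q=0$ and $C_1=C_2=\mathrm{id}_{\mathbb{R}^n}$, then applying \cref{Prop_Equiv_BL_constant} with $I=\emptyset$ at $B=(Q_1,Q_2)$, where condition (c) reduces to $Q_j^{-1}\leq (p_1Q_1+p_2Q_2)^{-1}$. Your justification that $\sup$ may be replaced by $\esssup$ is roundabout. It is enough to note that a nonzero function more log-convex than a Gaussian is positive and continuous, so on each fiber $\{x=p_1x_1+p_2x_2\}$ the supremum and the essential supremum coincide.
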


\bigskip

\begin{example}[Some examples on Gaussian extremizability]
\hfill \\

\begin{enumerate} 

    \item [(i)]  Consider $n=4, n_i=1, q_i=1$ for all $1 \leq i \leq 5.$ Moreover, take $C_1=(1,-1,0,0), C_2=(0,1,0,0), C_3=(1,0,0,0), C_4=(0,0,1,0), C_5=(0,0,0,1)$ and $p_1=p_4=p_5=2,p_2=p_3=\frac{1}{2}.$ 
    Then, one can check that $$\sum_{i=1}^5 p_i\dim(C_iV)=\sum_{V \not \subset \ker(C_i)}p_i \geq \dim(V) \text{ for any linear subspace } V \subset \mathbb{R}^4.$$ 
    Notice that 
    $$\sum_{i=1}^5 p_ib_iC_i^*C_i=\begin{pmatrix}
     2b_1+\frac{1}{2}b_3 & -2b_1 & 0 & 0 \\
    -2b_1 & 2b_1+\frac{1}{2}b_2 & 0 & 0 \\
        0 & 0 & 2b_4 & 0 \\
        0 & 0 & 0 & 2b_5 
    \end{pmatrix}$$ 
    which implies that $$f(b_1,b_2,b_3,b_4,b_5)=\frac{b_1^2b_2^{1/2}b_3^{1/2}b_4^2b_5^2}{(b_1b_2+b_1b_3+\frac{1}{4}b_2b_3)4b_4b_5}=\frac{b_1b_4b_5}{4}\frac{b_2^{1/2}b_3^{1/2}}{b_2+b_3+\frac{1}{4b_1}b_2b_3}.$$ To determine $$\sup_{0<b_i \leq 1}f(b_1,b_2,b_3,b_4,b_5),$$ it is clear that we need to take $b_1=b_4=b_5=1$ so we are left with the computation of $$\sup_{0<b_2,b_3 \leq 1}\frac{b_2^{1/2}b_3^{1/2}}{b_2+b_3+\frac{1}{4}b_2b_3}.$$ Since $b_2^{1/2}b_3^{1/2} \leq \frac{1}{2}(b_2+b_3)$, the supremum is less than $\frac{1}{2}.$ Moreover, if we take $b_2=b_3=\epsilon$, then $$\frac{b_2^{1/2}b_3^{1/2}}{b_2+b_3+\frac{1}{4}b_2b_3}=\frac{\epsilon}{2\epsilon+\frac{1}{4}\epsilon^2}=\frac{1}{2+\frac{1}{4}\epsilon} \rightarrow_{\epsilon \rightarrow 0} \frac{1}{2}.$$ So the supremum is $1/2$ but can never be attained. This shows that not every non-degenerate Brascamp-Lieb data is extremizable. 
    
    \hfill \\

    \item[(ii)] Let us instead consider a Gaussian extremizable Brascamp-Lieb datum. 
    Set $n=2=n_1,n_2=1, Q_1=I_2, Q_2=1$ and $C_1=\begin{pmatrix}
        1 \; 1 \\
        0 \; 1
    \end{pmatrix}, C_2=(1,2), p_1=1,p_2=2.$ This datum satisfies the desired conditions for finiteness with a critical subspace $V=\mathrm{span}\{(2,-1)\}=\ker(C_2)$. Writing $B_1$ as $\begin{pmatrix}
        b_1^{(1)} \; b_1^{(2)} \\
        b_1^{(2)} \; b_1^{(3)}
    \end{pmatrix}$, we find that $$p_1C_1^*B_1C_1+p_2C_2^*B_2C_2=\begin{pmatrix}
        b_1^{(1)}+2b_2 \quad b_1^{(1)}+b_1^{(2)}+4b_2 \\
        b_1^{(1)}+b_1^{(2)}+4b_2 \quad b_1^{(1)}+2b_1^{(2)}+b_1^{(3)}+8b_2
    \end{pmatrix}$$ and computing the determinant gives $$b_1^{(1)}b_1^{(3)}-(b_1^{(2)})^2+2b_2(b_1^{(1)}+b_1^{(3)}-2b_1^{(2)}).$$ 
    So, we need to maximize $$\frac{\det(B)b_2^2}{\det(B)+2b_2\langle B(1,-1)^T,(1,-1)^T\rangle}=\frac{b_2^2}{1+2b_2\langle \frac{B}{\det(B)}(1,-1)^T,(1,-1)^T\rangle}$$ or in other words, to minimize $\langle \frac{B}{\det(B)}(1,-1)^T,(1,-1)^T\rangle.$
Notice that any $0<B \leq I_2$ can be written as $$B=U\mathrm{diag}(\lambda_1,\lambda_2)U^T, 0<\lambda_1 \leq \lambda_2 \leq 1, U \in O(n)$$ and by Rayleigh's principle, $$\Bigl \langle \frac{B}{\det(B)}\frac{(1,-1)^T}{\sqrt 2},\frac{(1,-1)^T}{\sqrt 2}\Bigl \rangle \geq \frac{\lambda_1}{\lambda_1 \lambda_2}=\frac{1}{\lambda_2} \geq 1.$$
    
    Taking $$B_\epsilon=\begin{pmatrix}
        1 \; 1 \\
        -1 \; 1
    \end{pmatrix} \begin{pmatrix}
        \epsilon \; 0 \\
        0 \; 1
    \end{pmatrix} \begin{pmatrix}
        1 \; 1 \\
        -1 \; 1
    \end{pmatrix}^{-1}=\frac{1}{2}\begin{pmatrix}
        1+\epsilon \quad 1-\epsilon \\
        1-\epsilon \quad 1+\epsilon
    \end{pmatrix},$$ we see that $\det(B_\epsilon)=\epsilon, B_\epsilon(1,-1)^T=\epsilon(1,-1)^T$ which gives $$\frac{b_2^2}{1+4b_2}$$ maximized at $b_2=1$. So, the supremum is equal to $\frac{1}{5}$ and is achieved at the family $(B_\epsilon,1)$ for any $0<\epsilon<1$. 
\end{enumerate}
\end{example}

\hfill \\

\bigskip

\section{Optimizers of Regularized Brascamp-Lieb Inequalities}

\hfill \\

We would like to turn to the study of equality cases for Gaussian extremizable Brascamp-Lieb data. Indeed, investigating equality cases is only meaningful when the datum $(\textbf{C},p,\textbf{Q})$ has a Brascamp-Lieb constant $\mathrm{BL}(\textbf{C},p,\textbf{Q})$ that is extremizable. But, by \cref{Ext_implies_Gaussian_ext}, this implies that the Brascamp-Lieb constant must be Gaussian extremizable. Moreover, by \cref{Prop_Equiv_BL_constant}, one may first reduce to considering the general extremizers of a generalized geometric BL. 
For notational simplicity, we will focus on the case $I=\{1,\cdots,m\}$ but the next arguments allow to conclude that the extremizers for general $I \subset \{1,\cdots,m\}$ will have the same form as in the case where all functions are more log-convex than some fixed Gaussians.

\hfill \\

Suppose that the inputs $(f_j)_{1 \leq j \leq m}$ are Schwarz functions such that $f_j$ is more log-convex than $g_{Q_j}$ for any $1 \leq j \leq m$ and attain equality for the generalized geometric Brascamp-Lieb datum $(\textbf{C},p,\textbf{Q})$, \textit{i.e.} $$\int_H \prod_{j=1}^m (f_j \circ C_j)^{p_j} =\prod_{j=1}^m \Bigl (\int_{H_j}f_j\Bigl)^{p_j}.$$ 
\hfill \\
Fix some time $t>1.$ Let us first derive from the second condition obtained in \cref{Important_rk_equality_cases} an equality between the gradients of $\log \tilde Q$ and $\log \tilde f_j$, which will be fundamental for the rest of the argument. 
Since $\langle (I-P)A(x),A(x)\rangle=0$ with $P=TT^*$, one has $$A(x)=T\beta(x)$$ for almost every $x \in  H$ where $\beta: H \rightarrow H.$ This implies that $p_j^{1/2}C_j \overrightarrow{v_j}=p_j^{1/2}C_j\beta(x)$ for any $1 \leq j \leq m$ by equality of the rows. In other words, $$-\frac{1}{4\pi}(\nabla \log \tilde f_j)(C_jx)=C_j\beta(x) \quad \forall 1 \leq j \leq m.$$ This shows that $\beta$ is smooth and $A(x)=T\beta(x)$ must hold for all $x \in H.$

Then, as $$\log \tilde Q(x)=\frac{\sum_{j=1}^m p_j\dim(H_j)-\dim(H)}{2}\log(t)+\sum_{j=1}^m p_j\log (\tilde f_j(C_jx)),$$ one has, using $\sum_{j=1}^m p_jC_j^*C_j=\mathrm{id}_{H_j}$, $$\nabla \log \tilde Q(x)=\sum_{j=1}^m p_j C_j^*(\nabla \log \tilde f_j)(C_jx)=-4\pi \sum_{j=1}^m p_jC_j^*C_j\beta(x)=-4\pi \beta(x).$$ Thus, \begin{equation} \label{Fundamental_eq_equality_cases} C_j\nabla \log \tilde Q(x)=(\nabla \log \tilde f_j)(C_jx)\end{equation} for all $x \in H.$  
\hfill \\

\hfill \\

It is now time to extract as much information as possible from the first condition of \cref{Important_rk_equality_cases}. 
Setting $B_j= \bigl[ (t-1)\mathrm{id}_{H_j}+Q_j^{-1} \bigl]^{-1},$ note that $$\begin{array}{lll}
\ds \frac{Q_j}{t}-B_j &=& \ds \frac{Q_j}{t}- \bigl[ (t-1)\mathrm{id}_{H_j}+Q_j^{-1} \bigl]^{-1} \\ \\
\nm 
&=& \ds \Bigl(\frac{Q_j}{t} \bigl[ (t-1)\mathrm{id}_{H_j}+Q_j^{-1} \bigl]-\mathrm{id}_{H_j}\Bigl)\bigl[ (t-1)\mathrm{id}_{H_j}+Q_j^{-1} \bigl]^{-1} \\ \\
\nm 
&=& \ds \frac{t-1}{t}(Q_j-\mathrm{id}_{H_j})\bigl[ (t-1)\mathrm{id}_{H_j}+Q_j^{-1} \bigl]^{-1}.
\end{array}$$ By assumption, $$\mathrm{Tr}((\mathrm{id}_{H_j}-C_jC_j^*)(D^2\log \tilde f_j+\frac{2\pi }{t}Q_j))=0$$ and $(\mathrm{id}_{H_j}-C_jC_j^*)(Q_j-\mathrm{id}_{H_j})=0$, which shows that $$\mathrm{Tr}((\mathrm{id}_{H_j}-C_jC_j^*)(D^2\log \tilde f_j+2\pi B_j))=0.$$ Further, since $\mathrm{id}_{H_j}-C_jC_j^*$ and $ D^2\log \tilde f_j+2\pi B_j$ are both positive semi-definite, the previous trace equality actually means that $$(\mathrm{id}_{H_j}-C_jC_j^*)^{1/2}(D^2 \log \tilde f_j+2\pi B_j)(\mathrm{id}_{H_j}-C_jC_j^*)^{1/2}=0.$$ \hfill \\ For notational convenience, set $A_j=\mathrm{id}_{H_j}-C_jC_j^*<\mathrm{id}_{H_j}.$ As $A_j,Q_j$ commute, they are simultaneously diagonalizable or in other words, there exists an orthonormal basis of $H_j$, $\{v_1^{(j)},\cdots,v_{n_j}^{(j)}\}$, such that $$A_jv_i^{(j)}=a_i^{(j)}v_i^{(j)}, Q_jv_i^{(j)}=q_i^{(j)}v_i^{(j)} \quad \forall 1 \leq i \leq n_j$$ with $1>a_{1}^{(j)} \geq \cdots \geq a_{n_j}^{(j)} \geq 0,\;  q_i^{(j)} \geq 1$ for any $1 \leq i \leq n_j$ (we can only ask for one set of eigenvalues to be ordered).  Set $0 \leq k_j \leq n_j$ such that $a_i^{(j)}>0$ for any $1 \leq i \leq k_j, a_i^{(j)}=0$ for all $k_j+1 \leq i \leq n_j.$ Denoting $V_j \in O(n_j)$ the change of basis matrix, $$Q_j=V_j \,\mathrm{diag}(q_1^{(j)},\cdots,q_{n_j}^{(j)}) V_j^T \Longrightarrow B_j=V_j\, \mathrm{diag}\Bigl(\frac{1}{t-1+\frac{1}{q_1^{(j)}}},\cdots,\frac{1}{t-1+\frac{1}{q_{n_j}^{(j)}}}\Bigl)V_j^T.$$ Moreover, as $(\mathrm{id}_{H_j}-C_jC_j^*)(Q_j-\mathrm{id}_{H_j})=0$, $q_i^{(j)}=1$ for any $1 \leq i \leq k_j.$ In particular, the $k_j$-th first eigenvalues of $B_j$ are $t^{-1}.$ Applying the eigenvectors of $A_j$ that are not associated to a zero eigenvalue to the equality $A_j^{1/2}(D^2 \log \tilde f_j+2\pi B_j)A_j^{1/2}=0$, one obtains that $$A_j^{1/2}((D^2 \log \tilde f_j )v_i^{(j)}+2\pi B_jv_i^{(j)})=0 \quad \forall 1 \leq i \leq k_j$$ which implies that $$(D^2 \log \tilde f_j)v_i^{(j)}=-\frac{2\pi}{t}v_i^{(j)}+\sum_{l=k_{j}+1}^{n_j}\alpha_l v_l^{(j)}.$$ This means that decomposing $H_j$ as $\mathrm{Ran}(\mathrm{id}_{H_j}-C_jC_j^*)=\mathrm{span}\{v_1^{(j)},\cdots,v_{k_j}^{(j)}\}$ in direct sum with $\ker(\mathrm{id}_{H_j}-C_jC_j^*)=\mathrm{span}\{v_{k_j+1}^{(j)},\cdots,v_{n_j}^{(j)}\}$, one gets that $$ D^2(\log \tilde f_j)=V_j\left(\begin{array}{ c | c }
    \mathrm{diag}(-\frac{2\pi}{t},\cdots,-\frac{2\pi}{t}) & \cdot_j \\
    \hline
    \cdot_j & \bullet_j 
  \end{array}\right)V_j^T.$$ This uses completely the first condition of \cref{Important_rk_equality_cases} in the sense that if $D^2(\log \tilde f_j)$ is of this form, then it satisfies the trace equality. 
  Recall the two remaining conditions that are available to us: \begin{equation} \label{Remaining_conditions_eq} (\nabla \log \tilde f_j)(C_jx)=C_j\nabla \log \tilde Q(x) \text{ and } D^2\log \tilde f_j \geq -2\pi B_j \geq -2\pi \frac{Q_j}{t}.\end{equation}

\hfill \\

It will be useful to decompose $H$ as a direct sum containing $\ker(C_j)$. More precisely, one may write $$H=\ker(C_j) \oplus \mathrm{Ran}(C_j^*)=\ker(C_j) \oplus C_j^*\mathrm{Ran}(\mathrm{id}_{H_j}-C_jC_j^*)\oplus C_j^* \ker(\mathrm{id}_{H_j}-C_jC_j^*).$$ Indeed, $C_jx=\sum_{i=1}^{n_j}\beta_i v_i^{(j)}, C_j(C_j^*v_i^{(j)})=(1-a_i^{(j)})v_i^{(j)}$ which implies that $$C_j\Bigl(x-\sum_{i=1}^{n_j}\frac{\beta_i}{1-a_i^{(j)}}C_j^*v_i^{(j)}\Bigl)=0.$$

\hfill \\

Unless $Q_j>\mathrm{id}_{H_j}$ (in which case $C_jC_j^*=\mathrm{id}_{H_j}$ and one can reduce to the analysis of Valdimarsson in \cite{Vald2}), we have $$0 \leq D^2(\log \tilde f_j)+2\pi \frac{Q_j}{t}=V_j\left(\begin{array}{ c | c }
    \mathrm{diag}(0,\cdots,0) & \cdot_j \\
    \hline
    \cdot_j & \bullet_j+\mathrm{diag}(\frac{2\pi}{t}q_{k_j+1},\cdots,\frac{2\pi}{t}q_{n_j})
  \end{array}\right)V_j^{T},$$ which means that $\cdot_j=0, \bullet_j \geq -\mathrm{diag}(\frac{2\pi}{t}q_{k_j+1},\cdots,\frac{2\pi}{t}q_{n_j}).$

\hfill \\

By differentiating the first equality in \cref{Remaining_conditions_eq}, one gets $$D^2(\log \tilde f_j)(C_jx)C_j=C_jD^2\log \tilde Q(x)$$ and in particular, for $k_j+1 \leq i \leq n_j$, $$D^2\log \tilde f_j(C_jx)v_i^{(j)}=D^2\log \tilde f_j(C_jx)C_jC_j^*v_i^{(j)}=C_jD^2\log \tilde Q(x)C_j^*v_i^{(j)}$$ and so, $$D^2 (\log \tilde f_j)(C_jx)=V_j\left(\begin{array}{ c | c }
    \mathrm{diag}(-\frac{2\pi}{t},\cdots,-\frac{2\pi}{t}) & 0 \\
    \hline
    0 & C_jD^2(\log \tilde Q)(x)C_j^* \bigl|_{\ker(A_j)}
  \end{array}\right)V_j^{T}.$$

Thus, further investigation of $D^2 (\log \tilde Q)$ is needed.  

\hfill \\

Notice that for $c_j \in \mathrm{Ran}(C_j^*), c_j=C_j^*e_j$, $$\langle \nabla \log\tilde Q(x),c_j\rangle=-4\pi\langle \beta(x), C_j^*e_j\rangle=\langle \nabla \log \tilde f_j(C_jx),e_j\rangle.$$ Differentiating in a direction of $\ker(C_j)$, one obtains that for $a_j \in \ker(C_j)$, $$\langle D^2 (\log \tilde Q)(x)a_j,C_j^*e_j \rangle=0.$$ Moreover, for any $1 \leq i \leq k_j$, $$\begin{array}{lll}
C_jD^2(\log \tilde Q)(x)C_j^*v_i^{(j)} &=& D^2(\log \tilde f_j)(C_jx)C_jC_j^*v_i^{(j)} \\
\nm 
&=& \ds (1-a_i^{(j)})D^2(\log \tilde f_j)(C_jx)v_i^{(j)}=-\frac{2\pi}{t}(1-a_i^{(j)})v_i^{(j)}
\end{array}$$ which implies that $$D^2(\log \tilde Q)(x)C_j^*v_i^{(j)}=-\frac{2\pi}{t}C_j^*v_i^{(j)} \text{ since } D^2(\log \tilde Q)(x)C_j^*v_i^{(j)} \perp \ker(C_j).$$ Hence, $$D^2(\log \tilde Q)(x)=\left(\begin{array}{ c | c }
    \ker(C_j) & 0 \\
    \hline
    0 & \left. \begin{array}{ c | c }
    \mathrm{diag}(-\frac{2\pi}{t},\cdots,-\frac{2\pi}{t}) & 0  \\
    \hline
    0 & W_j
  \end{array} \right.
  \end{array}\right)$$
where $W_j=C_j^*\ker(\mathrm{id}_{H_j}-C_jC_j^*)$ and the second block is with respect to the basis $C_j^*\mathrm{Ran}(\mathrm{id}_{H_j}-C_jC_j^*) =: \tilde W_j.$ 

\bigskip

Set $$K=\bigcap_{j=1}^m \bigl( \ker(C_j) \oplus C_j^*\ker(\mathrm{id}_{H_j}-C_jC_j^*) \bigl).$$
As we already know the form of $D^2(\log \tilde Q)$ on $C_j^*\mathrm{Ran}(\mathrm{id}_{H_j}-C_jC_j^*)$ for any $1 \leq j \leq m,$ what is left to determine is the form of $D^2(\log \tilde Q)$ on $K.$ In fact, $K$ might not be critical so we will further decompose $D^2(\log \tilde Q)$ with respect to the largest critical subspace that is invariant under $C_iC_i^*$ (it will be seen why this invariance property is crucial for the argument to hold) and its orthogonal complement. 

\hfill \\

We introduce the following notation: if $A \subset B \subset H$, then the perpendicular of $A$ with respect to $B$ will be denoted by $$A^{\perp,B}=A^\perp \cap B.$$

\hfill \\

Let us remark that $$C_j^*\ker(\mathrm{id}_{H_j}-C_jC_j^*)=\ker(\mathrm{id}_H-C_j^*C_j) \text{ for any } 1 \leq j \leq m.$$ On the one hand, $x \in C_j^*\ker(\mathrm{id}_{H_j}-C_jC_j^*) \Longrightarrow x=C_j^*z $ where $z \in \ker(\mathrm{id}_{H_j}-C_jC_j^*)$ and $$(C_j^*C_j)x=C_j^*(C_jC_j^*z)=C_j^*z=x$$ so $x \in \ker(\mathrm{id}_H-C_jC_j^*).$  On the other hand, $x \in \ker(\mathrm{id}_H-C_j^*C_j)$ implies that $ x=C_j^*(C_jx)$. But $$C_jC_j^*(C_jx)=C_jx \Longrightarrow C_jx \in \ker(\mathrm{id}_{H_j}-C_jC_j^*).$$

\hfill \\

To have a better grasp of the strategy of the proof, let us compute two examples where the behavior of the data with respect to $K$ is different. 

\begin{example}
First, we look at an example where $C_i^*C_iK \subset K$ for all $1 \leq i \leq m.$ 
    Let us take $n=2,n_1=2,n_2=1$ and $Q_1=\begin{pmatrix}
        1 \; 0 \\
        0 \; \lambda
    \end{pmatrix}, Q_2=1$ where $\lambda>1,$ $p_1=p_2=1.$ Finally, we set $$C_1=\begin{pmatrix}
        \frac{1}{\sqrt{2}} \; 0 \\
        0 \; 1 
    \end{pmatrix},C_2=\Bigl(\frac{1}{\sqrt 2},0 \Bigl).$$  Then, $$C_1C_1^* \leq I_2, (I_2-C_1C_1^*)(Q_1-I_2)=\begin{pmatrix}
        \frac{1}{2} \; 0\\
        0 \; 0
    \end{pmatrix} \begin{pmatrix}
        0 \; 0 \\
        0 \; \lambda-1
    \end{pmatrix}=0.$$ Moreover, $C_2C_2^*=\frac{1}{2}$ and $p_1C_1^*C_1+p_2C_2^*C_2=\begin{pmatrix}
        \frac{1}{2} \; 0 \\
        0 \; 1
    \end{pmatrix}+\begin{pmatrix}
        \frac{1}{2} \; 0 \\
        0 \; 0
    \end{pmatrix}=I_2.$ Hence, it is a generalized geometric BL and in particular, its BL constant is equal to $1.$ Let us first find the Gaussian extremizers: if $B_1=\begin{pmatrix}
        b_1^{(1)} \; b_1^{(2)} \\
        b_1^{(2)} \; b_1^{(3)}
    \end{pmatrix}$, then $$\det(p_1C_1^*B_1C_1+p_2b_2 C_2^*C_2)=\det \Bigl ( \begin{pmatrix}
        \frac{1}{2}b_1^{(1)} \; \frac{1}{\sqrt 2}b_1^{(2)} \\
        \frac{1}{\sqrt{2}}b_1^{(2)} \; b_1^{(3)}
    \end{pmatrix}+\begin{pmatrix}
        \frac{1}{2}b_2 \; 0 \\
        0 \; 0
    \end{pmatrix} \Bigl)=\frac{1}{2}\det(B_1)+\frac{1}{2}b_2b_1^{(3)}.$$ 
From the previous computation, it follows that $$\mathrm{BL}(\textbf{C},p,\textbf{Q})=\sup_{0<B_1 \leq Q_1, 0<b_2 \leq 1}\frac{\det(B_1)b_2}{\frac{1}{2}\det(B_1)+\frac{1}{2}b_2b_1^{(3)}}=\sup_{0<B_1 \leq Q_1}\frac{\det(B_1)}{\frac{1}{2}\det(B_1)+\frac{1}{2}b_1^{(3)}}$$ and $$\det(B_1)=b_1^{(1)}b_1^{(3)}-(b_1^{(2)})^2 \leq b_1^{(1)}b_1^{(3)} \leq b_1^{(3)}$$ since $B_1 \leq Q_1$ implies $b_1^{(1)} \leq 1.$ \hfill \\ \\  In conclusion, one has $$\frac{\det(B_1)}{\frac{1}{2}\det(B_1)+\frac{1}{2}b_1^{(3)}} \leq 1$$ with equality if $b_1^{(3)}=\det(B_1)$ or equivalently, $b_1^{(1)}=1,b_1^{(2)}=0.$ This shows that the supremum is indeed $1$ and the maximum is attained at $B_1=\begin{pmatrix}
    1 \; 0 \\
    0 \; \alpha
\end{pmatrix}$ for $\alpha \in (0;\lambda], b_2=1.$ \hfill \\ The Brascamp-Lieb inequality associated with the chosen data says that if $f_1$ is more log-convex than $g_{Q_1}$ and $f_2$ is more log-convex than $g_{Q_2}$, then $$\int_{\mathbb{R}^2}f_1 \Bigl(\frac{1}{\sqrt{2}}x_1,x_2 \Bigl)f_2 \Bigl(\frac{1}{\sqrt{2}}x_1 \Bigl)\, dx_1 dx_2 \leq \int_{\mathbb{R}^2}f_1 \int_{\mathbb{R}}f_2.$$ \hfill \\

Clearly, $\mathrm{Ran}(I_2-C_1C_1^*)=\mathrm{span}\{e_1\}, \ker(I_2-C_1C_1^*)=\mathrm{span} \{e_2\}, \ker(C_1)=\{0\}, W_2=\{0\}$ so that $K=W_1 \cap \ker(C_2)=\mathrm{span} \{e_2\}$ is critical.
From \cref{Fundamental_eq_equality_cases}, one may show that there is equality iff $$f_1(x_1,x_2)=c_1e^{-\pi x_1^2-ax_1}u(x_2),  f_2(x_1)=c_2e^{-\pi x_1^2-ax_1}$$ with $u$ being more log-convex than $e^{-\lambda \pi  x^2}.$ 
\end{example}

\hfill \\

\begin{example}

Let us now look at an example where the assumption $C_j^*C_jK \subset K$ for any $1 \leq j \leq m$ does not hold. 
    Set $n=2,n_1=2,n_2=n_3=n_4=1, p_1=p_2=p_3=p_4=\frac{1}{2},$ and $$C_1=\begin{pmatrix}
        1 \; 0 \\
        0 \; \frac{1}{\sqrt{2}}
    \end{pmatrix}, \, C_2=\Bigl(0,\frac{1}{\sqrt 2}\Bigl), \, C_3=\Bigl(\frac{1}{\sqrt 2},\frac{1}{\sqrt 2}\Bigl), C_4=\Bigl(\frac{1}{\sqrt 2},-\frac{1}{\sqrt 2}\Bigl).
   $$ Consider $$Q_1=\begin{pmatrix}
       2 \; 0 \\
       0 \; 1
   \end{pmatrix},Q_2=1,Q_3=Q_4=2.$$ Then, $$C_1^*C_1=\begin{pmatrix}
       1 \; 0 \\
       0 \; 1/2
   \end{pmatrix}, C_2^*C_2=\begin{pmatrix}
       0 \; 0 \\
       0 \; 1/2
   \end{pmatrix}, C_3^*C_3=\begin{pmatrix}
       1/2 \; 1/2 \\
       1/2 \; 1/2
   \end{pmatrix}, C_4^*C_4=\begin{pmatrix}
       1/2 \; -1/2 \\
       -1/2 \; 1/2
   \end{pmatrix}$$ which implies that $\sum_{i=1}^4 p_iC_i^*C_i=I_2$ and it is easy to see that the other conditions for $(\textbf{C},p,\textbf{Q})$ to be a generalized geometric Brascamp-Lieb datum are satisfied. \hfill \\ \\ Moreover, $$\begin{array}{lll}
   \ker(C_1)=\{0\}, C_1^*\ker(I_2-C_1C_1^*)=\mathrm{span}\{e_1\}; \\
   \nm 
   \ker(C_2)=\mathrm{span}\{e_1\}, C_2^*\ker(1-C_2C_2^*)=\{0\}; \\
   \nm 
   \ker(C_3)=\mathrm{span}\{(1,-1)\}, C_3^*\ker(1-C_3C_3^*)=\mathrm{span}\{(1,1)\}; \\
   \nm 
   \ker(C_4)=\mathrm{span}\{(1,1)\}, C_4^*\ker(1-C_4C_4^*)=\mathrm{span}\{(1,-1)\}.\end{array}$$ Hence, $K=\mathrm{span}\{e_1\}$ and one can notice that $C_3^*C_3K \not \subset K, C_4C_4^*K \not \subset K.$ Furthermore, $K$ is not critical as $\sum_{i=1}^4 p_i\dim(C_iK)=\frac{3}{2}>\dim(K)=1$ and in fact, the datum is simple. This means that the extremizers are expected to be Gaussian. Let us show that this expectation is founded. 
   \hfill \\ \\
   First, we write the Brascamp-Lieb inequality with such a datum: for $f_i$ more log-convex than $g_{Q_i}$ for any $1 \leq i \leq 4$, $$\int_{\mathbb{R}^2}f_1 \Bigl(x,\frac{1}{\sqrt 2}y \Bigl)^{1/2}f_2\Bigl(\frac{1}{\sqrt 2}y \Bigl)^{1/2}f_3 \Bigl(\frac{1}{\sqrt 2}x+\frac{1}{\sqrt 2}y\Bigl)^{1/2}f_4 \Bigl(\frac{1}{\sqrt 2}x-\frac{1}{\sqrt 2}y \Bigl)^{1/2}\, dx \, dy \leq \prod_{i=1}^4 \Bigl( \int_{\mathbb R^{n_i}}f_i\Bigl)^{1/2}.$$ 
   One can directly take $t=1$ as it does not change anything to the argument. Using the characterization of $D^2(\log \tilde Q)$ when in terms of special eigenspaces of $C_1$ or $C_2$, one gets $$D^2(\log \tilde Q)(x,y)=\begin{pmatrix}
       \partial_{11}(\log \tilde Q)(x,y) \; 0 \\
       0 \qquad \qquad -2\pi
   \end{pmatrix}$$ with $\partial_{11}(\log \tilde Q) \geq -4\pi.$ Then, $\partial_1 (\log \tilde Q)(x,y)$ is constant in $y$ and so $$\log \tilde Q(x,y)=\log Q(x)-\pi y^2-by \text{ and } \nabla (\log \tilde Q)(x,y)=((\log Q)'(x),-2\pi y-b).$$
   \hfill \\
   We  may forget about the constants since, by \cref{Closure_prop_extremizers}, the extremizers that will be derived can be multiplied by any positive constants. 
   Now, let us use \cref{Fundamental_eq_equality_cases} for $j=1,2,3,4.$ First, $$(\log f_3)'\Bigl(\frac{1}{\sqrt 2}x+\frac{1}{\sqrt 2}y \Bigl)=C_3\nabla \log \tilde Q(x,y)=\frac{1}{\sqrt 2}(\log Q)'(x)-\frac{2\pi}{\sqrt 2}y-\frac{b}{\sqrt 2},$$ $$(\log f_4)'\Bigl(\frac{1}{\sqrt 2}x-\frac{1}{\sqrt 2}y \Bigl)=C_4\nabla \log \tilde Q(x,y)=\frac{1}{\sqrt 2}(\log Q)'(x)+\frac{2\pi}{\sqrt 2}y+\frac{b}{\sqrt 2}$$ and taking $y=0$ shows that $$(\log f_3)'(\frac{1}{\sqrt 2}x)=\frac{1}{\sqrt 2}(\log Q)'(x)-\frac{b}{\sqrt 2},(\log f_4)'(\frac{1}{\sqrt 2}x)=\frac{1}{\sqrt 2}(\log Q)'(x)+\frac{b}{\sqrt 2}.$$ \hfill \\ This implies that $$\log f_3(x)=\frac{1}{2}(\log Q)(\sqrt 2x)-\frac{b}{\sqrt 2}x ,\log f_4(x)=\frac{1}{2}(\log Q)(\sqrt 2x)+\frac{b}{\sqrt 2}x.$$ For $j=1,2$, we have $$\nabla (\log f_1)\Bigl(x,\frac{1}{\sqrt 2}y \Bigl)=C_1\nabla \log \tilde Q(x,y)=\Bigl((\log Q)'(x),-\frac{2\pi}{\sqrt 2}y-\frac{b}{\sqrt 2}\Bigl)$$ and $$(\log f_2)'\Bigl(\frac{1}{\sqrt 2}y \Bigl)=C_2\nabla \log \tilde Q(x,y)=-\frac{2\pi}{\sqrt 2}y-\frac{b}{\sqrt 2}$$ which implies that $$\log f_1(x,y)=\log Q(x)-\pi y^2-\frac{b}{\sqrt 2}y, \log f_2(y)=-\pi y^2-\frac{b}{\sqrt 2}y.$$  To find the final form of $Q$, we use the equation of $(\log f_3)'$ to get $$(\log Q)'(y)=(\log Q)'(0)-2\pi y,$$ which means that $Q(x)=e^{-\pi x^2-ax}$ and consequently,
$$\begin{array}{lll} f_1(x,y)=c_1 e^{-\pi x^2-ax}e^{-\pi y^2-by}, f_2(y)=c_2e^{-\pi y^2-by}, \\ \\
f_3(x)=c_3Q(\sqrt{2}x)^{1/2}e^{-bx}=c_3e^{-\pi x^2-(\frac{a}{\sqrt 2}+b)x}, \\ \\
f_4(x)=c_4Q(\sqrt 2x)^{1/2}e^{bx}=c_4 e^{-\pi x^2-(\frac{a}{\sqrt 2}-b)x}\end{array}$$ for any $x,y \in \mathbb{R} \text{ and } a,b \in \mathbb{R}$ arbitrary. Let us show that if $f_1,f_2,f_3,f_4$ are of this form, then they are extremizers. Indeed, first notice that $f_i$ are more log-convex than $g_{Q_i}$ for any $1 \leq i \leq 4.$ Second, let us compute $$\begin{array}{lll}
\ds \int_{\mathbb{R}^2}\prod_{j=1}^4 f_j(C_j(x,y))^{1/2}\, dx \, dy &=& \ds \int_{\mathbb{R}^2}e^{-\pi x^2-ax}e^{-\pi y^2-\sqrt 2 by}\, dx \, dy \\
\nm 
&=& \ds e^{a^2/4\pi}e^{b^2/2\pi} 
\end{array}$$ and $$\begin{array}{lll}
    \ds \prod_{j=1}^4 \Bigl( \int f_j \Bigl)^{1/2} &=& \ds e^{a^2/8\pi}e^{b^2/8\pi} e^{b^2/8\pi}e^{(\frac{a}{\sqrt 2}+b)^2/8\pi}e^{(\frac{a}{\sqrt 2}-b)^2/8\pi}  \\
     &=& e^{a^2/4\pi}e^{b^2/2\pi},
\end{array}$$ since $\int_{\mathbb{R}}e^{-\pi x^2-cx}\, dx=e^{c^2/4\pi}$ for any $c \in \mathbb{R}.$  
\end{example}

\bigskip 

\hfill \\

Recall that $$D^2(\log \tilde Q)(x)=\left(\begin{array}{ c | c }
    \ker(C_j) & 0 \\
    \hline
    0 & \left. \begin{array}{ c | c }
    \mathrm{diag}(-\frac{2\pi}{t},\cdots,-\frac{2\pi}{t}) & 0 \\
    \hline
    0 & W_j
  \end{array} \right.
  \end{array}\right)$$
where $W_j=C_j^*\ker(\mathrm{id}_{H_j}-C_jC_j^*)$ and the second block is with respect to the basis $\{C_j^*v_i^{(j)}: 1 \leq i \leq k_j\}=C_j^*\mathrm{Ran}(\mathrm{id}_{H_j}-C_jC_j^*) =: \tilde W_j.$

We introduce the definition of generalized independent subspace that plays a crucial role in \cite{Vald2}. 

\begin{definition}
    A subspace $\tilde H$ of $K$ is generalized independent with respect to the generalized geometric datum if it is not $\{0\}$ and has the form $$\tilde H=\bigcap_{j=1}^m \tilde H_j^{a}$$ where for each $j, \tilde H_j^a$ is either $\ker(C_j)$ or $W_j=C_j^*\ker(\mathrm{id}_{H_j}-C_jC_j^*)=\ker(\mathrm{id}_H-C_j^*C_j).$ 
\end{definition}

Note that for $x \in \tilde H$ a generalized independent subspace of $K$, either $C_ix=0$ or $C_i^*C_ix=x$ for any $1 \leq i \leq m$. This implies that $C_i^*C_i \tilde H \subset \tilde H$ for all $1 \leq i \leq m.$ By \cref{Commuting_implies_critical}, any generalized independent subspace is critical. 
\hfill \\

Moreover, define $\tilde K_{\mathrm{dep}}$ as the orthogonal complement of $\bigoplus_{k=1}^{k_0}\tilde K_k$ as subspace of $K$ where $\{\tilde K_k : k=1,\cdots,k_0\}$ is an enumeration of the generalized independent subspaces of $K$. Then, arguing exactly as in \cite[Lemma 13]{Vald2}, one obtains $$D^2\log \tilde Q(x)=\left(\begin{array}{ c | c }
    \mathrm{diag}(-\frac{2\pi}{t},\cdots,-\frac{2\pi}{t}) & 0 \\
    \hline
    0 & \left. \begin{array}{ c | c } 
    \nm
    \tilde K_1   & 0   \\ 
    \hline
    0 &  \left. \begin{array}{c | c} \nm \ddots & 0 \\
    \hline
    0 & \left. \begin{array}{c | c} \nm \tilde K_{k_0} & 0 \\
    \nm
    \hline
     0 & \tilde K_{\mathrm{dep}}  \end{array} \right.  \end{array} \right. 
    \end{array} \right.
  
  \end{array}\right) .$$

\hfill \\

Intuitively, we will decompose further $\tilde K_{\mathrm{dep}}$ into critical subspaces and a non-critical part, \textit{i.e.} $$\tilde K_{\mathrm{dep}}=\bigoplus_{k=k_0+1}^{k_1}\tilde K_k \oplus \text{Remaining non-critical part}.$$ On $\bigoplus_{k=1}^{k_1}\tilde K_k$, the Brascamp-Lieb data will be reduced to a geometric Brascamp-Lieb data with $\bigoplus_{k=k_0+1}^{k_1}\tilde K_k$ being the dependent part of the independent decomposition using the terminology of \cite{Vald2}. Moreover, on $\tilde K_j$ for $1 \leq j \leq k_0$, we have an arbitrary Jacobian as in the situation of Hölder's inequality and on $\tilde K_{\mathrm{dep}}$, we have a constant Jacobian, which means that the functions are Gaussians as in the case of Young's inequality. 

\hfill \\ Thus, for any $1 \leq j \leq m$, we have $$\log \tilde Q(x)=-\frac{\pi}{t} \langle P_{\tilde W_j}x,P_{\tilde W_j}(x+b_j)\rangle+u_j^{\parallel}(P_{\ker(C_j)}x)+u_j^\perp(P_{W_j}x)$$ 
and
$$\log \tilde Q(x)=-\frac{\pi}{t} \langle P_{K^\perp}x,P_{K^\perp}(x+b_\perp)\rangle+\sum_{k=1}^{k_0}u_{\tilde K_k}(P_{\tilde K_k}x)+u_{\tilde K_{\mathrm{dep}}}({P_{\tilde K_{\mathrm{dep}}}}x).$$

\begin{proposition}
    Suppose that $K$ has no generalized independent subspace. Then any optimizer must be Gaussian. 
\end{proposition}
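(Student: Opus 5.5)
With no generalized independent subspace, $k_0=0$ and $\tilde K_{\mathrm{dep}}=K$. The plan is to show that $D^2\log\tilde Q(x,t)$ is a constant matrix. Then $\tilde Q(\cdot,t)$ is a Gaussian times an exponential, and the fundamental identity \cref{Fundamental_eq_equality_cases} transfers this to every $\tilde f_j(\cdot,t)$.

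On $K^\perp$ we already know that $D^2\log\tilde Q=-\frac{2\pi}{t}\mathrm{id}$, and that $K$ is invariant for the Hessian, because $K^\perp=\sum_j \tilde W_j$ and each $\tilde W_j$ decouples. So it remains to control $D^2\log\tilde Q$ restricted to $K$.

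First fix $x$ and write $R(x)=D^2\log\tilde Q(x)|_K$. For each $j$ the block structure gives $\langle R(x)a,c\rangle=0$ for $a\in\ker(C_j)\cap K$ and $c\in W_j\cap K$. From \cref{Fundamental_eq_equality_cases}, $u_j^{\parallel}$ depends only on $P_{\ker C_j}x$ and $u_j^\perp$ only on $P_{W_j}x$. Hence $\partial_a\partial_c\log\tilde Q\equiv 0$, and each directional second derivative in a $\ker(C_j)$ direction is independent of the $W_j$-coordinates, and vice versa.

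Following \cite[Lemma 13]{Vald2}, consider the set $\mathcal{I}$ of directions $v\in K$ such that $x\mapsto\partial_v^2\log\tilde Q(x)$ is constant. I will argue in two steps.

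\textbf{Step (i).} If $\log\tilde Q|_K$ splits as a sum along each decomposition $K=(K\cap\ker C_j)\oplus(K\cap W_j)$, then the subspace on which a non-constant Hessian can live must be an intersection of choices $\tilde H_j^a\in\{\ker C_j,W_j\}$. Here I would use the invariance $C_j^*C_jK\subset K$ to guarantee that $K\cap\ker C_j$ and $K\cap W_j$ really span $K$. Where invariance fails, I would instead pass to the largest $C_i^*C_i$-invariant critical subspace inside $K$, as the paper announced, and handle its complement separately.

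\textbf{Step (ii).} The non-constant part of the Hessian is therefore supported on a generalized independent subspace, which by hypothesis is trivial. This forces $R(x)\equiv R$, a constant matrix.

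Concretely, the key algebraic fact to prove is the following. Suppose a $C^2$ function $u$ on $K$ is simultaneously additively separable with respect to each of the $m$ splittings. Then $u$ is a quadratic plus a linear term plus a sum of functions of the projections onto the atoms of the lattice generated by the splittings. The atoms are exactly the generalized independent subspaces. Differentiating twice, the mixed Hessian entries vanish across all splittings, and hence $D^2u$ is block diagonal with respect to every $\ker C_j\oplus W_j$. Any block not contained in an atom must be constant, because its dependence on coordinates is killed by some splitting.

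Once $D^2\log\tilde Q(\cdot,t)\equiv M_t$ is constant, $\tilde Q(\cdot,t)$ is Gaussian. By \cref{Fundamental_eq_equality_cases}, $\nabla\log\tilde f_j(C_jx,t)=C_j\nabla\log\tilde Q(x,t)$ is affine in $x$. Since $C_j$ is surjective, $\log\tilde f_j(\cdot,t)$ is quadratic, i.e.\ $\tilde f_j(\cdot,t)$ is Gaussian for every $t>1$.

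Letting $t\downarrow 1$ gives that $f_j$ is a limit of Gaussians, and it has finite mass. Alternatively, I would invert the heat flow: if $f*\gamma_{t-1}$ is Gaussian for all $t>1$, then by Fourier transform $\hat f$ is Gaussian, so $f$ is Gaussian. This settles the case of Schwarz extremizers.

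For general extremizers, I would reduce to bounded or smooth ones. The reduction uses \cref{Closure_prop_extremizers}(v),(vi): convolve an extremizer with a Gaussian extremizer, which exists since the datum is geometric (take $B_j=\mathrm{id}$). The result is a smooth extremizer with rapid decay that is still more log-convex than $g_{Q_j}$. It is then Gaussian by the above, and deconvolution shows that the original $f_j$ was Gaussian.

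The main obstacle I expect is Step (i), i.e.\ proving that constancy of the Hessian on $K$ follows from the absence of generalized independent subspaces. When $C_j^*C_j$ does not preserve $K$, the splittings $K\cap\ker C_j$ and $K\cap W_j$ need not span $K$, so the lattice argument of \cite{Vald2} does not apply directly. I would first try to show that the non-invariant directions are forced to carry the constant value $-\frac{2\pi}{t}$, by propagating it through $C_j^*C_j$ from $\tilde W_j$ as in the second example above. I would then apply the lattice argument to the invariant remainder.
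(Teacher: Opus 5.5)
Your outer reduction is fine: a constant Hessian of $\log\tilde Q(\cdot,t)$ makes every $\tilde f_j(\cdot,t)$ Gaussian, and the passage $t\downarrow 1$ and the extension to non-Schwartz extremizers also go through. The heart of the proposition, your Steps (i)--(ii), is not proved, however.

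The lattice argument needs $K=(K\cap\ker(C_j))\oplus(K\cap W_j)$ for every $j$, and this fails exactly when $C_j^*C_jK\not\subset K$. In the paper's second example, $K=\mathrm{span}\{e_1\}$ meets both $\ker(C_3)=\mathrm{span}\{(1,-1)\}$ and $W_3=\mathrm{span}\{(1,1)\}$ only in $\{0\}$. So the splitting of $K$ that Step (i) relies on does not exist for $j=3$.

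The repair you suggest is only a plan, and its second half is circular. The paper's treatment of the complement $H_{\mathrm{dep}}^{\perp,K}$ of the largest $C_i^*C_i$-invariant subspace (the determination of $c$) starts from the fact that $\log\tilde Q$ is already quadratic there, which is what you are trying to establish. Even in the invariant case, your key algebraic fact is asserted rather than shown. Subspaces under sums and intersections do not form a distributive lattice, so identifying the atoms with the generalized independent subspaces, and concluding that every other block of the Hessian is constant, needs a real argument.

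The missing idea is to stay on all of $H$ and pass to the Fourier side, where the distributivity you need is purely set-theoretic. For each $j$,
\[
\nabla\log\tilde Q(x)=-\frac{\pi}{t}P_{\tilde W_j}^*P_{\tilde W_j}(2x+b_j)+P_{\ker(C_j)}^*\nabla u_j^{\parallel}(P_{\ker(C_j)}x)+P_{W_j}^*\nabla u_j^{\perp}(P_{W_j}x).
\]
This has polynomial growth, hence is tempered. Since a function of $P_Vx$ alone has Fourier transform supported in $V$, its Fourier transform is supported in $\{0\}\cup\ker(C_j)\cup W_j=\ker(C_j)\cup W_j$. Intersecting over $j$, the support lies in $\bigcap_{j=1}^m(\ker(C_j)\cup W_j)$. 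This set is the union of the sets $\bigcap_{j=1}^m\tilde H_j^a$ over all choices $\tilde H_j^a\in\{\ker(C_j),W_j\}$. Each of these is contained in $K$, hence equals $\{0\}$ by hypothesis.

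So $\nabla\log\tilde Q$ is a polynomial. The linear growth of $\nabla\log\tilde f_j(C_jx)$ for Schwartz inputs (\cite[Lemma 5]{Vald2}) forces it to be affine, so $\tilde Q$ and all $\tilde f_j$ are Gaussian. This is the paper's argument. It never restricts to $K$, needs no invariance of $K$ under $C_j^*C_j$, and needs no a priori information on the value of the Hessian on $K$.
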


\begin{proof}
Recall that
    $$\nabla \log \tilde Q(x)=-\frac{\pi}{t} P_{\tilde W_j}^*P_{\tilde W_j}(2x+b_j)+P_{\ker(C_j)}^*\nabla u_j^{||}(P_{\ker(C_j)}x)+P_{W_j}^*\nabla u_j^\perp(P_{W_j}x).$$ As it is well-known that the Fourier transform of a distribution supported at the origin is a polynomial (and reversely, the Fourier transform of a polynomial is a distribution supported at the origin), the Fourier transform of $\nabla \log \tilde Q(x)$ is supported on $\ker(C_j) \cup W_j$ for any $1 \leq j \leq m.$ As this holds for any $1 \leq j \leq m$, it is in fact supported on $$\bigcap_{j=1}^m (\ker(C_j) \cup W_j)=\{0\},$$ since $K$ has no generalized independent subspace. Hence, using growth estimates, $\nabla \log \tilde Q(x)$ is a linear polynomial. More precisely, as we are assuming that $f_j$ is a Schwartz function for any $1 \leq j \leq m$, one may show that $\nabla \log(\tilde f_j)(C_jx)$ has linear growth in $x$ (see \cite[Lemma 5]{Vald2}) which shows the previous claim as $$\nabla \log \tilde Q(x)=\sum_{j=1}^m p_jC_j(\nabla \log \tilde f_j)(C_jx).$$ This means that all $\tilde f_j$ are Gaussians. 
\end{proof}

We can consider the largest subspace of $\tilde K_{\mathrm{dep}}$ such that it is invariant under $C_i^*C_i$ for all $1 \leq i \leq m$ and denote its orthogonal complement in $\tilde K_{\mathrm{dep}}$ by $H_{\mathrm{dep}}^{\perp,K}.$ Moreover, by the next lemma, this subspace is critical. 

\begin{lemma} \label{Commuting_implies_critical}
    If $V \subset K$ satisfies $C_i^*C_iV \subset V$ for all $1 \leq i \leq m$ where $(\textbf{C},p,\textbf{Q})$ is a generalized geometric BL, then $V$ is critical.
\end{lemma}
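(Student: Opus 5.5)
The idea is to take traces in the identity $\sum_{i=1}^m p_iC_i^*C_i=\mathrm{id}_H$ after restricting it to $V$. Write $W_i=C_i^*\ker(\mathrm{id}_{H_i}-C_iC_i^*)=\ker(\mathrm{id}_H-C_i^*C_i)$, as in the remark preceding the definition of generalized independent subspaces. The key observation is that, on $K$, each $C_i^*C_i$ acts as an orthogonal projection. The proof then reduces to computing the trace of that projection on $V$.

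\emph{Step 1: structure of $C_i^*C_i$ on $K$.} Fix $i$. The subspaces $\ker(C_i)$ and $W_i$ are eigenspaces of the self-adjoint operator $C_i^*C_i$, with eigenvalues $0$ and $1$. Hence they are orthogonal, and $C_i^*C_i$ restricted to $\ker(C_i)\oplus W_i$ is the orthogonal projection onto $W_i$. By definition of $K$, every $x\in V\subset K$ decomposes as $x=a+w$ with $a\in\ker(C_i)$ and $w\in W_i$, and then $C_i^*C_ix=w$.

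\emph{Step 2: splitting $V$ using invariance.} The hypothesis $C_i^*C_iV\subset V$ gives $w=C_i^*C_ix\in V$, and therefore $a=x-w\in V$. Consequently
$$V=(V\cap\ker(C_i))\oplus(V\cap W_i)\quad\text{(orthogonal sum)},$$
and $C_i^*C_i|_V$ is the orthogonal projection of $V$ onto $V\cap W_i$. In particular
$$\mathrm{Tr}(C_i^*C_i|_V)=\dim(V\cap W_i).$$
Moreover, $C_i$ kills $V\cap\ker(C_i)$ and is injective on $V\cap W_i$, since $C_i^*C_i=\mathrm{id}$ there. So $C_iV=C_i(V\cap W_i)$, and
$$\dim(C_iV)=\dim(V\cap W_i)=\mathrm{Tr}(C_i^*C_i|_V).$$

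\emph{Step 3: trace identity.} Since each $C_i^*C_i$ leaves $V$ invariant, the geometric condition \cref{GGeometric_BL_datum} restricts to $V$ as $\sum_{i=1}^m p_iC_i^*C_i|_V=\mathrm{id}_V$. Taking traces on $V$ and using Step 2 yields
$$\dim(V)=\sum_{i=1}^m p_i\,\mathrm{Tr}(C_i^*C_i|_V)=\sum_{i=1}^m p_i\dim(C_iV),$$
so $V$ is critical.

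The only delicate point is Step 2. Membership $V\subset\ker(C_i)\oplus W_i$ alone does not force $V$ to split along the two summands; it is precisely the invariance $C_i^*C_iV\subset V$ that puts both components $a$ and $w$ back inside $V$. This is why the invariance hypothesis is needed in the statement.
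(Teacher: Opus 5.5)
Your proof is correct and follows essentially the paper's route: restrict $\sum_i p_iC_i^*C_i=\mathrm{id}_H$ to the invariant subspace $V$, take traces, and check that each term contributes $\dim(C_iV)$. The difference is cosmetic. The paper uses invariance to identify $(C_i|_V)^*$ with $C_i^*|_{C_iV}$, deduces $C_i|_V(C_i|_V)^*=\mathrm{id}_{C_iV}$, and then applies cyclicity of the trace. You instead identify $C_i^*C_i|_V$ directly as the orthogonal projection onto $V\cap W_i$ and compute its rank.
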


\begin{proof}
     By assumption, $C_i^*C_iV \subset V$ for any $1 \leq i \leq m.$ Thus for any $x \in C_iV, y \in V,$ $$\langle C_i^*x-(C_i|_V)^*x,y\rangle=0 \Longrightarrow (C_i|_V)^*x=C_i^*x \quad \forall 1 \leq i \leq m \text{ since } C_i^*x \in V.$$ This shows that $(C_i|_V)^*=C_i^*|_{C_iV}$ for any $1 \leq i \leq m.$     
     Moreover, for $x \in C_iV$, there exists $z \in V \subset K$ such that $x=C_iz.$ As $z=C_i^*z_i+\tilde z_i$ where $z_i \in \ker(\mathrm{id}_{H_i}-C_iC_i^*), \tilde z_i \in \ker(C_i),$ $$ x=C_iC_i^* z_i= z_i$$ so $C_i|_V(C_i|_V)^*x=C_iC_i^*x=x.$ In other words, one has $$C_i|_V(C_i|_V)^*=\mathrm{id}_{C_iV} \quad \forall 1 \leq i \leq m.$$ Moreover, $$\sum_{i=1}^m p_i(C_i|_V)^*C_i|_V=\mathrm{id}_V$$ by restricting the equality $\ds \sum_{i=1}^m p_iC_i^*C_i=\mathrm{id}_H$ on $V$, which holds since $(\textbf{C},p,\textbf{Q})$ is a generalized geometric BL.  Taking traces, the claim follows.  
\end{proof}

This shows that there exists a generalized maximum critical decomposition of $\tilde K_{\mathrm{dep}}$ $$\tilde K_{\mathrm{dep}}=\bigoplus_{k=k_0+1}^{k_1}\tilde K_k \oplus H_{\mathrm{dep}}^{\perp,K},$$ which implies that $$H_{\mathrm{dep}}=\bigoplus_{k=1}^{k_1}\tilde K_k,$$ such that the purely quadratic term in this Gaussian is the tensor product of multiples of the identity operator on each $\tilde K_k$ for $k_0+1 \leq k \leq k_1$, $H_\mathrm{dep}^{\perp,K}$ appearing in this decomposition. Furthermore, by definition of $\bigoplus_{k=k_0+1}^{k_1}\tilde K_k$ and as $\tilde K_k$ for any $1 \leq k \leq k_0$ is invariant under all $C_i^*C_i$, this is also the case for $H_{\mathrm{dep}}$, which means that $H_{\mathrm{dep}}$ is critical by the previous lemma. Hence, we deduce that $$\begin{array}{lll}
\log \tilde Q(x)&=& \ds -\frac{\pi}{t} \langle P_{K^\perp}x,P_{K^\perp}(x+b_\perp)\rangle+\sum_{k=1}^{k_0}u_{\tilde K_k}(P_{\tilde K_k}x) \\
&& \ds -\sum_{k=k_0+1}^{k_1}d_k\langle P_{\tilde K_k}x,P_{\tilde K_k}(x+b_k)\rangle-c\langle P_{H_{\mathrm{dep}}^{\perp,K}}x,P_{H_{\mathrm{dep}}^{\perp,K}}(x+\tilde b_\perp)\rangle.\end{array}$$ where $b_\perp \in K^\perp, b_k \in \tilde K_k$ and $\tilde b_\perp \in H_{\mathrm{dep}}^{\perp,K}.$ 

\hfill \\

The final step to be able to determine the extremizers is to understand the special form of the Gaussian constant $c$ on the block $H_{\mathrm{dep}}^{\perp,K}.$  

\hfill \\

Note that $(C_jH_{\mathrm{dep}})^{\perp,H_j}=C_j(H_{\mathrm{dep}}^\perp)=C_j(K^\perp \oplus H_{\mathrm{dep}}^{\perp,K}).$ Indeed, if $x \in H_{\mathrm{dep}}^\perp$, then $$\langle C_jx,y\rangle=\langle x,C_j^*y\rangle=0 \quad \forall y \in C_jH_{\mathrm{dep}}$$ since $C_j^*C_jH_{\mathrm{dep}} \subset H_{\mathrm{dep}}$ for any $1 \leq j \leq m.$ This implies that $C_j(H_{\mathrm{dep}}^\perp) \subset (C_jH_{\mathrm{dep}})^{\perp,H_j}$. Furthermore, $H_j=C_jH_{\mathrm{dep}} +C_j(H_{\mathrm{dep}}^\perp)$ since $C_j$ is surjective and consequently, $$\dim((C_jH_{\mathrm{dep}})^{\perp,H_j})=\dim(H_j)-\dim(C_jH_{\mathrm{dep}}) \leq \dim(C_jH_{\mathrm{dep}}^\perp).$$ This proves the desired claim. Using $C_j(H_{\mathrm{dep}}^\perp)=(C_jH_{\mathrm{dep}})^{\perp,H_j}$, we have \begin{equation} \label{Proj_commute_under_C_j}  C_jP_{H_{\mathrm{dep}}^\perp}^*P_{H_{\mathrm{dep}}^\perp}=P_{(C_jH_{\mathrm{dep}})^{\perp,H_j}}^*P_{(C_jH_{\mathrm{dep}})^{\perp,H_j}}C_j.\end{equation} 

\hfill \\

Let us now prove that $c=\frac{\pi}{t}$.
From \cref{Fundamental_eq_equality_cases}, we see that for any $x \in H$, $$\begin{array}{lll}
    P_{(C_jH_{\mathrm {dep}})^{\perp,H_j}}^*P_{(C_jH_{\mathrm{dep}})^{\perp,H_j}}\nabla \log \tilde f_j(C_jx) &=& P_{(C_jH_{\mathrm {dep}})^{\perp,H_j}}^*P_{(C_jH_{\mathrm{dep}})^{\perp,H_j}}C_j\nabla \log \tilde Q(x)
    \\
    \nm 
    &=& C_jP_{H_{\mathrm{dep}}^\perp}^*P_{H_{\mathrm{dep}}^\perp}\nabla \log \tilde Q(x) \\
    \nm 
    &=& \ds -\frac{\pi}{t} C_jP_{K^\perp}^*P_{K^\perp}(2x+b_\perp) \\
    \nm 
    && \ds -c C_jP_{H_\mathrm{dep}^{\perp,K}}^*P_{H_{\mathrm{dep}}^{\perp,K}}(2x+\tilde b_\perp).\end{array}$$ For $x \in \ker(C_j)$, denote by $y=P_{K^\perp}^*P_{K^\perp}x, z=P_{H_{\mathrm{dep}}^{\perp,K}}^*P_{H_{\mathrm{dep}}^{\perp,K}}x.$ Then by \cref{Proj_commute_under_C_j}, $$C_j(P_{H_\mathrm{dep}^{\perp}}^*P_{H_\mathrm{dep}^{\perp}}x)=0 \Longrightarrow C_jy=-C_jz$$ and hence the previous equation for $x \in \ker(C_j)$ reads $$\begin{array}{lll}P_{(C_jH_{\mathrm {dep}})^{\perp,H_j}}^*P_{(C_jH_{\mathrm{dep}})^{\perp,H_j}}\nabla \log \tilde f_j(0) &=& \ds 2\Bigl(-\frac{\pi}{t}+c \Bigl)C_jP_{K^\perp}^*P_{K^\perp}x-\frac{\pi}{t} C_jP_{K^\perp}^*P_{K^\perp}b_\perp \\
    \nm 
    && \ds -c \, C_jP_{H_\mathrm{dep}^{\perp,K}}^*P_{H_{\mathrm{dep}}^{\perp,K}}\tilde b_\perp.\end{array}$$ Subtracting this equation from the one obtained when considering $x=0$, we get $$\Bigl(-\frac{\pi}{t}+c \Bigl)C_jP_{K^\perp}^*P_{K^\perp}x=0 \quad \forall x \in \ker(C_j).$$
    
    Unless $P_{K^\perp}^*P_{K^\perp}\ker(C_j) \subset \ker(C_j)$ for any $1 \leq j \leq m$, we can conclude that $c=\frac{\pi}{t}.$ \hfill \\ \\ Else, $P_{K^\perp}^*P_{K^\perp}\ker(C_j) \subset \ker(C_j)$ or equivalently, $$P_K^*P_K(\ker(C_j)) \subset \ker(C_j) \text{ for any } 1 \leq j \leq m.$$ 
    This means that\footnote{As $P_K^*P_K\ker(C_j) \subset \ker(C_j)$ for any $1 \leq j \leq m, P_K^*P_K\ker(C_j)^\perp \subset \ker(C_j)^\perp$, which means that for any $x \in K,$ $$ x=P_K^*P_KP_{\ker(C_j)}^*P_{\ker(C_j)}x+P_K^*P_KP_{\ker(C_j)^\perp}^*P_{\ker(C_j)^\perp}x.$$} $$K=(K \cap \ker(C_j))\oplus (K \cap \ker(C_j)^\perp)=(K \cap \ker(C_j)) \oplus (K \cap \ker(\mathrm{id}_H-C_j^*C_j)).$$ But then, for $x \in K,$ $$ x=y_j+z_j \text{ for } y_j \in K \cap \ker(C_j), z_j \in K \cap \ker(\mathrm{id}_H-C_j^*C_j),$$ which implies that $C_j^*C_jx=z_j \in K.$ In conclusion, $$C_j^*C_jK \subset K \quad \forall 1 \leq j \leq m.$$ This proves that $H_{\mathrm{dep}}=K, H_{\mathrm{dep}}^{\perp,K}=\{0\}$ and in this case, there is nothing to show.   \hfill \\

Having verified that $c=\frac{\pi}{t},$ we can conclude as follows:

By the previous form of $\log \tilde Q$, we obtain  $$\begin{array}{lll}
    \nabla \log \tilde f_j(C_jx) &=& \ds C_j\nabla \log \tilde Q(x)=-\frac{\pi}{t} C_jP_{K^\perp}^*P_{K^\perp}(2x+b_\perp)-\frac{\pi}{t} C_jP_{H_{\mathrm{dep}}^{\perp,K}}^*P_{H_{\mathrm{dep}}^{\perp,K}}(2x+\tilde b_\perp)  \\
    \nm 
    &&  \ds +\sum_{k=1}^{k_0}C_jP_{\tilde K_k}^*\nabla u_{\tilde K_k}(P_{\tilde K_k}x)-\sum_{k=k_0+1}^{k_1}d_kC_jP_{\tilde K_k}^*P_{\tilde K_k}(2x+b_k) \\
    \nm 
    &=& \ds -\frac{\pi}{t} C_jP_{H_{\mathrm{dep}}^\perp}^*P_{H_{\mathrm{dep}}^\perp}(2x+b_\perp+\tilde b_\perp)+\sum_{k=1}^{k_0}C_jP_{\tilde K_k}^*\nabla u_{\tilde K_k}(P_{\tilde K_k}x) \\
    \nm 
    && \ds -\sum_{k=k_0+1}^{k_1}d_kC_jP_{\tilde K_k}^*P_{\tilde K_k}(2x+b_k) 
\end{array}$$ and recall \cref{Proj_commute_under_C_j}. 
Moreover, each term in the first sum is zero unless $\tilde K_k \subset C_jH_{\mathrm{dep}}$ in which case $C_jP_{\tilde K_k}=P_{\tilde K_k}C_j^*C_j.$ For the second sum, we have $$C_jP_{\tilde K_k}^*P_{\tilde K_k}=C_jP_{\tilde K_k}^*C_j^*C_j.$$

Letting $t \rightarrow 1$ and by the next remark, we get the following statement. 

\begin{theorem} \label{Main_Thm_Eq_Geometric_BL}
Let $(\textbf{C},p,\textbf{Q})$ be a generalized geometric Brascamp-Lieb datum and suppose that $(f_j)$ is an extremizer for this datum. 
Recall that $H_{\mathrm{dep}}$ is the largest subspace of $K$ that contains $\bigoplus_{k=1}^{k_0}\tilde K_k$ (where $\{\tilde K_k : k=1,\cdots,k_0\}$ is an enumeration of the generalized independent subspaces of $K$) and is invariant under $C_i^*C_i$ for any $1 \leq i \leq m.$ Then, there exist integrable functions $u_k: H_k \rightarrow \mathbb{R}, k=1,\cdots,k_0$, a critical decomposition $\tilde K_1 \oplus \cdots \oplus \tilde K_{k_0} \oplus \tilde K_{k_0+1} \oplus \cdots \oplus \tilde K_{k_1}$ of $H_{\mathrm{dep}}$, positive constants $c_j$ for $j=1,\cdots,m$ and $d_k$ for $k_0+1 \leq k \leq k_1$ and two elements $b \in H_{\mathrm{dep}}^\perp, \tilde b \in \bigoplus_{k=k_0+1}^{k_1}\tilde K_k$ such that 
$$\begin{array}{lll}f_j(x) &=&
    
    \ds c_je^{-\pi \langle P_{(C_jH_{\mathrm{dep}})^{\perp,H_j}}x,P_{(C_jH_{\mathrm{dep}})^{\perp,H_j}}(x+C_jb)\rangle} \\
    \nm 
    && \ds \prod_{k=1}^{k_0}u_k(\tilde P_{j,k}C_j^*x)\prod_{k=k_0+1}^{k_1}e^{-d_k \langle \tilde P_{j,k}C_j^*x,\tilde P_{j,k}(C_j^*x+\tilde b)\rangle} \end{array}$$ where $\tilde P_{j,k}$ is the orthogonal projection from $H$ to $C_jH_{\mathrm{dep}} \cap \tilde K_k.$ 
    Moreover, $u_k$ and $d_k $ are such that $f_j$ is more log-convex than $g_{Q_j}$ for any $1 \leq j \leq m.$
\end{theorem}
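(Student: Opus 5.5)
The plan is to assemble the heat-flow analysis carried out above into a statement about the original inputs. First I reduce to Schwartz inputs. If $(f_j)$ is an arbitrary extremizer, I use the closure properties of \cref{Closure_prop_extremizers}. Items (v) and (vi), with $g$ a Gaussian extremizer (the datum is generalized geometric, so $B_j=\mathrm{id}_{H_j}$ is one by \cref{BL_Geometric_equals_1}), produce bounded, smooth, rapidly decaying extremizers. These remain more log-convex than $g_{Q_j}$, by the remark on convolutions of more log-convex functions. Then (iv), together with the fact that the claimed form is closed under $L^1$ limits, lets me pass back. In fact the heat flow itself gives this regularization: $\tilde f_j(\cdot,t)$ is again an extremizer of the rescaled datum for every $t>1$. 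This follows because $t\mapsto\int\tilde Q(\cdot,t)$ is non-decreasing and equal to $\prod(\int f_j)^{p_j}$ at both ends.

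Next, for fixed $t>1$, I collect what was established. \cref{Fundamental_eq_equality_cases} gives $C_j\nabla\log\tilde Q(x)=\nabla\log\tilde f_j(C_jx)$. The block form of $D^2\log\tilde Q$ splits $H$ as $K^\perp\oplus\tilde K_1\oplus\cdots\oplus\tilde K_{k_0}\oplus\tilde K_{\mathrm{dep}}$, and \cref{Commuting_implies_critical} further splits $\tilde K_{\mathrm{dep}}$ into the critical pieces $\tilde K_{k_0+1},\dots,\tilde K_{k_1}$ and $H_{\mathrm{dep}}^{\perp,K}$. Finally, the constant on $H_{\mathrm{dep}}^{\perp,K}$ was shown to be $c=\pi/t$. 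Together these give the explicit expression for $\log\tilde Q(\cdot,t)$ displayed before the statement. I substitute it into the fundamental equation and use \cref{Proj_commute_under_C_j} to move projections through $C_j$. On the $\tilde K_k$-terms I use $C_j^*C_j\tilde K_k\subset\tilde K_k$, so that $C_jP_{\tilde K_k}^*P_{\tilde K_k}=\tilde P_{j,k}$-type identities hold. This expresses $\nabla\log\tilde f_j$ at points of $C_jH=H_j$ as a sum of three contributions: a linear part on $(C_jH_{\mathrm{dep}})^{\perp,H_j}$ with coefficient $-2\pi/t$, gradients of $u_{\tilde K_k}$ composed with $\tilde P_{j,k}C_j^*$, and linear parts with coefficients $-2d_k$ on $C_j\tilde K_k$. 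Surjectivity of $C_j$ gives this at every point of $H_j$. Integrating the gradient identity (the right side is a gradient of a function of $x_j$ because $C_jC_j^*$ acts as the identity on $C_jH_{\mathrm{dep}}$) yields the product form for $\tilde f_j(\cdot,t)$. The integration constants are absorbed into $c_j$, and $b,\tilde b$ come from $b_\perp+\tilde b_\perp$ and the $b_k$.

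Then I let $t\downarrow1$. Here $\tilde f_j(\cdot,t)\to f_j$ locally uniformly for Schwartz data, and the Gaussian coefficient $\pi/t\to\pi$. The parameters $b(t)$, $\tilde b(t)$, $d_k(t)$ and the functions $u_k(\cdot,t)$ must converge or be handled by compactness. I would argue via normalization: the masses $\int f_j$ are fixed, so the Gaussian parameters are bounded, and a subsequence converges. The functions $u_k(\cdot,t)$ are identified as $f_j$ restricted to $C_j\tilde K_k$, divided by the Gaussian factors, so they have limits. The log-convexity constraint relative to $g_{Q_j}$ is inherited in the limit, since $f_j$ is assumed more log-convex than $g_{Q_j}$. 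Equivalently, the restriction of $D^2\log f_j$ to each factor is bounded by $-2\pi Q_j$ there, and $Q_j$ is block diagonal along these pieces by $(\mathrm{id}-C_jC_j^*)(Q_j-\mathrm{id})=0$.

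I expect the main obstacle to be consistency across different $j$. The same $u_k$ and the same $d_k$, $b$, $\tilde b$ must serve every $j$, and the critical decomposition must not depend on $t$. I would try first to read everything off the single function $\tilde Q$, whose decomposition is fixed by the data alone (the subspaces $K$, $\tilde K_k$, $H_{\mathrm{dep}}$ do not depend on $t$), so that all $j$ share the parameters automatically. The remaining issue is the one-sided degenerate direction: the terms for $\tilde K_k\not\subset C_jH_{\mathrm{dep}}$ must vanish, which is exactly the earlier observation that $C_jP_{\tilde K_k}=0$ unless $\tilde K_k$ lies in the relevant range.
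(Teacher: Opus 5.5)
Your architecture is the paper's: the fixed-$t$ equality analysis, the explicit $\log\tilde Q(\cdot,t)$ with $c=\pi/t$, substitution into \cref{Fundamental_eq_equality_cases} via \cref{Proj_commute_under_C_j}, integration, $t\downarrow 1$, and removal of the Schwartz hypothesis. However, the last step fails as you wrote it. Items (v) and (vi) of \cref{Closure_prop_extremizers} only regularize at a fixed scale. With the standard Gaussian $g_j=e^{-\pi|\cdot|^2}$ they turn $f$ into the single Schwartz extremizer $F_j=(f_j*g_j)\,g_j(\cdot/\sqrt2)$ (up to constants), which is not close to $f_j$. No closure property produces Schwartz extremizers converging to $f$ in $L^1$, so (iv) plus $L^1$-closedness of the form tells you nothing about $f$. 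The missing idea, used in the paper's closing remark, is to invert the regularization structurally:
\begin{itemize}
\item $F_j$ has the form by the Schwartz case.
\item Dividing by the strictly positive Gaussian $g_j(\cdot/\sqrt2)$, which factorizes along every orthogonal decomposition of $H_j$, shows that $f_j*g_j$ factorizes in the same way.
\item Since $\hat g_j$ never vanishes, the Fourier transform transfers this factorization to $f_j$, and Gaussian factors deconvolve to Gaussian factors. The coefficient $\pi$ on $(C_jH_{\mathrm{dep}})^{\perp,H_j}$ is recovered because $a\mapsto \frac{a}{1+a}+\frac12$ is injective and equals $1$ only at $a=1$.
\end{itemize}

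Your heat-flow remark can also be made into a correct argument, but only combined with (vi). Set $F^{(t)}_j=\tilde f_j(\sqrt t\,\cdot,t)$. This is an extremizer of the same datum, is more log-convex than $g_{Q_j}$, and tends to $f_j$ in $L^1$ as $t\downarrow1$. It is bounded and smooth, but not Schwartz in general, since functions more log-convex than $g_{Q_j}$ may have polynomial tails. So apply (vi) with $x_0=0$ to get the Schwartz extremizer $F^{(t)}_j(\cdot/\sqrt2)\,g_j(\cdot/\sqrt2)$. Then recover the form of $F^{(t)}_j$ by dividing out that Gaussian; the correction to each $u_k$ is the same for every $j$ because $C_jC_j^*=\mathrm{id}$ on $C_jH_{\mathrm{dep}}$. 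Only after that should you let $t\downarrow 1$. Running the fixed-$t$ analysis directly on heat flows of non-Schwartz inputs would need the growth hypotheses of the multilinear monotonicity lemma, which are only checked for Schwartz data.

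Two smaller points:
\begin{itemize}
\item Only $K$, $H_{\mathrm{dep}}$ and $\tilde K_1,\dots,\tilde K_{k_0}$ are fixed by the data. The pieces $\tilde K_{k_0+1},\dots,\tilde K_{k_1}$ are adapted to the quadratic part of $\log\tilde Q$ on $\tilde K_{\mathrm{dep}}$ and depend on the extremizer. Their behaviour as $t\downarrow 1$ therefore genuinely needs your compactness argument: invariance under all $C_i^*C_i$ passes to limits and yields criticality by \cref{Commuting_implies_critical}.
\item The condition $(\mathrm{id}_{H_j}-C_jC_j^*)(Q_j-\mathrm{id}_{H_j})=0$ does not make $Q_j$ block diagonal along the $C_j\tilde K_k$. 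You do not need this, since the last sentence of the theorem is just the hypothesis on $f_j$.
\end{itemize}
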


\begin{remark}
    To remove the restriction that $(f_j)_{1 \leq j \leq m}$ is a tuple of Schwartz functions, we use \cref{Closure_prop_extremizers}. Indeed, when $(f_j)_{1 \leq j \leq m}$ is an optimizer where each $f_j$ is only assumed to be an $L^1$ function on $H_j$, denoting $$g_j(x)=e^{-\pi \|x\|^2_{H_j}},$$ $(g_j)_{1 \leq j \leq m}$ is an optimizer since the datum is generalized geometric and $((f_j * g_j)g_j(\frac{\cdot}{\sqrt 2}))_{1 \leq j \leq m}$ is also an optimizer. Each function in the  tuple we just constructed is a positive Schwartz function and by what was proven, this new optimizer has the desired form. But if $(f * g)g=k$ where $k(x)=k_P(Px)k_{P^\perp}(P^\perp x), P$ and $P^\perp$ are projections onto orthogonal subspaces and $g(x)=g_P(Px)g_{P^\perp}(P^\perp x)$ is strictly positive, then $$f(x)=f_P(Px)f_{P^\perp}(P^\perp x).$$ Moreover, if $g$ and $k$ are Gaussians, $f$ will also be Gaussian. Hence, the previous lemmas will also hold for the $L^1$ tuple $(f_j)_{1 \leq j \leq m}$ and any tuple must have the desired form. 
\end{remark}

\bigskip

\hfill \\ Let us now drop the condition that $(\textbf{C},p,\textbf{Q})$ is a generalized geometric Brascamp-Lieb datum. From \cref{Prop_Equiv_BL_constant}, the equations $$\begin{array}{ccc}
    M = \ds \sum_{j=1}^m p_jC_j^*B_jC_j  \\
    \nm 
     B_j^{-1} \geq C_jM^{-1}C_j, (B_j^{-1}-C_jM^{-1}C_j^*)(Q_j-B_j)=0
\end{array}$$ have a solution $M$ and $B_j$ with symmetric positive definite linear transformations and denoting $C_j'=B_j^{1/2}C_jM^{-1/2}, Q_j'=B_j^{-1/2}Q_jB_j^{-1/2}$, $(\textbf{C'},p,\textbf{Q'})$ is a generalized geometric datum. Moreover, if $(f_j)_{1 \leq j \leq m}$ is an optimizer for $(\textbf{C},p,\textbf{Q})$, then $(f_j \circ B_j^{-1/2})_{1 \leq j \leq m}$ is an optimizer for $(\textbf{C'},p,\textbf{Q'}).$ Conversely, if $(f_j')_{1 \leq j \leq m}$ is an optimizer for $(\textbf{C'},p,\textbf{Q'})$, then $(f_j' \circ B_j^{1/2})_{1 \leq j \leq m}$ is an optimizer for $(\textbf{C},p,\textbf{Q}).$ Hence, we get the following theorem: 

\begin{theorem} \label{Main_Thm_Eq_BL}
    Let $(\textbf{C},p,\textbf{Q})$ be an extremizable Brascamp-Lieb datum. Assume that $(f_j)_{1 \leq j \leq m}$ is an extremizer for this datum. Set $(\textbf{C'},p,\textbf{Q'})$ to be the generalized geometric datum equivalent to $(\textbf{C},p,\textbf{Q}).$ In addition, let $H_\mathrm{dep}$ corresponding to the datum $(\textbf{C'},p,\textbf{Q'}).$ Then, there exist integrable functions $u_k: H_k \rightarrow \mathbb{R}, k=1,\cdots,k_0$, a critical decomposition $H_{\mathrm{dep}}=\bigoplus_{k=1}^{k_0}\tilde K_k \oplus \tilde K_{k_0+1}\oplus \cdots \oplus \tilde K_{k_1}$, positive constants $c_j$ for $j=1,\cdots,m$ and $d_k$ for $k_0+1 \leq k \leq k_1$ and two elements $b \in H_{\mathrm{dep}}^\perp, \tilde b \in \bigoplus_{k=k_0+1}^{k_1}\tilde K_k$ such that $$\begin{array}{lll}
    f_j(x) &=&
    
    \ds c_je^{-\pi \langle P_{(C'_jH_{\mathrm{dep}})^{\perp,H_j}}B_j^{1/2}x,P_{(C'_jH_{\mathrm{dep}})^{\perp,H_j}}(B_j^{1/2}x+C_j'b)\rangle} \\
    \nm 
    && \ds \prod_{k=1}^{k_0}u_k(\tilde P_{j,k}C_j'^*B_j^{1/2}x)\prod_{k=k_0+1}^{k_1}e^{-d_k \langle \tilde P_{j,k}C_j'^*B_j^{1/2}x,\tilde P_{j,k}(C_j'^*B_j^{1/2}x+\tilde b)\rangle} \end{array}$$ where $\tilde P_{j,k}$ is the orthogonal projection from $H$ to $C'_jH_{\mathrm{dep}} \cap \tilde K_k.$ Moreover, $u_k$ and $d_k$ are such that $f_j$ is more log-convex than $g_{Q_j}$ for any $1 \leq j \leq m.$ 
    \hfill \\ Conversely, all functions of this form are optimizers for this problem. 
\end{theorem}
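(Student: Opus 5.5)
The plan is to reduce \cref{Main_Thm_Eq_BL} to \cref{Main_Thm_Eq_Geometric_BL} through the equivalence of data in \cref{Prop_Equiv_BL_constant}, and then to check the converse directly.

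First, since $(\textbf{C},p,\textbf{Q})$ is extremizable, \cref{Ext_implies_Gaussian_ext} shows it is Gaussian extremizable. So there is a global Gaussian extremizer $B=(B_j)$ with $0<B_j\le Q_j$. By \cref{Prop_Equiv_BL_constant} (a)$\Rightarrow$(d), the datum is equivalent to the generalized geometric datum $(\textbf{C'},p,\textbf{Q'})$, where $C_j'=B_j^{1/2}C_jM^{-1/2}$, $Q_j'=B_j^{-1/2}Q_jB_j^{-1/2}$, and the intertwining maps are $D=M^{-1/2}$, $D_j=B_j^{-1/2}$.

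Next I transfer extremizers. Given inputs $f_j$ for the original datum, set $f_j'=f_j\circ B_j^{-1/2}$, and substitute $x=M^{-1/2}y$ in $\int_H\prod (f_j\circ C_j)^{p_j}$. This gives $f_j(C_jM^{-1/2}y)=f_j'(C_j'y)$, and $\int f_j'=\det(B_j)^{1/2}\int f_j$. So the ratio $J$ for $(f_j')$ and $(\textbf{C'},p)$ equals the ratio for $(f_j)$ and $(\textbf{C},p)$, multiplied by exactly the factor relating the two BL constants in \cref{Equiv_BL_Constants}. Hence $f$ is an extremizer if and only if $f'$ is.

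The regularity classes also match. Write $f_j=g_{Q_j}h_j$ with $h_j$ log-convex. Then $f_j'(z)=e^{-\pi\langle Q_jB_j^{-1/2}z,B_j^{-1/2}z\rangle}h_j(B_j^{-1/2}z)=g_{Q_j'}(z)\,(h_j\circ B_j^{-1/2})(z)$, and log-convexity is preserved under linear changes of variable. So $f_j$ is more log-convex than $g_{Q_j}$ if and only if $f_j'$ is more log-convex than $g_{Q_j'}$.

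Applying \cref{Main_Thm_Eq_Geometric_BL} to $(f_j')$ gives the stated form of $f_j'$, in terms of the subspace $H_{\mathrm{dep}}$ attached to the geometric datum. Composing with $B_j^{1/2}$, i.e. $f_j=f_j'\circ B_j^{1/2}$, yields exactly the displayed formula, with $B_j^{1/2}x$ in place of $x$. The constraint that $f_j$ is more log-convex than $g_{Q_j}$ is the transported version of the constraint on $u_k,d_k$ in the geometric theorem.

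For the converse, I would show that every function of the geometric form is an extremizer for $(\textbf{C'},p,\textbf{Q'})$; the transfer above then gives the statement for the original datum. The computation goes as follows.
\begin{itemize}
\item Use \cref{Proj_commute_under_C_j} and $C_jP_{\tilde K_k}=P_{\tilde K_k}C_j^*C_j$ (valid when $\tilde K_k\subset C_jH_{\mathrm{dep}}$) to split $H=H_{\mathrm{dep}}^\perp\oplus\bigoplus_k\tilde K_k$ orthogonally. On each piece the integrand factors, because $C_j^*C_j$ preserves each $\tilde K_k$.
\item On $H_{\mathrm{dep}}^\perp$ the factors are Gaussians of type $\mathrm{id}$, plus linear terms generated by a common vector $b$. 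There the inequality is an equality by direct Gaussian computation, using $\sum p_jC_j^*C_j=\mathrm{id}_H$, just as in the second worked example.
\item On each $\tilde K_k$ with $k\le k_0$, the datum restricts to one where each $C_j|_{\tilde K_k}$ is either zero or a partial isometry with $C_j|(C_j|)^*=\mathrm{id}$. The integral factorizes Hölder-like, so arbitrary $u_k$ give equality: this uses criticality from \cref{Commuting_implies_critical} and the trace identity $\dim\tilde K_k=\sum p_j\dim C_j\tilde K_k$.
\item On each $\tilde K_k$ with $k_0<k\le k_1$, the restricted datum is geometric and the isotropic Gaussians $e^{-d_k|\cdot|^2}$ with a common shift $\tilde b$ are extremizers by direct computation.
\end{itemize}
Multiplying the pieces gives equality.

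I expect the main obstacle to be this converse bookkeeping: verifying that the projections $\tilde P_{j,k}C_j'^*$ really turn the integral into a product over the critical pieces, and that the linear shift terms cancel consistently on $H_{\mathrm{dep}}^\perp$. I would handle it by first treating the geometric case piece by piece and only then transporting the result by $B_j^{1/2}$.
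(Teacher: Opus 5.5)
Your forward direction is the paper's argument. You get Gaussian extremizability from \cref{Ext_implies_Gaussian_ext}, pass to the generalized geometric datum via \cref{Prop_Equiv_BL_constant}, and transfer both optimizers and the log-convexity class through $f_j'=f_j\circ B_j^{-1/2}$. Your Jacobian bookkeeping is exactly the computation the paper states without writing out. You then apply \cref{Main_Thm_Eq_Geometric_BL}.

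The converse is where you go beyond the paper. The paper offers nothing for it beyond the two-way transfer of optimizers, and \cref{Main_Thm_Eq_Geometric_BL} only states a necessary form. Your piece-by-piece verification over $H_{\mathrm{dep}}^\perp\oplus\bigoplus_k\tilde K_k$ therefore supplies a step that is otherwise missing. The three computations all check out:
\begin{itemize}
\item on $H_{\mathrm{dep}}^\perp$, $\sum_j p_j|C_jb|^2=|b|^2$ makes the linear terms match;
\item on the generalized independent pieces, $\sum_{j:\tilde K_k\subset W_j}p_j=1$;
\item on the Gaussian pieces, criticality matches the powers of $\pi/d_k$.
\end{itemize}

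The one step you assert without justification is that $C_j^*C_j$ preserves $\tilde K_k$ for $k>k_0$, which is what makes $f_j\circ C_j$ split along the decomposition. For $k\le k_0$ the paper notes this, but for the pieces of the critical decomposition it is really the converse of \cref{Commuting_implies_critical}. It does hold. Since $\|C_j\|_{\mathrm{op}}\le 1$, any subspace $V$ satisfies $\dim V=\mathrm{Tr}(P_V)=\sum_j p_j\|C_jP_V\|_{\mathrm{HS}}^2\le\sum_j p_j\dim(C_jV)$. Equality forces every nonzero singular value of $C_jP_V$ to equal $1$. Together with $0\le C_j^*C_j\le\mathrm{id}_H$, this gives $V=(V\cap\ker C_j)\oplus(V\cap\ker(\mathrm{id}_H-C_j^*C_j))$ for each $j$. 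So every critical subspace lies in $K$ and is $C_j^*C_j$-invariant, and your factorization goes through.
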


\hfill \\

\begin{remark}
Recall that we have a dual inequality \cref{Reg_Dual_BL_Log_Concavity} to the generalized geometric BL that we will call the generalized geometric Barthe's inequality: Assume that $\sum_{j=1}^m p_jC_j^*C_j=\mathrm{id}_H, C_jC_j^* \leq \mathrm{id}_{H_j}$ and $Q_j \leq \mathrm{id}_{H_j}, (\mathrm{id}_{H_j}-C_jC_j^*)(\mathrm{id}_{H_j}-Q_j)=0.$ Then, for $g_j$ more log-concave than $g_{Q_j}$ for any $1 \leq j \leq m$, $$\int_{H}\sup_{x=\sum_{j=1}^m p_jC_j^*y_j, y_j \in H_j}\prod_{j=1}^m g_j(y_j)^{p_j}\, dx \geq \prod_{j=1}^m \Bigl(\int_{H_j}g_j \Bigl)^{p_j}.$$ One can determine the equality of cases of the previous inequality by the same method as in \cite{Reverse_BL_equality} (which consequently generalize the main result of \cite{Reverse_BL_equality}).
\end{remark}

\begin{proposition}
    If $g_j$ is an extremizer for the generalized geometric Barthe's inequality, $$\begin{array}{lll}
    \ds g_j(x) &=& \ds c_j e^{-\pi\langle P_{(C_jH_{\mathrm{dep}})^{\perp,H_j}}x, P_{(C_jH_{\mathrm{dep}})^{\perp,H_j}}(x+C_jb)\rangle} \\
    \nm 
    && \ds \prod_{k=1}^{k_0}h_k(\tilde P_{j,k}C_j^*(x-w_j))\prod_{k=k_0+1}^{k_1}e^{-d_k\langle \tilde P_{j,k}C_j^*x,\tilde P_{j,k}(C_j^*x-b_j)\rangle}
    \end{array}$$ where \begin{itemize}
        \item [(i)] $\tilde P_{j,k}$ is the orthogonal projection from $H$ to $C_jH_{\mathrm{dep}} \cap \tilde K_k$, $c_j>0, b_j \in C_jH_\mathrm{dep} \cap \Bigl( \bigoplus_{k=k_0+1}^{k_1}\tilde K_k\Bigl)$ and $w_j \in C_jH_{\mathrm{dep}}$ for $j=1,\cdots,k$, 
        \item[(ii)] $h_k \in L^1(\tilde K_k)$ is non-negative for $k=1,\cdots,l$ and in addition, $h_k$ is log-concave if there exists $\alpha \neq \beta$ with $\tilde K_k \subset C_\alpha H_{\mathrm{dep}} \cap C_\beta H_{\mathrm{dep}},$ 
        \item[(iii)] $d_k>0$ for $k_0+1 \leq k \leq k_1$ and $h_k$ for $1 \leq k \leq k_0$ are such that $g_j$ is more log-concave than $g_{Q_j}$ for any $1 \leq j \leq m.$
    \end{itemize}
\end{proposition}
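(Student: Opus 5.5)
We plan to transfer the equality analysis of the forward generalized geometric inequality to the dual one by means of the Brenier-map proof of \cref{Theorem_REG_BL}, following the strategy of \cite{Reverse_BL_equality}. Let $(g_j)$ be an extremizer of the generalized geometric Barthe inequality, normalised so that $\int g_j=1$. Take $f_j=g_{\mathrm{id}_{H_j}}$, which are extremizers of the forward generalized geometric inequality by \cref{BL_Geometric_equals_1}, and let $\nabla\phi_j$ be the Brenier map pushing $f_j$ onto $g_j$. By the anisotropic Caffarelli contraction (\cref{General_Caff_Contraction}), the log-concavity assumption on $g_j$ gives $\nabla^2\phi_j\ge Q_j^{-1}$ or $\le$, according to the convention of the proof of \cref{Theorem_REG_BL}. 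The chain of inequalities $I\ge D_gJ$ in that proof then becomes a chain of equalities, since $D_g=1$ for geometric data.

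Equality has two consequences, which we extract in order.

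First, equality in the determinant step forces, for almost every $x$,
\[
\prod_j \det\bigl(\nabla^2\phi_j(C_jx)\bigr)^{p_j}=\det\Bigl(\sum_j p_jC_j^*\nabla^2\phi_j(C_jx)C_j\Bigr).
\]
This is the equality case of the determinant inequality underlying the Gaussian computation $E_g=\sqrt{D_g}$, i.e.\ of \cref{Prop_Equiv_BL_constant}. It means that the family $B_j(x)=\nabla^2\phi_j(C_jx)$, together with $Q_j$ or its inverse, is a Gaussian extremizer of the forward datum, pointwise in $x$.

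Second, equality in the sup step forces $\Theta$ to be essentially injective onto the support and $g$-optimal there.

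The first condition lets us reuse the structural analysis of Section 4 almost verbatim, with $D^2\log\tilde f_j$ replaced by $-\nabla^2\phi_j$. The local extremality conditions (c) of \cref{Prop_Equiv_BL_constant} hold pointwise. The geometric constraint $(\mathrm{id}-C_jC_j^*)(Q_j-\mathrm{id})=0$, together with the two-sided bound on $\nabla^2\phi_j$, pins $\nabla^2\phi_j$ to the identity on $\mathrm{Ran}(\mathrm{id}_{H_j}-C_jC_j^*)$ and makes it block diagonal there, exactly as in the derivation leading to \cref{Fundamental_eq_equality_cases}.

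Setting $\Phi(x)=\sum_jp_j\phi_j(C_jx)$, so that $\nabla\Phi=\Theta$, we obtain the same block decomposition of $D^2\Phi$ along $\ker(C_j)$, $W_j$ and $\tilde W_j$. Then:
\begin{itemize}
\item[(a)] the generalized independent subspaces $\tilde K_1,\dots,\tilde K_{k_0}$ carry arbitrary convex pieces;
\item[(b)] the Fourier-support argument on $\tilde K_{\mathrm{dep}}$ shows that $\nabla\Phi$ is affine there;
\item[(c)] the constant on $H_{\mathrm{dep}}^{\perp,K}$ is forced to equal the identity scaling by the same $\ker(C_j)$ argument used to show $c=\pi/t$.
\end{itemize}
Integrating $\nabla\phi_j=C_j\nabla\Phi\circ(\text{right inverse})$ and using the Monge--Ampère relation $g_j(\nabla\phi_j)\det\nabla^2\phi_j=f_j$ then yields the stated form. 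The Gaussian pieces account for the quadratic exponentials with the $d_k$. The factors $h_k$ are the push-forwards of standard Gaussians by the arbitrary convex maps on $\tilde K_k$, and the translations $w_j$ come from the affine parts.

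The second equality condition (the sup being attained at $y_j=\nabla\phi_j(C_jx)$) is where item (ii) comes from. When $\tilde K_k$ lies in $C_\alpha H_{\mathrm{dep}}\cap C_\beta H_{\mathrm{dep}}$ for $\alpha\neq\beta$, the same $h_k$ appears in two factors combined through a sup-convolution. Equality in the Prékopa--Leindler-type step then forces $h_k$ to be log-concave, as in \cite{Reverse_BL_equality}. When the subspace is seen by only one $j$, no constraint arises. Item (iii) is simply the requirement that the final $g_j$ respect the regularization hypothesis.

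The main obstacle is regularity. Caffarelli's theorem gives only bounds on $\nabla^2\phi_j$, so the pointwise Hessian identities must be made rigorous. We would first establish everything for $g_j$ smooth and positive by replacing $g_j$ with $\mathrm{Conv}(g_j,g_{\mathrm{id}})$-type regularizations. These remain extremizers, by the dual analogue of \cref{Closure_prop_extremizers}, and stay in the admissible class by the log-concavity remark. We would then remove the regularization via the factorization argument of the remark following \cref{Main_Thm_Eq_Geometric_BL}.
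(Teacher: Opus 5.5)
Your strategy is the one the paper intends. The paper gives no argument beyond pointing to the method of \cite{Reverse_BL_equality}: transport the forward extremizers $g_{\mathrm{id}_{H_j}}$ onto the $g_j$ and force equality in the chain of the proof of \cref{Theorem_REG_BL}. However, the step that transfers the Section 4 analysis is not justified as you wrote it.

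First, Caffarelli gives only a one-sided bound. With source $g_{\mathrm{id}_{H_j}}$ and $g_j$ more log-concave than $g_{Q_j}$ (Barthe normalization, $Q_j\le\mathrm{id}_{H_j}$), \cref{General_Caff_Contraction} yields $\nabla^2\phi_j\le Q_j^{-1/2}\le Q_j^{-1}$. This is exactly what makes $B_j(x):=\nabla^2\phi_j(C_jx)$ admissible for the forward datum, but there is no lower bound to ``pin'' anything.

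Second, you need three facts: that $B_j(x)$ is the identity on $\mathrm{Ran}(\mathrm{id}_{H_j}-C_jC_j^*)$ with vanishing off-diagonal block, the block structure of $\nabla^2\Phi$, and the gradient relation $\nabla\phi_j(C_jx)=C_j\nabla\Phi(x)+\mathrm{const}$ that you use at the end. None of these can be read off from condition (c) of \cref{Prop_Equiv_BL_constant} plus a Caffarelli bound. In Section 4 the gradient relation \cref{Fundamental_eq_equality_cases} came from the heat-flow condition $\langle (I-P)A(x),A(x)\rangle=0$, which has no counterpart in the transport chain. Substituting $-\nabla^2\phi_j$ for $D^2\log\tilde f_j$ does not produce it.

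The repair is to apply the forward result to Gaussians. For a.e.\ $x$ the tuple $(g_{B_j(x)})_j$ is a forward extremizer. So either the Gaussian instance of \cref{Fundamental_eq_equality_cases} (let $t\downarrow 1$ in the heat-flow argument for Gaussian inputs) or \cref{Main_Thm_Eq_Geometric_BL} restricted to Gaussians gives $B_j(x)C_j=C_jM(x)$, where $M(x)=\sum_j p_jC_j^*B_j(x)C_j=\nabla^2\Phi(x)$. It also gives $B_j(x)=\mathrm{id}$ on $\mathrm{Ran}(\mathrm{id}_{H_j}-C_jC_j^*)$ with zero off-diagonal block.

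Consequently $C_j^*C_jM(x)=C_j^*B_j(x)C_j$ is symmetric, so $\nabla^2\Phi(x)$ commutes with every $C_j^*C_j$; this is your block decomposition along $\ker(C_j)$, $W_j$, $\tilde W_j$. Since $\nabla\phi_j$ and $\nabla\Phi$ are Lipschitz by the Caffarelli bound, the identity $\nabla^2\phi_j(C_jx)C_j=C_j\nabla^2\Phi(x)$ integrates to $\nabla\phi_j(C_jx)=C_j\nabla\Phi(x)+\mathrm{const}$. From there your steps (a)--(c) and the push-forward computation go through, with the constants producing the translations $w_j$.

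Two smaller points. Item (ii) needs no Pr\'ekopa--Leindler-type equality analysis here. Every admissible $g_j$ is more log-concave than $g_{Q_j}$, hence log-concave, so each factor $h_k$ is automatically log-concave; the mechanism of \cite{Reverse_BL_equality} is only needed for unrestricted inputs.

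The ``dual analogue'' of \cref{Closure_prop_extremizers} you invoke is not in the paper. It does hold, by the same substitution $(x,y)\mapsto(\frac{x+y}{\sqrt2},\frac{x-y}{\sqrt2})$ applied to the sup-convolutions together with the Barthe inequality in the inner variable. But the $C^{1,1}$ bound from Caffarelli already makes all the a.e.\ Hessian identities rigorous, so that regularization step is not really needed.
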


\hfill \\

\section{Some Applications}

The generalized geometric Brascamp-Lieb inequality reformulated for Gaussian measures gives the following, which generalizes the results of Chen-Dafnis-Paouris in \cite{Chen_Dafnis_Paouris}: 

\begin{theorem}
    Assume that $n \leq m, n_1,\cdots,n_m \leq n$ are positive integers. For any $1 \leq i \leq m$, consider the $n_i \times n_i$ matrices $C_i$ with $$C_iC_i^* \leq I_{n_i}, Q_i \ge I_{n_i} \; \forall i \in I; \quad C_iC_i^* \ge I_{n_i}, Q_i \le I_{n_i} \; \forall i \in I^c, $$ and $(I_{n_i}-C_iC_i^*)(Q_i-I_{n_i})=0$ and $p_i>0$ such that $$\sum_{i=1}^m p_iC_i^*C_i=I_n.$$ Then, for functions $f_i$ that are more log-convex (resp. more log-concave) than $g_{\frac{Q_i-I_{n_i}}{2\pi}}$ for any $i \in I$ (resp. for any $i \in I^c$), $$\int_{\mathbb{R}^n}\prod_{i=1}^m f_i(C_ix)^{p_i}\, d\gamma_n(x) \leq \prod_{i=1}^m \Bigl(\int_{\mathbb{R}^{n_i}}f_i \, d\gamma_{n_i}\Bigl)^{p_i}.$$ 
\end{theorem}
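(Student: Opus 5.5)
We reduce the Gaussian-measure statement to \cref{BL_Geometric_equals_1} applied to the generalized geometric datum $(\textbf{C},p,\textbf{Q},I)$. Write $d\gamma_k(y)=(2\pi)^{-k/2}e^{-|y|^2/2}\,dy$. We absorb the Gaussian weights into the functions by setting
\[
F_i(y)=f_i(y)e^{-|y|^2/2}, \qquad y\in\mathbb{R}^{n_i}.
\]
Then $\int_{\mathbb{R}^{n_i}} f_i\, d\gamma_{n_i}=(2\pi)^{-n_i/2}\int F_i$.

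On the left-hand side we use the geometric identity $\sum_i p_i C_i^*C_i=I_n$, which gives
\[
\sum_i p_i |C_ix|^2=\Bigl\langle \sum_i p_iC_i^*C_ix,x\Bigr\rangle=|x|^2 .
\]
Hence
\[
\int \prod_i f_i(C_ix)^{p_i}\, d\gamma_n=(2\pi)^{-n/2}\int \prod_i F_i(C_ix)^{p_i}\,dx .
\]
The normalising powers of $2\pi$ differ between the two sides: we get $(2\pi)^{-n/2}$ on the left and $(2\pi)^{-\sum p_i n_i/2}$ on the right. Taking traces in $\sum p_iC_i^*C_i=I_n$ gives $n=\sum_i p_i\operatorname{Tr}(C_iC_i^*)$. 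When all $C_iC_i^*=I_{n_i}$ this yields $n=\sum p_in_i$ and the constants cancel. In general $\operatorname{Tr}(C_iC_i^*)\le n_i$ for $i\in I$, so the required comparison between the two powers of $2\pi$ has to be checked carefully, or absorbed via the scaling $x\mapsto \sqrt{2\pi}\,x$ in the convention $g_A=e^{-\pi\langle Ax,x\rangle}$. I would first rewrite everything with $\gamma$ replaced by the measure $e^{-\pi|x|^2}dx$, since with the paper's normalisation that is what makes the constants match exactly. The hypothesis in the statement is phrased with $g_{(Q_i-I)/2\pi}$ precisely so that this rescaling is the intended one.

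Next we check the regularity hypothesis. For $i\in I$ the hypothesis says $f_i=g_{(Q_i-I)/2\pi}h_i$ with $h_i$ log-convex, up to the $2\pi$-rescaling just discussed. Multiplying by the Gaussian weight $e^{-\pi|y|^2}$ (after rescaling) then turns this into $F_i=g_{Q_i}\tilde h_i$ with $\tilde h_i$ log-convex, because Gaussian exponents add: $(Q_i-I)+I=Q_i$. So $F_i$ is more log-convex than $g_{Q_i}$. Symmetrically, for $i\in I^c$, $F_i$ is more log-concave than $g_{Q_i}$. This is the only place where the hypotheses on the $f_i$ enter, and it is a purely algebraic check of quadratic forms.

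With these two reductions in hand, the datum satisfies the definition of a generalized geometric Brascamp--Lieb datum, namely
\[
C_iC_i^*\le I,\quad Q_i\ge I,\quad (I-C_iC_i^*)(Q_i-I)=0 \quad\text{on } I,
\]
the reversed inequalities on $I^c$, and $\sum_i p_iC_i^*C_i=I_n$. By \cref{BL_Geometric_equals_1} its constant is $1$. Applying \cref{Reformulation_REG_BL} with $\mathcal Q=0$ to the $F_i$ then gives
\[
\int \prod_i F_i(C_ix)^{p_i}\,dx\le \prod_i\Bigl(\int F_i\Bigr)^{p_i}.
\]
Translating back through the two identities above yields the claim.

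The main obstacle is the bookkeeping of normalising constants, specifically matching $\gamma$'s variance convention with the $e^{-\pi\langle\cdot,\cdot\rangle}$ convention so that the Gaussian prefactors on the two sides coincide. If a residual factor $(2\pi)^{(n-\sum p_in_i)/2}$ survives, I would handle it by noting that the generalized geometric conditions force it to be $\le 1$ in the relevant direction.
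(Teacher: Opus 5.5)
Your plan is essentially the paper's proof: you absorb $e^{-|x|^2/2}=\prod_i e^{-p_i|C_ix|^2/2}$ (from $\sum_i p_iC_i^*C_i=I_n$) into the inputs and apply the generalized geometric inequality with constant $1$, and your rescaling $x=\sqrt{2\pi}\,u$ (which turns $d\gamma_k$ into exactly $e^{-\pi|u|^2}du$ and the hypothesis into ``more log-convex/concave than $g_{Q_i-I}$'') is equivalent to the paper's use of $\mathrm{BL}(\mathbf{C},p,\mathbf{Q}/2\pi,I)=(2\pi)^{(n-\sum_ip_in_i)/2}$ for $F_i=f_ie^{-|\cdot|^2/2}$, which, without rescaling, is more log-convex/concave than $g_{Q_i/2\pi}$ and not $g_{Q_i}$. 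Drop your fallback, though: $f_i\equiv 1$ is admissible and gives equality, so the constants must cancel exactly and no residual factor can be absorbed (the naive bound via $\mathrm{BL}(\mathbf{C},p,\mathbf{Q})=1$ leaves $(2\pi)^{(\sum_ip_in_i-n)/2}\ge 1$ on the wrong side when $I=\{1,\dots,m\}$, and $n-\sum_ip_in_i$ has no sign in the mixed case), so the exact rescaling is essential rather than optional.
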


This allows us to retrieve \cite[Theorem 4]{Chen_Dafnis_Paouris} by taking $I=\{1,\cdots,m\}, Q_i=\Lambda I_{n_i}$ for $\Lambda>1$, which implies that $C_iC_i^*=I_{n_i}$ and letting $\Lambda \rightarrow +\infty$.

\begin{proof}
    Since $(\textbf{C},p,\textbf{Q},I)$ is a generalized geometric BL by assumption, $$\mathrm{BL}(\textbf{C},p,\textbf{Q},I)=1.$$ Now, from $$\mathrm{BL}\Bigl(\textbf{C},p,\frac{\textbf{Q}}{2\pi},I\Bigl)=(2\pi)^{(n-\sum_{i=1}^m p_in_i)/2}\mathrm{BL}(\textbf{C},p,\textbf{Q},I)=(2\pi)^{(n-\sum_{i=1}^m p_in_i)/2},$$
    we obtain
    $$\begin{array}{lll}
        \ds \int_{\mathbb{R}^n} \prod_{i=1}^m f_i(C_ix)^{p_i} \, d\gamma_n(x)  &=& \ds \frac{1}{(2\pi)^{n/2}}\int_{\mathbb{R}^n}\prod_{i=1}^m f_i(C_ix)^{p_i} e^{-\frac{1}{2}\sum_{i=1}^m p_i |C_ix|^2}\, dx  \\
        \nm
        &=& \ds  \frac{1}{(2\pi)^{n/2}}\int_{\mathbb{R}^n}\prod_{i=1}^m \Bigl(f_i(C_ix)e^{-\frac{|C_ix|^2}{2}} \Bigl)^{p_i} \, dx  \\
        \nm 
        &\leq& \ds \frac{(2\pi)^{(n-\sum_{i=1}^m p_in_i)/2}}{(2\pi)^{n/2}}\prod_{i=1}^m \Bigl( \int_{\mathbb{R}^{n_i}}f_i e^{-\frac{|\cdot|^2}{2}} \Bigl)^{p_i} \\
        \nm 
        &=& \ds \prod_{i=1}^m \Bigl( \int f_i \, d\gamma_{n_i}\Bigl)^{p_i}.
    \end{array}$$ Here, we used the fact that $f_i$ being more log-convex (resp. more log-concave) than $g_{\frac{Q_i-I_{n_i}}{2\pi}}$ implies that $f_ie^{-|\cdot|^2/2}$ is more log-convex (resp. more log-concave) than $g_{\frac{Q_i}{2\pi}}.$ 
\end{proof}

\begin{theorem} \label{Gaussian_Version_BL}
    Let $n \leq m$ and $n_1,\cdots,n_m \leq n$ be positive integers and let $N=n_1+\cdots+n_m$. For every $i=1,\cdots,m$, consider the $n_i \times n$ matrices $C_i$ and $n_i \times n_i$ matrices $$C_iC_i^* \leq I_{n_i}, Q_i \geq I_{n_i} \; \forall i \in I; \quad C_iC_i^* \geq I_{n_i}, Q_i \le I_{n_i} \; \forall i \in I^c$$  such that $$(I_{n_i}-C_iC_i^*)(Q_i-I_{n_i})=0 \quad \forall 1 \le i \le m.$$  Let the $N \times N$ diagonal matrix $P$ be given by  $$P=\mathrm{diag}(p_1I_{n_1},\cdots,p_mI_{n_m}).$$ For functions $f_i$ that are more log-convex (resp. more log-concave) than $g_{\frac{Q_i-I_{n_i}}{2\pi}}$ for any $i \in I$ (resp. for any $i \in I^c$), we have that if $CC^* \leq P^{-1}$, then $$\int_{\mathbb{R}^n}\prod_{i=1}^m f_i(C_ix)^{p_i}\, d\gamma_n(x) \leq \prod_{i=1}^n \Bigl( \int_{\mathbb{R}^{n_i}}f_i\, d\gamma_{n_i} \Bigl)^{p_i}.$$
\end{theorem}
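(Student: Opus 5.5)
We reduce \cref{Gaussian_Version_BL} to the previous theorem (the Gaussian reformulation of the generalized geometric inequality) by adding an auxiliary Gaussian factor that completes the datum to a generalized geometric one. Write $C:\mathbb{R}^n\to\mathbb{R}^N$ for the stacked map $Cx=(C_1x,\dots,C_mx)$, so that $\sum_{i=1}^m p_iC_i^*C_i=C^*PC$. The hypothesis $CC^*\le P^{-1}$ is equivalent to $P^{1/2}CC^*P^{1/2}\le I_N$, hence to $\|P^{1/2}C\|\le 1$, hence to $C^*PC\le I_n$. We therefore set
$$R=I_n-\sum_{i=1}^m p_iC_i^*C_i\ \ge 0.$$

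If $R=0$ the datum is already generalized geometric and the previous theorem applies directly. Otherwise let $k=\mathrm{rank}(R)$ and write $R=C_{m+1}^*C_{m+1}$ with $C_{m+1}=(R^{1/2})|_{\mathrm{Ran} R}:\mathbb{R}^n\to\mathbb{R}^{k}$, which is surjective. Take $p_{m+1}=1$, $f_{m+1}\equiv 1$ (equivalently, a Gaussian whose $\gamma_k$-integral we can compute) and put $m+1\in I$. Then $\sum_{i=1}^{m+1}p_iC_i^*C_i=I_n$. The remaining conditions for index $m+1$ are as follows. First, $C_{m+1}C_{m+1}^*$ equals $R$ restricted to its range, and $R\le I$ gives $C_{m+1}C_{m+1}^*\le I_k$. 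Second, we may choose $Q_{m+1}=I_k$, which makes $(I-C_{m+1}C_{m+1}^*)(Q_{m+1}-I)=0$ trivially. Third, $f_{m+1}\equiv1$ is more log-convex than $g_{0}=g_{(Q_{m+1}-I)/2\pi}$, since $1=g_0\cdot 1$ and the constant $1$ is log-convex. All other indices keep their hypotheses unchanged.

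Applying the previous theorem to this enlarged datum gives
$$\int_{\mathbb{R}^n}\prod_{i=1}^m f_i(C_ix)^{p_i}\,d\gamma_n(x)\le \prod_{i=1}^m\Bigl(\int f_i\,d\gamma_{n_i}\Bigr)^{p_i}\Bigl(\int_{\mathbb{R}^k}1\,d\gamma_k\Bigr)=\prod_{i=1}^m\Bigl(\int f_i\,d\gamma_{n_i}\Bigr)^{p_i},$$
which is the claim.

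The main obstacle is checking that the previous theorem really tolerates the constant input $f_{m+1}\equiv1$, which is not in $L^1$ for Lebesgue measure. Its proof passes through $\mathrm{BL}(\mathbf{C},p,\mathbf{Q}/2\pi,I)$ applied to $f_ie^{-|\cdot|^2/2}$, and here that function is the Gaussian $e^{-|y|^2/2}$, which is integrable and more log-convex than $g_{Q_{m+1}/2\pi}=g_{I/2\pi}$ (with equality). So that argument goes through unchanged. Alternatively, one can avoid the issue by approximating $f_{m+1}$ with $e^{-\epsilon|y|^2}$ and letting $\epsilon\to0$ by monotone convergence. A secondary check is the equivalence $CC^*\le P^{-1}\iff C^*PC\le I_n$, which follows since $AA^*$ and $A^*A$ have the same nonzero spectrum for $A=P^{1/2}C$.
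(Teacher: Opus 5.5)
Your proposal is correct and uses the same reduction as the paper: complete $\sum_i p_iC_i^*C_i\le I_n$ to a generalized geometric datum by adjoining auxiliary maps with input $\equiv 1$ and $Q=I$, then apply the preceding Gaussian-measure theorem. The only difference is in how the remainder $I_n-\sum_i p_iC_i^*C_i$ is split. The paper decomposes it spectrally into rank-one pieces $\theta_i\theta_i^*$ plus rescaled copies of the $C_i^*C_i$, sorting the latter into $I$ or $I^c$. Your single factor $C_{m+1}=R^{1/2}$ onto $\mathrm{Ran}\,R$ does the same in one step, and your remark on the non-integrable input $f_{m+1}\equiv 1$ covers a point the paper passes over.
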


\hfill \\

As previously, this theorem recovers \cite[Theorem 3 (i)]{Chen_Dafnis_Paouris}.

\begin{proof}
    Assume $CC^* \leq P^{-1}.$ This means that $\|A\|_{\mathrm{op}} \leq 1$ where $A=C^*PC$ since $$A^2=C^*P(CC^*)PC \leq C^*P^*P^{-1}PC=A.$$ Let $\lambda_1,\cdots,\lambda_n \geq 0$ be the eigenvalues of $A$ in non-increasing order and $\theta_1,\cdots,\theta_n$ the corresponding orthonormal eigenvectors. Take $k$ to be the largest integer such that $\lambda_1=\cdots=\lambda_k.$ Then, $$\sum_{i=1}^m \frac{p_i}{\lambda_1}C_i^*C_i+\sum_{i=k+1}^n \Bigl(1-\frac{\lambda_i}{\lambda_1} \Bigl)\theta_i\theta_i^*=\frac{A}{\lambda_1}+\sum_{i=k+1}^n \Bigl(1-\frac{\lambda_i}{\lambda_1} \Bigl)\theta_i\theta_i^*=I_n$$ as $A=\sum_{i=1}^n \lambda_i \theta_i \theta_i^*.$ If $\lambda_1<1$, one may write the previous equation as $$\sum_{i=1}^m p_iC_i^*C_i+\sum_{i=k+1}^n \Bigl(1-\frac{\lambda_i}{\lambda_1}\Bigl)\theta_i\theta_i^*+\sum_{i=1}^m p_i\Bigl(\frac{1}{\lambda_1}-1\Bigl)C_i^*C_i=I_n$$ with the coefficient terms being always positive as $1 \geq \lambda_1 \geq \lambda_i$ for any $1 \leq i \leq n.$  To conclude, there exists some $\nu \in \mathbb{N} \cup \{0\}$ such that there are $k_j \times n$ matrices $B_j$ with $B_jB_j^* \leq I_{k_j}$ or $B_jB_j^* \geq I_{k_j}$ (either $B_j=\theta_j^*$ then $B_jB_j^*=|\theta_j|^2=1$ or $B_j=C_{i_j}$) and $b_j>0$ for $1 \leq j \leq \nu$ satisfying $$\sum_{i=1}^m p_iC_i^*C_i+\sum_{j=1}^\nu b_jB_j^*B_j=I_n.$$ Taking $f_i$ given functions which are more log-convex than $g_{\frac{Q_i-I_{n_i}}{2\pi}}$ for $1 \leq i \leq m$, we set $g_1=f_1,\cdots,g_m=f_m$ and $g_{m+1}=\cdots=g_{m+\nu}=1$,  $Q'_j=I_{k_j}$ for $1 \leq j \leq \nu$ and $I'=\{m+j: j \in \{1,\cdots,\nu\} \text{ s.t. }  B_jB_j^* \le I_{k_j}\}.$ Since the conditions to apply the previous theorem are satisfied with $$((C_i)_{1 \leq i \leq m},(B_j)_{1 \leq j \leq \nu}), ((Q_i)_{1 \leq i \leq m},(Q_j')_{1 \leq j \leq \nu},I \cup I')$$ and $\int g_{i}\, d\gamma_{k_i}=1$ for all $m+1 \leq i \leq m+\nu$, the desired claim is proven. 
\end{proof}

\begin{theorem}
    Let $m,n_1,\cdots,n_m$ positive integers, $N=n_1+\cdots+n_m.$ Suppose $X_i$ is an $n_i$-dimensional random vector for $1 \leq i \leq m$ such that their joint law $$X=(X_1,\cdots,X_m)$$ forms a centered jointly $N$-dimensional Gaussian random vector with covariance matrix $T_{ii} \leq I_{n_i}, (I_{n_i}-T_{ii})(Q_i-I_{n_i})=0$ for matrices $Q_i \geq I_{n_i}$ for any $i \in I$ and $T_{ii} \geq I_{n_i}, (I_{n_i}-T_{ii})(Q_i-I_{n_i})=0$ for matrices $Q_i \leq I_{n_i}$ for any $i \in I^c.$ Let $P$ be the block diagonal matrix $$P=\mathrm{diag}(p_1I_{n_1},\cdots,p_mI_{n_m}).$$ Then for functions $f_i$ that are more log-convex (resp. more log-concave) than $g_{\frac{Q_i-I_{n_i}}{2\pi}}$ for any $i \in I$ (resp. for any $i \in I^c$), if $T \leq P^{-1}$, $$\mathbb{E}\prod_{i=1}^m f_i(X_i)^{p_i} \leq \prod_{i=1}^m \Bigl(\int_{\mathbb{R}^{n_i}} f_i \, d\gamma_{n_i} \Bigl)^{p_i}.$$ 
\end{theorem}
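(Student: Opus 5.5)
This is a direct translation of \cref{Gaussian_Version_BL} into probabilistic language. Since $T$ is a symmetric positive semi-definite $N\times N$ matrix, I will write $T=CC^*$ with $C$ an $N\times n$ matrix, where $n=\mathrm{rank}(T)$ or $n=N$; for instance $C=T^{1/2}$ with $n=N$. Splitting $C$ into row blocks $C_i$ of size $n_i\times n$ gives
$$X\overset{d}{=}(C_1Z,\cdots,C_mZ),\qquad Z\sim\gamma_n.$$
The diagonal blocks then satisfy $T_{ii}=C_iC_i^*$. The hypotheses on $T_{ii}$ therefore become exactly the hypotheses of \cref{Gaussian_Version_BL}: $C_iC_i^*\le I_{n_i}$ and $Q_i\ge I_{n_i}$ for $i\in I$, $C_iC_i^*\ge I_{n_i}$ and $Q_i\le I_{n_i}$ for $i\in I^c$, and $(I_{n_i}-C_iC_i^*)(Q_i-I_{n_i})=0$ for all $i$. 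The hypothesis $T\le P^{-1}$ is literally $CC^*\le P^{-1}$. With these identifications,
$$\mathbb{E}\prod_{i=1}^m f_i(X_i)^{p_i}=\int_{\mathbb{R}^n}\prod_{i=1}^m f_i(C_ix)^{p_i}\,d\gamma_n(x),$$
and \cref{Gaussian_Version_BL} gives the claim.

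Three routine checks remain.

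\emph{Surjectivity of $C_i$.} The earlier framework assumes each $C_i$ is surjective. For $i\in I^c$ this follows from $C_iC_i^*\ge I_{n_i}$. For $i\in I$ it can fail if $T_{ii}$ is singular. If so, I approximate $T$ by $T_\epsilon=(1-\epsilon)T+\epsilon I$, which has invertible diagonal blocks. I then pass to the limit using Fatou's lemma or dominated convergence, since the functions are integrable against Gaussians.

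\emph{Constraints under perturbation.} I must check that the constraint $(I-T_{ii})(Q_i-I)=0$ survives the perturbation. It does: $I-(T_\epsilon)_{ii}=(1-\epsilon)(I-T_{ii})$, so the product with $Q_i-I$ is still zero. Likewise $T_\epsilon\le P^{-1}$ holds because $\epsilon I\le \epsilon P^{-1}$ whenever all $p_i\le 1$. The condition $p_i\le1$ itself follows from $T\le P^{-1}$ together with $T_{ii}\ge$ something nonzero; in the degenerate case I instead shrink $T$ slightly.

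\emph{The constraint $n\le m$.} The constraint $n\le m$ in \cref{Gaussian_Version_BL} is only notational, so I will take $n=N$ and allow $C$ to have a kernel. The proof of \cref{Gaussian_Version_BL} only used $\sum_i p_iC_i^*C_i\le I$, obtained from $\|C^*PC\|\le 1$, and then completed $\sum_i p_iC_i^*C_i$ to the identity.

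The main obstacle I expect is the completion step inside \cref{Gaussian_Version_BL}. There the added rank-one terms $\theta\theta^*$ need $\theta\theta^*\le I$ with the right log-convexity class, which holds since $|\theta|=1$. Also, when $C^*PC$ is singular one must make sure the completed datum is a generalized geometric datum, so that \cref{BL_Geometric_equals_1} applies. I would verify this directly rather than rely on the earlier reduction.
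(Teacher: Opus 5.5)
Your main line is exactly the paper's proof. Factor $T=UU^*$, note $T_{ii}=U_iU_i^*$ and $X\overset{d}{=}UZ$, and apply \cref{Gaussian_Version_BL} to the block rows. The paper takes $n=\mathrm{rk}(T)$ via \cite[Lemma 2]{Chen_Dafnis_Paouris}. Your choice $U=T^{1/2}$, $n=N$, works equally well, because the completion step in the proof of \cref{Gaussian_Version_BL} gives weight $1$ to $\ker(C^*PC)$.

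The paper never checks that the $U_i$ are surjective, so your perturbation for singular $T_{ii}$, $i\in I$, adds rigour the paper lacks. However, one step of it is false.

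\emph{The $p_i\le 1$ claim.} The hypotheses do not force $p_i\le 1$ for $i\in I$, even when $T_{ii}\neq 0$. Take $m=2$, $n_1=n_2=1$, $I=\{1,2\}$, $T=\mathrm{diag}(\tfrac12,0)$, $p=(2,1)$, $Q_1=Q_2=1$. All assumptions hold and $T_{22}$ is singular. Yet $(1-\epsilon)T+\epsilon I=\mathrm{diag}(\tfrac{1+\epsilon}{2},\epsilon)\not\le P^{-1}=\mathrm{diag}(\tfrac12,1)$.

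\emph{The fallback.} ``Shrinking $T$'' does not help either. It cannot make singular blocks invertible. Moreover, any perturbation $(1-\epsilon)T+\epsilon cI$ with $0\le c<1$ adds $\epsilon(1-c)I$ to $I-(T_\epsilon)_{ii}$, which destroys $(I-(T_\epsilon)_{ii})(Q_i-I)=0$ wherever $Q_i\neq I$.

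\emph{The repair.} Work blockwise. If $p_i>1$, then $T_{ii}\le p_i^{-1}I<I$, so $I-T_{ii}$ is invertible and the constraint forces $Q_i=I$. Set $D=\mathrm{diag}(\min(1,p_i^{-1})I_{n_i})$ and $T_\epsilon=(1-\epsilon)T+\epsilon D$. Then:
\begin{itemize}
\item $T_\epsilon\le P^{-1}$, since it is a convex combination of $T$ and $D$, both $\le P^{-1}$;
\item the diagonal blocks of $T_\epsilon$ are invertible;
\item $(T_\epsilon)_{ii}\le I$ for $i\in I$, and $(T_\epsilon)_{ii}\ge I$ for $i\in I^c$, where $p_i\le 1$ automatically because $I\le T_{ii}\le p_i^{-1}I$;
\item $I-(T_\epsilon)_{ii}=(1-\epsilon)(I-T_{ii})+\epsilon(I-D_{ii})$, and $I-D_{ii}=0$ unless $Q_i=I$, so the commutation constraint survives.
\end{itemize}

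\emph{The limit.} Couple $X^\epsilon=\sqrt{1-\epsilon}\,X+\sqrt{\epsilon}\,D^{1/2}Y$ with $Y$ an independent standard Gaussian, and apply Fatou. Fatou applies because each $f_i$ is almost surely continuous at $X_i$, not because of integrability against Gaussians. For $i\in I$, $f_i$ is a Gaussian factor times a log-convex function, hence continuous. For $i\in I^c$, $X_i$ has a density, and a log-concave function is continuous off the boundary of its convex support, which is a Lebesgue-null set.
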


\begin{proof}
    Set $n=\mathrm{rk}(T).$ According to \cite[Lemma 2]{Chen_Dafnis_Paouris}, there exists a $N \times n$ matrix $U$ such that $T=UU^*.$ We write $U_1,\cdots,U_m$ for the block rows of $U$ and denote by $u_1^i,\cdots,u_{n_i}^i$ the rows of $U_i.$ Then, $T=UU^*=(U_iU_j^*)_{1 \leq i,j \leq m}$ which implies that $T_{ii}=U_iU_i^*$ for any $1 \leq i \leq m.$ Moreover, $$X=(X_1,\cdots,X_m)^T\overset{d}{=}(U_1Z,\cdots,U_mZ)^T=UZ$$ where $Z$ is an $n$-dimensional standard Gaussian random vector. Then, we can make use of the previous theorem since $$\mathbb{E}\prod_{i=1}^m f_i(X_i)^{p_i}=\int_{\mathbb{R}^n}\prod_{i=1}^m f_i(U_ix)^{p_i}\, d\gamma_n(x)$$ and $U_i$ satisfies the conditions of \cref{Gaussian_Version_BL}. 
\end{proof}

Thanks to our results, one may improve the forward regularized hypercontractivity inequalities of \cite[Corollary 6.6]{Nakamura_Tsuji}. Set $$\mathcal{Q}=\frac{1}{2\pi(1-e^{-2s})}\begin{pmatrix}
    \Bigl( 1-\frac{1-e^{-2s}}{p}\Bigl) \mathrm{id}_{\mathbb{R}^n} & -e^{-s}\mathrm{id}_{\mathbb{R}^n} \\
    \nm 
    -e^{-s}\mathrm{id}_{\mathbb{R}^n} & \Bigl( 1-\frac{1-e^{-2s}}{q'}\Bigl)\mathrm{id}_{\mathbb{R}^n}
\end{pmatrix}>0, p_1=\frac{1}{p}, p_2=\frac{1}{q'}$$ such that $$\frac{q-1}{p-1} \le e^{2s}, 1<p,q<+\infty$$ and let $$C_1=\begin{pmatrix}\mathrm{id}_{\mathbb{R}^n} & 0\end{pmatrix}, C_2=\begin{pmatrix} 0 & \mathrm{id}_{\mathbb{R}^n}\end{pmatrix}.$$  

The hypercontractivity inequality on $\mathbb{R}^n$ can be considered as a special case of the Brascamp-Lieb inequality $$\int_{\mathbb{R}^n}e^{-\pi \langle \mathcal{Q}x,x\rangle}\prod_{j=1}^2 f_j(C_jx)^{p_j}\ dx \leq \mathrm{H}(p_1,p_2)\prod_{j=1}^2 \Bigl( \int_{\mathbb{R}^n} f_j\Bigl)^{p_j}.$$

For simplicity, let us consider $n=1.$ 
Then, $$\begin{array}{lll}M &=& p_1aC_1^*C_1+p_2bC_2^*C_2+\mathcal{Q}=\begin{pmatrix}
    \frac{1}{p}a & 0 \\
    \nm 
    0 & \frac{1}{q'}b
\end{pmatrix}+\mathcal{Q} \\
\nm 
&=& \ds \frac{1}{2\pi(1-e^{-2s})}\begin{pmatrix}
    1+\frac{1-e^{-2s}}{p}(2\pi a-1) & e^{-s} \\
    \nm 
    e^{-s} & 1+\frac{1-e^{-2s}}{q'}(2\pi b-1)
\end{pmatrix}.
\end{array}$$
In particular, $$M^{-1}=\frac{2\pi}{ 1+\frac{2\pi a-1}{p}+\frac{2\pi b-1}{q'}+\frac{1-e^{-2s}}{pq'}(2\pi a-1)(2\pi b-1)}\begin{pmatrix}
    1+\frac{1-e^{-2s}}{q'}(2\pi b-1) & e^{-s} \\
    \nm 
    e^{-s} & 1+\frac{1-e^{-2s}}{p}(2\pi a-1)
\end{pmatrix}.$$

By \cref{Prop_Equiv_BL_constant},  \begin{equation} \label{Sup_Det_Hypercontractivity} \sup_{\substack{0<a_1 \leq \frac{1}{2\pi \beta} \\ 0<a_2 \leq \frac{1}{2\pi \alpha}}}\frac{a_1^{p_1/2}a_2^{p_2/2}}{\det(p_1a_1C_1^*C_1+p_2a_2C_2^*C_2+\mathcal{Q})^{1/2}}=\frac{a^{p_1/2}b^{p_2/2}}{\det(M)^{1/2}}\end{equation} if and only if $$2\pi \beta \leq \frac{1}{a}= C_1M^{-1}C_1^*=(M^{-1})_{11} \text{ or } \frac{1}{a}=2\pi \beta \geq (M^{-1})_{11} $$ and $$2\pi \alpha \leq \frac{1}{b} =C_2M^{-1}C_2^*=(M^{-1})_{22} \text{ or } \frac{1}{b}=2\pi \alpha \geq (M^{-1})_{22}.$$ Notice that if the supremum is taken over $0<a_1 \leq \frac{1}{2\pi \beta}$ and $0<a_2$, then the condition for $a_2$ restricts to $\frac{1}{b}=(M^{-1})_{22}.$ \hfill \\

Let us rewrite the equations $\frac{1}{a}=(M^{-1})_{11}, \frac{1}{b}=(M^{-1})_{22}.$ Indeed, we have $$\begin{array}{lll}
\ds \frac{1}{b}=\frac{2\pi}{1+\frac{2\pi a-1}{p}+\frac{2\pi b-1}{q'}+\frac{1-e^{-2s}}{pq'}(2\pi a-1)(2\pi b-1)}\Bigl( 1+\frac{1-e^{-2s}}{p}(2\pi a-1)\Bigl) \\
\nm 
\Longrightarrow \ds 1-\frac{1}{q'}+\frac{2\pi b}{q'}+\frac{2\pi a-1}{p}+\frac{1-e^{-2s}}{pq'}(2\pi a-1)(2\pi b-1)=2\pi b+\frac{1-e^{-2s}}{p}2\pi b(2\pi a-1) \\
\nm 
\Longrightarrow \ds -\bigl(1-\frac{1}{q'}\bigl)(2\pi b-1)+\frac{1-e^{-2s}}{p}(2\pi a-1)\Bigl( -\bigl(1-\frac{1}{q'}\bigl)(2\pi b-1)-1\Bigl) +\frac{2\pi a-1}{p}=0 \\
\nm 
\Longrightarrow \ds \Bigl( \frac{1-e^{-2s}}{p}(2\pi a-1)+1 \Bigl) \frac{1}{q}(2\pi b-1)=\frac{e^{-2s}}{p}(2\pi a-1) \\
\nm 
\Longrightarrow \ds b=\frac{1}{2\pi}\Bigl(1 +q\frac{\frac{e^{-2s}}{p}(2\pi a-1)}{\frac{1-e^{-2s}}{p}(2\pi a-1)+1}\Bigl)
\end{array}$$ and analogously, $$a=\frac{1}{2\pi} \Bigl( 1+p'\frac{\frac{e^{-2s}}{q}(2\pi b-1)}{\frac{1-e^{-2s}}{q}(2\pi b-1)+1}\Bigl).$$

\begin{theorem}
Assume that $\beta>1.$ If $$\alpha>\frac{1+\frac{1-\beta}{p\beta}(1-e^{-2s})}{1+\frac{1-\beta}{p\beta}(1+(q-1)e^{-2s})},$$ then the following inequality holds for $f_1$ that is more log-convex than $g_{1/(2\pi\beta)}, \text{ formally } (\log f_1)'' \geq -\frac{1}{\beta}$ and $f_2$ that is more log-convex than $g_{1/(2\pi\alpha)}, \text{ formally } (\log f_2)'' \geq -\frac{1}{\alpha}$: $$\frac{\ds \int_{\mathbb{R}^2}e^{-\pi \langle \mathcal{Q}x,x\rangle}\prod_{j=1}^2 f_j(C_jx)^{p_j}\, dx}{\ds \prod_{j=1,2}\Bigl(\int_{\mathbb{R}} f_j\Bigl)^{p_j}} \leq (2\pi)^{1-\frac{1}{2}(p_1+p_2)}(1-e^{-2s})^{1/2}\frac{\beta^{-1/2p}\alpha^{-1/2q'}}{\Bigl(1+\frac{1}{p}\frac{1-\beta}{\beta}+\frac{1}{q'}\frac{1-\alpha}{\alpha}+\frac{1-e^{-2s}}{pq'}\frac{1-\beta}{\beta}\frac{1-\alpha}{\alpha}\Bigl)^{1/2}}.$$ 
\end{theorem}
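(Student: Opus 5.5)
The plan is to obtain the inequality from the forward regularized inequality \cref{Reformulation_REG_BL} with $I=\{1,2\}$, $n=1$, $Q_1=\frac{1}{2\pi\beta}$ and $Q_2=\frac{1}{2\pi\alpha}$. This bounds the left-hand side by
$$\sup_{0<a\le \frac{1}{2\pi\beta},\,0<b\le\frac{1}{2\pi\alpha}}\Bigl\{\frac{a^{p_1}b^{p_2}}{\det(M(a,b))}\Bigr\}^{1/2},\qquad M(a,b)=p_1aC_1^*C_1+p_2bC_2^*C_2+\mathcal Q .$$
So everything reduces to a finite-dimensional optimisation. The claim is that this supremum is attained at the corner $a=\frac{1}{2\pi\beta}$, $b=\frac{1}{2\pi\alpha}$, and that its value there is the stated right-hand side.

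\textbf{Step 1: evaluate at the corner.} Set $c=1-e^{-2s}$, $x=2\pi a-1$ and $y=2\pi b-1$. From the explicit matrix $M$ written before the statement,
$$(2\pi c)^2\det M=(1+\tfrac{c x}{p})(1+\tfrac{c y}{q'})-e^{-2s}=c\,D,\qquad D=1+\tfrac{x}{p}+\tfrac{y}{q'}+\tfrac{c}{pq'}xy,$$
so that $\det M=D/\bigl((2\pi)^2c\bigr)$. At the corner we have $x=\frac{1-\beta}{\beta}$ and $y=\frac{1-\alpha}{\alpha}$. Also $a^{p_1/2}b^{p_2/2}=(2\pi)^{-(p_1+p_2)/2}\beta^{-1/(2p)}\alpha^{-1/(2q')}$. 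Combining these gives exactly
$$(2\pi)^{1-\frac{1}{2}(p_1+p_2)}c^{1/2}\,\beta^{-1/(2p)}\alpha^{-1/(2q')}D^{-1/2}.$$
I also need to check that $D>0$ at the corner, which must hold since $\det M>0$ because $\mathcal Q>0$.

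\textbf{Step 2: show the corner is a global extremizer.} I would use the characterization of \cref{Prop_Equiv_BL_constant}, transported to the case $\mathcal Q>0$. Following the remark reducing to $\mathcal Q=0$, adjoin a third input $g_{\mathcal Q}$ with $C_3=\mathrm{id}$ and constraint $0<B_3\le\mathcal Q$, for which the optimum is $B_3=\mathcal Q$. Condition (c) with $B_1=a=Q_1$ and $B_2=b=Q_2$ only requires
$$\tfrac1a\ge (M^{-1})_{11},\qquad \tfrac1b\ge (M^{-1})_{22},$$
since the complementary slackness terms vanish at $a=Q_1$, $b=Q_2$. The condition for the auxiliary index holds automatically, because $\mathcal Q^{-1}-M^{-1}\ge0$ (as $M\ge\mathcal Q$) and $Q_3-B_3=0$. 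This is just the statement that the partial derivatives $p_1(\frac1a-(M^{-1})_{11})$ and $p_2(\frac1b-(M^{-1})_{22})$ of the log-ratio are nonnegative at the corner. Then (c) $\Rightarrow$ (a) of \cref{Prop_Equiv_BL_constant} upgrades this first-order condition to global optimality.

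\textbf{Step 3: verify the two inequalities.} Using the formula for $M^{-1}$, the second condition becomes $\alpha D\ge 1+\frac{cx}{p}$. Substituting $\alpha(1+y)=1$ and multiplying by $q$ (using $1-\frac1{q'}=\frac1q$), the algebra collapses to
$$\alpha\Bigl[1+\tfrac{x}{p}\bigl(1+(q-1)e^{-2s}\bigr)\Bigr]\ge 1+\tfrac{cx}{p}.$$
This is exactly the hypothesis on $\alpha$ once one checks the sign of the bracket, using $x<0$ since $\beta>1$. The first condition is, symmetrically, $\beta D\ge 1+\frac{cy}{q'}$, equivalently
$$\beta\Bigl[1+\tfrac{y}{q'}\bigl(1+(p'-1)e^{-2s}\bigr)\Bigr]\ge 1+\tfrac{cy}{q'}.$$

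\textbf{Main obstacle.} I expect this last verification to be the hardest step, because it is not stated as a hypothesis and must follow from $\beta>1$, the assumption on $\alpha$, and the hypercontractive range $\frac{q-1}{p-1}\le e^{2s}$. I would first treat it as a linear inequality in $\beta$: rewrite $y$ through $\alpha$, note that $\beta\mapsto\beta D-(1+\frac{cy}{q'})$ is affine in $\beta$ with value $\ge0$ at $\beta=1$ (where $x=0$, and it reduces to $D=1+\frac{y}{q'}$ versus $1+\frac{cy}{q'}$, requiring $y\ge0$ or a sign analysis), and then check monotonicity in $\beta$. If this fails in part of the range, the fallback is to maximise over $a$ first for fixed $b=\frac{1}{2\pi\alpha}$, which is a one-variable problem, and show directly that the optimal $a$ is the constrained endpoint $a=\frac{1}{2\pi\beta}$ whenever $\beta>1$.
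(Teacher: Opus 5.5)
Your Steps 1 and 2 are sound. The corner value is computed correctly. Verifying the two KKT inequalities at $(a,b)=(\frac{1}{2\pi\beta},\frac{1}{2\pi\alpha})$ and then invoking (c)$\Rightarrow$(a) of \cref{Prop_Equiv_BL_constant} for the augmented datum does give global optimality. (A simpler justification: $\det M$ is a positive combination of $1,a,b,ab$, so the log-ratio is concave in $(\log a,\log b)$.) Your route differs from the paper's, which instead tries to exclude fixed points of $F$.

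The second inequality in Step 3 is indeed the hypothesis on $\alpha$. The bracket is positive because $x\in(-1,0)$ and $1+(q-1)e^{-2s}\le p$ by $\frac{q-1}{p-1}\le e^{2s}$; the sign $x<0$ alone does not give this.

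The gap is the first inequality: it does not follow from the hypotheses. For $\beta>1$ the threshold on $\alpha$ exceeds $1$, so $y\in(-1,0)$. The denominator below is positive by the same hypercontractivity condition, and your first inequality is equivalent to
\[
\beta\;\ge\;\beta_0(\alpha):=\frac{1+\frac{cy}{q'}}{1+\frac{y}{q'}\left(1+(p'-1)e^{-2s}\right)}\;>\;1 .
\]
This is a genuine extra restriction. In particular, your affine function $\beta\mapsto\beta D-(1+\frac{cy}{q'})$ equals $e^{-2s}y/q'<0$ at $\beta=1$, so it is not nonnegative there as you expected.

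Your fallback also fails, and in fact the statement is false in this regime. Take $p=q=2$, $e^{-2s}=\frac12$, $\beta=\frac{11}{10}$, $\alpha=10$; the threshold on $\alpha$ is $\frac{43}{41}$, so the hypotheses hold. With $X=2\pi a$ and $Y=2\pi b$, the squared ratio for Gaussian inputs is $8\pi\sqrt{XY}/(1+3X+3Y+XY)$. For $Y=\frac{1}{10}$ this is maximized in $X$ at $X=\frac{13}{31}<\frac{10}{11}$. Its value there is about $8\pi\cdot 0.0788$, against about $8\pi\cdot 0.0732$ at the corner. So the admissible inputs $f_1=g_{13/(62\pi)}$ and $f_2=g_{1/(20\pi)}$ violate the claimed bound.

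The paper's own argument has the same hole. Showing that $F$ has no fixed point only excludes critical points where both $a$ and $b$ are interior. It does not exclude the mixed case, with $a$ interior satisfying $1/a=(M^{-1})_{11}$ and $b=\frac{1}{2\pi\alpha}$, which is exactly where the maximum sits in this example. The theorem needs the additional hypothesis $\beta\ge\beta_0(\alpha)$, equivalently $2\pi\beta\ge (M^{-1})_{11}$ at the corner. With that hypothesis added, your Steps 1--3 give a complete proof, and a more direct one than the paper's fixed-point route.
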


\begin{remark}
    If $\alpha$ is smaller than this exponent, one cannot expect to improve the inequality and \cite[Corollary 6.6]{Nakamura_Tsuji} gives already the best possible bound. This is coherent with the fact that at $\alpha=\frac{1+\frac{1-\beta}{p\beta}(1-e^{-2s})}{1+\frac{1-\beta}{p\beta}(1+(q-1)e^{-2s})}$, the constant in the previous theorem is $$\begin{array}{lll}
    \ds (2\pi)^{1-\frac12 (p_1+p_2)}(1-e^{-2s})^{1/2}\beta^{-1/2p}\Bigl(1+\frac{1-\beta}{p\beta}(1-e^{-2s})\Bigl)^{-1/2q'}\Bigl( 1+\frac{1-p}{p\beta}(1+(q-1)e^{-2s}) \Bigl)^{-1/2q} \\
    \ds = (2\pi)^{1-\frac12 (p_1+p_2)}(1-e^{-2s})^{1/2}\Bigl\|P_s[(\frac{\gamma_\beta}{\gamma})^{\frac 1p}]\Bigl\|_{L^q(\gamma)}
    \end{array}$$ which is precisely the constant of Corollary 6.6.(i) in \cite{Nakamura_Tsuji}. 
\end{remark}

\begin{proof}
    From what was proven before, it suffices to show that for $\alpha, \beta$ as in the assumptions of the theorem, the map $$\begin{array}{lll}\ds F: \Bigl[0;\frac{1}{2\pi \beta}\Bigl] \times  \Bigl[ 0; \frac{1}{2\pi \alpha}\Bigl] \rightarrow \mathbb{R}^2, \\ \ds F(x,y)=\Bigl(\frac{1}{2\pi} \Bigl( 1+p'\frac{\frac{e^{-2s}}{q}(2\pi y-1)}{\frac{1-e^{-2s}}{q}(2\pi y-1)+1}\Bigl) ,\frac{1}{2\pi}\Bigl(1 +q\frac{\frac{e^{-2s}}{p}(2\pi x-1)}{\frac{1-e^{-2s}}{p}(2\pi x-1)+1}\Bigl)\Bigl) \end{array}$$ has no fixed points. The previous claim, whose proof follows by straightforward computations, proves that the supremum in \cref{Sup_Det_Hypercontractivity} must be attained at $$a=\frac{1}{2\pi \beta}, b=\frac{1}{2\pi \alpha}$$ and one may conclude by applying \cref{Theorem_REG_BL} with the data defined above and some rearranging. 
\end{proof}

\bigskip

\section*{Acknowledgements}

The author is grateful to his supervisor Alessio Figalli, for his support and useful discussions during the elaboration of this work. He also would like to thank Shohei Nakamura for many helpful comments. Finally, he wishes to thank Guido de Philippis and Yair Shenfeld for a conversation about \cref{De_Phil_Shenfeld_contraction}, and for drawing the author's attention to reference \cite{Gozlan}. 

\hfill \\

\appendix 

\section{A general Caffarelli's Contraction Theorem}

\hfill \\

A celebrated result of Caffarelli in \cite{Caff} concerns the Lipschitz property of the optimal transportation map under log-convexity (resp. log-concavity) assumptions on the source measure (resp. the target measure). The following statement was shown in \cite{Caff} and subsequently extended in \cite{Kolesnikov}:  if $d\mu=e^{-V}\, dx$ and $d\nu=e^{-W}\, dx$ are probability measures in $\mathbb{R}^n$, with $\mu$ supported in all of $\mathbb{R}^n$, such that there exist $\lambda,\Lambda>0$ with $$\nabla^2 V \leq \lambda I_n \text{ and } \nabla^2 W \geq \Lambda I_n$$ and $\nabla \Phi$ is the Brenier map transporting $\mu$ to $\nu$, then $$\nabla^2\Phi \leq \sqrt{\frac{\lambda}{\Lambda}}I_n.$$
For our purposes, we would like to prove a similar statement with slightly more general assumptions, replacing $\lambda I_n, \Lambda I_n$ by positive definite matrices.

\begin{theorem} \label{General_Caff_Contraction}
    Let $d\mu=e^{-V}\, dx$ and $d\nu=e^{-W}\, dx$ be probability measures on $\mathbb{R}^n$, with $\mu$ supported on all of $\mathbb{R}^n$, such that there exist $\mathbb{R}^{n \times n} \ni A,B>0$ with \begin{equation} \label{Caff_condition}\nabla^2 V \leq A \text{ and } \nabla^2 W \geq B.\end{equation} Let $\nabla \Phi$ be the Brenier map transporting $\mu$ to $\nu.$ Then, $$\nabla^2\Phi \leq A^{1/2}(A^{1/2}BA^{1/2})^{-1/2}A^{1/2}.$$
\end{theorem}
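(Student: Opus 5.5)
Our plan is to reduce to the isotropic case treated by Caffarelli and Kolesnikov by a linear change of variables on the source and target. Set $S=A^{1/2}$ and define the rescaled probability measures $\tilde\mu=(S)_\#\mu$, $\tilde\nu=(T)_\#\nu$ for a suitable invertible symmetric $T$ to be chosen. Writing $\tilde\mu=e^{-\tilde V}dx$, we have $\tilde V(y)=V(S^{-1}y)+\mathrm{const}$, so $\nabla^2\tilde V=S^{-1}\nabla^2 V S^{-1}\le S^{-1}AS^{-1}=I_n$. For the target, $\tilde W(z)=W(T^{-1}z)+\mathrm{const}$ gives $\nabla^2\tilde W=T^{-1}\nabla^2 W T^{-1}\ge T^{-1}BT^{-1}$. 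The difficulty is that a composition of a Brenier map with two linear maps is in general no longer the gradient of a convex function, so the choice of $T$ must be tied to $S$ in a way that preserves the Brenier structure.

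We proceed as follows. First, we observe that if $\nabla\Phi$ pushes $\mu$ to $\nu$, then the map $y\mapsto S^{-1}\nabla\Phi(S^{-1}y)$ is the gradient of the convex function $\Psi(y)=\Phi(S^{-1}y)$ up to the left factor. Precisely, $\nabla\Psi(y)=S^{-1}\nabla\Phi(S^{-1}y)$. This map pushes $\tilde\mu=S_\#\mu$ to $(S^{-1})_\#\nu$. Hence we choose $T=S^{-1}=A^{-1/2}$. By uniqueness in Brenier's theorem, $\nabla\Psi$ is then the Brenier map from $\tilde\mu$ to $\tilde\nu:=(A^{-1/2})_\#\nu$. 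For this target, $\nabla^2\tilde W=A^{1/2}\nabla^2W A^{1/2}\ge A^{1/2}BA^{1/2}=:\tilde B>0$, while $\nabla^2\tilde V\le I_n$.

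The remaining step, which is the main obstacle, is the anisotropic Caffarelli estimate with identity on the source side and a general matrix $\tilde B$ on the target side. We want $\nabla^2\Psi\le\tilde B^{-1/2}$. Isotropic rescalings alone do not give this, and rotating by the eigenbasis of $\tilde B$ is harmless, since orthogonal maps preserve both $I_n$ and the Brenier structure. This reduces matters to $\tilde B$ diagonal, but still anisotropic. Here we would rerun Caffarelli's maximum-principle argument, in Kolesnikov's form, directly. Consider the Monge--Ampère equation $\tilde V(y)=\tilde W(\nabla\Psi(y))-\log\det\nabla^2\Psi(y)$. At a point maximizing the quantity
\begin{equation*}
\langle \nabla^2\Psi(y)\,\tilde B^{1/2}e,\tilde B^{1/2}e\rangle
\end{equation*}
over unit vectors $e$ and over $y$, we differentiate the equation twice along the direction $\xi=\tilde B^{1/2}e$. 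For compactly supported or suitably approximated data, the maximum is attained after a perturbation by a small quadratic, as in Caffarelli and Kolesnikov. At the maximum point:
\begin{itemize}
\item the term $\partial_{\xi\xi}\tilde V\le|\xi|^2$ comes from the source bound;
\item the term $\langle\nabla^2\tilde W\,\nabla^2\Psi\xi,\nabla^2\Psi\xi\rangle\ge\langle\tilde B\nabla^2\Psi\xi,\nabla^2\Psi\xi\rangle$ comes from the target bound;
\item the third-derivative terms of $\log\det$ and of $\tilde W$ have the favorable sign at a maximum, exactly as in the isotropic proof.
\end{itemize}
Since $\xi$ is a top eigendirection of the conjugated Hessian $\tilde B^{1/2}\nabla^2\Psi\tilde B^{1/2}$, these combine to $\lambda_{\max}(\tilde B^{1/2}\nabla^2\Psi\tilde B^{1/2})^2\le 1$. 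This is precisely $\nabla^2\Psi\le\tilde B^{-1/2}$.

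Undoing the change of variables gives $\nabla^2\Phi(x)=S\,\nabla^2\Psi(Sx)\,S\le A^{1/2}(A^{1/2}BA^{1/2})^{-1/2}A^{1/2}$, which is the claim. We expect the approximation and regularity step to be the main technical point. This means justifying that the maximum of the second derivative is attained, and that $\Psi$ is smooth enough to differentiate the Monge--Ampère equation twice. We would handle it as in \cite{Kolesnikov}: truncate to bounded convex domains, use Caffarelli's regularity theory, and pass to the limit using the stability of Brenier maps.
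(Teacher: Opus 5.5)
Your reduction via $\Psi(y)=\Phi(A^{-1/2}y)$ to the case $\nabla^2\tilde V\le I_n$, $\nabla^2\tilde W\ge\tilde B:=A^{1/2}BA^{1/2}$ is correct and is the same as the paper's first step. The gap is in the maximum-principle step: the weight $\tilde B^{1/2}$ is wrong, and the claimed conclusion does not follow. Let $e_0$ be a top eigenvector of $\tilde B^{1/2}\nabla^2\Psi(y_0)\tilde B^{1/2}$ with eigenvalue $\lambda$, and set $\xi=\tilde B^{1/2}e_0$. Then $\nabla^2\Psi\,\xi=\lambda\tilde B^{-1/2}e_0$. So the inequality you actually obtain, $\langle\tilde B\nabla^2\Psi\xi,\nabla^2\Psi\xi\rangle\le\partial_{\xi\xi}\tilde V\le|\xi|^2$, reads $\lambda^2\le\langle\tilde Be_0,e_0\rangle$, not $\lambda^2\le1$. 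Moreover, $\lambda_{\max}(\tilde B^{1/2}\nabla^2\Psi\tilde B^{1/2})\le1$ is the statement $\nabla^2\Psi\le\tilde B^{-1}$, not $\nabla^2\Psi\le\tilde B^{-1/2}$. That statement is false in general. Take $\tilde\mu=N(0,I_n)$ and $\tilde\nu=N(0,\tilde B^{-1})$, which satisfy your reduced hypotheses. The Brenier Hessian is $\tilde B^{-1/2}$, which fails to be $\le\tilde B^{-1}$ as soon as $\tilde B$ has an eigenvalue larger than $1$. As written, your argument only yields the non-sharp bound $\nabla^2\Psi\le\lambda_{\max}(\tilde B)^{1/2}\tilde B^{-1}$.

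The fix is to weight by $\tilde B^{1/4}$ instead, i.e.\ maximize $\langle\nabla^2\Psi(y)\tilde B^{1/4}e,\tilde B^{1/4}e\rangle$ over $y$ and unit $e$. At a maximum, $e_0$ is a top eigenvector of $\tilde B^{1/4}\nabla^2\Psi\tilde B^{1/4}$ with eigenvalue $\lambda$. With $\xi=\tilde B^{1/4}e_0$ you get $\nabla^2\Psi\,\xi=\lambda\tilde B^{-1/4}e_0$. The same inequality then becomes $\lambda^2\langle\tilde B^{1/2}e_0,e_0\rangle\le\langle\tilde B^{1/2}e_0,e_0\rangle$, i.e.\ $\tilde B^{1/4}\nabla^2\Psi\tilde B^{1/4}\le I_n$, which is exactly $\nabla^2\Psi\le\tilde B^{-1/2}$.

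With this change your proof is the infinitesimal counterpart of the paper's. The paper, with the target truncated to a ball, minimizes over $\mathbb{R}^n\times S^{n-1}$ the finite-difference quantity $\langle\tilde B^{-1/2}\alpha,\alpha\rangle-h^{-2}(\Psi(x+h\alpha)+\Psi(x-h\alpha)-2\Psi(x))$. There the tangential optimality condition produces a Lagrange multiplier, which the Monge--Amp\`ere equation forces to be $\le0$. A bootstrap in the finite differences then removes the constant lost in passing from first to second differences. Working with second differences is what gives attainment of the extremum from the behaviour at infinity, using only Caffarelli's regularity.

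Your version still needs two things: that $\sup_y\Psi_{\xi\xi}$ is attained, and that $\Psi\in C^4$. A naive $-\epsilon|y|^2$ perturbation does not settle this. It leaves error terms $2\epsilon\langle\nabla\tilde W(\nabla\Psi(y)),y\rangle$ and $-2\epsilon\,\mathrm{Tr}((\nabla^2\Psi)^{-1})$. The second is not obviously controlled for a compactly supported target, since $\nabla^2\Psi$ degenerates at infinity. So that step needs more than a citation.
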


\hfill \\

This result can actually be deduced from the work of Chewi and Pooladian (see \cite{Chewi_Pooladian}) who proved the theorem for two commuting matrices $A$ and $B$ under mild regularity assumptions on $V$ and $W.$ It was also proved directly by Gozlan and Sylvestre in \cite{Gozlan}. 
However, the proof we would like to present does not use any advanced tools such as the entropic regularization, which the aforementioned authors brought into play to deduce an extension of Caffarelli's theorem.

\begin{remark}
Recall that the Hessian of the Brenier potential between the Gaussian distributions $N(0,A^{-1})$ and $N(0,B^{-1})$, which trivially satisfy the constraint \cref{Caff_condition}, is the matrix $$B^{-1/2}(B^{-1/2}A^{-1}B^{-1/2})^{-1/2}B^{-1/2}=B^{-1/2}(B^{1/2}AB^{1/2})^{1/2}B^{-1/2}$$ by \cite[Lemma 2.3]{Takatsu}. But setting $C=B^{1/2}A^{1/2}(A^{1/2}BA^{1/2})^{-1/2}A^{1/2}B^{1/2}$, one has $$\begin{array}{lll}
C^2 &=& B^{1/2}A^{1/2}(A^{1/2}BA^{1/2})^{-1/2}A^{1/2}BA^{1/2}(A^{1/2}BA^{1/2})^{-1/2}A^{1/2}B^{1/2} \\
\nm 
&=& B^{1/2}A^{1/2}A^{1/2}B^{1/2}=B^{1/2}AB^{1/2}.
\end{array}$$ This proves that $$A^{1/2}(A^{1/2}BA^{1/2})^{-1/2}A^{1/2}=B^{-1/2}(B^{1/2}AB^{1/2})^{1/2}B^{-1/2}.$$

This observation shows that the previous theorem is sharp for every pair of matrices and implies that the Brenier potential between Gaussians achieves the largest possible Hessian among all source and target measures obeying the constraint \cref{Caff_condition}. 
\end{remark}

\hfill \\

The proof strategy is based on the maximum principle and is very similar to \cite{Vald1}. 

\begin{proof}
    Set $$\tilde V=V(A^{-1/2}\cdot)+\frac{1}{2}\log(\det A), \tilde W=W(A^{1/2}\cdot)-\frac{1}{2}\log(\det A)$$ and $\tilde \mu=e^{-\tilde V}\, dx=(A^{1/2})_{\#}\mu, \tilde \nu=e^{-\tilde W}\, dx=(A^{-1/2})_{\#}\nu$.  Then, defining $\tilde \Phi(x)=\Phi(A^{-1/2}x)$ for all $x \in \mathbb{R}^n$ with $\nabla \Phi$ being the Brenier map that transports $\mu$ to $\nu$, $\nabla \tilde \Phi$ is the Brenier map that transports $\tilde \mu$ to $\tilde \nu$. Moreover, $$\nabla^2 \tilde V= A^{-1/2}\nabla^2 V(A^{-1/2}\cdot)A^{-1/2} \leq I_n, \nabla^2 \tilde W \geq A^{1/2}BA^{1/2}.$$ Since $$\nabla^2 \Phi=A^{1/2}\nabla \tilde \Phi(A^{1/2}\cdot)A^{1/2},$$ we see that it suffices to show that $$\nabla^2 \tilde \Phi \leq (A^{1/2}BA^{1/2})^{-1/2}.$$ In other words, we reduced the claim to showing that: $$\text{ If } D^2V \leq I_n \text{ and } D^2W \geq A>0, \text{ then } \nabla^2 \Phi \leq A^{-1/2}.$$ 
    \hfill \\ First of all, we replace $e^{-W}$ by $e^{-W_r}$ with $W_r: \overline{B}_r(0) \rightarrow \mathbb{R}$ given by $W_r(x)=W(x)+C'$ where $C'$ is a normalizing constant chosen so that $\int e^{-W_r}\, dx=1.$ Let us write the Brenier map transporting $e^{-V}\, dx$ to $e^{-W_r}\, dx$  as $\nabla \Phi^r$ with $\Phi^r$ convex.
    \hfill \\ \\ We will be interested in the function $$K(x,\alpha)=\langle A^{-1/2}\alpha, \alpha \rangle-\frac{\Phi^r(x+h\alpha)+\Phi^r(x-h\alpha)-2\Phi^r(x)}{h^2} \quad \forall x \in \mathbb{R}^n, \forall \alpha \in S^{n-1}$$ for some fixed $h>0.$ One may assume by approximation that $V,W \in C^{1,1}_{\mathrm{loc}}(\mathbb{R}^n).$ Hence, by Caffarelli's regularity theory, $K$ is $C^2$ on $\mathbb{R}^n \times S^{n-1}$ and the goal is to show that $K$ is non-negative. 
    By the convexity of $\Phi^r$, $$K(x,\alpha) \leq \langle A^{-1/2}\alpha,\alpha\rangle \quad \forall \alpha \in S^{n-1}.$$ As in the limit as $x$ tends to infinity, this inequality becomes an equality (by \cite[Lemma 2.1]{Vald1} or by \cite[Lemma 3.1]{Alessio}). This implies that $K$ has a global minimum that we assume to be attained at $(x_0,
    \alpha_0)$. \\ Then, $$\partial_{x_i}K(x_0,\alpha_0)=0 \Longrightarrow \Phi_i^r(x_0+h\alpha_0)+\Phi_i^r(x_0-h\alpha_0)-2\Phi_i^r(x_0)=0 \quad \forall 1 \leq i \leq n$$ which gives $$\nabla \Phi^r(x_0+h\alpha_0)+\nabla \Phi^r(x_0-h\alpha_0)-2\nabla \Phi^r(x_0)=0.$$ In addition, for any vector $v \in \mathrm{span}\{\alpha\}^{\perp}$, $$\begin{array}{lll}\partial_{v}K(x_0,\alpha_0) &=& \ds \lim_{t \rightarrow 0}\frac{K \bigl(x_0,\frac{\alpha_0+tv}{\sqrt{1+t^2}} \bigl)-K(x_0,\alpha_0)}{t} \\ \\
    \nm 
    &=& \ds 2\langle A^{-1/2}\alpha,v\rangle-\frac{\langle \nabla \Phi^r(x_0+h\alpha_0), hv\rangle-\langle \nabla \Phi^r(x_0-h\alpha_0),hv\rangle}{h^2}=0 \end{array}$$ since $(x_0,\alpha_0)$ is a local minimum. Hence, there exists $\lambda \in \mathbb{R}$ such that $$A^{-1/2}\alpha_0=\frac{\nabla \Phi^r(x_0+h\alpha_0)-\nabla \Phi^r(x_0-h\alpha_0)}{2h}-\lambda \alpha_0$$ and so $$\nabla \Phi^r(x_0 \pm h\alpha_0)=\nabla \Phi^r(x_0) \pm h(A^{-1/2}\alpha_0+\lambda \alpha_0).$$ Taking the logarithm of the Monge-Ampère equation, one has \begin{equation}
    \label{Monge_Amp}  \begin{array}{lll}
    \log(\det \nabla^2 \Phi^r(x_0+h\alpha_0))+\log(\det \nabla^2 \Phi^r(x_0-h\alpha_0))-2\log(\det \nabla^2 \Phi^r(x_0)) \\ \\
    \nm 
    
    = W_r(\nabla \Phi^r(x_0+h\alpha_0))+W_r(\Phi^r(x_0-h\alpha_0))-2W_r(\Phi^r(x_0)) \\ \\
    \nm 
     -[V(x_0+h\alpha_0)+V(x_0-h\alpha_0)-2V(x_0)].
    \end{array}\end{equation} Since $A \mapsto \det(A)$ is a log-concave function, the left-hand side is less than $$D(\log \det)(\nabla^2 \Phi^r(x_0))E=\mathrm{Tr}(\nabla^2 \Phi^r(x_0)^{-1}E)=\mathrm{Tr}(\nabla^2 \Phi^r(x_0)^{-1/2} \, E \; \nabla^2 \Phi^r(x_0)^{-1/2})$$ where $E=\nabla^2 \Phi^r(x_0+h\alpha_0)+\nabla^2 \Phi^r(x_0-h\alpha_0)-2\nabla^2 \Phi^r(x_0).$ As $E$ is the Hessian of the function $$x \mapsto \Phi^r(x+h\alpha_0)+\Phi^r(x-h\alpha_0)-2\Phi^r(x)$$ which attains a local maximum at $x_0$, $E$ is negative semi-definite. Hence, the left-hand side of \cref{Monge_Amp} is non-positive. \\ On the right-hand side of \cref{Monge_Amp}, we notice that $\nabla^2 V \leq I_n$ gives $$V(x_0+h\alpha_0)+V(x_0-h\alpha_0)-2V(x_0) \leq h^2|\alpha_0|^2.$$ Moreover, denoting $v=\nabla \Phi^r(x_0+h\alpha_0)-\nabla \Phi^r(x_0)=\nabla \Phi^r(x_0)-\nabla \Phi^r(x_0-h\alpha_0)$, one has $$ \begin{array}{lll} W_r(\nabla \Phi^r(x_0+h\alpha_0))+W_r(\nabla \Phi^r(x_0-h\alpha_0))-2W_r(\nabla \Phi^r(x_0)) \\ \\
    \nm 
    = \ds W_r(\nabla \Phi_r(x_0)+v)-W_r(\nabla \Phi_r(x_0))+W_r(\nabla \Phi_r(x_0)-v)-W_r(\nabla \Phi^r(x_0)) \\ \\
    \nm 
    =\ds \int_0^1 \langle v, \nabla W_r(\nabla \Phi^r(x_0)+sv)-\nabla W_r(\nabla \Phi^r(x_0)-sv)\rangle \, ds \\ \\
    \nm 
    = \ds \int_0^1 \int_{-s}^s \langle v, \nabla^2 W_r(\nabla \Phi^r(x_0)+tv)v\rangle \, dt \, ds.
    \end{array}$$ 
 As $\nabla^2 W_r \geq A$, we conclude that
    $$\begin{array}{lll}W_r(\nabla \Phi^r(x_0+h\alpha_0))+W_r(\nabla \Phi^r(x_0+h\alpha_0))-2W_r(\nabla \Phi^r(x_0)) \\
    \nm 
    \geq h^2 \langle A(A^{-1/2}\alpha_0+\lambda \alpha_0),A^{-1/2}\alpha+\lambda \alpha_0\rangle.
    \end{array}$$ since $v=h(A^{-1/2}\alpha_0+\lambda \alpha_0).$ Hence, $$2\lambda \langle A^{1/2}\alpha_0,\alpha_0\rangle+\lambda^2 \langle A\alpha_0,\alpha_0\rangle=-\langle \alpha_0,\alpha_0\rangle+\langle A(A^{-1/2}\alpha_0+\lambda \alpha_0),A^{-1/2}\alpha_0+\lambda \alpha_0\rangle \leq 0$$
    and so $\lambda \leq 0.$ Thus, $$\langle A^{-1/2}\alpha_0,\alpha_0\rangle \geq \frac{\langle \nabla \Phi^r(x_0+h\alpha_0),\alpha_0\rangle-\langle \nabla \Phi^r(x_0-h\alpha_0),\alpha_0\rangle}{2h}.$$ This implies by the convexity of $\Phi^r$ that $$\begin{array}{lll}
    \ds \frac{\Phi^r(x_0+h\alpha_0)+\Phi^r(x_0-h\alpha_0)-2\Phi^r(x_0)}{h^2} &\leq& \ds \frac{\langle \nabla \Phi^r(x_0+h\alpha_0), \alpha_0\rangle-\langle \nabla \Phi^r(x_0-h\alpha_0),\alpha_0\rangle}{h} \\
    \nm 
    &\leq& 2\langle A^{-1/2}\alpha_0,\alpha_0\rangle\end{array}$$ which allows us to conclude that $$K(x,\alpha) \geq K(x_0,\alpha_0) \geq -\langle A^{-1/2}\alpha_0,\alpha_0\rangle \quad \forall x \in \mathbb{R}^n, \alpha \in S^{n-1}$$ since $(x_0,\alpha_0)$ is a global minimum. This is equivalent to $$\frac{\Phi^r(x+h\alpha)+\Phi^r(x-h\alpha)-2\Phi^r(x)}{h^2} \leq \langle A^{-1/2}\alpha, \alpha \rangle+\langle A^{-1/2}\alpha_0,\alpha_0\rangle.$$ Taking $M$ to be the ratio of the largest and smallest eigenvalues of $A^{-1/2}$, one obtains $$\langle A^{-1/2}\alpha_0,\alpha_0\rangle \leq M\langle A^{-1/2}\alpha,\alpha\rangle$$ by Rayleigh's principle, \textit{i.e.} for $\mathbb{R}^n \ni B>0$, $$\langle Bx,x\rangle \geq \lambda_{\min}(B) \text{ and } \langle Bx,x\rangle \leq \lambda_{\max}(B) \quad \forall x \in S^{n-1}.$$ Therefore, $$\frac{\Phi^r(x_0+h\alpha)+\Phi^r(x_0-h\alpha)-2\Phi^r(x)}{h^2} \leq (M+1)\langle A^{-1/2}\alpha,\alpha\rangle.$$ 
    %Using the fact that the estimate is uniform in $h$, we apply the same strategy of bootstrapping as in \cite{Kolesnikov,Vald1} to get that $$\frac{\Phi^r(x_0+h\alpha)+\Phi^r(x_0-h\alpha)-2\Phi^r(x)}{h^2} \leq \langle A^{-1/2}\alpha,\alpha\rangle$$ which asserts that $K(x_0,\alpha_0) \geq 0$

    We now may apply the same strategy of bootstrapping as in \cite{Kolesnikov,Vald1} to get the desired estimate; more precisely, assuming that we have an estimate of the type $$\frac{\Phi^r(x_0+h\alpha)+\Phi^r(x_0-h\alpha)-2\Phi^r(x)}{h^2} \le a \langle A^{-1/2}\alpha,\alpha\rangle,$$ one gets letting $h \rightarrow 0$ that $$\langle \nabla^2 \Phi^r(x)\alpha, \alpha \rangle \le a\langle A^{-1/2}\alpha,\alpha \rangle.$$ Using this additional information, one has $$\langle \nabla \Phi^r(x_0+s\alpha_0)-\nabla \Phi^r(x_0-s\alpha),\alpha_0\rangle=\int_{-s}^s \langle D^2\Phi^r(x_0+t\alpha_0)\alpha_0,\alpha_0\rangle \, dt \le 2\langle A^{-1/2}\alpha_0,\alpha_0\rangle as$$ and recall that we have already shown the following bound: $$\begin{array}{lll}
    \ds \langle \nabla \Phi^r(x_0+s\alpha_0)-\nabla \Phi^r(x_0-s\alpha_0),\alpha_0\rangle &\le& \ds \langle \nabla \Phi^r(x_0+h\alpha_0)-\nabla \Phi^r(x_0-h\alpha_0),\alpha_0\rangle \\ \\
     &\le& \ds 2h\langle A^{-1/2}\alpha_0,\alpha_0\rangle
    \end{array}$$ for any $0 \le s \le h.$ 
    
    This implies $$\begin{array}{lll}
    \ds \frac{\Phi^r(x_0+h\alpha_0)+\Phi^r(x_0-h\alpha_0)-2\Phi^r(x)}{h^2} &\le& \ds \frac1{h^2}\int_0^h \langle \nabla \Phi^r(x_0+s\alpha_0)-\nabla \Phi^r(x_0-s\alpha_0),\alpha_0\rangle \, ds \\ \\
    &\le& \ds \frac1{h^2}\int_0^h 2\langle A^{-1/2}\alpha_0,\alpha_0\rangle \min(a s,h)\, ds \\ \\
    &=& \ds \frac{2a-1}{a}\langle A^{-1/2}\alpha_0,\alpha_0\rangle.
    \end{array}$$
    As $a_1=M+1, a_{n+1}=\frac{2a_n-1}{a_n}$ defines a decreasing sequence converging to $1$, we get by letting $n \rightarrow +\infty$ the following: $$\frac{\Phi^r(x_0+h\alpha_0)+\Phi^r(x_0-h\alpha_0)-2\Phi^r(x_0)}{h^2} \le \langle A^{-1/2}\alpha_0,\alpha_0\rangle.$$
    In other words, this proves that $$K(x,\alpha) \ge K(x_0,\alpha_0) \ge  0 \quad \forall x \in \mathbb{R}^n \text{ and } \alpha \in S^{n-1}.$$ Letting $h\rightarrow 0$, this shows that $$\langle \nabla^2 \Phi^r(x)\alpha,\alpha\rangle \leq \langle A^{-1/2}\alpha,\alpha\rangle$$ which translates to $$\frac{\langle \nabla \Phi^r(x+h\alpha)-\nabla \Phi^r(x-h\alpha),\alpha\rangle}{2h} \leq \langle A^{-1/2}\alpha,\alpha \rangle$$ by integration. Since $e^{-W_r}\, dx$ tends to $e^{-W}\, dx$ weakly, the stability of the transport map gives that $$\frac{\langle \nabla \Phi(x+h\alpha)-\nabla \Phi(x-h\alpha),\alpha \rangle}{2h} \leq \langle A^{-1/2}\alpha,\alpha \rangle.$$ Taking the limit as $h \rightarrow 0$, we get $$\langle \nabla^2 \Phi(x)\alpha,\alpha\rangle \leq \langle A^{-1/2}\alpha,\alpha \rangle \quad \forall \alpha \in S^{n-1}$$ which concludes the proof.
\end{proof}

\bigskip

Furthermore, we would like to add that it is possible to retrieve by PDE arguments a generalization of \cite{Guido}, which was proven in \cite{Gozlan}. Let us first give an anisotropic extension of \cite[Thm 1.4]{Guido}:

\begin{theorem} \label{De_Phil_Shenfeld_contraction}
    Let $d\mu=e^{-V}\, dx$ and $d\nu=e^{-W}\, dx$ be probability measures on $\mathbb{R}^n$ with $\mu$ supported on all of $\mathbb{R}^n$, such that there exist $\alpha>0$ and $A,G \in \mathbb{R}^{n \times n} \text{ satisfying } A \geq 0 , G>0,$ $$\mathrm{div}(A \nabla V) \leq \alpha \text{ and } \nabla^2 W \geq G^{-1}.$$ Let $\nabla \Phi : \mathbb{R}^n \rightarrow \mathbb{R}^n$ be the Brenier map transporting $\mu$ to $\nu.$ Then, $$\| \mathrm{div}(A\nabla \Phi)\|_{L^\infty} \leq \sqrt{\alpha \cdot \mathrm{Tr}(AG)}.$$ 
\end{theorem}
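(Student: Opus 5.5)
The plan is to mimic the maximum-principle proof of \cref{General_Caff_Contraction}, working with the scalar quantity $\mathrm{div}(A\nabla\Phi)=\mathrm{Tr}(A\nabla^2\Phi)$ instead of second difference quotients in a fixed direction. As there, we first truncate the target to $e^{-W_r}$ on $\overline{B}_r(0)$ and assume by approximation that $V,W\in C^{1,1}_{\mathrm{loc}}$. Then Caffarelli's regularity theory makes $\Phi^r$ smooth enough to differentiate the Monge--Ampère equation twice. We work with the Brenier map $\nabla\Phi^r$. Since $\Phi^r$ is convex and $A\ge 0$, we have $\mathrm{Tr}(A\nabla^2\Phi^r)\ge 0$, so only an upper bound is needed. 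At the end, stability of optimal transport as $r\to\infty$ passes the bound to $\Phi$, taken in the distributional sense. To make the limit harmless, we will in fact prove the bound for the averaged second differences $\Delta_h\Phi^r(x)=\sum_k\bigl[\Phi^r(x+hA^{1/2}e_k)+\Phi^r(x-hA^{1/2}e_k)-2\Phi^r(x)\bigr]/h^2$, which converge to $\mathrm{Tr}(A\nabla^2\Phi^r)$ as $h\to0$.

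The core step is a maximum principle for $u=\mathrm{Tr}(A\nabla^2\Phi^r)=\sum_k\partial_{kk}\Phi^r$ after replacing $x$ by $A^{1/2}x$; equivalently we set $\partial_\xi=A^{1/2}$-directional derivatives. Write the Monge--Ampère equation as $\log\det\nabla^2\Phi^r=W_r(\nabla\Phi^r)-V$ and apply the operator $L=\sum_k\partial_{\xi_k\xi_k}$ with $\xi_k=A^{1/2}e_k$. Concavity of $\log\det$ gives, with $M=\nabla^2\Phi^r$,
\[
L(\log\det M)\le \mathrm{Tr}\bigl(M^{-1}\nabla^2 u\bigr),
\]
and at a maximum point $x_0$ of $u$ (which exists because $u\to0$ at infinity for the truncated target, as in \cite[Lemma 2.1]{Vald1}) this is $\le0$. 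On the right side, $L V=\mathrm{div}(A\nabla V)\le\alpha$. Also
\[
L\bigl(W_r(\nabla\Phi^r)\bigr)=\sum_k\langle\nabla^2W_r\,M\xi_k,M\xi_k\rangle+\langle\nabla W_r,\nabla u\rangle,
\]
and the last term vanishes at $x_0$. Hence at $x_0$,
\[
\sum_k\langle G^{-1}M\xi_k,M\xi_k\rangle\le\alpha .
\]
By Cauchy--Schwarz,
\[
u=\sum_k\langle M\xi_k,\xi_k\rangle=\sum_k\langle G^{-1/2}M\xi_k,G^{1/2}\xi_k\rangle\le\Bigl(\sum_k|G^{-1/2}M\xi_k|^2\Bigr)^{1/2}\Bigl(\sum_k|G^{1/2}\xi_k|^2\Bigr)^{1/2},
\]
and $\sum_k|G^{1/2}A^{1/2}e_k|^2=\mathrm{Tr}(AG)$. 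This yields $u(x_0)^2\le\alpha\,\mathrm{Tr}(AG)$, and therefore the bound everywhere.

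The main obstacle is rigor at the maximum point. First, we need enough regularity to differentiate twice; if that fails, we carry out the argument with the discrete operator $\Delta_h$, as in the appendix, using concavity of $\log\det$ on the sum of the second differences of the equation. Second, we must justify that $u$ attains its supremum: on the truncated problem, $\nabla\Phi^r$ is bounded and its Hessian degenerates at infinity. If the discrete version gives a nonoptimal constant because of the cross term, we fall back on the bootstrapping used in the proof of \cref{General_Caff_Contraction}. Finally, we let $h\to0$ and $r\to\infty$.
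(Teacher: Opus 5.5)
Your core computation is the paper's own (formal) proof, and it is correct. The paper takes $\log$ of the Monge--Amp\`ere equation, differentiates twice, sums, and evaluates at a maximum point of $\mathrm{div}(A\nabla\Phi)$. It then discards the term $\mathrm{Tr}\bigl(((\nabla^2\Phi)^{-1}\nabla^2\Phi_e)^2\bigr)\ge0$ and the first- and second-order terms at $x_0$, exactly as your concavity-of-$\log\det$ step does. It concludes with \cref{Simple_trace_ineq} for $B=G$, $C=\nabla^2\Phi(x_0)$, $D=A$, which is your Cauchy--Schwarz inequality. The only difference is organizational. The paper treats $A=\mathrm{id}$, then $A>0$ via $x\mapsto A^{1/2}x$, then $A\ge0$ via a weighted sum in an eigenbasis of $A$; your operator $L=\sum_k\partial_{\xi_k\xi_k}$ with $\xi_k=A^{1/2}e_k$ does all cases at once. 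Like you, the paper simply assumes the maximum is attained.

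The part of your plan that would fail is the discrete operator $\Delta_h$ built from second differences along the fixed directions $\xi_k$. The hypothesis $\mathrm{div}(A\nabla V)\le\alpha$ only bounds $\sum_k\langle\nabla^2V(x)\xi_k,\xi_k\rangle$ at a single point. By contrast, $\sum_k[V(x_0+h\xi_k)+V(x_0-h\xi_k)-2V(x_0)]$ samples $\langle\nabla^2V\,\xi_k,\xi_k\rangle$ at the different points $x_0+t\xi_k$, so it is not bounded by $\alpha h^2$. For instance, with $A=\mathrm{id}$ on $\mathbb{R}^2$, the harmonic function $\mathrm{Re}(z^4)$ has coordinate second-difference sum $4h^4>0$ at the origin. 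This is exactly where the situation differs from \cref{General_Caff_Contraction}, whose hypothesis $\nabla^2V\le I_n$ is a directional bound valid at every point. For the same reason, a pointwise bound $u\le c:=\sqrt{\alpha\,\mathrm{Tr}(AG)}$ does not imply $\Delta_h\Phi^r\le c$. So you cannot prove the bound for $\Delta_h\Phi^r$ to make the limit harmless, whether by a discrete maximum principle or by integrating the continuous one.

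Pass to the limit distributionally instead, as you also mention: $\int\Phi^r\,\mathrm{Tr}(A\nabla^2\psi)\,dx\le c\int\psi\,dx$ for $0\le\psi\in C_c^\infty$, and $\Phi^r\to\Phi$ locally uniformly after normalization. If you want a discrete version, replace coordinate second differences by ellipsoidal means $\frac{1}{\sigma(S^{n-1})}\int_{S^{n-1}}[f(x+hA^{1/2}\theta)-f(x)]\,d\sigma(\theta)$. Since $y\mapsto V(x+A^{1/2}y)-\frac{\alpha}{2n}|y|^2$ is superharmonic, the mean value inequality bounds the $V$-term by $\frac{\alpha h^2}{2n}$, which matches the normalization $\frac{h^2}{2n}\mathrm{Tr}(A\nabla^2 f)$ of these means.

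Finally, differentiating the equation twice classically needs $\Phi^r\in C^4$, that is, $V,W\in C^{2,\beta}$ rather than $C^{1,1}_{\mathrm{loc}}$. Mollification preserves both hypotheses, so this point is harmless.
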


\hfill \\ First, let us establish the following simple trace inequality: 

\begin{lemma} \label{Simple_trace_ineq}
For $ \mathbb{R}^{n \times n} \ni B>0, \mathbb{R}^{n \times n} \ni C,D \geq 0, C \neq 0$, $$\mathrm{Tr}(B^{-1}CDC) \geq \mathrm{Tr}(DB)^{-1}\mathrm{Tr}(DC)^2.$$
\end{lemma}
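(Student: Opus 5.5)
This inequality is Cauchy--Schwarz for the Hilbert--Schmidt inner product $\langle X,Y\rangle=\mathrm{Tr}(X^*Y)$, applied to a factorisation of $\mathrm{Tr}(DC)$ that has the weight $B^{1/2}$ on one side and $B^{-1/2}$ on the other. I first write
$$\mathrm{Tr}(DC)=\mathrm{Tr}\bigl(D^{1/2}CD^{1/2}\bigr)=\mathrm{Tr}\bigl(D^{1/2}B^{1/2}\,B^{-1/2}CD^{1/2}\bigr)=\bigl\langle B^{1/2}D^{1/2},\,B^{-1/2}CD^{1/2}\bigr\rangle.$$
The first equality uses cyclicity of the trace. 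The second inserts $B^{1/2}B^{-1/2}=\mathrm{id}$, which requires $B>0$. The third uses $(B^{1/2}D^{1/2})^*=D^{1/2}B^{1/2}$, since all the square roots involved are symmetric.

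Next I apply Cauchy--Schwarz:
$$\mathrm{Tr}(DC)^2\le \|B^{1/2}D^{1/2}\|_{HS}^2\,\|B^{-1/2}CD^{1/2}\|_{HS}^2.$$
Both norms simplify by cyclicity of the trace. The first is $\|B^{1/2}D^{1/2}\|_{HS}^2=\mathrm{Tr}(D^{1/2}BD^{1/2})=\mathrm{Tr}(DB)$. The second is
$$\|B^{-1/2}CD^{1/2}\|_{HS}^2=\mathrm{Tr}\bigl(D^{1/2}CB^{-1}CD^{1/2}\bigr)=\mathrm{Tr}(B^{-1}CDC).$$
This gives $\mathrm{Tr}(DC)^2\le \mathrm{Tr}(DB)\,\mathrm{Tr}(B^{-1}CDC)$.

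The only remaining point is dividing by $\mathrm{Tr}(DB)$, and this needs a separate case. If $D\ne0$, then $\mathrm{Tr}(DB)=\mathrm{Tr}(B^{1/2}DB^{1/2})>0$, because $B^{1/2}DB^{1/2}\ge0$ is nonzero, and dividing gives the claim. If $D=0$, both sides vanish, provided $\mathrm{Tr}(DB)^{-1}\mathrm{Tr}(DC)^2$ is read as $0$; I will state this convention explicitly. The hypothesis $C\neq0$ is not needed for the argument; presumably it is there only to ensure the inequality is nondegenerate where the lemma is applied in the proof of \cref{De_Phil_Shenfeld_contraction}. I expect no real obstacle. The only care needed is in choosing the factorisation so that exactly $B^{1/2}$ and $B^{-1/2}$ sit on opposite sides.
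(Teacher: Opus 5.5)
Your proof is correct and is essentially the paper's argument: the paper applies Cauchy--Schwarz to $\mathrm{Tr}\bigl((D^{1/2}CB^{-1/2})(D^{1/2}B^{1/2})^T\bigr)$, which is your pairing of $B^{1/2}D^{1/2}$ with $B^{-1/2}CD^{1/2}$ up to transposition. Your remark that the final division needs $D\neq 0$ (so that $\mathrm{Tr}(DB)>0$) rather than $C\neq 0$ is a correct refinement that the paper leaves implicit.
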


\begin{proof}
By Cauchy-Schwarz, $$\begin{array}{lll}
    \mathrm{Tr}(DC)^2 &=& \ds \mathrm{Tr}((D^{1/2}CB^{-1/2})(D^{1/2}B^{1/2})^T)^2 \\
    \nm 
    &\leq& \ds \mathrm{Tr}(B^{-1/2}CDCB^{-1/2})\mathrm{Tr}(D^{1/2}BD^{1/2}) \\
    \nm 
    &\leq& \ds \mathrm{Tr}(B^{-1}CDC)\mathrm{Tr}(DB).
    \end{array}$$
\end{proof}

\hfill \\ \begin{proof} We only sketch a formal proof.
Let us first assume that $A=\mathrm{Id}_{\mathbb{R}^n}.$ 
For a given unit vector $e \in \mathbb{R}^n$ and a function $\xi: \mathbb{R}^n \rightarrow \mathbb{R}$, we denote by $\xi_e$ (resp. $\xi_{ee}$) the first (resp. second) directional derivative of $\xi$ in the direction $e.$ Taking the logarithm in the Monge-Ampère equation and differentiating twice in the direction $e$ gives $$V_{ee}=\langle \nabla^2 W(\nabla \Phi)\nabla^2 \Phi e, \nabla^2 \Phi e \rangle+\langle \nabla W, \nabla \Phi_{ee}\rangle-\mathrm{Tr}((\nabla^2 \Phi)^{-1}\nabla^2 \Phi_{ee})+\mathrm{Tr}\bigl(( (\nabla^2 \Phi)^{-1}\nabla^2 \Phi_e)^2\bigl).$$ Using $\nabla^2 W \geq G^{-1}$, the previous inequality leads to $$V_{ee} \geq \langle G^{-1}\nabla^2 \Phi e, \nabla^2 \Phi e\rangle+\langle \nabla W, \nabla \Phi_{ee}\rangle-\mathrm{Tr}((\nabla^2 \Phi)^{-1}\nabla^2 \Phi_{ee})+\mathrm{Tr}\bigl(( (\nabla^2 \Phi)^{-1}\nabla^2 \Phi_e)^2\bigl).$$ Summing on both sides over the standard basis $\{e_i\}$ of $\mathbb{R}^n$ yields $$\Delta V \geq \sum_{i=1}^n \langle \nabla^2 \Phi \, G^{-1}\, \nabla^2 \Phi e_i,e_i\rangle+\langle \nabla W, \nabla \Delta \Phi \rangle-\mathrm{Tr}((\nabla^2 \Phi)^{-1}\nabla^2 \Delta \Phi)+\sum_{i=1}^n \mathrm{Tr}\bigl(( (\nabla^2 \Phi)^{-1}\nabla^2 \Phi_{e_i})^2\bigl).$$ Assume that $\Delta \Phi$ attains its maximum at $x_0.$ On the one hand, $\nabla \Delta \Phi(x_0)=0$ and $\nabla^2 \Delta \Phi(x_0) \leq 0$ and so $$\langle \nabla W(x_0),\nabla \Delta \Phi(x_0)\rangle-\mathrm{Tr}[(\nabla^2 \Phi(x_0))^{-1}\nabla^2 \Delta \Phi(x_0)] \geq 0.$$ On the other hand, $$\mathrm{Tr}\bigl(( (\nabla^2 \Phi)^{-1}\nabla^2 \Phi_e)^2\bigl)=\mathrm{Tr}(A^2) \geq 0 \text{ with } A=(\nabla^2 \Phi)^{-1/2}\nabla^2 \Phi_e (\nabla^2 \Phi)^{-1/2}.$$ By \cref{Simple_trace_ineq}, $$\alpha \geq \Delta V(x_0) \geq \mathrm{Tr}(G^{-1}(\nabla^2 \Phi(x_0))^2) \geq \mathrm{Tr}(G)^{-1}(\Delta \Phi(x_0))^2$$ and therefore, $$\Delta \Phi(x_0) \leq \sqrt{\alpha \cdot \mathrm{Tr}(G)}$$ using that $\Delta \Phi(x_0) \geq 0$ as $\Phi$ is convex. Since $x_0$ is a point where $\Delta \Phi$ attains its maximum, we obtain the desired claim.  \\
Let us now prove the case with $A>0$. We consider $\tilde V=V(A^{1/2}\cdot)-\frac{1}{2}\log(\det A)$ and $\tilde \mu=e^{-\tilde V}\, dx.$ Notice that for any Borel set $B \subset \mathbb{R}^n$, $$(A^{-1/2})_{\#}\mu(B)=\int_{A^{1/2}B}e^{-V(x)}\, dx=\int_B e^{-V(A^{1/2}y)}\det(A)^{1/2}\, dy$$ \textit{i.e.} $(A^{-1/2})_{\#}\mu=\tilde \mu.$ Consider $\tilde \nu=(A^{1/2})_{\#}\nu=e^{-\tilde W}\, dx$ with $\tilde W=W(A^{-1/2}\cdot)+\frac{1}{2}\log(\det A).$ Set $\tilde \Phi(x)=\Phi(A^{1/2}x)$ for any $x \in \mathbb{R}^n$, where $\nabla \Phi$ is the Brenier map pushing $\mu$ forward to $\nu.$ Then, $\tilde \Phi$ is also convex and $\nabla \tilde \Phi(x)=A^{1/2} \nabla \Phi(A^{1/2}x)$ for any $x \in \mathbb{R}^n.$ This implies that $$\begin{array}{lll}
(\nabla \tilde \Phi)_{\#}\tilde \mu &=& (A^{1/2} \circ \nabla \Phi \circ A^{1/2})_{\#}\tilde \mu=(A^{1/2} \circ \nabla \Phi)_{\#}(A^{1/2})_{\#}((A^{-1/2})_{\#}\mu) \\
\nm 
&=& (A^{1/2})_{\#}((\nabla \Phi)_{\#}\mu)=(A^{1/2})_{\#}\nu \\
\nm 
&=& \tilde \nu.
\end{array}$$ Hence, by the uniqueness part of Brenier's theorem, $\nabla \tilde \Phi$ is the optimal transport map from $\tilde \mu$ to $\tilde \nu.$ Moreover, $$\alpha \geq \mathrm{div}(A\nabla V)(A^{1/2}x)=\mathrm{Tr}(A^{1/2}D^2V(A^{1/2}x)A^{1/2})=\mathrm{Tr}(D^2 \tilde V(x))=\Delta \tilde V(x) \text{ for a.e. } x \in \mathbb{R}^n$$ and similarly, $$D^2 \tilde W(x)=A^{-1/2}D^2W(A^{-1/2}x)A^{-1/2} \geq A^{-1/2}G^{-1}A^{-1/2}=(A^{1/2}GA^{1/2})^{-1} \text{ for a.e. } x \in \mathbb{R}^n.$$ Thus, $$\|\Delta \tilde \Phi\|_{L^\infty} \leq \sqrt{\alpha \mathrm{Tr}(A^{1/2}GA^{1/2})}=\sqrt{\alpha \mathrm{Tr}(AG)}.$$ The claim is proven if we notice that, as $\tilde \Phi(x)=\Phi(A^{1/2}x)$, $$\Delta \tilde \Phi(x)=\mathrm{Tr}(D^2\tilde \Phi(x))=\mathrm{Tr}(A^{1/2}D^2\Phi(A^{1/2}x)A^{1/2})=\mathrm{div}(A\nabla \Phi(A^{1/2}x))$$ which shows that $$\|\mathrm{div}(A\nabla \Phi)\|_{L^\infty} \leq \sqrt{\alpha \mathrm{Tr}(AG)}.$$

\hfill \\ To treat the case where $A \geq 0$ with $A \neq 0$, it suffices by conjugating to show that this holds for $$A=\mathrm{diag}(\lambda_1,\cdots,\lambda_m,0,\cdots,0) \text{ for } m \leq n, \text{ where } \lambda_i>0 \quad \forall 1 \leq i \leq m.$$ As in the previous proof while considering the Laplacian, one obtains $$\begin{array}{lll} \alpha &\geq& \ds \sum_{i=1}^m \lambda_i V_{e_i e_i} \geq \sum_{i=1}^m \lambda_i \langle \nabla^2 \Phi \, G^{-1}\, \nabla^2 \Phi e_i,e_i\rangle \\ 
\nm 
&& \ds +\langle \nabla W, \nabla \sum_{i=1}^m \lambda_i \Phi_{e_ie_i} \rangle-\mathrm{Tr}((\nabla^2 \Phi)^{-1}\nabla^2 \sum_{i=1}^m \lambda_i \Phi_{e_ie_i})+\sum_{i=1}^m \lambda_i \mathrm{Tr}\bigl(( (\nabla^2 \Phi)^{-1}\nabla^2 \Phi_{e_i})^2\bigl).
\end{array}$$ Assume that $\sum_{i=1}^m \lambda_i \Phi_{e_ie_i}=\mathrm{div}(A \nabla \Phi)=\mathrm{Tr}(A \nabla^2 \Phi)$ attains its maximum at $x_0.$ As before, the contribution of the last three terms is positive and hence $$\alpha \geq \sum_{i=1}^n \langle \nabla^2 \Phi(x_0) G^{-1}\nabla^2\Phi(x_0) A^{1/2}e_i,A^{1/2}e_i\rangle=\mathrm{Tr}(A^{1/2}\nabla^2 \Phi(x_0) \, G^{-1}\nabla^2 \Phi(x_0) A^{1/2}).$$ 
By \cref{Simple_trace_ineq}, $$\begin{array}{lll}
\alpha &\geq& \mathrm{Tr}(G^{-1}\nabla^2 \Phi(x_0)A\nabla^2\Phi(x_0)) \\
\nm 
&\geq& \mathrm{Tr}(AG)^{-1}\mathrm{Tr}(A\nabla^2 \Phi(x_0))^2
\end{array}$$ and the desired claim is proven. 
\end{proof}

\hfill \\

The result of \cite[Thm 6.3]{Gozlan} for the Kantorovitch potential may be re-derived using \cref{Simple_trace_ineq}. Indeed, this lemma implies, in particular, that for any $A,B>0$, $$\mathrm{Tr}(AB^2) \geq \mathrm{Tr}(A^{-1})^{-1}\mathrm{Tr}(B)^2.$$ If $V$ is $\alpha_V$-superharmonic and $W^*$ is $\frac{1}{\beta_W}$-superharmonic, one gets $$\begin{array}{lll}
\ds n\alpha_V \geq \Delta V(x_0) &\geq& \ds \mathrm{Tr}\Bigl(\nabla^2 W(\nabla \Phi(x_0))(\nabla^2 \Phi(x_0))^2\Bigl) \\
\nm 
&\geq& \ds \mathrm{Tr}\Bigl( \bigl(\nabla^2 W(\nabla \Phi (x_0))\bigl)^{-1}\Bigl)^{-1}(\Delta \Phi(x_0))^2 \geq \frac{\beta_W}{n}(\Delta \Phi(x_0))^2
\end{array}$$ where $x_0$ is the maximum point of $\Delta \Phi.$ This implies that $$n\sqrt{\frac{\alpha_V}{\beta_W}} \geq \Delta \Phi(x_0) =\|\Delta \Phi\|_\infty.$$ 

\hfill \\ 

By allowing directional dependencies, one is able to generalize \cite[Thm 1.18]{Guido}.

\hfill \\

Indeed, consider matrices $A_1,\cdots,A_N \in \mathbb{R}^{2 \times 2}$ and set $A=\mathrm{diag}(A_1,\cdots,A_N).$
Let $\mu_{\beta,N}$ be the probability measure on $\mathbb{C}^N$ defined by $$d\mu_{\beta,N,Q}(z)=\frac{e^{-Q_{\beta,N}(z)-G_{\beta,N}(z)}}{\mathcal{Z}_{\beta,N}} \text{ with } \mathcal{Z}_{\beta,N}=\int_{\mathbb{C}^N}e^{-Q_{\beta,N}(z)-G_{\beta,N}(z)}\, dz$$ where $$Q_{\beta,N}(z)=\beta N\sum_{j=1}^N Q_j(z_j) \text{ with } Q_j: \mathbb{C} \rightarrow \mathbb{R} \text{ for any } 1 \leq j \leq N,$$ and $$G_{\beta,N}(z)=-\beta \sum_{i<j=1}^N \log |A_i^{-1/2}z_i-A_j^{-1/2}z_j|$$ for any $z \in \mathbb{C}^N.$ Since $\mathrm{div}(A\, \nabla G_{\beta,N}) \leq 0$, one can apply the previous results to the two-dimensional (one component) Coulomb gas $\mu_{\beta,N,Q}$ and the pure background model $\nu_{\beta,N,Q}$ given by $$d\nu_{\beta,N,Q}(z)=\frac{e^{-Q_{\beta,N}(z)}}{\int e^{-Q_{\beta,N}(z)}\, dz}\, dz.$$ 

\begin{theorem}
    Suppose that there exists $\mathbb{R}^{2 \times 2} \ni G_i,A_i>0,$ and $\alpha_i>0$ such that $$\mathrm{div}(A_i\nabla Q_i(x)) \leq \alpha_i, \nabla^2 Q_i(x) \geq G_i^{-1} \text{ for all } 1 \leq i \leq N$$ and for a.e. $x \in \mathbb{R}^2$, where $\nabla^2 Q_i$ is the distributional derivative on $\mathbb{R}^2.$ Let $\nabla \Phi_{\beta,N,Q}$ be the optimal transport map between $\mu_{\beta,N,Q}$ and $\nu_{\beta,N,Q}.$ Then, $$\|\mathrm{div}(A\nabla \Phi_{\beta,N,Q})\|_{L^\infty(dx)} \leq \sqrt{\alpha \sum_{i=1}^N \mathrm{Tr}(A_iG_i)}$$ where $$\alpha=\sum_{i=1}^N \alpha_i.$$
\end{theorem}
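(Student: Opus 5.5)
The plan is to apply \cref{De_Phil_Shenfeld_contraction} on $\mathbb{R}^{2N}\cong\mathbb{C}^N$, with source $\mu_{\beta,N,Q}=e^{-V}dx$ and target $\nu_{\beta,N,Q}=e^{-W}dx$. Here
$$V=Q_{\beta,N}+G_{\beta,N}+\log\mathcal Z_{\beta,N},\qquad W=Q_{\beta,N}+\log\int e^{-Q_{\beta,N}}.$$
The anisotropy matrix will be the block-diagonal $A=\mathrm{diag}(A_1,\cdots,A_N)$. The work therefore reduces to checking the two hypotheses $\mathrm{div}(A\nabla V)\le\alpha'$ and $\nabla^2W\ge\mathcal G^{-1}$, and then reading off the constant $\sqrt{\alpha'\,\mathrm{Tr}(A\mathcal G)}$.

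\textbf{Target convexity.} The function $Q_{\beta,N}(z)=\beta N\sum_j Q_j(z_j)$ is separable, so its Hessian is block diagonal with blocks $\beta N\nabla^2Q_j\ge\beta N G_j^{-1}$. Hence $\nabla^2W\ge\mathcal G^{-1}$ with $\mathcal G=(\beta N)^{-1}\mathrm{diag}(G_1,\cdots,G_N)$.

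\textbf{Source superharmonicity.} Since $A$ is block diagonal, $\mathrm{div}(A\nabla Q_{\beta,N})=\beta N\sum_i\mathrm{div}(A_i\nabla Q_i)\le\beta N\sum_i\alpha_i$. For the interaction term, each summand $-\beta\log|A_i^{-1/2}z_i-A_j^{-1/2}z_j|$ is of the form $\ell\circ L$. Here $L(z)=A_i^{-1/2}z_i-A_j^{-1/2}z_j$ is linear $\mathbb{R}^4\to\mathbb{R}^2$ and $\ell=-\beta\log|\cdot|$. This gives
$$\mathrm{div}(A\nabla(\ell\circ L))=\mathrm{Tr}(LAL^*\,\nabla^2\ell\circ L),\qquad LAL^*=A_i^{-1/2}A_iA_i^{-1/2}+A_j^{-1/2}A_jA_j^{-1/2}=2\,\mathrm{id}_{\mathbb{R}^2}.$$
So this term equals $2\Delta\ell\circ L$, and $\Delta\ell=-2\pi\beta\delta_0\le0$ distributionally. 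Thus $\mathrm{div}(A\nabla G_{\beta,N})\le0$, confirming the remark preceding the statement. Altogether $\mathrm{div}(A\nabla V)\le\beta N\alpha$.

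\textbf{Conclusion.} \cref{De_Phil_Shenfeld_contraction} then yields
$$\|\mathrm{div}(A\nabla\Phi)\|_\infty\le\sqrt{\beta N\alpha\cdot(\beta N)^{-1}\sum_i\mathrm{Tr}(A_iG_i)}=\sqrt{\alpha\sum_i\mathrm{Tr}(A_iG_i)}.$$
The factors $\beta N$ cancel, which gives exactly the claimed bound.

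\textbf{Main obstacle.} The hard part is justifying \cref{De_Phil_Shenfeld_contraction} when $V$ is singular on the collision set, since the theorem was only sketched formally for smooth potentials. I would first regularize the logarithm, e.g. $\ell_\epsilon=-\frac\beta2\log(|\cdot|^2+\epsilon^2)$, which is still superharmonic. Then I would apply the theorem to the resulting smooth measures $\mu^\epsilon$, which have full support. Finally I would pass to the limit $\epsilon\to0$ using stability of Brenier maps under weak convergence, together with the weak-$*$ lower semicontinuity of the $L^\infty$ bound on $\mathrm{div}(A\nabla\Phi)$. This is the same approximation scheme as in the proof of \cref{General_Caff_Contraction}.
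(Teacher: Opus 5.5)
Your proposal is correct and follows the paper's proof: you apply \cref{De_Phil_Shenfeld_contraction} on $\mathbb{R}^{2N}$ with $A=\mathrm{diag}(A_1,\dots,A_N)$, $G=(\beta N)^{-1}\mathrm{diag}(G_1,\dots,G_N)$ and superharmonicity constant $\beta N\alpha$, so the factors $\beta N$ cancel. Your computation $LAL^*=2\,\mathrm{id}_{\mathbb{R}^2}$, which justifies $\mathrm{div}(A\nabla G_{\beta,N})\le 0$, and your superharmonic regularization of the logarithmic singularity supply details that the paper only asserts or leaves formal.
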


\begin{proof}
    Denote $d\mu_{\beta,N,Q}=e^{-V_{\beta,N,Q}}\, dz.$ The assumption $\nabla^2 Q_i \geq G_i^{-1}$ implies that $\nabla^2 Q_{\beta,N} \geq \beta N \mathrm{diag}(G_1^{-1},\cdots,G_N^{-1}).$ Since $\mathrm{div}(A\nabla G_{\beta,N}) \leq 0$, it follows that $$\mathrm{div}(A\nabla V_{\beta,N,Q})=\mathrm{div}(A\nabla Q_{\beta,N})+\mathrm{div}(A\nabla G_{\beta,N}) \leq \beta N \sum_{i=1}^N\mathrm{div}(A_i\nabla Q_i) \leq \beta N\alpha$$ where $\alpha=\sum_{i=1}^N \alpha_i.$ Hence, we can apply \cref{De_Phil_Shenfeld_contraction} with $$n=2N, \mu=\mu_{\beta,N,Q}, \nu=\nu_{\beta,N,Q}, A=\mathrm{diag}(A_1,\cdots,A_N), G=(\beta N)^{-1}\mathrm{diag}(G_1,\cdots,G_N).$$ This shows that $$\begin{array}{lll}
    \ds \|\mathrm{div}(A\nabla \Phi_{\beta,N,Q})\|_{L^\infty(dx)} &\leq& \ds \sqrt{\beta N\alpha\, \mathrm{Tr}(AG)} \\
    \nm 
    &\leq& \ds \sqrt{\alpha \sum_{i=1}^N \mathrm{Tr}(A_iG_i)}.
    \end{array}$$
\end{proof}

\bigskip

\printbibliography

\end{document}